 \DeclareFontFamily{U}{mathc}{}
\DeclareFontShape{U}{mathc}{m}{it}%
{<->s*[1.03] mathc10}{}
\DeclareMathAlphabet{\mathscr}{U}{mathc}{m}{it}
\newcommand{\e}{\varepsilon}
 \newcommand{\disk}{\mathbb{D}}
\newcommand{\fr}{\partial}
\newcommand{\set}[1]{\left\{#1\right\}}
\newcommand{\norm}[1]{{\left\Vert#1\right\Vert}}
\newcommand{\abs}[1]{\left\vert#1\right\vert}
\newcommand{\rest}[1]{ \arrowvert_{#1}}
\newcommand{\unsur}[1]{\frac{1}{#1}}
\newcommand{\lrpar}[1]{\left(#1\right)}
\newcommand{\inv}{^{-1}}
\DeclareMathOperator{\supp}{Supp}
\DeclareMathOperator{\Div}{div}
\DeclareMathOperator{\Crit}{Crit}
\DeclareMathOperator{\id}{id}
\DeclareMathOperator{\jac}{Jac}
\DeclareMathOperator{\dist}{dist}
\DeclareMathOperator{\Tor}{Tor} 
\DeclareMathOperator{\Rk}{R} 
\DeclareMathOperator{\Hal}{Hal}
\def\C{\mathbf{C}}
\def\R{\mathbf{R}}
\def\Q{\mathbf{Q}}
\def\Z{\mathbf{Z}}
\def\N{\mathbf{N}}
\def\ii{{\mathsf{i}}}
\def\jj{{\mathsf{j}}}
\def\bfe{{\mathbf{e}}}
\newcommand{\Hyp}{\mathbb{H}}
\newcommand{\NS}{{\mathrm{NS}}}
\newcommand{\Sing}{{\mathrm{Sing}}}
\newcommand{\Tang}{{\mathrm{Tang}}}
\newcommand{\STang}{{\mathrm{STang}}}
\newcommand{\NT}{{\mathrm{NT}}}
\newcommand{\Lat}{{\mathrm{Lat}}}
\def\hol{{\mathrm{hol}}}
\def\Ima{{\mathsf{Im}}}
\def\Rea{{\mathsf{Re}}}
\def\P{\mathbb{P}}
\def\Aut{\mathsf{Aut}}
 \def\PGL{{\sf{PGL}}}
\def\PSL{{\sf{PSL}}}
\def\PO{{\sf{PO}}}
\def\End{{\sf{End}}}
\def\GL{{\sf{GL}}}
\def\SL{{\sf{SL}}}
\def\vol{{\sf{vol}}}
\def\Vect{{\mathrm{Vect}}}
\def\dist{{\sf{dist}}}
  \newcommand{\romain}[1]{{\color{blue}*}\marginpar{\tiny  \color{blue} RD: #1}}
  \newcommand{\serge}[1]{{\color{red}*}\marginpar{\tiny  \color{red} SC: #1}}
\theoremstyle{plain}
\newtheorem{thm}{Theorem}[section]
\newtheorem{cor}[thm]{Corollary}
\newtheorem{pro}[thm]{Proposition}
\newtheorem{lem}[thm]{Lemma}
\newtheorem{mthm}{Theorem}
\newtheorem{mcor}[mthm]{Corollary}
\newtheorem{mthmprime}{Theorem}
\theoremstyle{definition}
\newtheorem{defi}[thm]{Definition}
\newtheorem{eg}[thm]{Example}
\newtheorem{rem}[thm]{Remark}
\newtheorem{nota}[thm]{Notation}
\numberwithin{equation}{section}       % Number formulas within sections
\numberwithin{equation}{section}       % Number formulas within sections
\begin{document}

\setlength{\parskip}{.2em}
\setlength{\baselineskip}{1.27em}

\begin{abstract}
We classify invariant probability measures for non-elementary groups of automorphisms, on any compact 
Kähler 
surface $X$, under the assumption that the group contains a so-called ``parabolic automorphism''. 
We also prove that except in certain rigid situations known as Kummer examples, 
there are only finitely many    invariant, ergodic, probability measures with a Zariski 
dense support. If $X$ is a K3 or Enriques surface, and the group does not preserve any algebraic subset,
 this leads to a complete description of orbit closures.
\end{abstract}

%
%%%%%%%%%%%%%%%%%%%%%%%%%%%%%%%%%%%%%%%%%%%%%%%%%%%%%%%%%%%%%%%%%%
%
\title[Invariant measures for automorphism groups of surfaces]
{Invariant measures for large automorphism groups of projective surfaces}
\date{\today}

\author{Serge Cantat}
\address{Serge Cantat, IRMAR, Campus de Beaulieu,
b\^atiments 22-23
263 avenue du G\'en\'eral Leclerc, CS 74205
35042  RENNES C\'edex}
\email{serge.cantat@univ-rennes1.fr}

\author{Romain Dujardin}
\address{Romain Dujardin,  Sorbonne Universit\'e, CNRS, Laboratoire de Probabilit\'es, Statistique  et Mod\'elisation  (LPSM), F-75005 Paris, France}
\email{romain.dujardin@sorbonne-universite.fr}

\thanks{ }

%\author{}
%\address{D\'epartement de math\'ematiques\\
%         Universit\'e de Rennes\\
%         Rennes\\
%         France}
%\email{serge.cantat@univ-rennes1.fr, junyi.xie@univ-rennes1.fr}
%
%%%%%%%%%%%%%%%%%%%%%%%%%%%%%%%%%%%%%%%%%%%%%%%%%%%%%%%%%%%%%%%%%%
%

\maketitle
 
\setcounter{tocdepth}{1}
\tableofcontents

%
%%%%%%%%%%%%%%%%%%%%%%%%%%%%%%%%%%%%%%%%%%%%%%%%%%%%%%%%%%%%%%%%%%
% 
%
%%%%%%%%%%%%%%%%%%%%%%%%%%%%%%%%%%%%%%%%%%%%%%%%%%%%%%%%%%%%%%%%%%
%

%
%\tableofcontents

%\renewcommand{\thefootnote}{}
%\footnotetext{\textit{2010 Mathematics Subject Classification}: 37P55, 	32H50}
%\footnotetext{\textit{Keywords}: polynomial endomorphism, orbit, valuative tree}

%%%%%%%%%%%%%%%%%%%%%%%%%%%%%%%%%%%%%%%%%%%%%%
%%%%%%%%%%%%%%%%%%%%%%%%%%%%%%%%%%%%%%%%%%%%%%
\section{Introduction}\label{par:intro}
%%%%%%%%%%%%%%%%%%%%%%%%%%%%%%%%%%%%%%%%%%%%%%
%%%%%%%%%%%%%%%%%%%%%%%%%%%%%%%%%%%%%%%%%%%%%%
%%%%%
\subsection{Non-elementary groups and parabolic automorphisms}
%%%%%
Let $\Gamma$ be a group of automorphisms of a compact K\"ahler surface $X$. We say 
that $\Gamma$ is {\bf{non-elementary}} if its image $\Gamma^*$ in  $\GL(H^2(X;\Z))$, 
given by its  action on the cohomology, contains a non-abelian free group.  
We refer to~\cite{stiffness} for a description of  such groups; in particular, 
it is shown in~\cite{stiffness} that the existence of a non-elementary subgroup in 
$\Aut(X)$ implies that $X$ is a projective surface. 
By definition, an automorphism $g\colon X\to X$ is {\bf{parabolic}}  if $\norm{(g^n)^*}$ grows quadratically with the number $n$ of iterates,
where $\norm{\cdot }$ is any operator norm on $\End(H^2(X;\C))$. Any parabolic automorphism  
preserves some genus $1$ fibration, acting by translation along the fibers.
A group $\Gamma\subset \Aut(X)$  containing a parabolic automorphism is non-elementary if and only if it contains two parabolic automorphisms 
preserving distinct fibrations~\cite{stiffness, finite_orbits}. Examples of surfaces such that $\Aut(X)$ is non-elementary and contains 
parabolic automorphisms include some rational surfaces, some K3 surfaces (notably Wehler surfaces), and  general Enriques surfaces
(see \S~\ref{sec:examples} below).
 
%%%%%
\subsection{Classification}
%%%%%
%In this article, we classify  probability measures on $X$ which are invariant by a non-elementary group \emph{containing a parabolic automorphism}. 
%In the following theorem, we say that an analytic subset $\Sigma$ in an open $U\subset X$ is 
%a {\bf{totally real}} surface if  for every smooth point $x$ of $\Sigma$, 
%the (real) tangent space $T_x\Sigma$ has (real) dimension $2$ and contains a basis of the complex tangent space $T_xX$; equivalently $T_x\Sigma$ and its image $\jj_X( T_x\Sigma )$ by the complex structure satisfy $T_x\Sigma\oplus_\R \jj_X( T_x\Sigma )=T_xX$. 
Our first theorem was already announced in \cite{stiffness, finite_orbits}:

\begin{mthm}\label{thm:main}  
Let $X$ be a compact K\"ahler surface. Let $\Gamma$ be a non-elementary subgroup of $\Aut(X)$ containing a parabolic element. Let $\mu$ be a $\Gamma$-invariant ergodic probability measure on $X$. 
Then, $\mu$ satisfies exactly one  of the following properties. 

\begin{enumerate}[\em (a)]
\item   $\mu$ is the average on a finite orbit of $\Gamma$.
\item  $\mu$ is nonatomic and  supported on a $\Gamma$-invariant curve $D\subset X$.
\item There is a $\Gamma$-invariant proper algebraic subset $Z$ of $X$, and  a $\Gamma$-invariant, 
totally real  analytic surface $\Sigma$ of 
$X\setminus Z$ such that {\em {(1)}} $\mu(\overline{\Sigma}) = 1$ and $\mu(Z)=0$; {\em {(2)}}  $\Sigma$ has finitely many irreducible components;   
{\em {(3)}} the singular locus of $\Sigma$ is locally finite;  
{\em {(4)}} $\mu$ 
is absolutely continuous with respect to the Lebesgue measure  on $\Sigma$,  and {\em {(5)}}  its density (with respect to any real analytic area form on the regular part of  $\Sigma$)
is real analytic.
 \item  There is a $\Gamma$-invariant proper algebraic subset $Z$ of $X$ such that {\em {(1)}} $\mu(Z)=0$; {\em {(2)}} the support of $\mu$ is equal to $X$; {\em {(3)}} $\mu$ is absolutely continuous with respect to the Lebesgue measure on $X$; and {\em {(4)}} the density of $\mu$ with respect to any real analytic
volume form on $X$ is real analytic on $X\setminus Z$.
\end{enumerate}
\end{mthm}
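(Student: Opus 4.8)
The plan is to realize $\mu$ as a measure for the random dynamical system generated by $\Gamma$. Fix a finitely supported probability measure $\nu$ on $\Gamma$ whose support generates $\Gamma$ and contains two parabolic elements $g_1,g_2$ preserving two distinct genus-$1$ fibrations $\pi_i\colon X\to B_i$; such $g_1,g_2$ exist precisely because $\Gamma$ is non-elementary. Since $\mu$ is $\Gamma$-invariant, the product $\nu^{\otimes\Z}\otimes\mu$ is invariant under the two-sided skew product on $\Gamma^{\Z}\times X$, and we may speak of the Lyapunov exponents $\lambda^{-}(\mu)\le\lambda^{+}(\mu)$ of the derivative cocycle. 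Two reductions dispose of cases (a) and (b): if $\mu$ carries an atom, ergodicity forces it to be the uniform measure on a finite $\Gamma$-orbit; and if $\supp\mu$ is not Zariski dense, its Zariski closure is a $\Gamma$-invariant proper algebraic subset, hence of dimension $\le 1$, and ergodicity together with non-atomicity then gives case (b). So assume henceforth that $\mu$ is non-atomic with Zariski-dense support; then $\mu$ gives no mass to any algebraic curve, because a positive-mass curve would have finite $\Gamma$-orbit and would confine $\supp\mu$, contradicting Zariski density.

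Next I would show that such a $\mu$ is hyperbolic, i.e.\ $\lambda^{+}(\mu)>0>\lambda^{-}(\mu)$. If it were not, the invariance principle of Ledrappier and Avila--Viana, applied to the projectivization of the derivative cocycle, would show that the fiberwise stationary measures on $\mathbb{P}(TX)$ are $\Gamma$-invariant and would thereby produce a $\Gamma$-invariant measurable line field (or invariant measurable pair of lines) defined $\mu$-almost everywhere; but a non-elementary group of automorphisms of a projective surface admits no such invariant measurable line field unless the ambient measure is carried by a $\Gamma$-invariant curve, which has been excluded. Hence $\mu$ is hyperbolic, and Pesin theory for random diffeomorphisms provides, for $\mu$-a.e.\ $x$ and $\nu^{\otimes\Z}$-a.e.\ itinerary $\omega$, local stable and unstable manifolds $W^{s}_{\omega}(x)$ and $W^{u}_{\omega}(x)$; since every element of $\Gamma$ is holomorphic these are germs of holomorphic curves, and $T_xX=E^{s}_{\omega}(x)\oplus E^{u}_{\omega}(x)$ over $\C$ wherever the two Oseledets lines differ.

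The core of the proof is the analysis of the conditional measures of $\mu$ along the unstable manifolds, and this is where the parabolic automorphisms become indispensable. On the one hand, Ledrappier--Young dimension theory for the random system, together with the genuine (not merely stationary) invariance of $\mu$, which forces the stable and unstable entropies to agree, shows that these conditionals are exact-dimensional, of real dimension $0$, $1$ or $2$ on the holomorphic curve $W^u$; moreover the case where all the $W^u$ lie inside a $\Gamma$-invariant algebraic subset --- the analogue of the Brown--Rodriguez Hertz dichotomy --- is excluded by Zariski density. On the other hand, disintegrating $\mu$ over $B_1$, the conditional $\mu_b$ on a fiber $F_{1,b}$ is invariant under the translation induced by $g_1$, hence under the closed subgroup $H_{1,b}\le F_{1,b}$ that this translation generates, so $\mu_b$ is Haar along the $H_{1,b}$-cosets, and similarly for $\pi_2$. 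Since $\mu$ charges no curve, $\dim H_{i,b}\ge 1$ almost everywhere (otherwise $\mu$ would sit on a $\Gamma$-invariant union of torsion multisections), which in particular rules out atomic unstable conditionals; and matching the Haar structure along the two transverse fibrations with the Ledrappier--Young formula, one finds that $\dim H_{1,b}$ and $\dim H_{2,b}$ are a.e.\ equal to a common value $d\in\{1,2\}$ and that $\mu$ has real dimension $2d$. If $d=2$, then $\mu$ is simultaneously Haar along the two transverse holomorphic foliations by elliptic fibers, and a Fubini argument in holomorphic coordinates in which both fibrations are linear --- using that absolute continuity of a measure along two transverse foliations of a surface implies absolute continuity --- yields case (d). If $d=1$, a dimension count shows that $\supp\mu$ is a real-analytic surface $\Sigma$, that the $\pi_1$- and $\pi_2$-circles through a generic point of $\Sigma$ span it and make it totally real and bi-foliated, and the same Fubini argument carried out inside $\Sigma$ yields case (c).

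It remains to harvest the finer statements. The $\Gamma$-invariant proper algebraic set $Z$ is built from the critical values and singular fibers of $\pi_1$ and $\pi_2$, together with the invariant curves met by $\overline{\Sigma}$; finiteness of the number of irreducible components of $\Sigma$ and local finiteness of its singular set follow from $\Gamma$ being finitely generated and from $\mu$ charging each component; and real-analyticity of the density --- on the regular part of $\Sigma$ in case (c), on $X\setminus Z$ in case (d) --- follows from the explicit Haar expression for $\mu$ combined with the real-analyticity of the two fibrations, upgraded by the principle that an invariant integrable density for a non-elementary group of real-analytic transformations is real-analytic off $Z$. The main obstacle is the measure-rigidity step of the third paragraph: pinning the unstable conditionals to the clean dimensions $0,1,2$ and, above all, isolating the totally real alternative that leads to (c). This goes well beyond the generic ``SRB versus invariant subvariety'' dichotomy, and it is exactly here that the hypothesis that $\Gamma$ contains parabolic automorphisms --- through the two transverse invariant fibrations it supplies --- is essential.
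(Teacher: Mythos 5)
The outline runs correctly up to the point where it matters most, and then skips the actual theorem. Your reduction to non-atomic, Zariski-diffuse measures, and your observation that the conditionals of $\mu$ on the fibers of each invariant fibration are Haar measures on cosets of a closed subgroup of dimension $1$ or $2$, is exactly the paper's starting point (Lemma~\ref{lem:basic_dynamics} and Proposition~\ref{pro:smooth_case}); your treatment of the case $d=2$ is in substance Steps~1--5 of Proposition~\ref{pro:smooth_case} (note, though, that the consistency of $d$ between the two fibrations must be proved -- in the paper this is Steps~2--3, using that full Haar conditionals along $\pi_g$ force $(\pi_h)_*\mu$ to be absolutely continuous -- and your appeal to a Ledrappier--Young formula does not substitute for it without first establishing exact dimensionality for this random system, which you do not do). The genuine gap is the case $d=1$: ``a dimension count shows that $\supp\mu$ is a real-analytic surface $\Sigma$'' is precisely what fails to be automatic, and it is the heart of the paper. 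The Haar structure only gives a \emph{local} picture: $\mu$ is locally carried by finitely many squares $\pi_g^{-1}(\gamma)\cap\pi_h^{-1}(\gamma')$, with $\gamma$ an arc of some curve $\mathrm{R}^{k}_{p,q}$. When these squares are saturated under $\Gamma$, or when $\gamma$ is analytically continued around the singular fibers (where the monodromy acts on the slopes $(p,q)$), there is no a priori bound on $k$ or $\|(p,q)\|$, and the resulting union of circles can perfectly well be an immersed surface that is dense in a $3$- or $4$-dimensional set while $\mu$ still has exact dimension $2$; no dimension count distinguishes a closed analytic surface from such a dense immersed family (this is exactly the difficulty flagged in the introduction as having been overlooked in \cite{cantat_groupes}, see Remark~\ref{rem:saturation}). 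Ruling it out is what Sections~\ref{par:Main Proof1}--\ref{sec:Main_Proof} do: either the fibrations are isotrivial and the $\mathrm{R}^k_{p,q}$ are global analytic curves after a finite base change (Lemma~\ref{lem:analytic_continuation_gamma_isotrivial}), or one uses the tangency curve $\mathrm{Tang}^{\mathrm{tt}}(\pi_g,\pi_h)$ and the curvature blow-up Lemma~\ref{lem:simple_curvature} to bound $\max(k,\|(p,q)\|)$ uniformly and produce the analytic curve $\sigma_g$ of full $\mu_g$-measure (Lemma~\ref{lem:analytic_continuation_gamma_general}), after which $\Sigma$ is assembled in Proposition~\ref{pro:analytic_sigma}. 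Likewise, finiteness of the components and local finiteness of $\mathrm{Sing}(\Sigma)$ do not follow from ``$\Gamma$ finitely generated'' ($\Gamma$ is not assumed finitely generated, and the implication would fail anyway); in the paper they come from C-analyticity of $\Sigma$ together with the ergodicity/finite-index-stabilizer argument and the analysis outside $\mathrm{STang}_\Gamma$.

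A secondary objection: the hyperbolicity/Pesin/invariance-principle preamble is both unjustified and unnecessary. The assertion that a non-elementary subgroup of $\Aut(X)$ admits no $\mu$-measurable invariant line field over a Zariski-diffuse measure is itself a nontrivial theorem requiring proof, and whether such $\mu$ are hyperbolic for the random cocycle is never actually used afterwards: all of your subsequent analysis rests on the translation structure along the fibers of the two parabolic fibrations, which is where the paper begins directly, without any stochastic or Pesin-theoretic machinery.
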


\begin{rem} %~
\begin{enumerate}[\em $(1)$]
\item 
Recall that an analytic surface  $\Sigma$ in an open $U\subset X$ is 
  {\bf{totally real}}   if  for every smooth point $x$ of $\Sigma$, 
the (real) tangent space $T_x\Sigma$  contains a basis of the complex tangent space $T_xX$; equivalently  
%$T_x\Sigma$ and its image $\jj_X( T_x\Sigma )$ by the complex structure satisfy 
$T_x\Sigma\oplus_\R \jj_X( T_x\Sigma )=T_xX$.
%In Assertion~(c),  we will simply say that the support of  $\mu$ is totally real.
%In fact\romain{de nouveau je suis convaincu que toute SV analytique totalement réelle est localement de cette forme (voir Baouendi Ebenfelt Rothschild Prop. 1.3.8.) C'était quoi le point de cette remarque déjà? }, we will prove that, locally, $\Sigma$ is defined by finitely many equations of type ${\mathsf{Re}}(u_i(x,y))=0$ where the   $u_i$ are holomorphic functions.   

\item  Each of the four cases (a), (b), (c), and (d) is  characterized by a property of the support $\supp(\mu)$: being finite, Zariski dense in a 
curve, totally real, or equal to~$X$.

\item  Given any non-elementary group $\Gamma\subset \Aut(X)$, there is a unique {\bf{maximal $\Gamma$-invariant curve}} $D_\Gamma\subset X$ (see \S~\ref{par:invariant_curves} below). 
The invariant algebraic  set  $Z$ is independent of $\mu$ and admits an explicit description (see Propositions~\ref{pro:smooth_case} and~\ref{pro:analytic_sigma}). It  
is made of components of $D_\Gamma$ together with  a residual finite invariant set.
\end{enumerate}
\end{rem}

\begin{mcor}\label{cor:no_invariant_subset}
Let $X$ be a compact K\"ahler surface. 
Let $\Gamma$ be a non-elementary subgroup of $\Aut(X)$ that contains a parabolic element
and does not preserve any proper algebraic subset of $X$.  
If $\mu$ is a $\Gamma$-invariant and ergodic probability measure on $X$, then 
$\mu$ is:
\begin{enumerate}[\em (a)]
\item  either a   measure with real-analytic density 
 on a compact, smooth, totally real,  and real analytic surface $\Sigma$ of $X$;
\item  or a   measure with real-analytic density on $X$. 
\end{enumerate}
\end{mcor}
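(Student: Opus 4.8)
The plan is to derive Corollary~\ref{cor:no_invariant_subset} directly from Theorem~\ref{thm:main} by eliminating cases (a) and (b) of that theorem under the extra hypothesis that $\Gamma$ preserves no proper algebraic subset, and then upgrading the remaining cases (c) and (d) to the stronger statements claimed here. First I would observe that the hypothesis ``$\Gamma$ preserves no proper algebraic subset'' immediately rules out case (b) of Theorem~\ref{thm:main}, since that case produces a $\Gamma$-invariant curve $D\subset X$, and a curve is a proper algebraic subset. It also forces the invariant algebraic set $Z$ appearing in cases (c) and (d) to be empty: by the third remark following Theorem~\ref{thm:main}, $Z$ is itself a $\Gamma$-invariant algebraic subset (made of components of $D_\Gamma$ together with a residual finite invariant set), so the no-invariant-subset hypothesis gives $Z=\varnothing$.

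Next I would eliminate case (a). If $\mu$ were the average on a finite orbit $\mathcal{O}$ of $\Gamma$, then $\mathcal{O}$ would be a nonempty finite $\Gamma$-invariant set, hence a proper algebraic subset of $X$ preserved by $\Gamma$ (here one uses that $X$ is a compact Kähler, in fact projective by \cite{stiffness}, surface, so points are algebraic and $X$ itself is not finite as $\Gamma$ non-elementary forces $\dim X = 2$). This contradicts the hypothesis, so case (a) does not occur. Therefore $\mu$ falls into case (c) or case (d) of Theorem~\ref{thm:main}, with $Z=\varnothing$ in both.

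It remains to translate these two cases into the cleaner conclusions (a) and (b) of the Corollary. In case (d) with $Z=\varnothing$, properties (1)--(4) of Theorem~\ref{thm:main}(d) say exactly that $\mu$ has support equal to $X$, is absolutely continuous with respect to Lebesgue measure on $X$, and has density real analytic on $X\setminus Z = X$; this is precisely conclusion (b). In case (c) with $Z=\varnothing$, the totally real analytic surface $\Sigma$ lives in $X\setminus Z = X$ and satisfies $\mu(\overline\Sigma)=1$; I would argue that $\overline\Sigma=\Sigma$ is closed, smooth and compact. Indeed, with $Z$ empty the singular locus of $\Sigma$, which by (c)(3) is a locally finite subset of $X\setminus Z=X$, is a $\Gamma$-invariant closed set; being locally finite in the compact $X$ it is finite, hence a proper algebraic subset invariant under $\Gamma$ unless it is empty — so it is empty and $\Sigma$ is smooth. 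Then $\Sigma$ is a closed totally real real-analytic submanifold of the compact surface $X$, hence compact, and $\mu$ is absolutely continuous with real-analytic density on it by (c)(4)--(5); this is conclusion (a).

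The only genuinely delicate point is the claim that the singular locus of $\Sigma$ is empty and that $\overline{\Sigma}$ coincides with the smooth compact surface $\Sigma$: one must make sure that ``locally finite singular locus'' together with $\Gamma$-invariance and compactness of $X$ really forces emptiness, which rests on the singular locus being $\Gamma$-invariant (it is intrinsic to $\Sigma$, and $\Sigma$ is $\Gamma$-invariant) and on the ambient space being compact with no finite invariant set. Everything else is a direct unwinding of the statement of Theorem~\ref{thm:main}, so I do not expect further obstacles.
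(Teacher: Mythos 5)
Your proposal is correct and follows essentially the same route as the paper: rule out cases (a) and (b) of Theorem~\ref{thm:main} using the no-invariant-algebraic-subset hypothesis, note that the exceptional set $Z=\STang_\Gamma$ is then empty, and in case (c) conclude smoothness of $\Sigma$ because its singular locus would otherwise be a finite $\Gamma$-invariant (hence algebraic) subset. The extra care you take about $\overline\Sigma=\Sigma$ being compact is a harmless elaboration of what the paper leaves implicit.
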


In the totally real case~(c) of Theorem~\ref{thm:main}, it is natural to inquire about the structure of  $\overline{\Sigma}$ on the whole surface $X$, including $Z$. Under a mild geometric condition 
(AC) we are indeed able to show 
that $\overline\Sigma$ admits a semi-analytic extension across $Z$ 
(see Theorem~\ref{thm:semi-analytic} in Section~\ref{sec:semi-analytic}); this means that $\overline\Sigma$ is defined locally by finitely many analytic inequalities (see \S\ref{par:semi-analytic-vocabulary}). 
%\begin{mthmprime}\label{thm:semi-analytic}
%If in  Theorem~\ref{thm:main} we further assume the   
%non-degeneracy condition (AC), then in the totally real case~$(c)$ we can add the conclusion: (5) $\overline{\Sigma}$ is a semi-analytic subset of $X$. 
%\end{mthmprime}
Since it requires some additional concepts, the 
 condition (AC) will be described 
  only in \S~\ref{par:global_Rabpq}:   
it concerns  the action of $\Gamma$ on the singular fibers of 
  elliptic fibrations invariant by   parabolic elements of $\Gamma$. 
This condition is   satisfied in many interesting cases: it holds for instance when $D_\Gamma$ is empty,
 and can be checked on concrete examples.   
 Note that if $\overline\Sigma$ is semi-analytic, the singular locus of $\Sigma$ is finite: indeed isolated singularities cannot accumulate in this case. %\romain{je rétablis cette phrase qui me semble intéressante en fait}

In Sections~\ref{sec:not_real_parts} and~\ref{sec:boundary}, we 
 provide examples  showing  that the geometric conclusions of 
  Theorems~\ref{thm:main} and~\ref{thm:semi-analytic} are, in a sense, optimal. 
More precisely it is shown that  in case~(c), 
\begin{enumerate}[--]
\item it may happen that $\Sigma$ is \textbf{not} contained in the real part of $X$, for any real structure 
on $X$; in other words,  there is no anti-holomorphic involution $\sigma\colon X\to X$ for which $\Sigma$ would be contained in the  the fixed point set ${\mathrm{Fix}}(\sigma)$ (see Corollary~\ref{cor:not_real_parts});
\item  $\Sigma$ can have a non-empty boundary (see \S\ref{par:examples_deformation}). 
\end{enumerate}

%%%%%%
 \subsection{Finitely many  invariant measures} 
 %%%%%%

Theorem \ref{thm:main} is a key   ingredient  of the finiteness of the number of  
periodic orbits obtained in~\cite{finite_orbits}. %\romain{dans l'esprit de \cite[Thm A]{finite_orbits} il serait intéressant de montrer que génériquement pour Wehler il n'y a pas de mesure totalement réelle}
It also leads to the following alternative which is reminiscent of, but independent from, 
\cite[Thm B]{finite_orbits}.

\begin{mthm}\label{thm:finiteness}
Let $X$ be a compact K\"ahler surface. If $\Gamma$ is a non-elementary subgroup of $\Aut(X)$  
containing a parabolic element, then there are  only finitely many ergodic $\Gamma$-invariant 
probability measures  giving no mass to proper Zariski closed subsets, unless $(X, \Gamma)$ is a Kummer group.
\end{mthm}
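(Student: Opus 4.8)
The plan is to reduce the statement to a rigidity/compactness argument applied to the family of measures supplied by Theorem~A. First observe that any ergodic $\Gamma$-invariant probability measure $\mu$ giving no mass to proper Zariski closed subsets must fall into case (c) or case (d) of Theorem~\ref{thm:main}: cases (a) and (b) are supported on a finite orbit or on an invariant curve, both of which are proper Zariski closed. Moreover in case (c) the condition $\mu(Z)=0$ combined with $\mu$ giving no mass to Zariski closed sets forces $\overline\Sigma$ itself to be Zariski dense, so $\Sigma$ cannot be contained in an invariant curve; similarly in (d) we automatically have $\mathrm{supp}(\mu)=X$. So we must bound the number of such ``Zariski diffuse'' measures. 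I would first dispose of case (d): an absolutely continuous invariant measure on $X$ is, after normalization, harmonic for the random walk defined by any generating measure on $\Gamma$, and by the stiffness results of \cite{stiffness} (or by a direct argument using that the density is real analytic and $\Gamma$-invariant off $Z$) such a measure, if it exists, is essentially unique --- its density is determined up to scalar by the invariance equations, so there is at most one measure of type (d). If there were a one-parameter family, the real-analytic densities would have to interpolate, contradicting that an invariant real-analytic volume form with $\Gamma$ non-elementary is rigid; alternatively the existence of type (d) forces $(X,\Gamma)$ into the Kummer case, which is the excluded situation.

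The heart of the argument is therefore the totally real case (c). Here I would argue by contradiction: suppose there are infinitely many distinct ergodic invariant measures $\mu_i$ of type (c), supported on totally real surfaces $\Sigma_i$ with Zariski dense closure. By weak-$*$ compactness extract a limit $\mu_\infty$ of a subsequence; $\mu_\infty$ is again $\Gamma$-invariant. The key structural input is that the supports $\overline{\Sigma_i}$ all avoid a \emph{fixed} invariant algebraic set (by the third remark after Theorem~\ref{thm:main}, the set $Z$ in case (c) is built from components of the maximal invariant curve $D_\Gamma$ together with a residual finite invariant set, hence independent of $\mu$); so all the $\mu_i$ live on the common quasi-projective piece $X\setminus Z$ where the dynamics is ``expanding'' in the transverse direction in the Pesin sense. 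On $X\setminus Z$ one has, for a random walk on $\Gamma$, positive and negative Lyapunov exponents (non-elementarity plus the parabolic element yield a non-degenerate cocycle), and the stable/unstable conditional measures of any type (c) measure are rigid: unstable conditionals are Lebesgue-like along the one-dimensional unstable lamination. Two distinct such ergodic measures then have mutually singular unstable conditionals supported on disjoint real-analytic curves through generic points, which one can leverage --- via a Hopf-type / analytic continuation argument --- to show the $\Sigma_i$ are separated, i.e. the set of such measures is discrete in the weak-$*$ topology. Combined with compactness of the space of probability measures, discreteness gives finiteness.

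I expect the main obstacle to be making the ``discreteness'' step rigorous: weak-$*$ limits of type (c) measures need not be ergodic nor of type (c), so one cannot naively say the limit is another measure in the family. The resolution is to exploit the real-analytic rigidity in conclusion (5) of Theorem~\ref{thm:main}(c): the density of each $\mu_i$ with respect to a fixed real-analytic area form on the regular part of $\Sigma_i$ is real analytic, and the $\Sigma_i$ themselves are real-analytic surfaces invariant under the same group; an infinite family of such invariant real-analytic objects in a fixed quasi-projective surface, all with uniformly controlled complexity (finitely many components, bounded degree coming from the fact that they are graphs over the fixed invariant elliptic fibrations near their singular fibers), must degenerate, and the limiting analytic set would be an invariant algebraic curve not contained in $D_\Gamma$ --- contradicting maximality of $D_\Gamma$. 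Quantifying ``bounded complexity'' uniformly over the family is the delicate point, and it is precisely here that one uses the detailed description of $\Sigma$ near the invariant elliptic fibrations (the same analysis underlying condition (AC) and Theorem~\ref{thm:semi-analytic}). Once uniform complexity bounds are in place, a normal-families/Bishop compactness argument for analytic sets closes the contradiction, and the Kummer exception enters exactly when this rigidity genuinely fails, i.e. when the transverse dynamics is linearizable and one gets a positive-dimensional family of invariant tori.
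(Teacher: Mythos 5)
Your reduction to cases (c) and (d) is right, and your uniqueness argument for case (d) is essentially the paper's: two absolutely continuous ergodic measures have real-analytic densities off a proper analytic set, the ratio is an invariant continuous function, and ergodicity makes it constant. (The aside that existence of a type (d) measure "forces the Kummer case" is false -- Wehler surfaces carry the invariant holomorphic volume and are not Kummer -- but you only offered it as an alternative.)

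The genuine gap is in case (c), where your proposal contains neither a working finiteness mechanism nor a mechanism producing the Kummer alternative. The paper's dichotomy is concrete: either $\Tang^{\mathrm{tt}}(\pi_g,\pi_h)=\emptyset$ for \emph{every} pair of Halphen twists with distinct fibrations, in which case the two foliations are everywhere transverse off $\mathrm D_\Gamma$, their holonomies force isotriviality, and one builds explicit maps $E'\times E\to X_0\to E'/H'\times E/H$ exhibiting $(X,\Gamma)$ as a Kummer group; or some pair has $\Tang^{\mathrm{tt}}\neq\emptyset$, and then a curvature blow-up argument (long circles of slope $(p,q)$ passing near a tangency point have images under $\pi_h$ of curvature $\gtrsim \e^{-k}$, contradicting the uniform size bound valid for evenly charged arcs) yields a bound on $\max(k,\norm{(p,q)})$ that is \emph{uniform over all invariant measures}. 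This produces a single analytic curve $\sigma_g$, independent of $\mu$, with finitely many irreducible components, carrying every $\mu_g$; finiteness then follows because the regular part of $\pi_g\inv(\sigma_g)\cap\pi_h\inv(\sigma_h)$ has finitely many connected components meeting a fixed compact set, each supporting at most one ergodic measure by analyticity of the density. Your route does not reach this: the weak-$*$ compactness plus "discreteness" step is unsubstantiated (limits need not be ergodic or of type (c), and the Pesin-theoretic claims -- nonzero exponents and Lebesgue-like unstable conditionals for arbitrary \emph{invariant}, as opposed to stationary, measures -- are not justified here); and your fallback claim that an infinite family of invariant totally real surfaces of "uniformly controlled complexity" must degenerate onto an invariant algebraic curve is refuted by the Kummer examples themselves, where infinitely many invariant real $2$-tori exist and can accumulate everywhere in $X$ rather than on a curve. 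The "uniform complexity bound" you defer as a technical point is precisely the heart of the theorem, and it is exactly where the non-vanishing of the tangency locus (i.e.\ the non-Kummer hypothesis) must be used; your only gesture toward the Kummer exception ("linearizable transverse dynamics") is not an argument that could be run.
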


 We refer to~\cite{finite_orbits} for the definition of Kummer groups; roughly speaking  it means that the action of $\Gamma$ on $X$ comes from the action of a group of automorphisms on some torus $\C^2/\Lambda$. 
This result will be established  in Section~\ref{sec:finiteness}. We also show in \S~\ref{subs:infinitely_many_measures} that a Kummer group can indeed
 admit infinitely many ergodic    invariant measures of totally real type. 
Together with \cite[Thm. C]{finite_orbits} (finitely many finite orbits)  we thus obtain:

\begin{mcor}
 Let $X$ be a compact Kähler surface which is not a torus. Let $\Gamma$
  be a subgroup of $\Aut(X)$ which contains a parabolic element and does not preserve any algebraic curve.  Then there are at most finitely many  $\Gamma$-invariant,  ergodic probability measures on $X$. 
\end{mcor}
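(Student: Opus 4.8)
The plan is to reduce the statement to Theorem~\ref{thm:main}, Theorem~\ref{thm:finiteness}, and \cite[Thm.~C]{finite_orbits}, after first checking that $\Gamma$ is automatically non-elementary. For the latter, suppose $\Gamma$ were elementary. Since it contains a parabolic element it cannot contain two parabolic elements with distinct invariant fibrations, so all parabolic elements of $\Gamma$ share a single invariant genus~$1$ fibration $\pi\colon X\to B$; conjugating a parabolic element by an arbitrary $h\in\Gamma$ produces a parabolic element, which must again have fibration $\pi$, so $h$ preserves $\pi$ and hence $\Gamma$ preserves $\pi$. If $\pi$ has a singular fiber, then its (finite, nonempty, intrinsic) set of critical values is $\Gamma$-invariant, and the union of the corresponding fibers is a $\Gamma$-invariant algebraic curve, contradicting the hypothesis. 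If $\pi$ has no singular fiber, then $\chi_{\mathrm{top}}(X)=0$, so $X$ is either a torus (excluded) or a bielliptic surface or a minimal ruled surface over an elliptic curve; in the last two cases $\Aut(X)$ acts through a finite group on $H^2(X;\Z)$ and thus contains no parabolic element, again a contradiction. Hence $\Gamma$ is non-elementary, and in particular $X$ is projective, so every curve on $X$ is algebraic.

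Next I apply Theorem~\ref{thm:main} to an arbitrary $\Gamma$-invariant ergodic probability measure $\mu$ and treat the four cases. Case~(b) cannot occur: a nonatomic measure supported on a $\Gamma$-invariant curve $D$ would give a $\Gamma$-invariant algebraic curve, which is forbidden. In case~(a), $\mu$ is the uniform measure on a finite orbit, and there are only finitely many finite orbits by \cite[Thm.~C]{finite_orbits}. In cases~(c) and~(d), I claim $\mu$ gives no mass to any proper Zariski closed subset $W$: indeed $\mu(Z)=0$, and on $X\setminus Z$ the measure is absolutely continuous with respect to Lebesgue measure on $\Sigma$ (resp.\ on $X$) with real-analytic density; in case~(d), $W$ is a proper complex subvariety of $X$, hence Lebesgue-negligible; in case~(c), $W\cap\Sigma$ is a proper real-analytic subset of $\Sigma$, properness holding because a totally real surface is locally contained in no complex curve, and such a subset is Lebesgue-negligible in $\Sigma$. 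So in both cases $\mu(W)=0$, and Theorem~\ref{thm:finiteness} applies: there are finitely many such $\mu$, unless $(X,\Gamma)$ is a Kummer group. But a Kummer group whose underlying surface is not a torus must preserve the exceptional curves of the resolution of the quotient of the underlying torus (equivalently, the branch curve, or the $(-2)$-curves), which is a $\Gamma$-invariant algebraic curve; this is excluded by hypothesis. Hence $(X,\Gamma)$ is not a Kummer group, and the number of measures of type~(c) or~(d) is finite.

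Since every $\Gamma$-invariant ergodic probability measure falls into case~(a), (c), or~(d), and each of these contributes only finitely many, the total count is finite. The two steps requiring genuine care are the exclusion of the elementary case — which rests on the classification of compact Kähler surfaces carrying a parabolic automorphism, or alternatively on the explicit description of elementary subgroups containing parabolic elements from~\cite{stiffness, finite_orbits} — and the exclusion of the Kummer case, which requires unwinding the definition of a Kummer group from~\cite{finite_orbits} precisely enough to exhibit the invariant curve; the verification that cases~(c) and~(d) charge no proper Zariski closed subset is then a routine dimension count.
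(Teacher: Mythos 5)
Your overall skeleton — classify $\mu$ by Theorem~\ref{thm:main}, kill case~(b) by the no-invariant-curve hypothesis, quote \cite[Thm.~C]{finite_orbits} for case~(a), check that measures of type~(c)/(d) charge no proper Zariski closed set, and apply Theorem~\ref{thm:finiteness} after ruling out the Kummer alternative — is exactly the deduction the paper intends (it states the corollary as an immediate consequence of these three results). But your reduction to the non-elementary setting has a genuine gap. When the invariant fibration $\pi$ has no critical values, you claim that $\chi_{\mathrm{top}}(X)=0$ forces $X$ to be a torus, a bielliptic surface, or a minimal ruled surface over an elliptic curve. This classification is incomplete: it omits the properly elliptic surfaces which are elliptic fiber bundles over a base of genus $\geq 2$, e.g.\ $X=C\times E$ with $g(C)\geq 2$. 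On such a surface parabolic automorphisms do exist — for a nonconstant morphism $\varphi\colon C\to E$, the shear $(c,e)\mapsto (c,e+\varphi(c))$ has $(f^n)^*$ growing quadratically, since the class of the graph of $n\varphi$ intersects a fiber of the second projection in $n^2\deg\varphi$ points — so your ``no parabolic element, contradiction'' step fails there. The case can be repaired, but by a different mechanism: in this situation the fibration is the Iitaka fibration (or at any rate its base has genus $\geq 2$), so $\Gamma$ acts on the base through the finite group $\Aut(C)$; every orbit of fibers is therefore finite, and the union of the fibers over any such orbit is a $\Gamma$-invariant algebraic curve, contradicting the hypothesis. Alternatively, one can simply invoke the structure of elementary groups containing parabolic elements from \cite{stiffness,finite_orbits}, as you suggest in your closing remark, but the argument you actually wrote does not cover this case.

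Two smaller points, easily fixed. In the Kummer exclusion you should also dispose of the possibility that the birational morphism $X\to A/G$ is an isomorphism, i.e.\ that $G$ acts freely: then $X$ is a torus or a bielliptic surface, the first excluded by hypothesis and the second carrying no parabolic automorphism (its $h^{1,1}$ equals $2$), so only the case with a nonempty exceptional locus — which is indeed a $\Gamma$-invariant curve — remains; also ``branch curve'' is not the right object here, since the quotient map $A\to A/\langle -\mathrm{id}\rangle$ is unramified in codimension one. The verification that measures of type~(c)/(d) give no mass to proper Zariski closed subsets is correct, and could be shortened by noting that Lemma~\ref{lem:dC} together with the mutual exclusivity of the four cases already gives $d_\C(\mu)=2$ in cases~(c) and~(d).
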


% The following corollary provides an  interesting connection with  some recent  (in)finite\-ness results for 
%real structures on  complex surfaces (see e.g.~\cite[Appendix D]{Degtyarev-Itenberg-Kharlamov:LNM} and~\cite{Dinh-Oguiso:duke}). 
%
%\begin{mcor}\label{cor:finiteness_geometric}
%Let $X$ be a K3 or Enriques surface, and     $\Gamma$
%  be a subgroup of $\Aut(X)$, such that $(X, \Gamma)$ is not a Kummer group. 
%  Then there are only finitely many $\Gamma$-invariant closed totally real surfaces. 
%\end{mcor}
%
%The proof is given in  Section~\ref{sec:finiteness}.
%
%%%%%%
 \subsection{Orbit closures} 
 %%%%%%
 
Although the main focus is on invariant measures, our methods can also
be used to describe the topological structure of orbits. In conjunction with the results of~\cite{stiffness}, 
this   leads  to the following  neat statement, which relies crucially on the finiteness Theorem~\ref{thm:finiteness}.

\begin{mthm}\label{mthm:topological}
Let $X$ be a K3 or Enriques surface. Let $\Gamma\leq \Aut(X)$ be a non-elementary subgroup
 which   contains    parabolic elements 
 and does not preserve any non-empty proper algebraic subset.
  Then there exists a real analytic and  totally real  surface $\Sigma \subset X$ such that:
  \begin{enumerate}[{\em (a)}]
 \item  if $x\notin \Sigma$ then 
 $\overline{\Gamma(x)} = X$, and 
 \item  if $x\in \Sigma$ then 
 $\overline{\Gamma(x)}$ is a union of connected components of $\Sigma$. 
 \end{enumerate}
\end{mthm}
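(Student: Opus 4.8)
The plan is to deduce Theorem~\ref{mthm:topological} by combining the measure classification of Theorem~\ref{thm:main}, the finiteness Theorem~\ref{thm:finiteness}, and the structural results about orbit closures from \cite{stiffness}. Since $X$ is a K3 or Enriques surface and $\Gamma$ preserves no proper algebraic subset, cases~(a) and~(b) of Theorem~\ref{thm:main} are excluded, so every ergodic $\Gamma$-invariant probability measure is either of totally real type~(c) or of type~(d) with support equal to $X$; moreover such an $X$ is never a torus, so by Theorem~\ref{thm:finiteness} there are only finitely many of them. First I would collect the finitely many totally real invariant measures $\mu_1,\dots,\mu_N$ of type~(c) and set $\Sigma$ to be (the regular locus of) the union $\bigcup_i \supp(\mu_i)$; by Corollary~\ref{cor:no_invariant_subset} each $\supp(\mu_i)$ is a compact, smooth, totally real, real analytic surface (here we use that $Z$ must be empty, since $Z$ is a $\Gamma$-invariant algebraic subset and $\Gamma$ preserves none), and $\Sigma$ is again such a surface with finitely many connected components.

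Next I would show that if $x\notin\Sigma$ then $\overline{\Gamma(x)}=X$. The key input is the stationary-measure / unique-ergodicity-type machinery from \cite{stiffness}: picking a suitable probability measure $\nu$ on $\Gamma$ whose support generates $\Gamma$, any weak-$*$ limit of the Cesàro averages $\frac1n\sum_{k<n}\frac1{\#(\text{sphere})}\sum (f_k)_*\delta_x$ is a $\nu$-stationary measure, and by the results of \cite{stiffness} every $\nu$-stationary measure on such an $X$ is in fact $\Gamma$-invariant (stiffness). Thus the cluster values of the empirical measures of the orbit of $x$ are convex combinations of $\mu_1,\dots,\mu_N$ and the volume-class measures of type~(d). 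I would then argue that if $\overline{\Gamma(x)}\neq X$, the orbit closure is a proper closed $\Gamma$-invariant subset; using again \cite{stiffness} (which classifies the possible closed invariant subsets, or equivalently shows that a proper orbit closure must be contained in the union of the invariant curve $D_\Gamma$ and the supports of the totally real measures), together with $D_\Gamma=\varnothing$, we conclude $\overline{\Gamma(x)}\subseteq\Sigma$, hence $x\in\Sigma$ — contradiction. So for $x\notin\Sigma$, $\overline{\Gamma(x)}=X$.

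Finally, for $x\in\Sigma$: the orbit closure $\overline{\Gamma(x)}$ is a closed $\Gamma$-invariant subset of the compact real analytic totally real surface $\Sigma$. Restricting the dynamics to (a component of) $\Sigma$, I would invoke the minimality/unique ergodicity statements of \cite{stiffness} for the action on the totally real surface — each connected component carries a unique invariant probability measure $\mu_i$ of full support, and the action on it is minimal — to conclude that $\overline{\Gamma(x)}$ is exactly the union of those connected components of $\Sigma$ that meet $\Gamma(x)$. This gives conclusion~(b). The main obstacle, and the step that needs the most care, is the first half of~(a): ruling out intermediate-dimensional orbit closures for $x\notin\Sigma$, i.e.\ showing that a proper orbit closure is forced into $\Sigma$. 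This is where one genuinely needs the finiteness Theorem~\ref{thm:finiteness} (to have only finitely many totally real pieces, so that $\Sigma$ is a genuine analytic set rather than something dense) together with the orbit-closure description from \cite{stiffness}; one must also make sure the exceptional set where the argument could fail is algebraic and hence empty by the no-invariant-subset hypothesis. A secondary subtlety is checking that stiffness applies in the Enriques case as well as the K3 case, which is handled by passing to the K3 double cover if necessary.
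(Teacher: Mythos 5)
There is a genuine gap, and it sits exactly where you suspect: the claim that a proper orbit closure must be contained in $\Sigma$. You attribute to \cite{stiffness} a ``classification of closed invariant subsets'' or an ``orbit-closure description'', but no such statement exists there: \cite{stiffness} is about stationary measures and stiffness, and stiffness only controls weak-$*$ limits of empirical measures along an orbit. What it gives you is that these limits are convex combinations of the finitely many invariant measures, hence that a proper orbit closure $\overline{\Gamma(x)}$ must \emph{contain} the support of some totally real measure $\mu_i$; it does not give the reverse inclusion $\overline{\Gamma(x)}\subseteq\Sigma$. A priori the closure could be a topologically large invariant set (an ``exceptional minimal set''-type object, or $\Sigma_i$ together with an immersed surface accumulating on it) carrying no invariant measure beyond the $\mu_i$'s, and measure-theoretic arguments cannot exclude this. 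This is precisely the difficulty the paper highlights (it was overlooked in \cite{cantat_groupes}), and it is resolved by a purely topological argument in Section~\ref{sec:closed_invariant}: Proposition~\ref{pro:accumulation} shows that for any infinite closed invariant $F\neq X$, the accumulation set of $F$ is (outside $\STang_\Gamma$, which is empty here) a totally real analytic surface and $F\setminus\mathrm{Acc}(F)$ is locally finite. Its proof needs two ingredients absent from your proposal: the Chabauty-type finiteness statement for non-$\e$-dense subgroups of $\R^2/\Z^2$ (Corollary~\ref{cor:closed_epsilon_sparse}), which confines $F$ locally to finitely many ``squares'' cut out by two transverse Halphen fibrations, and the circle/orbit-closure analysis of Section~\ref{sec:halphen} to upgrade accumulation to the full squares. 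Only after this dichotomy (proper orbit closure $\Rightarrow$ totally real surface) does Theorem~\ref{thm:finiteness} enter, exactly as you intend, to show there are finitely many such surfaces (each carries $\vol_\Sigma$ by Remark~\ref{rmk:real_area_form}) and hence a maximal one.

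A secondary inaccuracy: for conclusion~(b) you invoke minimality and unique ergodicity of the $\Gamma$-action on each component of $\Sigma$, again attributed to \cite{stiffness}; this is neither proved there nor in the present paper, and it is stronger than what Theorem~\ref{mthm:topological} asserts. The correct (and easier) argument is the one the paper uses: for $x\in\Sigma$, the orbit closure is a proper closed invariant set, hence by the dichotomy of Theorem~\ref{thm:orbit_closures} it is a smooth totally real surface contained in the surface $\Sigma$; being a closed $2$-dimensional submanifold of $\Sigma$ it is open and closed, hence a union of connected components. So the overall architecture of your proposal (finiteness of measures $+$ a structure theorem for orbit closures) matches the paper, but the structure theorem is the new content of Section~\ref{sec:closed_invariant}, and it cannot be replaced by stiffness.
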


 This applies for instance to very general Wehler (2,2,2)-surfaces (see Example~\ref{eg:wehler}). 
The assumption that $\Gamma$ does not admit any invariant  algebraic (in particular finite) 
subset is essential to our strategy;  we will  come back 
to this issue  in a forthcoming work~\cite{hyperbolic}.

%%%%%%%
% \subsection{Notes} 
% %%%%%%
%A  weak form of Theorem~\ref{thm:main} is proven in \cite{cantat_groupes} 
%in the special case of K3 surfaces\footnote{Also, one of the statements in~\cite{cantat_groupes} is slightly erroneous. In case (c) of Theorem~\ref{thm:main}, one first shows that $\mu$ gives mass to some {\emph{germs of real analytic surfaces}}, and one has to glue these germs together to construct the surface $\Sigma$; to do so there is a monodromy problem which is overlooked in \cite{cantat_groupes}. To say it 
%differently,    the results of \cite{cantat_groupes} only imply that  
% $\Sigma$ is the analytic continuation of such a germ, which could a priori  be dense in $X$. Overcoming this problem occupies a significant part of the present paper. }. The results of \cite{cantat_groupes} 
%do not describe the support of the measure or the smoothness of its density, 
%and are not sufficient to derive the global structure of $\Sigma$ given by Corollary~\ref{cor:no_invariant_subset}, nor the finiteness result of Theorem~\ref{thm:finiteness}.

%%%%%%
 \subsection{Comments on the proofs} 
 %%%%%%

A  weak form of Theorem~\ref{thm:main} was proven in~\cite{cantat_groupes} 
in the special case of K3 surfaces. The results of \cite{cantat_groupes} 
do not describe the support of the measure nor the smoothness of its density, 
and are not sufficient to derive the global structure of $\Sigma$ given by Corollary~\ref{cor:no_invariant_subset}, nor the finiteness result of Theorem~\ref{thm:finiteness}.

The main idea to prove Theorem~\ref{thm:main} is quite natural: if $\mu$ is an ergodic 
 invariant measure which gives no mass to  algebraic subsets, we can find a $\mu$-generic point $x$ where 
 two parabolic elements act as translations in two different directions and it follows that locally $\mu$ is either absolutely continuous (case (d)) or supported  by a germ of totally real analytic surface 
 $\Sigma$ at $x$ (case (c)). 
We can then propagate this picture using the dynamics of the group, or the analytic continuation of the local germ $\Sigma$. 
Then it could be that in case (c), the propagation of  $\Sigma$ 
is an  immersed surface which may be dense in some 
exceptional minimal set, or even dense in $X$; this difficulty was overlooked in~\cite{cantat_groupes}. 
Overcoming this issue occupies a significant part of the paper (see Section~\ref{sec:Main_Proof} for details, 
and Section~\ref{sec:closed_invariant} for a related argument in the topological category).

  Theorems~\ref{thm:semi-analytic} on the semi-analyticity of $\overline\Sigma$ and Theorem~\ref{thm:finiteness} are largely intertwined. Both rely on a careful analysis
 of the dynamics of parabolic automorphisms  near the singular fibers of their invariant  fibrations; this requires to dive into Kodaira's description of genus $1$ fibrations. 
  
%%%%%%
\subsection{Structure of the paper} 
%%%%%%

  In Section~\ref{sec:examples} we start by briefly describing 
 a few basic examples which are useful to be kept in mind;   more advanced examples are
  given in  Sections~\ref{sec:not_real_parts} and~\ref{sec:boundary}. In Section~\ref{sec:halphen} we collect 
some  preliminary results on genus~$1$ fibrations, their singular fibers, and the dynamics of ``Halphen twists'' 
preserving such fibrations. Since such automorphisms 
appear in many different instances, we hope that will   prove useful beyond this paper (see also Duistermaat's monograph~\cite{duistermaat} for a thorough treatment with a   different focus). 
The core of the paper extends from Sections~\ref{par:Main Proof1} to~\ref{sec:finiteness}.  A basic dichotomy  
  is whether $X$ is birationally equivalent to a torus, or not. 
 The torus  case relies on  elementary tools from homogeneous dynamics, and the details are given in  Appendix \ref{app:abelian}.   The proof of Theorem~\ref{thm:main} occupies Sections~\ref{par:Main Proof1} and~\ref{sec:Main_Proof}.  
% Theorems~\ref{thm:semi-analytic} on the semi-analyticity of $\overline\Sigma$ and Theorem~\ref{thm:finiteness} are largely intertwined, and  rest on a careful analysis
% of the action of the parabolic elements of $\Gamma$
%  near the singular fibers of the associated elliptic fibrations. 
%  The details are   given
  Theorems~\ref{thm:semi-analytic}   and Theorem~\ref{thm:finiteness} are established 
  in Sections~\ref{sec:semi-analytic} and~\ref{sec:finiteness}. 
  In Section~\ref{sec:closed_invariant} we study the topological 
  classification  of orbits and prove in particular Theorem~\ref{thm:orbit_closures}, which contains  Theorem~\ref{mthm:topological}. 

%%%%%%
\subsection{Acknowledgment} We are grateful to Yves de Cornulier for useful discussions which led to Theorem~\ref{thm:chabauty}, after we had obtained Corollary~\ref{cor:closed_epsilon_sparse}.

%%%%%%%%%%%%%%%%%%%%%%%%%%%%%%%%%%%%%%%%%%%%%%
%%%%%%%%%%%%%%%%%%%%%%%%%%%%%%%%%%%%%%%%%%%%%%
\section{Two examples}\label{sec:examples}
%%%%%%%%%%%%%%%%%%%%%%%%%%%%%%%%%%%%%%%%%%%%%%
%%%%%%%%%%%%%%%%%%%%%%%%%%%%%%%%%%%%%%%%%%%%%%

%Before starting the proof of our results, let us describe a few examples which are useful to keep in mind. 
%\romain{répétition}\romain{on pourrait déplacer cette section après la section 3, et développer un peu l'analyse des fibres singulières dans le cas Coble  (et raccourcir Wehler) pour avoir une discussion  un peu nouvelle par rapport à  \cite{stiffness}} 

\subsection{K3 surfaces}\label{par:examples_K3surfaces} Let $X$ be any K3 surface. 
There is a holomorphic $2$-form $\Omega_X$ on $X$ that does not vanish and 
satisfies $\int_X\Omega_X\wedge {\overline{\Omega_X}}=1$; this form is unique up to multiplication by 
a complex number of modulus $1$. Thus, the volume  form $\vol_X:=\Omega_X\wedge  {\overline{\Omega_X}}$ is 
$\Aut(X)$-invariant. If $X$ comes with a real structure for which $X(\R)$ is non-empty, then $X(\R)$ is orientable 
and some multiple of $\Omega$ restricts to a positive area form on $X(\R)$ (see~\cite[\S VIII.4]{Silhol:LNM} and \S 1 of  \cite{Degtyarev-Kharlamov:1997}).
This area form is multiplied by 
$\pm 1$ by elements of $\Aut(X_\R)$; in particular, the measure induced by this form is invariant (see also Remark~\ref{rmk:real_area_form} below). We refer to~\cite{Degtyarev-Itenberg-Kharlamov:LNM, Degtyarev-Kharlamov:2000} for the topology of $X(\R)$: it can be a sphere,  the union of a sphere and a surface
of genus $2$,   a torus, etc. Here are two explicit examples. 
%\serge{J'ai raccourci l'exemple suivant, enlevé des formules explicites, et ajouté un blabla ci-dessus qui parle des surfaces K3 de manière générale}

\begin{eg}\label{eg:wehler}(See \cite{stiffness, Cantat-Oguiso}).-- Take three copies of 
$\P^1$, with respective coordinates $z_i=[x_i:y_i]$, $i=1,2,3$. Let $X\subset \P^1\times \P^1\times \P^1$ be a Wehler surface, {i.e.} a smooth surface of degree $(2,2,2)$; 
assume that {\em{$X$ is very general in 
the family of such surfaces}}. In particular, $X$ is smooth and it is a K3 surface.
Fix an index $k\in \{1,2,3\}$, let $i<j$ be the two indices such that $\{i,j,k\}=\{1,2,3\}$, and let  
$\pi_{ij}\colon X\to \P^1\times \P^1$ be the projection that forgets the $k$-th coordinate; 
this projection is a $2$-to-$1$ cover, and we denote by $\sigma_k$ the involution 
that permutes the points in the fibers of $\pi_{ij}$. 
Then,  $\Aut(X)$ is generated by the three involutions $\sigma_k$ and 
is non-elementary (see~\cite{Cantat:Acta, stiffness, Wang:1995}); the composition 
$\sigma_i\circ \sigma_j$ is a parabolic automorphism  preserving the genus $1$ fibration $\pi_k(z_1,z_2,z_3)=z_k$; and the composition $\sigma_1\circ \sigma_2\circ \sigma_3$ is a loxodromic automorphism  --with topological entropy $\log(9+4\sqrt{5})>0$. It is shown in 
\cite[Thm A]{finite_orbits} that for a very general $X$, there is no $\Aut(X)$-invariant 
proper algebraic subset.  If
$X$ is defined by a polynomial equation with real coefficients, then $\Aut(X)$ preserves the real structure 
$X_\R$ because the three involutions do. In particular, the real part $X(\R)$ is $\Aut(X)$-invariant. 

For future reference let us note that the canonical invariant 2-form admits  a simple explicit expression: 
consider affine coordinates $x_i\in \C$ corresponding to  each of the three $\P^1$ factors (with $z_i=[x_i:1]$); then, $X$ is defined by a polynomial equation $P(x_1,x_2,x_3)=0$, and at every point in $X$, one of the partial derivatives of $P$ does not vanish because $X$ is smooth; then,
  up to some constant factor,  
 \begin{equation}
 \Omega_X=\frac{dx_1\wedge dx_2}{\partial_{x_3} P}=\frac{dx_2\wedge dx_3}{\partial_{x_1} P}=\frac{dx_3\wedge dx_1}{\partial_{x_2} P}.
 \end{equation}
\end{eg}

%To get explicit formulas for $\Omega_X$\romain{c'est quoi le point de ce passage?}, consider the affine coordinates $x_i\in \C$ on each of the three $\P^1$s (with $z_i=[x_i:1]$); then, $X$ is defined by a polynomial equation $P(x_1,x_2,x_3)=0$ and, up to some constant factor,  
%\begin{equation}
%\Omega_X=\frac{dx_1\wedge dx_2}{\partial_{x_3} P}=\frac{dx_2\wedge dx_3}{\partial_{x_1} P}=\frac{dx_3\wedge dx_1}{\partial_{x_2} P};
%\end{equation}
%each point of $X$ has a neighbourhood on which one of these derivatives does not vanish, 
%because $X$ is smooth, and the formula defines a global holomorphic form on 
%$X\subset (\P^1)^3$ that does not vanish because $\deg(P)=(2,2,2)$.
%When $P$ has real coefficients, then $\Omega_X$ is defined over $\R$ and its real part induces an area form on $X(\R)$;
%in particular, $X(\R)$ is orientable (as any K3 surface, see~\cite[\S VIII.4]{Silhol:LNM} and \S 1 of  \cite{Degtyarev-Kharlamov:1997}). This area form is multiplied by $\pm 1$ by elements of $\Aut(X_\R)$.  We refer to~\cite{Degtyarev-Itenberg-Kharlamov:LNM, Degtyarev-Kharlamov:2000} for the topology of $X(\R)$: it can be a sphere,  the union of a sphere and a surfaceof genus $2$,   a torus, etc. 

\begin{eg}(See \cite[\S 3.2]{stiffness}).--  
Fix five lengths $(\ell_0,\ell_2,\ldots, \ell_4)\in (\R_+^*)^5$ such that there is at least one pentagon ${\mathsf{P}}=(a_0, \ldots, a_4)$
in $\R^2$, the sides of which satisfy $(a_i,a_{i+1})=\ell_i$ (for $i$ taken modulo $5$); here, by a {\bf{pentagon}}, we just mean an ordered 
set of five points $a_i$ in $\R^2$.  Assume that the family 
of such pentagons does not contain any flat pentagon (for instance this imposes $\ell_0+\ell_1\neq \ell_2+\ell_3+\ell_4$). Consider the set of all such pentagons modulo affine positive isometries of $\R^2$; thus, each pentagon can now be put in a 
normal position, with $a_0=(0,0)$ and $a_1=(\ell_0,0)$. This set can be identified with a real algebraic surface $X(\R)$
that depends on $(\ell_0, \ldots, \ell_4)$. 
There are five natural involutions acting algebraically on this surface: 
given one of the vertices $a_i$ of a pentagon ${\mathsf{P}}\in X(\R)$, consider the two circles with centers $a_{i-1}$ and $a_{i+1}$
and respective radii $\ell_{i-1}$ and $\ell_i$, where indices are taken modulo $5$; these circles intersect in two points $a_i$ and $a_i'$; thus we get an involution 
$\sigma_i$ of $X(\R)$, mapping ${\mathsf{P}}$ to the pentagon $\sigma_i({\mathsf{P}})$ with the same vertices except for $a_i$ that is 
replaced by~$a_i'$. 
Our hypotheses imply that $X(\R)$ is the real part of some real K3 surface $X_\R$ and $\sigma_i\in \Aut(X_\R)$.
Again, the composition $\sigma_i \circ \sigma_{i+1}$ is a parabolic automorphism of $X$ when the lengths are chosen generically. 
\end{eg}

Similar examples of large groups of automorphisms preserving a volume form on 
$X$ (resp. a smooth measure on $X(\R)$)  
can be constructed on some abelian surfaces and on most Enriques surfaces (see~\cite{stiffness} for instance).  

\begin{rem}\label{rmk:real_area_form} 
If $\Sigma$ is a totally real surface (of class $C^1$, say) in an abelian or 
 K3 surface $X$, which is invariant under a group 
$\Gamma\subset \Aut(X)$, then the canonical 2-form $\Omega_X$ induces a $\Gamma$-invariant 
measure on $\Sigma$. Indeed for every $x\in X$, the tangent space $T_xX$ contains two $\C$ linearly independent vectors, thus $\Omega_X\rest{\Sigma}$ induces a complex valued 
2-form on $\Sigma$ that does not vanish. Thus locally we can define an area form 
$\Omega_\Sigma$ by $\Omega_{\Sigma, x}  = \xi(x) \Omega_{X, x}$ 
where $\xi$ is a function with values in the unit circle, and whenever $\Sigma$ is orientable or not, this 
 induces (by   taking the associated density $\abs{\Omega_\Sigma}$ in the non-orientable case) 
the desired measure on $\Sigma$. 
If $X$ is an Enriques surface, the universal cover $q\colon X'\to X$ is an \'etale  $2$-to-$1$ cover by a K3 surface. 
If $\Sigma\subset X$ is a totally real surface, then its pre-image $\Sigma'=q^{-1}(\Sigma)$ is also totally real, and 
the automorphism of the covering $q$ is an element of $\Aut(X';\Sigma')$. Thus, 
applying the above construction in $X'$ and pushing forward to $X$, we get an invariant measure 
on $\Sigma$ as well. 
Finally, if $X$ is a blow-up of an abelian, K3, or Enriques surface, 
the same construction applies, except that  the density of the associated volume form 
may vanish along the exceptional divisor of the blow-up.
\end{rem}

\subsection{Rational surfaces}\label{par:examples_rational} The
 family of Coble surfaces (see~\cite{Cantat-Dolgachev, stiffness}) and the examples described by Blanc in~\cite{Blanc:Michigan}  
 give rational surfaces $X$ such that $\Aut(X)$ is  non-elementary and contains parabolic elements (see~\cite{stiffness}, {\S}3.4, and Example~\ref{eg:condition_(AC)} below). They are constructing
 by blowing up a finite number of points in $\P^2$: the $10$ double points of some rational sextic $S$ for Coble surfaces; a finite number of points
 on a cubic curve $C$ for Blanc surfaces. The strict transform $S'$ and $C'$ of these curves are preserved by $\Aut(X)$. Denote by $K_X$ the canonical bundle of the surface. 
 
\begin{enumerate}
\item In the Coble case, there is a meromorphic section $\Omega$ of $K_X^{\otimes 2}$ that does not vanish and has a simple pole along $S'$.

\item In Blanc's example, there is
a meromorphic section $\Omega$ of $K_X$  that does not vanish and has a simple pole along $C'$.
\end{enumerate} 
In both cases, $\Omega$ induces a natural measure on $X$: for Blanc surfaces, it is given by the form $\Omega\wedge \overline{\Omega}$; for Coble surfaces, it is given by $\Omega^{1/2}\wedge \overline{\Omega}^{1/2}$.
In the Coble case, the total mass of this measure is finite, while in Blanc's example it is infinite. Moreover, 
if $\Gamma\subset \Aut(X)$ is any subgroup generated by parabolic elements, then $\Gamma$ preserves this measure.

\subsection{Subgroups} In each of the previous examples, one can replace $\Aut(X)$ by a {\bf{thin}} subgroup, {i.e.} 
a subgroup $\Gamma\subset \Aut(X)$ of infinite index  but with 
\begin{equation}
{\mathrm{Zar}}(\Aut(X)^*)/{\mathrm{Zar}}(\Gamma^*)<+\infty;
\end{equation}
here, $\mathrm{Zar}(\Gamma^*)$ is the Zariski closure in $\GL(H^2(X;\R))$. For instance, pick finitely many parabolic automorphisms $g_i$  in $\Aut(X)$, 
and consider the group $\Gamma$ generated by high powers $g_i^m$  of the $g_i$. 
If the $g_i$ do not preserve the same fibrations (see below), $\Gamma$ is non-elementary; and if one chooses the $g_i$ correctly, $\Gamma$ is thin. In case of Wehler surfaces, it suffices to 
take $g_1= \sigma_2\circ\sigma_3$ and $g_2=\sigma_3\circ \sigma_1$ and $m=3$. 

%%%%%%%%%%%%%%%%%%%%%%%%%%%%%%%%%%%%%%%%%%%%%%
%%%%%%%%%%%%%%%%%%%%%%%%%%%%%%%%%%%%%%%%%%%%%%
\section{The dynamics of Halphen twists}\label{sec:halphen}
%%%%%%%%%%%%%%%%%%%%%%%%%%%%%%%%%%%%%%%%%%%%%%
%%%%%%%%%%%%%%%%%%%%%%%%%%%%%%%%%%%%%%%%%%%%%%

%\romain{ce serait pas mal dans cette section de distinguer les paragraphes ou $f$ joue un rôle des autres notamment dans le \S \ref{par:singular-fibers} }\serge{Oui}

%\serge{A priori, on peut maintenant figer cette section 3 et ne plus y revenir ... (version du 23 Août 2021 :-))}

\subsection{Parabolic automorphisms and Halphen twists}\label{par:Halphen-intro} 
An automorphism $f$ of a compact K\"ahler surface $X$  is said 
\textbf{parabolic} if 
\begin{equation}\label{eq:parabolic_definition}
\int_X (f^n)^*\kappa\wedge \kappa \asymp n^2
\end{equation}
for some (hence  any) K\"ahler form $\kappa$ on $X$; equivalently, some power $(f^m)^*$ of $f^*\in \GL(H^2(X;\Z))$ is unipotent and the 
maximal size of its Jordan blocks is equal to $3$; equivalently, $f^*$ acts on $H^{1,1}(X;\R)$ 
as a parabolic isometry with respect to the intersection form given by the cup  product (see~\cite{Cantat:Milnor}). 
%\begin{rem}[See~\cite{Cantat:Milnor, Cantat:SLC}]
%The notion of Halphen twist extends to bimeromorphic transformations, and to birational 
%transformations of projective surfaces over an arbitrary field $\bfk$; in this latter case, we 
%assume   $\bfk$  algebraically closed for simplicity. Then, any Halphen twist $f\in \Bir(X)$ is conjugated, by some
%bimeromorphic (resp. birational) transformation $\varphi\colon Y\dasharrow X$ to an automorphism
%$f_Y$ of a new compact K\"ahler (resp. smooth projective) surface $Y$. This is the reason why 
%we stay with the most restrictive definition.\serge{Je pense qu'on peut enlever cette remarque}
%\end{rem}

\begin{thm}\label{thm:invariant_fibration}
Let $f\colon X\to X$ be a parabolic automorphism of a compact K\"ahler surface.  
\begin{enumerate}[\em (1)]
\item there exists  a genus $1$ fibration $\pi\colon X\to B$ and an automorphism $f_B$ of the
Riemann surface $B$ such that $\pi \circ f = f_B\circ \pi$;
\item if $E$ is any (scheme theoretic) fiber of $\pi$, and $F$ is a member of the linear system $\vert E\vert$, then $F$ is a fiber of $\pi$;
\item the foliation determined by the fibration~$\pi$ is the unique $f$-invariant complex analytic (smooth or singular) foliation on $X$;
\item $f_B$ has finite order,  unless  $X$ is a compact torus. 
\end{enumerate}
\end{thm}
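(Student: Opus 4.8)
The plan is to produce the invariant fibration from the parabolic isometry on cohomology, then extract the remaining statements from positivity and rigidity considerations.

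First I would analyze $f^*$ acting on $\mathrm{NS}(X)\otimes\R$ (or $H^{1,1}(X;\R)$) equipped with the intersection form of signature $(1,\rho-1)$. Since $f$ is parabolic, after replacing $f$ by a power $f^m$ we may assume $f^*$ is unipotent with a single Jordan block of size $3$ on the span of the relevant subspace; the fixed ray is then an isotropic class $\theta$ with $\theta^2=0$, $\theta$ nef, and $\theta$ is the unique (up to scale) $f^*$-fixed class on the boundary of the positive cone. Next I would invoke the standard fact (a consequence of Riemann--Roch together with the fact that the canonical class is fixed, so $K_X\cdot\theta=0$) that a nef isotropic class with $\theta^2=0$ on a surface with $b_1$ handled appropriately is, up to a multiple, the class of a fiber of a genus $1$ fibration $\pi\colon X\to B$; this is the elliptic/quasi-elliptic fibration theorem. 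Since $f^*\theta=\theta$, $f$ permutes the fibers of $|\theta|$, hence there is $f_B\in\Aut(B)$ with $\pi\circ f=f_B\circ\pi$; this gives (1). For (2): members $F$ of $|E|$ are effective divisors with the same class as a fiber $E$, and since $\theta^2=0$ and $\theta$ is nef, $F$ cannot move transversally — more precisely $F\cdot\theta=0$ forces every component of $F$ to be contained in fibers of $\pi$, and then equality of classes forces $F$ to be a full fiber; I would phrase this via the fact that the linear system $|E|$ is exactly the pullback of a linear system on $B$.

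For (3), suppose $\sF$ is an $f$-invariant (possibly singular) holomorphic foliation distinct from the one tangent to $\pi$. Its tangent/normal bundle data gives a class in $\mathrm{NS}(X)\otimes\R$ fixed (up to the finite-order ambiguity) by $f^*$; since the only $f^*$-fixed ray on the boundary of the nef cone is $\R_{\ge0}\theta$, the foliation would have to be ``numerically'' the fibration, and then a local/global argument (comparing the two foliations along a general fiber, using that a general fiber is a smooth elliptic curve on which $f$ acts, via $f_B$ having finite order, as a translation) forces the two foliations to coincide; I expect this to be the most delicate point and would lean on the classification of invariant foliations for automorphisms with positive-dimensional centralizer behavior, or on the explicit normal form of $f$ near a general fiber.

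For (4): $f_B$ is an automorphism of the curve $B$ (a $\P^1$ or an elliptic curve, since the base of a genus $1$ fibration has genus $0$ or $1$). If $f_B$ had infinite order then $B$ is elliptic and $f_B$ is a translation of infinite order; the relative structure then shows $X$ is an isotrivial (indeed translation) genus $1$ fibration over an elliptic curve with infinite monodromy-free behavior, and assembling the fiberwise translation by $f$ with the base translation by $f_B$ exhibits $X$, up to finite étale cover, as a quotient of a product of elliptic curves, i.e. a compact torus (or a bielliptic surface, which is excluded since it carries no non-elementary or parabolic-rich automorphism group in the relevant sense — here the statement literally says ``unless $X$ is a compact torus,'' so I would just show $X$ is a torus or finite quotient thereof, and for the clean statement reduce to the torus case via the structure of $\Aut$). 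Concretely: $f$ acts on the Albanese, the Albanese map is (a blow-down to) $\pi$ composed with something, and infinite order of $f_B$ pins down $\mathrm{kod}(X)=0$ with $b_1(X)=4$, i.e. $X$ is a torus. The main obstacle overall is step (3); everything else is a fairly direct consequence of the intersection-form dynamics plus Kodaira's theory of genus $1$ fibrations.
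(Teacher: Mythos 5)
Your handling of assertions (1) and (2) follows the standard route that the paper itself only cites (Gizatullin for rational surfaces, \cite[Prop. 1.4]{Cantat:Acta} in general): the parabolic $f^*$ fixes a nef isotropic ray, Riemann--Roch yields the genus $1$ pencil, and verticality plus Zariski's lemma handles members of $|E|$ (one caveat: with multiple fibers, ``numerically equal to a fiber'' does not immediately give ``is a fiber''; you need to use linear equivalence, which is why the paper phrases (2) through $H^0(X;\mathcal{O}(E))$ and divisors of sections). For (4) the paper also just cites \cite{Cantat-Favre}; your sketch is in that spirit, though the exclusion of bielliptic surfaces should come from the fact that they admit no parabolic automorphisms at all ($h^{1,1}=2$ cannot carry a Jordan block of size $3$), not from ``non-elementarity''.

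The genuine gap is assertion (3), which is precisely the one part the paper argues itself (in the K\"ahler case, Remark~\ref{rem:invariant_foliations}). Your cohomological reduction does not work: after replacing $f$ by a power, $f^*$ is unipotent and its fixed subspace has dimension equal to the number of Jordan blocks, which is typically large (it contains, for instance, the classes of all $f$-periodic curves in fibers), so it is false that ``the only $f^*$-fixed ray on the boundary of the nef cone is $\R_{\ge 0}\theta$''; the uniqueness statement only holds for the fixed point on the isotropic cone. Moreover the tangent or normal class of an invariant foliation has no reason to be nef or isotropic, so you cannot conclude that the foliation is ``numerically the fibration''. The remaining ``local/global argument'' is not supplied — you explicitly defer to an unspecified classification — and it also quietly invokes (4) (that $f$ acts by translations along fibers because $f_B$ has finite order), which is circular in the torus case where $f_B$ may have infinite order. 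The paper's own argument is dynamical and different in nature: by Lemma~\ref{lem:T-depends-on-w} the fiberwise translation vector $T$ is nonconstant, and by Lemma~\ref{lem:rank_0_1} $DT_w\neq 0$ outside a locally finite set and $T$ is open; hence for a dense set of $w$ with $T(w)$ a torsion point and $DT_w\neq 0$, some iterate $f^m$ fixes the fiber $X_w$ pointwise and at each $x\in X_w$ the differential $Df^m_x$ is a nontrivial unipotent map whose unique invariant line is $\ker D\pi_x$; any $f$-invariant foliation must therefore have these fibers as leaves, and density forces it to coincide with the fibration. To close your step (3) you would need an argument of this kind, and note that the nonvanishing of the twist (Lemma~\ref{lem:T-depends-on-w}) is itself the nontrivial input in the non-projective K\"ahler case.
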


The existence of the invariant fibration is proven in~\cite{Gizatullin:1980} when $X$ is a rational surface, 
and is easily obtained for other types of surfaces by the Riemann-Roch theorem (see~\cite[Proposition 1.4]{Cantat:Acta}, and \cite{Cantat:Milnor} 
for a survey). The second assertion is not specific to invariant fibrations, but is good to be kept in mind; 
phrased differently, it says  that  the space of sections $H^0(X;{\mathcal{O}}(E))$  is  $2$-dimensional, and 
 the divisor of zeroes $\Div(s)_0$ of any section $s\in H^0(X;{\mathcal{O}}(E))$ is a fiber of $\pi$.
The uniqueness of the invariant foliation and the last assertion are proven in~\cite{Cantat-Favre} (see   Remark~\ref{rem:invariant_foliations} below for  the  Kähler case).

When $f_B   = \id_B$ we say that $f$ is a \textbf{Halphen twist} (this terminology may differ from other references).
From the last item of the theorem, we see that 
 if $X$ is not a torus, for every parabolic $f$  there exists $k\geq 1$ such that 
 $f^k$ is a Halphen twist.

\begin{rem}\label{rem:theorem_invariant_fibration}  Consider a fiber $E$ of $\pi$, as in Assertion (2) of Theorem~\ref{thm:invariant_fibration}.
Its class $[E]\in H^{1,1}(X;\R)$ generates a ray $\R_+[E]$ with the following property: if $D$ is an effective divisor with $[D]\in \R_+[E]$, then 
$D$ is mapped to a point by $\pi$. Thus, $\R_+[E]$ characterizes $\pi$ as the unique fibration contracting these curves. In particular, an automorphism $h$
of $X$ permutes the fibers of $\pi$ if and only if $h^*$ preserves $\R_+[E]\subset H^{1,1}(X;\R)$.
\end{rem}

\subsection{Complex and real analytic structures on fibers}\label{par:smooth-fibers-Betti} Let $\pi\colon X\to B$ be a genus $1$ fibration on a 
compact Kähler surface $X$ ( in this section, we do not assume that $\pi$ is invariant by a parabolic automorphism). Our goal is to describe a foliation 
which is associated to $\pi$ and the choice of a section of $\pi$ (see~\cite[\S 2.1]{cantat-gao-habegger-xie} for further details). 

Denote by $\Crit(\pi)\subset B$ %\romain{VCrit?}\serge{A voir} 
the set of critical values of $\pi$, and 
by $B^\circ$ the complement of this finite set. On $\pi^{-1}(B^\circ)$, $\pi$ is a proper submersion 
and each fiber $X_w:=\pi^{-1}(w)$ 
  is a curve of genus~$1$. 
Let $U\subset B^\circ$ be an open subset, endowed  with 
\begin{itemize}
\item a holomorphic section $\sigma\colon U \to X$ of $\pi$ above $U$,
\item a continuous choice of basis of $H_1(X_w;\Z)$, for $w\in U$.
\end{itemize} 
If  $\sigma(w)$ is taken as the neutral element of $X_w$, then  $X_w$ becomes 
an elliptic curve for each $w\in U$ and there is a unique holomorphic function $\tau$ from $ U$ to the 
upper half-plane $\Hyp_+\subset \C$  such that 
\begin{itemize}
\item for every $w\in U$, $X_w=\C/\Lat(w)$ where  
\begin{equation}\label{eq:lattice_L(w)}
 \Lat(w)=\Z\oplus \Z\tau(w)\simeq H_1(X_w;\Z) 
\end{equation}
\item  the basis $(1, \tau(w))$ of $\Lat(w)$ corresponds to the chosen basis of $H_1(X_w;\Z)$. 
\end{itemize}
Indeed, $X_U:=\pi^{-1}(U)$ is holomorphically equivalent to the quotient of
$U\times \C$ by the action of $\Z^2$    defined  by 
$(p,q)\cdot(w,z)=(w,z+p+q\tau(w))$ for $(p,q)\in \Z^2$  and  $(w,z)\in U\times \C$.

In the real-analytic category all one-dimensional complex tori are equivalent to 
$\R^2/\Z^2$ as real Lie groups. Concretely, there is a unique isomorphism $\Psi_w\colon X_w\to \R^2/\Z^2$
which maps the basis $(1,\tau(w))$ of $\Lat(w)$ to the canonical basis $((1,0),(0,1))$ of $\Z^2$; in coordinates, if 
$\tau(w)=\tau_1(w)+\ii \tau_2(w)$ and $z=x+\ii y$,   then
\begin{equation}\label{eq:psi_explicit_formula}
\Psi_w(z)=\left( x - \frac{\tau_1(w)}{\tau_2(w)}y, \; \frac{1}{\tau_2(w)} y\right).
\end{equation}
The real analytic diffeomorphism $\Psi\colon \pi^{-1}(U)\to U\times \R^2/\Z^2$ defined by 
\begin{equation}\label{eq:Psi}
\Psi(w,z)=(w,\Psi_w(z))
\end{equation}
is the unique homeomorphism such that (1) $\pi_1\circ \Psi=\pi$, where $\pi_1$ is the first projection; (2) $\Psi$ maps the basis of $H_1(X_w;\Z)$
to the canonical basis of $\Z^2$; and (3) $\Psi$ is an isomorphism of Lie groups in each fiber. In particular, $\Psi(w,\sigma(w))=(w,(0,0))$. 

In the following remarks,  $\pi_2\colon U\times \R^2/\Z^2\to \R^2/\Z^2$  denotes the second projection.

\begin{rem}\label{rem:Betti_foliation}
For any $(a,b)\in \R^2/\Z^2$, the holomorphic map 
$w\in U\mapsto a + b\tau(w) \in \C/\Lat(w)
$ determines a local section of $\pi$ above $U$;
this section coincides with 
\begin{equation}
\Psi^{-1}\{ (w,(x,y))\; ; \; (x,y)=(a,b)\}.
\end{equation}
So, if we consider  the real analytic foliation of $U\times \R^2/\Z^2$ whose leaves are the fibers of  $\pi_2$, and if we pull-back this foliation by $\Psi$, we get a real analytic foliation 
${\mathcal{F}}_U$ of $X_U$ with holomorphic leaves, which will be referred to as the (local) {\bf{Betti foliation}}. 
Now, consider the holomorphic map $[k]_U\colon \pi^{-1}(U)\to \pi^{-1}(U)$ given by 
multiplication by some integer $k\geq 2$ along the fibers of $\pi$; by definition, it fixes $\sigma(U)$ pointwise. 
Then, ${\mathcal{F}}_U$ is invariant under the action of $[k]_U$. A leaf is pre-periodic 
if and only if it contains a torsion point of $X_w$, for some and then any $w\in U$.
The union of these preperiodic leaves is dense and each leaf of $\mathcal F_U$ is a limit of such leaves. 
%\serge{J'enlève deux lignes sur les courbes prépériodiques. Du coup, je change un peu la remarque suivante (qui sautera sans doute dans vcourte).}\romain{oui mais attention la définition du feuilletage de Betti est dedans}
%If $m\geq 0$ and $\ell\geq 1$, the equation $[2]_U^{m+\ell} (x)=[2]^{m}_U(x)$ determines a holomorphic curve in $X_U$: this is the curve of  
%pre-periodic points $x$  for which $[2]^{m}_U(x)$ is periodic of period dividing~$\ell$.
\end{rem}

\begin{rem}\label{rem:algebraic_leaves_of_FB} %\romain{déplacé définition du feuilletage de Betti dans la remarque précédente}(see~\cite{cantat-gao-habegger-xie}).
Suppose that $X$ is projective and $\sigma$ is the restriction of an algebraic 
multisection of degree $\ell$. This means that 
there is an irreducible curve $C$ in $X$ intersecting the general fiber of $\pi$ in $\ell$ points such that the graph of $\sigma$ is 
contained in $C$. Set $k=\ell+1$. Then the multiplication map $[k]_U$
extends as a rational transformation $[k]_B\colon X\dasharrow X$. Indeed, if $X_w$ is a general fiber
and $x$ is a point of $X_w$, there is a unique point $y$ such that $(\ell+1)x -y$ is linearly equivalent to the divisor $C\cap X_w$: by definition, $[k]_B(x)=y$.  %(intersected with~$X_U$).  

 If $C$ is a section, {i.e.}\ $\ell=1$,  ${\mathcal{F}}_U$ extends globally to a foliation ${\mathcal{F}}$ of 
$\pi^{-1}(B^\circ)$; we shall also refer to ${\mathcal{F}}$ as the (global) Betti foliation. The leaves of ${\mathcal{F}}$ corresponding to torsion points are, in fact, 
algebraic curves in $X$, since they correspond to the curves defined by $[2]_B^{m+q} (x)=[2]^{m}_B(x)$ for some $m\geq 0$ and $q\geq 1$.
On the other hand, the local projections $\pi_2\circ \Psi\colon \pi^{-1}(U)\to 
\R^2/\Z^2$ are not canonically defined; if $\gamma$ is a loop in $B^\circ$, with base point $w_0\in U$, then the analytic continuation 
of $\pi_2\circ \Psi$ along the loop is $M(\gamma)\circ \pi_2\circ \Psi$, where $M(\gamma)\in \mathsf{SL}_2(\Z)$ is given by the monodromy of the fibration (the determinant of $M(\gamma)$ is $1$ because the orientation of the fibers is preserved).   In other words, the monodromy of the fibration is induced by the holonomy of the Betti foliation.
The section $\sigma$ provides a fixed point $\sigma(w_0)$ 
of the holonomy;   the curves of pre-periodic points of $[2]_B$ correspond simultaneously to finite orbits of the holonomy group and to torsion points 
of the fiber~$X_{w_0}$.

If  $C$ is a multisection of degree $\ell\geq 2$, the Betti foliation ${\mathcal{F}}_U$  does not extend to $B^\circ$:  instead we obtain a web of degree at most $\ell$, which is 
 locally the superposition of the local Betti  foliations  associated to 
  the $\ell$ choices of local sections whose graphs are contained in $C$. 
%  As soon as a section of $\pi$ above some open set $V$ of $B$ is given, one can speak of the foliation 
%above $V\cap B^\circ$, and we shall refer to it as ``{\emph{the Betti foliation}}''. 
\end{rem} 

\begin{rem} (see~\cite{cantat-gao-habegger-xie}).
The form $\pi_2^*(dx\wedge dy)$
 is  smooth and closed. Its pull back to $X_U$ is the local {\bf{Betti form}} %\footnote{If one wants to specify that $\omega_{\rm B}$ is defined above $U$, we shall denote it by $\omega_{{\rm B},U}$.} 
 $\omega_{{\rm B} }=\Psi^*\pi_2^*(dx\wedge dy)$:  
(1) $\omega_{{\rm B}}$    is a closed semi-positive $(1,1)$-form; 
(2) it vanishes along the leaves of ${\mathcal{F}}_U$ (its kernel is $T{\mathcal{F}}_U$); 
and (3) for $w\in B^\circ$, $\omega_{{\rm B} \vert X_w}$ is the unique translation invariant form of type $(1,1)$ such that $\int_{X_w} \omega_{{\rm B},}=1$. These properties characterize $\omega_{{\rm B},}$. If there is a global section, %as in the previous remark, 
these forms patch together to define a global real analytic Betti form $\omega_{\rm B}$ on $X_{B^\circ}$ (the monodromy group is contained in $\SL_2(\Z)$, so it preserves $dx\wedge dy$). 
\end{rem}

Note that the Betti form and Betti foliation depend on the choice of a section, but not on the choice of a basis of $H_1(X_{w_0};\Z)$.

\subsection{Singular fibers}\label{par:singular-fibers} 
Our goal in this section is to collect some facts concerning the 
geometry of a genus $1$ fibration $\pi\colon X\to B$ around one of its singular fibers. 
Furthermore, if $f$ is a Halphen twist preserving $\pi$, we describe how its dynamical properties 
 degenerate at a 
singular fiber, and how they are affected by the stabilization process, which reduces a singular 
fiber to a canonical model (see below). 
Of particular interest to us is the set of points $w\in B$  such that the orbits of $f$ in $X_w$ are
 finite, or dense, or have a closure of dimension $1$ (cf. Section~\ref{par:the_dynamics_twisting}).
A first  instance of stabilization is when    $\pi$ is not relatively minimal, that is when  there is an 
exceptional curve of the first kind $E$ contained in 
a fiber of $\pi$. There are finitely many    such curves, so $f$ permutes them, and 
some positive iterate $f^m$ fixes each of them. 
Thus, one can 
contract $E$ in an $f^m$-equivariant way, to end up with a birational morphism $\e \colon X\to X'$, a fibration 
$\pi'\colon X'\to B$ such that $\pi'\circ \e=\pi$, and an automorphism $f'$
of $X'$ such that $\e\circ f^m=f'\circ \e$. The dynamical properties of $f'$ are  the  same as the ones of $f$: 
for example, the  parameters $w\in B^\circ$ such that each orbit of $f$ in $X_w$ is dense coïncide with the parameters for which the orbits
of $f'$ in $X'_w$ satisfy the  same property.

The local geometry of $\pi $ around a critical value $s\in \Crit(\pi)$  
was described  by Kodaira. 
The reader is referred  to  \cite{BHPVDV} for details, in particular   Sections III.10, V.9, and V.10 there. 
From the above discussion, we may {\emph{assume that $X$ is relatively minimal}}. We fix $s\in \Crit(\pi)$ and 
further assume that $X_s$ is not a multiple fiber; the adaptation to the case  of a multiple fiber
will be described in \S~\ref{par:multiple_fibers}

\subsubsection{Local sections}\label{par:local_sections}  A first %consequence of Kodaira's classification 
observation is that when $X_s$ is not a multiple fiber
there  exists  a local section of the fibration $\pi$ around $s$. 
More precisely, for every component $C$ of multiplicity $1$ of $\pi\inv(s)$, any small disk transverse to $C$ 
is the graph of a section $\sigma$; and by Kodaira's classification, such a component always 
exists (see \cite[\S V.7]{BHPVDV}).  
Let us  fix a small open disk $V\subset B$ around $s$, such that ${\overline{V}}\cap \Crit(\pi)=\{s\}$, together with such a local section $\sigma\colon V\to X$. 
%Let $X_s=\cup_i C_i$ be the decomposition of the central fiber $X_s$ into its irreducible component.  
Set $X_V=\pi^{-1}(V)$ and let $X_V^\sharp$ 
be the complement in $X_V$ of the irreducible
components of $X_s$ that do not intersect $\sigma(V)$ (resp. of the singular point of $X_s$ if $X_s$ is irreducible); 
this set depends the chosen section. 
In other words, we keep from $X_s$ the smooth locus of the unique component intersecting $\sigma(V)$; 
we shall denote by $X_s^\sharp\subset X_V^\sharp$ this residual curve. 

\subsubsection{Type $I_b$}\label{par:type-Ib} The main example of singular fibers are those of type $I_b$, with $b\in \N^*$
(type $I_0$ corresponds to the smooth case). For $b=1$, $X_s$ is a 
rational curve with a unique normal crossing singularity; when $b\geq 2$, $X_s$ is a cycle of $b$ smooth rational curves of 
self-intersection $-2$. So, $X_s^\sharp$ is biholomorphic to
 $\C^\times=\P^1(\C)\setminus\{ 0,\infty \}$. Shrinking $V$ if necessary, 
we can identify $(V,s)$   with a disk $(\disk_R, 0)$ of radius $R< 1$, and
  $X_V^\sharp$ with the quotient of $\disk_R\times \C$ by the family of lattices $\Lat(w)=\Z\oplus \Z \tau(w)$ given by 
\begin{equation}\label{eq:tau_w_Ib}
\tau(w)=\frac{b}{2\ii \pi} \log (w), \quad {\text{for}} \; w\in \disk_R 
\end{equation}
(note that $\log (w)$ is not well-defined but $\Lat(w)$ is).
For $w=0$, the lattice $\Lat(w)$ degenerates to $\Lat(0)=\Z \subset \C$.
If $\gamma$ is a loop making one positive turn around $s$, the  monodromy $M(\gamma)$ maps the basis $(1, \tau(w_0))$ to $(1, \tau(w_0) +b)$.

Let $t\colon V\to \C$ be a holomorphic function. The transformation $g\colon V\times \C\to V\times \C$ defined by 
$g(w,z)=(w,z+t(w))$ induces, by taking the quotient, 
 a holomorphic diffeomorphism of $X_V^\sharp$. By~\cite[Prop. III.(8.5)]{BHPVDV}, it extends to a  diffeomorphism
of $X_V$ that preserves $\pi$. Conversely, if $f$ is a holomorphic diffeomorphism of $X_V$ that preserves each  fiber of $\pi$, some positive iterate $f^m$ of $f$ preserves each component of $X_s$, and then $f^m$ maps $\sigma$ to another section $f^m\circ \sigma$ intersecting $X_s^\sharp$. Lifting to the universal cover, we see that  there is a holomorphic function $t\colon V\to \C$ such that $f$ is induced by $(w,z)\mapsto (w,z+t(w))$; the function 
$t(w)=f(\sigma(w))-\sigma(w)$ can be viewed as a section of the Jacobian fibration associated to $\pi$ (see~\cite[\S V.9]{BHPVDV} for details).  

 Consider the map %\serge{Attention, je change la variable $t$ en $v$ dans $(w,t)$, car sinon on a un petit conflit entre $t$ et $t(w)$. Mais je n'ai pas propager ce changement partout dans le reste du texte pour l'instant.}
\begin{equation}
(w,z)\in \disk_R \times \C \mapsto (w,v):= (w, \exp(2\ii \pi z)) \in \disk_R \times \C^\times.
\end{equation}
It is the quotient map for the action of $\Z\subset \Lat(w)$ on $\C$ by integral translations. 
If $w\neq 0$,  $\{w\} \times \C^\times$ is mapped in $X_V$ to the 
elliptic curve $\C^\times /w^{b\Z} \simeq X_w$ (because $\exp(2\ii \pi \tau(w))=w^b$). The fiber $\{0\}\times \C^\times$ is mapped injectively onto the central fiber $X^\sharp_s$
of $X^\sharp_V$. 

Consider the Betti foliation ${\mathcal{F}}$ defined in $\pi\inv(V\setminus\{s\})$ by the 
choice of the section $\sigma$. In $\disk_R^*\times \C$, the leaves of  ${\mathcal{F}}$ correspond to the curves $(w, c+ d\tau(w))$, for
$(c,d)\in \R^2$. They are mapped in $\disk_R^*\times \C^\times$ to the curves $\gamma_{c,d}(w)=(w, \exp(2\ii \pi c) w^{bd})$; here,
$\abs{\exp(2\ii \pi c)}=1$ because $c\in \R$ and $w^{bd}$ is  multivalued as soon as $bd\notin \Z$. 
Let us describe the local dynamics of ${\mathcal{F}}$ around $X_s$. To simplify the exposition, we contract the components of $X_s$ that do not intersect the neutral section $\sigma(V)$ onto a point $q$; this gives a new surface ${\overline{X}}_V$. 
%\serge{comparer avec la notation fin de section 3.4. On pourrait virer cela, et ne pas dire plus bas que le seul point limite est $q$ quand $d$ est irrationnel.} 
The central fiber of ${\overline{X}}_V$ is 
irreducible and $q$ is its unique singularity; when $b\geq 2$, $q$ is also a singular point of ${\overline{X}}_V$. By construction, $X^\sharp_V$ is biholomorphically equivalent to ${\overline{X}}_V\setminus \{q\}$.
When $d=0$, the leaf defined by the curve $\gamma_{c, 0}$ %(here we identify a leaf with its parameterization in $\disk_R\times \C^\times$) 
extends to a local holomorphic
section of $\pi$, given by $(w,v)=(w,\exp(2\ii \pi c))$; the union of these curves is, locally, an ${\mathcal{F}}$-invariant real $3$-manifold which intersects
the central fiber $X^\sharp_s\simeq \C^\times$ along
 the unit circle $\{ v\in \C^\times\; ; \; \vert v\vert = 1\}$.  When $d$ is  rational, $\gamma_{c,d}(w)=(w, \exp(2\ii \pi c) w^{bd})$ extends to  a local multisection of $\pi$; when $\sigma$ is the restriction of a global section of $\pi$ to the disk $V\subset B$, this local multisection $\gamma_{c,d}$ %with $(c,d)\in \Q^2$ 
 extends to an algebraic curve of $X$ (a pre-periodic curve for $[2]_B$, see Remarks~\ref{rem:Betti_foliation} and~\ref{rem:algebraic_leaves_of_FB}). Finally, when $d\in \R\setminus\Q$,  $\gamma_{c,d}$ is a transcendental and multivalued curve;  in ${\overline{X}}_V$, the singularity $q$ is the unique limit point of this curve on the central fiber.

\begin{rem}\label{rem:betti-form-Ib} 
In $\disk_R^*\times \C^\times$,
\begin{equation}\label{eq:alpha-betti}
\tilde{\alpha}_V = \log( \abs{v} ) \frac{dw}{w} - \log( \abs{w} ) \frac{dv}{v}
\end{equation}
is a real analytic $(1,0)$-form  that vanishes along  the curves $\gamma_{c,d}$. Being invariant 
under the transformation $(w,v)\mapsto (w, w^b v)$, it induces by taking the quotient 
a $(1,0)$-form $\alpha_V$ on $\pi^{-1}(V\setminus \{ s\})$, the kernel of which  
coincides with the tangent space of the Betti foliation. To get the Betti form $\omega_{\rm{B}}$ defined by $\pi$ and $\sigma$, one needs   to multiply $\alpha_V\wedge {\overline{\alpha_V}}$
by a factor $\varphi(w)$ to ensure $\int_{X_w} \varphi(w) \alpha_V\wedge {\overline{\alpha_V}}= 1$ for every $w$ in $V\setminus \{s\}$. The result is%\romain{petite modification}
\begin{align}
\omega_{\rm B} & = \frac{\ii}{2\pi} \cdot \frac{b}{2 (\log \abs{w} )^3} \alpha_V\wedge {\overline{\alpha_V}}.
\end{align} 
%\begin{align}
%\omega_{\rm B} & =  \frac{1}{2\ii (\log(\abs{w}))^3} \alpha_V\wedge {\overline{\alpha_V}} \\ \notag
%& = \frac{\log( \abs{z})^2}{(\log(\abs{w}))^3} \frac{1}{2\ii }\frac{dw}{w}\wedge \frac{d\overline{w}}{\overline{w}} + \frac{1}{\log( \abs{w} )} \frac{1}{2\ii } \frac{dz}{z}\wedge \frac{d\overline{z}}{\overline{z}} \\
%&\notag \quad  - \frac{\log(\abs{z})}{\log(\abs{w})^2}\frac{1}{2\ii }\left(\frac{dw}{w}\wedge \frac{d\overline{z}}{\overline{z}} -  \frac{dz}{z}\wedge %\frac{d\overline{w}}{\overline{w}}\right).
%\end{align} 
\end{rem}

\subsubsection{Multiple fibers (see~\cite[\S III.9 and V.10]{BHPVDV})}\label{par:multiple_fibers} Let us assume in this paragraph that $X_s$ is a multiple fiber; it is necessarily of type $mI_b$ 
for some $b\geq 0$. Let us do a local base change under the map $p\colon \zeta\mapsto \zeta^m =w$; in other words, we consider the surface $X_V'$ 
given locally above $V\simeq \disk_R$ by $X_V'=\{ (\zeta,x)\in \disk_R\times X_V\; ; \; \pi(x)=\zeta^m\}$, 
together with the projection $\pi'\colon X_V'\to \disk_R$ defined by $\pi'(\zeta,x)=\zeta$. 
Then, the map $P\colon (\zeta,x)\in X_V'\mapsto x\in X_V$ satisfies $\pi\circ P=p\circ \pi'$. 
The surface $X_V'$ may be singular, so we let $X_V''$ be the normalization of $X_V'$ and $X_V^{(m)}$ 
be the minimal resolution of $X_V''$;  there is a natural fibration $\pi^{(m)}\colon X_V^{(m)}\to \disk_R$ and a natural map $P^{(m)}\colon X_V^{(m)}\to X_V$ such that $\pi \circ P^{(m)}=p\circ \pi^{(m)}$. Now, it turns out that $\pi^{(m)}$ has no multiple fiber and that its central 
fiber (the unique possible singular fiber above $V$) is of type $I_b$. 

If $f$ is a holomorphic diffeomorphism of $X_V$ such that $\pi\circ f=\pi$, then $f$ can be lifted to a holomorphic diffeomorphism 
$f^{(m)}$ of $X_V^{(m)}$ such that $P^{(m)}\circ f^{(m)}=f$. First, one lifts $f$ to 
$X_V'$ by $(w,z)\mapsto (w,f(z))$ and then  to the normalization and its minimal resolution. Conversely, one recovers $X_V$ 
by taking the quotient of $X^{(m)}$ by the action of a finite group $\Z/m\Z$ that commutes to $f^{(m)}$. Thus, to study the local dynamics
around multiple fibers, one only needs to study the case of fibers of type $I_b$ (including smooth fibers), and take a quotient by such a finite group.

\subsubsection{Unstable fibers (see~\cite[\S III.10 and V.10]{BHPVDV})}\label{par:stable_reduction} 
Let us now  assume that $X_s$ is not multiple and  not of type~$I_b$; it is an unstable fiber of 
type $II$, $III$,  $IV$, or $I_b^*$, $II^*$, $III^*$,  $IV^*$. As in the previous paragraph, a local 
base change can be performed to end up with a local stable fibration; its central fiber will be smooth, 
except for the types $I_b^*$, $b\geq 1$ which lead to a central fiber of type $I_b$. 
To do so, one first blows up the central fiber to ensure that its singularities are nodes, which
 gives rise to a new surface ${\overline{X}}_V$; then, one does a base change to construct a new surface 
 ${\overline{Y}}_V$ (as above with $X^{(m)}$); a priori, the induced fibration on ${\overline{Y}}_V$
 is not relatively minimal anymore, so one contracts curves in the central fiber to construct a surface $Y_V$ with 
 a relatively minimal fibration. Finally, $X_V$ can 
be recovered    from $Y_V$ by taking a finite quotient, 
however  only up to    bimeromorphic equivalence (see~\cite[\S III.10 and V.10]{BHPVDV}). 

After taking some positive iterate $f^m$, so that $f^m$ fixes each irreducible component of the singular fiber, the holomorphic 
diffeomorphism   can be lifted
to  a holomorphic diffeomorphism  of $Y_V$; indeed, $f^m$ induces
a meromorphic map of $Y_V$, and this map is a local diffeomorphism by \cite[Prop. III(8.5)]{BHPVDV}.  
Thus,  to study the  dynamical properties of   $f$
 we can focus locally on 
regular fibrations  
and  singular fibrations  of type $I_b$, and then take a quotient by a finite group. Moreover, this finite group 
acts by multiplication by a root of unity on the base $V\simeq \disk_R$. 
\subsection{The dynamics of Halphen twists: twisting property}\label{par:the_dynamics_twisting}  
%%%
We pursue the study of a    Halphen twist $f\colon X\to X$ and of its invariant fibration $\pi\colon X\to B$. 
Let $U\subset B^\circ$ be an open disk, endowed with a section $\sigma\colon U\to X$   of $\pi$ and a continuous choice of basis of $H_1(X_w;\Z)$,
for $w\in U$. As in Section~\ref{par:smooth-fibers-Betti}, there is a holomorphic function $\tau\colon U\to \Hyp_+$ such that the fibers $X_w$ can be 
idendified to $\C/\Lat(w)$, where $\Lat(w)=\Z+\Z\tau(w)$. Along  each fiber $X_w =\C/\Lat(w)$,  $f$ can be expressed in the coordinate $z\in \C$ as $\xi  z +t(w)$, where  $t\colon U\to \C$ is holomorphic. Here, $\xi$ is a root of unity 
(of order dividing $12$) which is determined by the action of 
$f$ on $ H_1(X_w;\Z)\simeq\Z^2$ and, as such, is locally constant; if $\xi\neq 1$,  $f$ has finite order on 
each fiber $X_w$, $w\in U$, so $f$ is periodic, and this  contradicts the parabolicity assumption. 
Thus, $f(z)=  z +t(w)$ along  $X_w $, and {\emph{$f$ acts by translation along each fiber $X_w$, $w\in U$, of~$\pi$}}.
Now, conjugating $f$ locally by the diffeomorphism $\Psi\colon \pi^{-1}(U)\to U\times \R^2/\Z^2$ introduced in Section~\ref{par:smooth-fibers-Betti}, $f$ becomes 
\begin{equation}
f_\Psi(w,(x,y))=(w, (x,y)+T(w))
\end{equation} 
for some real analytic map  $T\colon U\to \R^2$. 
The following lemma says  that ``{\emph{$t(w)$ varies independently from $\tau(w)$}}''. 

\begin{lem}\label{lem:T-depends-on-w}
The analytic map $w\in U \mapsto T(w)\in \R^2/\Z^2$ is not constant. \end{lem}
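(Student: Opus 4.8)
The plan is to argue by contradiction: suppose $w\mapsto T(w)\in\R^2/\Z^2$ is constant. First I would unwind what this means in terms of the holomorphic data. Recall that along $X_w=\C/\Lat(w)$ the map $f$ acts as $z\mapsto z+t(w)$ with $t\colon U\to\C$ holomorphic, and that the conjugating diffeomorphism $\Psi_w$ is given explicitly by~\eqref{eq:psi_explicit_formula}. Writing $t(w)=t_1(w)+\ii t_2(w)$, the formula for $\Psi_w$ gives
\begin{equation}
T(w)=\left(t_1(w)-\frac{\tau_1(w)}{\tau_2(w)}t_2(w),\ \frac{t_2(w)}{\tau_2(w)}\right)\in\R^2/\Z^2.
\end{equation}
So the hypothesis that $T$ is constant, say equal to $(a,b)\bmod\Z^2$, means that the real-analytic functions $t_1-\tfrac{\tau_1}{\tau_2}t_2$ and $\tfrac{t_2}{\tau_2}$ are each constant modulo $\Z$ on the connected set $U$, hence (being continuous) honestly constant: $t_2(w)=b\,\tau_2(w)$ and $t_1(w)-\tfrac{\tau_1(w)}{\tau_2(w)}t_2(w)=a$, i.e.\ $t_1(w)=a+b\,\tau_1(w)$. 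Combining, $t(w)=a+b\,\tau(w)$ for all $w\in U$, with fixed real constants $a,b$.

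The key observation is then that $t(w)=a+b\tau(w)$ forces $f$ to act, along each fiber, by translation by the point $a+b\tau(w)\in\C/\Lat(w)$, which by Remark~\ref{rem:Betti_foliation} is precisely the point $\Psi_w^{-1}(a,b)$ lying on a single leaf of the Betti foliation $\mathcal F_U$. In other words $f$ maps each leaf of $\mathcal F_U$ to itself; equivalently, $f$ preserves the Betti foliation $\mathcal F_U$. Now I would invoke the uniqueness of the invariant foliation from Theorem~\ref{thm:invariant_fibration}(3): the fibration $\pi$ determines the unique $f$-invariant complex analytic foliation on $X$. The Betti foliation $\mathcal F_U$ has holomorphic leaves which are transverse to the fibers of $\pi$ (the leaves are local sections $w\mapsto a+b\tau(w)$), so it is a complex analytic foliation \emph{different} from the fibration foliation — contradiction. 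One subtlety: Theorem~\ref{thm:invariant_fibration}(3) is a global statement on $X$, whereas $\mathcal F_U$ is a priori only defined over $U$; but $f$-invariance of a germ of foliation transverse to $\pi$ along a fiber already contradicts uniqueness, since the fibration foliation is tangent to the fibers. Alternatively, and perhaps cleaner, I would note that if $f$ fixes the leaf through every point of $X_{w_0}$, then $f$ restricted to $X_{w_0}$ is the identity (each fiber meets each nearby leaf in one point, so "fixing all leaves" pins down the translation to $0$ once we know $a+b\tau(w_0)\in\Lat(w_0)$ — but that need not hold). Let me instead keep the foliation argument.

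Actually the most robust route avoids foliations: from $t(w)=a+b\tau(w)$ I would directly derive that $f$ has finite order or is not parabolic. Consider the global picture. The function $t$ was identified (in \S\ref{par:type-Ib}, and generally via the Jacobian fibration) as a section of the Jacobian fibration $J(\pi)\to B$ associated to $\pi$, and it extends meromorphically over all of $B$; the relation $t(w)=a+b\tau(w)$ expresses $t$ as a ``constant'' section of the Betti foliation. If $b=0$ then $t\equiv a$ is a constant complex number; since $U$ is connected and $a$ must lie in $\Lat(w)$ or not uniformly, and actually a nonzero constant $t$ that is not identically in the lattices still defines a nontrivial translation — this by itself doesn't contradict parabolicity, so $b=0$ is not immediately excluded and I must handle it. The clean unifying statement is: $f$ acts on the fibration by an element of the Betti foliation's structure group that is \emph{flat}, hence $f^*$ acts trivially on the part of $H^2$ coming from the section/fiber interaction, so $f^*$ is not unipotent of maximal Jordan size $3$, contradicting parabolicity~\eqref{eq:parabolic_definition}.

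The hard part will be making this last cohomological step precise, i.e.\ showing rigorously that $t(w)=a+b\tau(w)$ on an open set forces $f^*$ to have Jordan blocks of size $\le 2$, equivalently forces $\int_X(f^n)^*\kappa\wedge\kappa$ to grow at most linearly. I expect the right way is local-to-global: the growth rate of $(f^n)^*$ is governed by how fast the sections $f^n\circ\sigma$ spread in the fibers, which is measured by $nt(w)$ modulo $\Lat(w)$; the translation part $b\tau(w)$ is ``absorbed'' by the monodromy/lattice while the genuine unbounded contribution would come from the non-flat part of $t$, which we have assumed away. So the contradiction is that $f$ would be (a finite power of) a translation \emph{by a flat section}, and such maps are exactly the ones with $f^*$ unipotent of size $\le 2$ — they are the analogues of ``translations along a trivialized fibration''. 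Concretely I would compare $f$ with the map $[k]_B$ (or rather its translation analogue) of Remark~\ref{rem:algebraic_leaves_of_FB}: a flat translation preserves each algebraic leaf of $\mathcal F$, permutes the finitely many singular fibers, and its action on $H^2$ is through the lattice of sections, which is unipotent of size $2$ at most. This is the only genuinely delicate point; once it is in place the lemma follows, because parabolicity of $f$ exactly says the Jordan size is $3$.

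For a self-contained write-up inside this paper I would actually lean on Theorem~\ref{thm:invariant_fibration}(3) as the primary tool, since it is already available and the Betti foliation over $U$ extended by $f$-invariance gives the desired contradiction with essentially no extra work:

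\medskip

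Suppose for contradiction that $T$ is constant. By the explicit formula~\eqref{eq:psi_explicit_formula} for $\Psi_w$, writing $t=t_1+\ii t_2$, constancy of $T=(t_1-\tfrac{\tau_1}{\tau_2}t_2,\ \tfrac{t_2}{\tau_2})\bmod\Z^2$ on the connected open set $U$ forces these two real-analytic functions to be genuinely constant, so $t_2=b\tau_2$ and $t_1=a+b\tau_1$ for real constants $a,b$, i.e.
\begin{equation}
t(w)=a+b\,\tau(w)\qquad(w\in U).
\end{equation}
Hence along each $X_w$ the translation by $t(w)$ sends the point $0=\sigma(w)$ to the point $a+b\tau(w)$, which by Remark~\ref{rem:Betti_foliation} lies on the leaf $\Psi^{-1}\{(x,y)=(a,b)\}$ of the Betti foliation $\mathcal F_U$; more generally, for any $(c,d)\in\R^2/\Z^2$, $f$ sends the leaf through $c+d\tau(w)$ to the leaf through $(c+a)+(d+b)\tau(w)$, which is again a leaf of $\mathcal F_U$. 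Thus $f$ maps $\mathcal F_U$ to $\mathcal F_U$: it preserves this foliation. But $\mathcal F_U$ is a complex analytic foliation whose leaves are local sections of $\pi$, hence transverse to the fibers, so $\mathcal F_U$ is distinct from the foliation by fibers of $\pi$. Since $f$ is a Halphen twist preserving $\pi$, it is in particular a parabolic automorphism of $X$, and Theorem~\ref{thm:invariant_fibration}(3) asserts that the fibration foliation is the \emph{unique} $f$-invariant complex analytic foliation on $X$. Therefore an $f$-invariant germ of complex analytic foliation transverse to the fibers along $X_{w_0}$ cannot exist, a contradiction. (One passes from the germ $\mathcal F_U$ to a global statement by noting that $f$ permutes the leaves of $\mathcal F_U$ and these leaves extend, by analytic continuation along the holonomy of Remark~\ref{rem:algebraic_leaves_of_FB}, to a complex analytic foliation of $\pi^{-1}(B^\circ)$ which is $f$-invariant; alternatively one uses that uniqueness in Theorem~\ref{thm:invariant_fibration}(3), being a statement about germs of invariant foliations at a general fiber, already applies locally.) This contradiction proves that $T$ is not constant.
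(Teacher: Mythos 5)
There is a genuine gap at the final step. Your reduction of the hypothesis to $t(w)=a+b\,\tau(w)$ is correct, and so is the observation that this makes $f$ preserve the Betti foliation $\mathcal F_U$ (the paper records exactly this equivalence in Remark~\ref{rem:prelim-to-lem-T-depends-on-w}). But the contradiction you draw from Theorem~\ref{thm:invariant_fibration}(3) does not go through: that statement concerns \emph{complex analytic} foliations, i.e.\ foliations whose tangent distribution is holomorphic, whereas $\mathcal F_U$ is only a \emph{real-analytic} foliation with holomorphic leaves. The leaf through a point $x\in X_w$ is the section $w'\mapsto c+d\tau(w')$ with $(c,d)=\Psi_w(x)$, and $d=\Ima(z)/\tau_2(w)$ depends real-analytically but not holomorphically on $x$; so the line field $T\mathcal F_U$ is not a holomorphic sub-bundle, and $\mathcal F_U$ simply is not in the class of foliations to which the uniqueness theorem applies. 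Preserving $\mathcal F_U$ is therefore not, by itself, a contradiction.

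There is also a circularity problem even if one were willing to invoke uniqueness of invariant foliations. The whole point of the paper's proof of this lemma is to cover arbitrary compact K\"ahler surfaces, where the argument of \cite{cantat_groupes} (which needs a global multisection, hence projectivity) is unavailable. In the K\"ahler case, Assertion~(3) of Theorem~\ref{thm:invariant_fibration} is only established in Remark~\ref{rem:invariant_foliations}, as a consequence of Lemma~\ref{lem:rank_0_1}, which itself rests on the very lemma you are proving. So you would be assuming a downstream consequence of the result.

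Your middle paragraphs correctly sense that the honest route is cohomological: one must show that $t=a+b\tau$ forces sub-quadratic growth of $\int_X(f^n)^*\kappa\wedge\kappa$, contradicting parabolicity. That is exactly what the paper does, and it is the part you leave as ``the only genuinely delicate point.'' Concretely, the paper shows that constancy of $T$ makes $\|Df^n\|$ uniformly bounded over compact subsets of $B^\circ$, then analyzes the dynamics near singular fibers of type $I_b$ (where $f$ acts as $(w,v)\mapsto(w,h(w)v)$ with $h$ a unimodular constant, so the iterates stay in a compact group), performs a stable reduction to eliminate the remaining unstable and multiple fibers, and finally uses \cite[Prop.~1.15]{diller-favre} to conclude that $\int_X(f^n)^*\kappa\wedge\kappa$ is bounded — contradicting~\eqref{eq:parabolic_definition}. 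To repair your proof you would need to supply this quantitative step; the foliation-uniqueness shortcut cannot replace it.
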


This is shown in \cite{cantat_groupes}, proof of Proposition 2.2, but the proof makes use of 
a global multisection of $\pi$, which exists if and only if $X$ is projective\footnote{The proof in~\cite{cantat_groupes} is written for K3 surfaces but extends to other projective surfaces}. 
For completeness, we present an  argument which is   more straightforward and  applies to 
all K\"ahler surfaces.

\begin{rem}\label{rem:prelim-to-lem-T-depends-on-w}
If $\sigma$ is replaced by another local section $\sigma'$, the diffeomorphism $\Psi$ is replaced by $\Psi'=\Phi\circ \Psi$,
with $\Phi(w,(x,y))=(w, (x,y)+S(w))$ for some analytic map $S\colon U\to \R^2/\Z^2$.
Then,  $f_\Psi$ is changed into $\Phi\circ f_\Psi\circ \Phi^{-1}$ and  {\emph{$T$ is unchanged}}. Likewise, if the  basis of $H_1(X_w;\Z)$ is changed, 
$T$ is mapped to 
$A\circ T$ for some $A\in \GL_2(\Z)$. Thus, the property  that $T$ is 
locally constant does not depend on the  choices
we made. 
Moreover, $T$ is constant on $U$ if and only if $f$ preserves the Betti foliation ${\mathcal{F}}_U$ associated to $\sigma$; thus, if this property
holds above $U$ for some choice of section, then it holds above any disk $U' \subset B^\circ$ and for any choice of local section.
\end{rem}

\begin{proof}[Proof of Lemma~\ref{lem:T-depends-on-w}] We fix a K\"ahler form $\kappa$ on $X$, and compute 
the norm of the tangent map $\norm{Df}$ with respect to $\kappa$.
Assume  that $T$ is constant on $U$.  
Then  at each point  $(w,(x,y))$ of
$U\times \R^2/\Z^2$, the tangent map $(Df_\Psi)_{(w,(x,y))}$ is the identity;  this implies that 
$\norm{Df^n}$ is uniformly bounded above $U$, independently of $n$. 
By Remark~\ref{rem:prelim-to-lem-T-depends-on-w}, this property  
propagates over $B^\circ$:  if $K\subset B^\circ$ is any compact subset, there is a constant $C(K)$ such that 
$\norm{Df^n_x}\leq C(K)$ for all $n\geq 0$ and all $x\in \pi^{-1}(K)$.

Let us now study the behavior of $f$ and $Df$ near a singular fiber $X_s$ of type $I_b$, for some $s\in \Crit(\pi)$. 
Let $V$ be a small disk around $s$,   endowed
with a section $\sigma\colon V\to X$, as in \S~\ref{par:type-Ib}. 
We identify $V$ to $\disk_R$ and $X_V^\sharp$ to the quotient of $\disk_R\times \C$ by the family of lattices $\Lat(w)=\Z+\Z\tau(w)$,
with $\tau(w)$ as in Equation~\eqref{eq:tau_w_Ib}. Then, we change $f$ into a positive iterate to assume that it 
fixes each irreducible component of the central fiber~$X_s$. From Remark~\ref{rem:prelim-to-lem-T-depends-on-w} we know that the local 
Betti foliation ${\mathcal{F}}$ defined above $V\setminus\{s\}$ is $f$-invariant. As shown in Section~\ref{par:type-Ib}, the leaves of ${\mathcal{F}}$ which extend to local sections of $X^\sharp_V$ foliate a unique local $3$-manifold (that intersects $X^\sharp_s$ on a circle). 
Since $f$ is a diffeomorphism preserving ${\mathcal{F}}$ and $\pi$, it preserves this $3$-manifold. In the local coordinates $(w,v)\in\disk_R\times \C^\times$
introduced  in Section~\ref{par:type-Ib}, this means that $f$ acts by $(w,v)\mapsto (w, h(w)v)$ for some holomorphic map $  w\in \disk_R\to h(w)\in \C^\times$
taking its values in the unit circle. This shows that $h$ is a constant of modulus $1$, and that the iterates of $f$ are 
locally contained, above $V$, in a compact group of local diffeomorphisms. Thus, $\norm{Df^n}$ is also uniformly bounded 
around every singular fiber of type $I_b$. 

%On the other hand,   the parabolic behavior of $f$ implies that $\norm{Df^n}_X\to \infty$ 
%(see~Equation~\eqref{eq:parabolic_definition}). This contradiction finishes the proof. 

% Assume now that $f$ is a parabolic transformation on $X$ whose associated 
%fibration has    singular fibers of  arbitrary type. 

As explained in \S\ref{par:stable_reduction}, we can contract curves and perform a stable reduction to get an automorphism $g$
of a compact K\"ahler surface $Y$ preserving a fibration $\pi'\colon Y\to B'$, a finite map $\eta\colon B'\to B$, and a meromorphic, dominant
map $\e \colon Y\dasharrow X$ such that $\pi\circ \e  = \eta\circ \pi'$ and $\e \circ g = f \circ \e $.
The fact that $T$ is constant on some open set  is an intrinsic property, 
so it also holds  for $g$. Since all singular fibers of $\pi'$ are now of type $I_b$, the previous argument
shows that $\norm{Dg^n}$ is uniformly bounded on $Y$; in particular,  $\norm{(g^n)^*}_{H^{2}(Y;\R)}$ is uniformly bounded. Let us show that this contradicts the parabolic
behavior of $f$. Write $\e$ as a composition $\varphi\circ \psi^{-1}$ where $\varphi\colon Z\to X$ is regular and $\psi\colon Z\to Y$ is a bimeromorphic morphism.
Set $h=\psi^{-1}\circ g\circ \psi$, which is a bimeromorphic map of $Z$.
By \cite[Prop. 1.15]{diller-favre}, $\norm{(h^n)^*}_{H^{1,1}(Z;\R)}$ is bounded. Then, %with $\kappa$ a Kähler form on $X$,
\begin{equation}
\int_{Z}  ((h^n) ^\varstar \varphi^\varstar\kappa) \wedge (\varphi^\varstar\kappa) = 
 \int_{Z} \varphi^\varstar \lrpar{ (f^n) ^\varstar\kappa\wedge \kappa} = \deg(\varphi)
 \int_{X} (f^n) ^\varstar\kappa\wedge \kappa.
 \end{equation}
The left hand side is bounded because $\norm{(h^n)^*}_{H^{1,1}(Z;\R)}$ is bounded. This contradicts the parabolic behavior of $f$ and concludes the proof. 
\end{proof}

\begin{lem}\label{lem:rank_0_1}
The differential $DT_w\colon T_wU\to \R^2$ of $T$ has
rank $2$ everywhere, except for finitely many points $w_j\in U$ at which $DT_{w_j}=0$. 
The analytic map $T\colon U\to \R^2$ is an open mapping. 
\end{lem}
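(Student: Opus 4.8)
The plan is to extract from the construction a single \emph{holomorphic} identity relating the translation amplitude $t(w)$ to the Betti coordinates $T(w)=(T_1(w),T_2(w))$, and to read everything off it. By the definition of $T$ and the explicit formula~\eqref{eq:psi_explicit_formula} for $\Psi_w$ we have
\begin{equation*}
t(w)=T_1(w)+\tau(w)\,T_2(w)\qquad(w\in U),
\end{equation*}
where $t$ and $\tau$ are holomorphic on $U$ while $T_1,T_2$ are real-valued and real-analytic. Applying $\partial_{\bar w}$ and using $\partial_{\bar w}\tau=0$ gives $\partial_{\bar w}T_1=-\tau\,\partial_{\bar w}T_2$, and taking conjugates (the $T_j$ being real, so $\partial_wT_j=\overline{\partial_{\bar w}T_j}$) this is equivalent to $\partial_wT_1=-\bar\tau\,\partial_wT_2$. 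From this, together with $\partial_xT_j=2\,\Rea\,\partial_wT_j$ and $\partial_yT_j=-2\,\Ima\,\partial_wT_j$, a short computation of the Jacobian of $T$, viewed as a real map on $U\subset\C$, yields
\begin{equation*}
\det DT_w=4\,\Ima\bigl(\tau(w)\bigr)\cdot|\partial_wT_2(w)|^2\ \geq\ 0 .
\end{equation*}
Since $\Ima\tau>0$, this settles the rank alternative apart from finiteness: $DT_w$ has rank $2$ unless $\partial_wT_2(w)=0$, and at such a point $\partial_wT_1(w)=-\bar\tau\,\partial_wT_2(w)=0$ too, so $DT_w=0$. In particular $DT_w$ never has rank exactly $1$, and the exceptional set is $Z:=\{w\in U:\partial_wT_2(w)=0\}$.

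The main point --- and the step I expect to be the real obstacle --- is to show that $Z$ is discrete, i.e.\ to rule out that $DT$ drops rank along a curve. Here I would argue as follows. First, $T$ is not locally constant: being real-analytic on the connected set $U$ it would otherwise be globally constant, against Lemma~\ref{lem:T-depends-on-w}; so $\partial_wT_2\not\equiv0$ and $Z$ is a proper real-analytic subset of the surface $U$. If $Z$ were not discrete it would contain a nontrivial real-analytic arc $\gamma$; on $\gamma$ we have $DT\equiv0$, hence $T\equiv(c_1,c_2)$ with $c_1,c_2\in\R$. Then the holomorphic function $w\mapsto t(w)-c_1-c_2\,\tau(w)$ vanishes on $\gamma$, hence identically on $U$ by the identity principle. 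Since $\bigl(1,\tau(w)\bigr)$ is an $\R$-basis of $\C$ for every $w$, the decomposition $t(w)=T_1(w)\cdot1+T_2(w)\cdot\tau(w)$ with real coefficients is unique; comparing it with $t(w)=c_1\cdot1+c_2\cdot\tau(w)$ forces $T_1\equiv c_1$ and $T_2\equiv c_2$, so $T$ is constant, again contradicting Lemma~\ref{lem:T-depends-on-w}. Thus $Z$ is discrete and closed in $U$; as the statement is local we may take $U$ relatively compact in $B^\circ$, and then $Z$ is finite.

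It remains to see that $T$ is an open mapping. On $U\setminus Z$ the differential is invertible, so it suffices to check openness near a point $w_0\in U$. Put $\zeta:=T_1+\ii\,T_2\colon U\to\C$; then $\partial_w\zeta=(\ii-\bar\tau)\,\partial_wT_2$ and $\partial_{\bar w}\zeta=(\ii-\tau)\,\overline{\partial_wT_2}$, whence
\begin{equation*}
|\partial_{\bar w}\zeta(w)|=k(w)\,|\partial_w\zeta(w)|,\qquad k(w):=\frac{|\ii-\tau(w)|}{|\ii-\bar\tau(w)|}<1 ,
\end{equation*}
the strict inequality holding because $|\ii-\bar\tau|^2-|\ii-\tau|^2=4\,\Ima\tau>0$. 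On a small disk $D\ni w_0$ one has $\sup_Dk<1$, so $\zeta|_D$ is a real-analytic $K$-quasiregular map ($K=(1+\sup_Dk)/(1-\sup_Dk)$), non-constant by the previous paragraph. By the Stoilow factorization of planar quasiregular maps, $\zeta|_D=h\circ\chi$ with $\chi$ a homeomorphism onto its image and $h$ holomorphic and non-constant; hence $\zeta$, and therefore $T$, is open near $w_0$. (Incidentally this reproves discreteness, since $Z\cap D=\chi^{-1}(\{h'=0\})$; the whole statement of the lemma could in fact be obtained from the Beltrami-type inequality above together with the theory of planar quasiregular maps.)
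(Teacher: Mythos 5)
Your proof is correct, and at its core it is the paper's argument rewritten in explicit coordinates. The decisive step --- excluding a one-dimensional set of critical points --- is the same identity-principle argument as in the paper: constancy of $T$ along an arc forces the holomorphic function $w\mapsto t(w)-c_1-c_2\tau(w)$ to vanish on that arc, hence identically, contradicting Lemma~\ref{lem:T-depends-on-w}; the paper phrases this geometrically as the coincidence of the two complex curves $f(\sigma(U))$ and the Betti leaf $\Psi^{-1}\{(w,(x,y))\,;\,(x,y)=T_0\}$, which is exactly your computation since these curves are the graphs of $t$ and of $c_1+c_2\tau$. Likewise your Jacobian identity $\det DT_w=4\,\Ima(\tau(w))\,\abs{\partial_wT_2(w)}^2$ is the explicit form of the paper's structural observation that $DT_w$ intertwines the complex structures of $U$ and of the fiber, which is how the rank-$0$-or-$2$ alternative is obtained there. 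The only genuine divergence is the openness statement: the paper dismisses it as an easy consequence of the rank dichotomy, whereas you derive the Beltrami-type relation $\abs{\partial_{\bar w}\zeta}=k(w)\abs{\partial_w\zeta}$ with $k<1$ and invoke Stoilow factorization of planar quasiregular maps --- valid, and it reproves discreteness for free, but heavier than needed (once the critical set is discrete, a local degree argument at an isolated zero of $DT$, using $\det DT>0$ nearby, already gives openness). One small point of wording: as in the paper's own proof and the remark following the lemma, the correct conclusion is local finiteness of the critical set in $U$ (finiteness after shrinking $U$ slightly), which is what your argument actually yields.
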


If $DT_w=0$, we shall say that $f$ {\bf{does not twist}} along the fiber $X_w$; this notion is intrinsic: it does not 
depend on the choice of the local diffeomorphism $\Psi$ (the argument is the same as that of  Remark~\ref{rem:prelim-to-lem-T-depends-on-w}). We define the set of non-twisting points by 
\begin{equation}
\NT_f=\{w\in B\; ; \; DT_w=0\}. %\; {\text{ and }} \; B^{\circ \circ}= B^\circ \setminus \NT_g.
\end{equation}

\begin{proof}[Proof of Lemma~\ref{lem:rank_0_1}]
Since $f$ is holomorphic, its differential commutes with the complex 
structure~$\jj_X$ of~$X$. 
This implies that $DT_w\circ \jj =j(w)\circ DT_w$ where $\jj$ is the complex structure on $U\subset B$ and $j(w)$ is 
a complex structure on $\R^2/\Z^2$ that depends on $w$ (it is the conjugate of the complex 
structure of $\C/\Lat(w)$ by the linear map $D\Psi_w$). As a consequence, the rank of $DT_w$ is
equal to $2$ or $0$. 
Assume that the real analytic set $\{w\in U\; ;\; DT_w=0\}$ contains some connected real analytic curve $C\subset U$. 
Along $C$, $T$ is a constant $T_0$; then, the image $f_\Psi(\{(w,(0,0))\; ;\; w\in U\})$ of 
the zero section intersects the horizontal
disk $\{ (w,(x,y))\; ; \; (x,y)=T_0\}$ on a real analytic curve. 
Now, let us come back to the complex surface $X$. 
The image $f(\sigma(U))$ of the zero section and, by Remark~\ref{rem:Betti_foliation}, 
the set $\Psi^{-1}\{ (w,(x,y))\; ; \; (x,y)=T_0\}$ are two connected complex analytic curves.
Since they intersect along a non-discrete subset, they coincide; this implies that $T$ is constant, in contradiction with Lemma~\ref{lem:T-depends-on-w}. 
Thus, the set of points at which $DT_w$ has rank $<2$ is locally finite. 
The last assertion follows easily from the first.
\end{proof}

\begin{rem}\label{rem:invariant_foliations}
Lemma~\ref{lem:rank_0_1} shows that  {\emph{every $f$-invariant holomorphic foliation ${\mathcal{G}}$ on $X$ is given by the invariant fibration $\pi$}}.
Indeed, if   $T(w)\in \Q^2/\Z^2$ and $DT_w\neq 0$, then a positive iterate  $f^m$ fixes $X_w$ pointwise and,  
at every point $x\in X_w$, ${\mathrm{Ker}}(D\pi_x)\subset T_xX$ is the unique line invariant by the tangent map $Df^m_x$; thus ${\mathcal{G}}$ must be everywhere tangent to $X_w$ and $X_w$ must be a leaf of ${\mathcal{G}}$. Since $T$ is open, $T^{-1}(\Q^2/\Z^2)$ is dense in $U$, so  ${\mathcal{G}}$ coincides with the fibration above $U$, hence everywhere on $X$. This proves Assertion~(3) of Theorem~\ref{thm:invariant_fibration} for compact K\"ahler surfaces (only 
projective surfaces are considered in~\cite{Cantat-Favre}).
\end{rem}

Lemma~\ref{lem:rank_0_1} does not say that $\NT_f$ is finite: it could  cluster 	at a critical value of~$\pi$. To exclude this possibility, let us first reformulate the
twisting property. Consider an open 
set $U\subset B$ endowed with a section $\sigma$ and the induced
 local Betti foliation  ${\mathcal{F}}$.  Its image $f_*({\mathcal{F}})$ is generically transverse to
${\mathcal{F}}$; these foliations  are tangent at  $x\in X_U$ if,
and only if $DT_{\pi(x)}=0$  if, and only if they are tangent along the fiber $X_{\pi(x)}$. So, 
$\pi^{-1}(\NT_f)=\Tang(\mathcal{F}, f_*({\mathcal{F}}))$.

\begin{lem}\label{lem:Betti_singular_fiber}
Let $s\in \Crit(\pi)$ be the projection of a fiber of type $I_b$, $b\geq 1$. 
Let $V$ be an open disk containing $s$, with a section $\sigma\colon V\to X$ of $\pi$; 
let ${\mathcal{F}}$ be the Betti foliation determined by $\sigma$ above $V\setminus \{s\}$. 
If $V$ is small enough,  $f_*({\mathcal{F}})$ is everywhere transverse to ${\mathcal{F}}$ above $V\setminus \{s\}$. \end{lem}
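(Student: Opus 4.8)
The plan is to use the explicit description of the local dynamics near a fiber of type $I_b$ from \S\ref{par:type-Ib} together with Lemma~\ref{lem:rank_0_1}, and to argue by contradiction: if no matter how small $V$ is there were a tangency point, these tangencies would accumulate on $s$, and I would show this forces $f$ to preserve the Betti foliation ${\mathcal{F}}$ near $s$, contradicting the twisting property established in Lemma~\ref{lem:T-depends-on-w}. First I would reduce, as in the proof of Lemma~\ref{lem:T-depends-on-w}, to the situation where $f$ (replaced by a positive iterate, which does not affect the tangency locus $\Tang({\mathcal{F}},f_*{\mathcal{F}}) = \pi^{-1}(\NT_f)$) fixes every irreducible component of $X_s$, and work in the coordinates $(w,v)\in\disk_R\times\C^\times$ on $X_V^\sharp$, where the leaves of ${\mathcal{F}}$ are the curves $\gamma_{c,d}(w)=(w,\exp(2\ii\pi c)\,w^{bd})$, and $f$ is induced on $\disk_R\times\C$ by $(w,z)\mapsto(w,z+t(w))$ for a holomorphic $t\colon\disk_R\to\C$.

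Next I would translate the tangency condition into the language of $t$. On $V\setminus\{s\}$, identifying $X_w = \C/\Lat(w)$ with $\Lat(w) = \Z\oplus\Z\tau(w)$ and $\tau(w) = \frac{b}{2\ii\pi}\log w$, the translation vector of $f$ in the real coordinates $(x,y)$ given by \eqref{eq:psi_explicit_formula} is $T(w) = \bigl(\Rea t(w) - \tfrac{\tau_1(w)}{\tau_2(w)}\Ima t(w),\ \tfrac{1}{\tau_2(w)}\Ima t(w)\bigr)$, so by Lemma~\ref{lem:rank_0_1} the point $w$ lies in $\NT_f$ exactly when $DT_w=0$. The key computational input is that, as $w\to s$, $\tau_2(w) = -\tfrac{b}{2\pi}\log\abs{w}\to+\infty$; I would extract from this that for $w$ close to $s$ the map $T$ behaves, to leading order, like a nonconstant function controlled by $t$ and its derivatives, and in particular that if $DT_{w_n}=0$ along a sequence $w_n\to s$ then the holomorphic data degenerate in a way that is incompatible with $t$ being nonconstant — more precisely, I would show that accumulation of $\NT_f$ at $s$ forces $T$ to be locally constant near $s$, hence (Remark~\ref{rem:prelim-to-lem-T-depends-on-w}) everywhere, contradicting Lemma~\ref{lem:T-depends-on-w}.

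Concretely, the cleanest route is probably to observe that a tangency point of ${\mathcal{F}}$ and $f_*{\mathcal{F}}$ in the $(w,v)$-coordinates is a point where $f$ sends a leaf $\gamma_{c,d}$ to a curve tangent to some leaf $\gamma_{c',d'}$; the leaves are graphs $v = \text{const}\cdot w^{bd}$ over $w$, and the image under $(w,z)\mapsto(w,z+t(w))$ corresponds, after passing to the $v = \exp(2\ii\pi z)$ coordinate, to multiplying $v$ by $\exp(2\ii\pi t(w))$. So the tangency equation becomes, after taking logarithmic derivatives, an identity of the form $t'(w)$ (suitably normalized against $\tfrac{1}{w}$) taking a value in the lattice direction; I would show the set of $w$ where this can happen is the zero set of a holomorphic function that, if it accumulated at $s$, would vanish identically on $V\setminus\{s\}$, again yielding that ${\mathcal{F}}$ is $f$-invariant near $s$. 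The main obstacle I anticipate is handling the multivaluedness of $w^{bd}$ and of $\log w$ carefully — the leaves $\gamma_{c,d}$ are genuinely multivalued and the monodromy $M(\gamma)$ acts nontrivially — so one must phrase the tangency condition monodromy-invariantly (e.g.\ using the form $\alpha_V$ from Remark~\ref{rem:betti-form-Ib}, whose kernel is $T{\mathcal{F}}$, and writing $\Tang({\mathcal{F}},f_*{\mathcal{F}}) = \{f^*\alpha_V\wedge\alpha_V = 0\}$); once that is done, finiteness near $s$ follows from the identity theorem for the resulting holomorphic function together with the twisting property. A secondary point to check is that the stabilization/base-change reductions of \S\ref{par:multiple_fibers}--\S\ref{par:stable_reduction} are not needed here since the statement already restricts to fibers of type $I_b$, but one should still confirm the local section $\sigma$ and the Betti foliation are set up exactly as in \S\ref{par:local_sections}--\S\ref{par:type-Ib}.
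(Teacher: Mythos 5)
You set things up correctly (pass to an iterate fixing the components, work in the coordinates $(w,v)\in\disk_R\times\C^\times$ of \S\ref{par:type-Ib}, and express the tangency locus via the form $\tilde\alpha_V$ evaluated along the image of the section), but your concluding mechanism has a genuine gap. The tangency condition is \emph{not} the zero locus of a holomorphic function: writing $f(w,v)=(w,e^{2\ii\pi t(w)}v)$, the condition $\tilde\alpha_V\big((f\circ\sigma)'(w)\big)=0$ reads
\begin{equation*}
-\,\ii\,\log(\abs{w})\, w\, t'(w)\;=\;\Ima\big(t(w)\big),
\end{equation*}
which mixes the holomorphic data $w\,t'(w)$ with the non-holomorphic quantities $\Ima(t(w))$ and $\log\abs{w}$. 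It is only a real-analytic condition on the punctured disk, and a real-analytic function on $V\setminus\{s\}$ can have isolated zeros accumulating at the puncture without vanishing identically (think of $\abs{e^{1/w}-1}^2$), so the identity-theorem step ``if the tangencies accumulate at $s$ then the function vanishes identically, hence ${\mathcal{F}}$ is $f$-invariant, contradicting Lemma~\ref{lem:T-depends-on-w}'' does not go through. Your first route has the same flaw: the argument of Lemma~\ref{lem:rank_0_1} forces $T$ to be constant only when the non-twisting set contains a \emph{curve} inside the domain (two complex curves meeting along a non-discrete set), not when one merely has a sequence of isolated points $w_n\to s$; moreover each $w_n$ could correspond to a tangency with a different leaf $\gamma_{c_n,d_n}$, so no pair of fixed analytic curves is forced to coincide. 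In short, Lemma~\ref{lem:T-depends-on-w} alone cannot rule out clustering of $\NT_f$ at a critical value --- that is exactly the phenomenon this lemma must exclude by a different means.

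The paper's proof closes the gap quantitatively rather than by analytic continuation: from the displayed equation, writing $t(w)=\alpha w^k+\hot$ (after discarding a possible real constant, which is excluded by Lemma~\ref{lem:T-depends-on-w} via Remark~\ref{rem:prelim-to-lem-T-depends-on-w}), the left-hand side is $\simeq -\ii\alpha k\,\log(\abs{w})\,w^k$, whose modulus is of order $\abs{w}^k\,\abs{\log\abs{w}}$, while the right-hand side is $O(\abs{w}^k)$; since $\abs{\log\abs{w}}\to\infty$ as $w\to s$, the equation has no solutions in a small punctured neighborhood of $s$. This asymptotic comparison, exploiting the logarithmic degeneration of the lattice $\Lat(w)=\Z\oplus\Z\,\frac{b}{2\ii\pi}\log(w)$ at an $I_b$ fiber, is the ingredient your proposal is missing; without it (or some substitute estimate near $s$), the accumulation of tangencies at $s$ cannot be excluded.
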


\begin{proof} We make use of Remark~\ref{rem:betti-form-Ib}, and work in $\disk_R\times \C^\times$. The automorphism $f$ is given by 
$f(w,v)=(w,  h(w) v)$ for some holomorphic function 
$h(w)=\exp(2\ii \pi t(w))$ that does not vanish. In these coordinates, the neutral 
section $\sigma$ is given by $\sigma(w)=(w,1)$, and its image under $f$ is the curve $f\circ\sigma\colon w\mapsto (w, \exp(2\ii \pi t(w)))$. 
The points $w$ above which  ${\mathcal{F}}$ is tangent to  $f_*{\mathcal{F}}$ are the points where the image of $f\circ\sigma$ is tangent to ${\mathcal{F}}$; they 
are determined by the equality ${\tilde{\alpha}}_V((f\circ \sigma)'(w))=0$, with ${\tilde{\alpha}}_V$ as in 
Equation~\eqref{eq:alpha-betti}. This leads to  the constraint 
\begin{equation} 
- \ii \log(\abs{w}) w t'(w) = \Ima (t(w));
\end{equation}
writing $t(w)= \alpha w^k + \mathrm{h.o.t.}$, for some $k$ in $\N$, 
we get $-\ii \alpha k \log(\abs{w}) w^k\simeq \Ima(\alpha w^k)$. As a consequence there is no solution close to the origin, and the proof is complete. 
\end{proof}

According to Section~\ref{par:singular-fibers}, the local dynamics of $f$ near an unstable or multiple fiber is covered by the dynamics of a parabolic automorphism near a  curve
of type $I_b$. So, 
the next result  is a corollary of the previous lemmas. 
(Note that the existence of a section in Assertion~(2) implies that
$X$ is projective and that $\pi$ does not have multiple fibers\footnote{ Indeed, if $D\subset X$ is the graph of a section and $F$ is a fiber, then  
$F\cdot D=1$, so $F$ can not be multiple; and if $F$ is reducible, $D$ intersects $F$ along a component of multiplicity $1$. Moreover, if 
$a\in \Z_+$ is large enough, then $aF+D$ is big and nef.}.) 

\begin{pro}\label{pro:NT_finite}   
Let $f$ be a parabolic automorphism of a compact Kähler surface $X$, acting trivially on the base of its invariant fibration $\pi\colon X\to B$. 
\begin{enumerate}[\em (1)]
\item The set of fibers $\pi^{-1}(w)$ along which $f$ does not twist is finite, {i.e.} $\NT_f$ is finite. 

\item If  $\pi$ admits  a global section  $\sigma\colon B\to X$ and  
  ${\mathcal{F}}$ is the associated Betti foliation on $\pi^{-1}(B^\circ)$, then 
 $\Tang(\mathcal{F}, f_*({\mathcal{F}}))$ is a finite union of fibers. 
\end{enumerate}
\end{pro}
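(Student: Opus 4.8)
The plan is to reduce Proposition~\ref{pro:NT_finite} to the local statements already proven. For assertion~(1), recall from Lemma~\ref{lem:rank_0_1} that $\NT_f$ is a locally finite subset of $B^\circ$; hence it is finite unless it accumulates at some critical value $s \in \Crit(\pi)$. So it suffices to show that for each $s \in \Crit(\pi)$ there is a punctured neighborhood $V \setminus \{s\}$ containing no point of $\NT_f$. First I would dispose of the case where $X_s$ is a (non-multiple) fiber of type $I_b$, $b\geq 1$: this is exactly Lemma~\ref{lem:Betti_singular_fiber}, since $\pi^{-1}(\NT_f) = \Tang(\mathcal F, f_*\mathcal F)$ and the lemma says $f_*\mathcal F$ is everywhere transverse to $\mathcal F$ near such a fiber, once $V$ is small enough. (Here one may need to first pass to a positive iterate $f^m$ fixing each component of $X_s$, as in the proof of Lemma~\ref{lem:T-depends-on-w}; the non-twisting locus is unchanged under this, since $DT_w$ for $f^m$ is $m \cdot DT_w$ for $f$ by linearity of the translation action along fibers.)

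\textbf{Reduction of the remaining fiber types.} For the other possibilities---$X_s$ a multiple fiber of type $mI_b$, or an unstable fiber of type $II,III,IV,I_b^*,II^*,III^*,IV^*$---I would invoke the stabilization machinery of \S\ref{par:multiple_fibers} and \S\ref{par:stable_reduction}. After replacing $f$ by a positive iterate fixing each component of $X_s$, one performs a local base change $\eta\colon B' \to B$ ramified at $s$ (followed by normalization, minimal resolution, and contraction of exceptional curves in the central fiber) to obtain a surface with a relatively minimal fibration $\pi^{(m)}$ whose central fiber is smooth or of type $I_b$, together with a lifted holomorphic diffeomorphism $f^{(m)}$; the original germ is recovered as a finite quotient, the finite group acting on the base by a root of unity. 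The point is that ``$f$ does not twist along $X_w$'' is an intrinsic property of the local dynamics along a regular (or $I_b$) fiber, invariant under these operations (it is detected by $DT_w = 0$, equivalently by $f$ preserving the Betti foliation, equivalently by tangency of $\mathcal F$ with $f_*\mathcal F$ as noted just before Lemma~\ref{lem:Betti_singular_fiber}; the base change only reparametrizes $w$ and the finite quotient does not affect the rank of $DT$). Thus a non-twisting fiber of $f$ accumulating at $s$ would produce, after stabilization, a non-twisting fiber of $f^{(m)}$ accumulating at the preimage critical value, whose central fiber is now smooth or of type $I_b$; for the smooth case $\NT_f$ is locally finite by Lemma~\ref{lem:rank_0_1}, and for the $I_b$ case Lemma~\ref{lem:Betti_singular_fiber} applies. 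Either way there is no accumulation, so $\NT_f$ is finite.

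\textbf{Assertion~(2) and the main obstacle.} For assertion~(2), observe that when $\pi$ admits a global section $\sigma$, the global Betti foliation $\mathcal F$ on $\pi^{-1}(B^\circ)$ is defined (Remark~\ref{rem:algebraic_leaves_of_FB}), and by the reformulation preceding Lemma~\ref{lem:Betti_singular_fiber} the tangency locus satisfies $\pi^{-1}(\NT_f) = \Tang(\mathcal F, f_*\mathcal F)$; since $\NT_f$ is finite by part~(1), this tangency locus is a finite union of fibers. (The footnote observations---that a section forces $X$ projective, no multiple fibers, and $aF + D$ big and nef for large $a$---are used only to situate the hypothesis, and can be cited directly.) I expect the main obstacle to be assertion~(1), specifically the bookkeeping in the stabilization step: one must check carefully that the non-twisting condition is genuinely preserved under base change plus finite quotient, i.e.\ that the correspondence between fibers of $\pi$ near $s$ and fibers of $\pi^{(m)}$ near $\eta^{-1}(s)$ matches up non-twisting loci, and that when the stable central fiber is of type $I_b$ (the $I_b^*$ case) one really is in the scope of Lemma~\ref{lem:Betti_singular_fiber} rather than some degenerate sub-case. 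The transcendental estimate at the end of Lemma~\ref{lem:Betti_singular_fiber}'s proof ($-\ii\alpha k \log|w| w^k \simeq \Ima(\alpha w^k)$, which has no solution near $0$ because the left side is of strictly larger order) is the analytic heart, but it is already established; what remains is to make sure every singular fiber type has been funneled into that estimate.
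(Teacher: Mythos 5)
Your proposal is correct and follows essentially the same route as the paper: local finiteness of $\NT_f$ in $B^\circ$ from Lemma~\ref{lem:rank_0_1}, non-accumulation at critical values via Lemma~\ref{lem:Betti_singular_fiber} for type $I_b$, reduction of multiple and unstable fibers to the smooth or $I_b$ case by the stabilization of \S\ref{par:multiple_fibers}--\S\ref{par:stable_reduction}, and assertion~(2) from the absence of multiple fibers and the identity $\pi^{-1}(\NT_f)=\Tang(\mathcal{F},f_*\mathcal{F})$. The paper states this reduction only in passing, so your more explicit check that the non-twisting condition is intrinsic and preserved under iterates, base change, and finite quotients is a faithful expansion rather than a different argument.
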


%%%
\subsection{The dynamics of Halphen twists: orbit closures}\label{par:the_dynamics_R}  
%%%

We keep  the notation from Section~\ref{par:the_dynamics_twisting}: we let 
 $U\subset B^\circ$ be a disk, on which a continuous choice of basis 
 for $H_1(X_w;\Z)$ and a section of $\pi$ have been chosen, 
so that $H_1(X_w;\Z)\simeq \Z^2$ and $X_w\simeq \R^2/\Z^2$. 

\subsubsection{}\label{par:definition_L_k_p_q}
A  proper and closed   subgroup  of $X_w$ is finite or $1$-dimensional.   
A  closed  $1$-dimensional subgroup $L\subset X_w$ is
characterized by two data: a slope $(p,q)$, given by a primitive vector in $\Z^2$, and 
the number $k\geq 1$  of connected components of $L$. The connected 
component of the identity $L^0\subset L$ is the kernel of the homomorphism 
$X_w\simeq \R^2/\Z^2\to \R/\Z$ defined by 
\begin{equation}
(x,y)\mapsto qx-py ,
\end{equation}
and $L$ is the preimage of the unique cyclic subgroup of order $k$ in $\R/\Z$. Equivalently, $L$ is 
the kernel of  $(x,y)\mapsto k(qx-py)$. We denote this subgroup by $L_w(k, (p,q))$. The integer $k$ is intrinsically defined, but the slope depends
on the basis of $H_1(X_w;\Z)$.

\begin{nota}\label{nota:L_f} For $z\in X$, we denote by $L_f(z)$ the closure of the orbit of $z$, and by $L_f^0(z)$ the connected component of $z$ in
$L_f(z)$. For $w$ in $B$, we denote by $f_w$ the restriction of $f$ to the fiber $X_w$. If $\pi(z)\notin \Crit(\pi)$,  $f_{\pi(z)}$ is a translation 
in $X_{\pi(z)}$; thus $L_f(z)$ is either finite, or a translate
 of a $1$-dimensional subgroup of $X_{\pi(z)}$, or equal to $X_{\pi(z)}$. 
By definition a translate of a connected, closed,  $1$-dimensional subgroup  of $X_{\pi(z)}$ will be called a 
{\bf{circle}}. 
% in the second case, we say that $L_f^0(z)$ is a {\bf{circle}} in $X_{\pi(z)}$.
\end{nota}

\subsubsection{}  Define 
\begin{align}
\Tor(U) &= \{ w\in U\; ; \;  f_{w}\colon X_w\to X_w \; \; {\text{is a periodic translation}}\} \\
\notag &= \{ w\in U\; ; \;  t(w) \; {\text{  has finite order in }} \; X_w=\C/\Lat(w) \} \\
\notag &= \{ w\in U\; ; \;  T(w)\in \Q^2/\Z^2\}.
\end{align}
This  set   is   intrinsically defined (it does not depend on the section or on the basis of $H_1(X_w;\Z)$). By Lemma~\ref{lem:rank_0_1},  $\Tor(B^\circ)$ is  a countable and dense subset of $B^\circ$. 
A point $w$ belongs to $\Tor(B^\circ)$ if and only if the orbit of every $z\in X_w$ is a finite subset  of $X_w$. 

\subsubsection{}\label{par:next_level} The next level of 
complexity is when $t(w)$ belongs to a 1-dimensional subgroup of $X_w$. 
Write $f_\Psi(w,(x,y))=(w,(x,y)+T(w))$. For $(\alpha,\beta)$ in $\Q^2/\Z^2$ and $(p,q)\in \Z^2\setminus\{(0,0)\}$, we set  \begin{align}\label{eq:RabpqT}
 \mathrm R^{\alpha,\beta}_{p,q}(U) & = \{w \in U\; ; \; T(w)\in (\alpha,\beta)+ \R \cdot (p,q)\}.
\end{align}
This set depends only on the slope $(p,q)$ and on $q\alpha-p\beta\in \R/\Z$, 
but not really on $(\alpha,\beta)$ itself.
Using holomorphic coordinates, this real analytic curve ${\Rk}^{\alpha,\beta}_{p,q}(U)$ is alternatively expressed as  
\begin{center}
${\Rk}^{\alpha,\beta}_{p,q}(U)=$ $\{w\; ; \;$ the complex numbers $t(w) - \sigma_{(\alpha,\beta)}(w)$ and $p+q\tau(w)$ are 
$\R$-collinear$\}$, 
\end{center}
where  
 $\sigma_{(\alpha,\beta)}$ is the local holomorphic section of $\pi$ defined by 
 \begin{equation}
 \sigma_{(\alpha,\beta)}(w)=\Psi^{-1}(w,(\alpha,\beta))=\alpha+\beta\tau(w) \mod\Lat(w);
 \end{equation}
 here, as in Equation~\eqref{eq:Psi} ,  $\Psi_w$ maps the basis $(1,\tau(w))$ of $\Lat(w)\subset \C$ onto the basis $((1,0),(0,1))$
 of $\Z^2\subset \R^2$.
This equation of $R^{\alpha,\beta}_{p,q}$ can  be written locally as 
\begin{equation}\label{eq:R_ab_pq}
\Ima\left( \frac{t(w)-\sigma_{(\alpha,\beta)}(w)}{p+q\tau(w)}\right)=0.
\end{equation} 
This curve may have singularities, as does the curve defined by $\Ima(w^k)=0$ in the unit $\disk$ when $k\geq 2$; %in that case, it locally looks like a star with $2k$ branches, or more precisely like the union of $k$ segments through the origin. 
this happens precisely when $w\in \mathrm{NT}_f$.  

Let $\Rk^k_{p,q}(U)\subset U$ be the closure of the set of points $w$ for which $L_f(z)$ 
has $k$ connected components of slope $(p,q)$ for all $z\in X_w$; it is 
the (finite) union of the $\Rk^{\alpha,\beta}_{p,q}(U)$, for all $(\alpha,\beta)$ in 
$\Q^2/\Z^2$ such that $q\alpha-p\beta$ has order $k$ in $\R/\Z$. Thus, $\Rk^k_{p,q}(U)$ is an analytic curve in~$U$.  
For  $w\in  \Rk^k_{p,q}(U)\setminus \Tor(U)$ and  $z\in X_w$, $L_f(z)$ is a translate of $L_w(k,(p,q))$.  

When performing an analytic continuation of $\Rk^{k}_{p,q}(U)$ around a critical value of $\pi$, 
the continuation may hit $U$ again along a component of $\Rk^{k}_{p',q'}(U)$ for some new slope $(p',q')$; 
the vector $(p',q')$ is in the orbit of $(p,q)$ under the action  of the  monodromy group of the fibration.
Since the orbit of $(p,q)$ is typically infinite, the analytic continuation could a priori intersect $U$ on 
infinitely many distinct $\Rk^{k}_{p',q'}(U)$.

Finally, for  each integer $k\geq 1$, we set 
%\begin{align}
%\Rk^k(U) &=   \{ w\in U\; ; \;  {\text{for every}} \; z\in X_w, L_f(z) \; {\text{is a subgroup of dimension}} \; 1  \\
%& \quad \quad \quad \quad \quad   \; {\text{in}}  \; X_w  \; {\text{with}}  \; k  \; {\text{connected components}} \} \notag
% \end{align}
%\begin{align}\label{eq:circles}
%\Rk^k(U) &=   \set{ w\in U\; ; \;  {\text{for every}} \; z\in X_w, L_f(z) \; {\text{is a union of }k\text{ circles}}} \end{align} 
\begin{equation}\label{eq:circles}
\Rk^k(U) = \bigcup_{(p,q)\in \Z^2 \text{ primitive}} \Rk^{k}_{p,q}(U), \;\;\; \text { and } \;\;\; \Rk(U)=\bigcup_{k\geq 1} \Rk^k(U).
\end{equation}
%and $\Rk(U)=\bigcup_{k\geq 1} \Rk^k(U)$; 
%again, the sets $\Rk^k(U)$ and $\Rk(U)$ are  intrinsically defined. 
%Let us express $\Rk^k(U)$ as a   real analytic subset of dimension $1$ in $U$. 
These sets are intrinsically defined. Intuitively $\Rk^k(U)$ should be thought as the set of $w\in U$ such 
that for $z\in X_w$, $L_f(z)$ is a union of $k$ circles; formally, this  is not a correct characterization of $\Rk^k(U)$ because  
$\Rk^k(U)$ contains points of    $\Tor(U)$.

\subsubsection{} Summarizing this discussion, and keeping the same notation, we obtain: 

\begin{lem}\label{lem:basic_dynamics} Let $f$ be a parabolic automorphism of a compact K\"ahler surface acting trivially on 
the base of its invariant fibration $\pi\colon X\to B$. Let $U\subset B^\circ$ be an open disk.
\begin{enumerate}[\em (1)]
\item The set $\Tor(B^\circ)$ is a dense and countable subset of $B$, and $w\in \Tor(B^\circ)$ if and only if the translation $f_w$ is periodic, 
if and only if $L_f(z)$ is finite for every $z\in X_w$.
\item For each slope $(p,q)$, the set $\Rk^k_{p,q}(U)$ is either empty, or a (possibly 
singular) real analytic curve; the set $\Rk^k(U)$ is the union of these curves. 
Moreover,  $w\in \Rk(B^\circ) \setminus \Tor(B^\circ)$
if and only if $L_f^0(z)$ is a circle for each $z$ in $X_w$ if and only if the closure of each orbit of $f_w$ is a union of circles embedded in $X_w$; 
\item  If $w\in B^\circ\setminus \Rk(B^\circ)$, then each orbit of $f_w$ is dense in $X_w$.
\end{enumerate}
In the second case, every  $f_w$-invariant and ergodic probability measure is the Lebesgue measure on $L_f(z)$ for some $z\in X_w$;   in the third case, the only $f$-invariant probability measure on $X_w$ is the Lebesgue measure.
\end{lem}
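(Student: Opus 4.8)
The plan is to unpack Lemma~\ref{lem:basic_dynamics} as a direct synthesis of the preceding analysis, organizing the proof around the trichotomy determined by the position of $w$ relative to $\Tor(B^\circ)$ and $\Rk(B^\circ)$. First I would dispatch assertion~(1): the three descriptions of $\Tor(B^\circ)$ in its defining display are equivalent by definition, and density plus countability is exactly what Lemma~\ref{lem:rank_0_1} (via the openness of $T$ and the density of $T^{-1}(\Q^2/\Z^2)$) gives, while the equivalence with $L_f(z)$ being finite for every $z\in X_w$ is immediate from the fact that $f_w$ is a translation by a torsion element, so every orbit is a coset of a finite cyclic subgroup. For assertion~(2) I would recall from \S\ref{par:next_level} that, once a section and a basis of $H_1(X_w;\Z)$ are fixed over $U$, the equation $\Ima\bigl((t(w)-\sigma_{(\alpha,\beta)}(w))/(p+q\tau(w))\bigr)=0$ cuts out $\Rk^{\alpha,\beta}_{p,q}(U)$ as a real analytic subset of the disk $U$; since $t$, $\sigma_{(\alpha,\beta)}$ and $\tau$ are holomorphic and $p+q\tau(w)$ is nowhere zero (as $\tau$ takes values in $\Hyp_+$), this is the zero set of a non-constant real analytic function unless the whole thing degenerates, so it is either empty or a real analytic curve (possibly singular precisely at points of $\NT_f$, cf.\ the remark after Equation~\eqref{eq:R_ab_pq}). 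Taking the finite union over the $(\alpha,\beta)$ with $q\alpha-p\beta$ of order $k$, and then over primitive slopes, gives the asserted structure of $\Rk^k_{p,q}(U)$ and $\Rk^k(U)$.

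Next I would establish the dynamical characterizations in (2) and (3). Fix $w\in B^\circ\setminus\Tor(B^\circ)$, so $f_w$ is translation by an element $a=T(w)$ of $X_w\simeq\R^2/\Z^2$ of infinite order; the closure $L_f(z)$ of the orbit of any $z$ is then the coset $z+\overline{\bra{a}}$, where $\overline{\bra{a}}$ is the closure of the cyclic group generated by $a$. By the elementary structure theory of closed subgroups of $\R^2/\Z^2$, $\overline{\bra{a}}$ is either the whole torus, or a closed $1$-dimensional subgroup $L_w(k,(p,q))$ for a unique $k\geq 1$ and primitive slope $(p,q)$; the latter happens exactly when $a$ lies on some rational line, i.e.\ when $a\in(\alpha,\beta)+\R\cdot(p,q)$ for some $(\alpha,\beta)\in\Q^2/\Z^2$ — which is the defining condition $w\in\Rk^{\alpha,\beta}_{p,q}(U)$, with $k$ the order of $q\alpha-p\beta$. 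This is precisely the statement that $w\in\Rk(B^\circ)\setminus\Tor(B^\circ)$ iff $L_f^0(z)$ is a circle for every $z\in X_w$ iff every orbit closure of $f_w$ is a finite union of circles, and that otherwise (case $w\in B^\circ\setminus\Rk(B^\circ)$) we have $\overline{\bra{a}}=X_w$, so every orbit is dense. The final sentence about invariant measures then follows from the classification of ergodic invariant measures of a minimal or $1$-dimensional-closure translation on the torus: on each orbit-closure circle $L_f(z)$ the translation is a minimal rotation, whose unique invariant probability measure is the Haar (Lebesgue) measure on that circle, and when $f_w$ is minimal on $X_w$ its unique invariant probability measure is Lebesgue on $X_w$.

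I do not expect any serious obstacle: the lemma is essentially a bookkeeping statement that collects facts already proven (Lemmas~\ref{lem:T-depends-on-w} and~\ref{lem:rank_0_1}) together with standard unique-ergodicity of irrational rotations and the structure of closed subgroups of the $2$-torus. The only point requiring a little care is making the "intrinsic" claims precise — that $\Tor$, the curves $\Rk^k_{p,q}$, and hence $\Rk^k$ and $\Rk$ do not depend on the auxiliary choices; but this was already handled in Remark~\ref{rem:prelim-to-lem-T-depends-on-w} (changing the section leaves $T$ unchanged, changing the basis of $H_1$ replaces $T$ by $A\circ T$ with $A\in\GL_2(\Z)$, which permutes slopes and preserves both the order $k$ and the property of being rational), so it suffices to invoke that remark. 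If anything is mildly delicate it is the behavior near points of $\NT_f$, where $\Rk^k_{p,q}(U)$ can be singular and several branches of different slopes can cross, but this does not affect the measure-theoretic conclusions, which concern individual fibers $X_w$ with $w\notin\Tor(B^\circ)$.
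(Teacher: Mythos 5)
Your proof is correct and is essentially the argument the paper intends: the lemma is stated as a summary of the preceding discussion (\S\ref{par:definition_L_k_p_q}--\ref{par:next_level} together with Lemmas~\ref{lem:T-depends-on-w} and~\ref{lem:rank_0_1}), and your writeup assembles exactly those ingredients plus the standard structure of closed subgroups of $\R^2/\Z^2$ and unique ergodicity of minimal translations. The only point worth making explicit is that $\Rk^{\alpha,\beta}_{p,q}(U)$ cannot be all of $U$ because $T$ is open by Lemma~\ref{lem:rank_0_1}, so $T(U)$ is not contained in a line; with that remark your ``unless the whole thing degenerates'' caveat is discharged.
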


\subsection{Additional notes  on the curves $\mathrm R^{\alpha,\beta}_{p,q}(U)$}\label{par:notes_on_gamma}
 Fix a finite set of real analytic arcs (slits)  $\gamma_j\subset B$ intersecting transversally, and containing the 
critical values of $\pi$, such that the  complement of $\bigcup \gamma_j$ in $B$ is topologically 
 a disk. Denote this disk by $U$. Then, choose  a section $\sigma$ of $\pi$ and a continuous family of basis for $H_1(X_w;\Z)$ above $U$. Each $\mathrm R^{\alpha,\beta}_{p,q}(U)$ is a real analytic
curve in $U$; but when crossing an arc $\gamma_j$, the analytic continuation of $\mathrm R^{\alpha,\beta}_{p,q}(U)$ must be glued to 
another $\mathrm R^{\alpha',\beta'}_{p',q'}(U)$. 
%\begin{figure}[h]
%\centering\epsfig{figure=curve_R2.pdf}
%\caption{{\small{A curve $\mathrm R^{\alpha,\beta}_{p,q}$.-- There are four critical values of $\pi$, marked with circled
% dots; one of them is a singularity of the curve. One of the irreducible components of the curve  has a non-compact analytic continuation. }}}\label{fig:Rpq}
%\end{figure}

In \S\ref{par:local_Rabpq}, we will have to understand the local structure of these curves near critical values of $\pi$ and,  
as before, the important case is that of   singular fibers of type $I_b$. Here, we content ourselves with a few simple remarks, which help understand some of the subtleties of the problem of the semi-analytic extension of the curves $\sigma_g$ in Section~\ref{sec:semi-analytic}. 

Choose a local coordinate $w$ as in \S~\ref{par:type-Ib} and, to fix the ideas,  suppose $(\alpha,\beta)=(0,0)$.  
 From 
Equations~\eqref{eq:R_ab_pq} and~\eqref{eq:tau_w_Ib},  $R^{0,0}_{p,q}$ is determined by the equation
\begin{equation}\label{eq:R_00_pq}
\Ima\left( \frac{t(w) }{p+q \frac{b}{2\ii\pi}\log (w) }\right)=0, 
\end{equation} 
where $\log (w)$ is viewed as a multivalued function. 
The multivaluedness of the logarithm takes care of the 
monodromy   in the sense that winding around the origin changes $\log (w)$ into $\log (w) + 2\ii\pi$, 
which in this equation corresponds to the monodromy action $(p,q)\mapsto (p+qb,q)$. 
The function 
$t(w)$ is a well-defined local holomorphic function, which may or may not vanish at the origin (see \S\ref{par:type-Ib}). 

If $q=0$, Equation~\eqref{eq:R_00_pq} reduces to $\Ima(t(w))=0$ which is an analytic subset of the disk (including the origin if $\Ima(t(0))=0$). 

Now, we  focus on the case $q\neq 0$, and we assume that    $t(w)=t_0 w^k$. 
Write  $w = e^{-s}$, where $s$ belongs to some right half plane, and write $t_0 = e^{ks_0}$.
 Then, Equation~\eqref{eq:R_00_pq} becomes 
 \begin{equation}\label{eq:R_00_pq_2}
\Ima\left( \frac{ e^{-k(s-s_0)} }{p+q \frac{b}{2\ii\pi} (-s) }\right)=0.
\end{equation}
Writing $s = x+iy$ and $s_0 = x_0+\ii y_0$, and making   a few elementary manipulations, 
 this equation reduces to 
\begin{equation}\label{eq:R_00_pq_final}
 x = \lrpar{y+ \frac{2\pi p}{b q}} \tan k(y-y_0), 
\end{equation}
which after a vertical translation gives $ x = (y-y_1)\tan (ky)$ for some   $y_1\in \R$. %\romain{j'avais dit $\pi\Q$ mais c'est $\R$ à cause du $y_0$}
There are two  different regimes: 
\begin{enumerate}[(1)]
\item if $k=0$ the curve is a line, which descends to a logarithmic spiral in the $w$-coordinate; 
\item if $k>0$, one distinguishes two cases, depending on whether
 $y_1$ is   of the form $\frac{1}{k}(\frac{\pi}{2}+j\pi)$, $j\in \Z$,  or not (see   Figure~\ref{fig:tangente}). In both cases, these curves have infinitely many branches asymptotic to the horizontal lines $ y  = \frac{1}{k}(\frac{\pi}{2}+j\pi)$, $j\in \Z$, as $x\to +\infty$. In the $w$-plane, these branches  have well-defined 
 tangent directions at the origin, so they extend as $C^1$ curves at 0, but they are not semi-analytic. 
 %\romain{est-ce si clair?}\serge{Il se pourrait que ce soit tout de même semi-analytic, non?}\romain{j'ai l'impression que non...}
\end{enumerate}

\begin{figure}[h]
\begin{minipage}{6cm}\includegraphics[width=6cm]{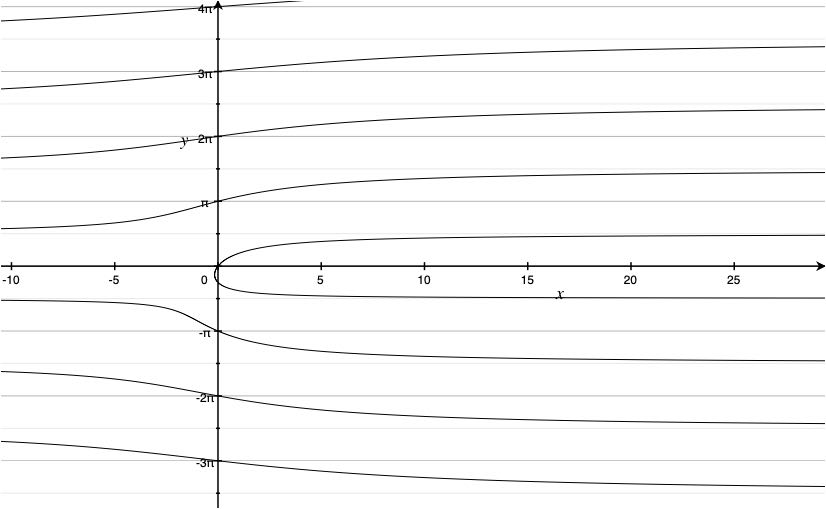}\end{minipage} \hspace{1cm}
\begin{minipage}{6cm}\includegraphics[width=6cm]{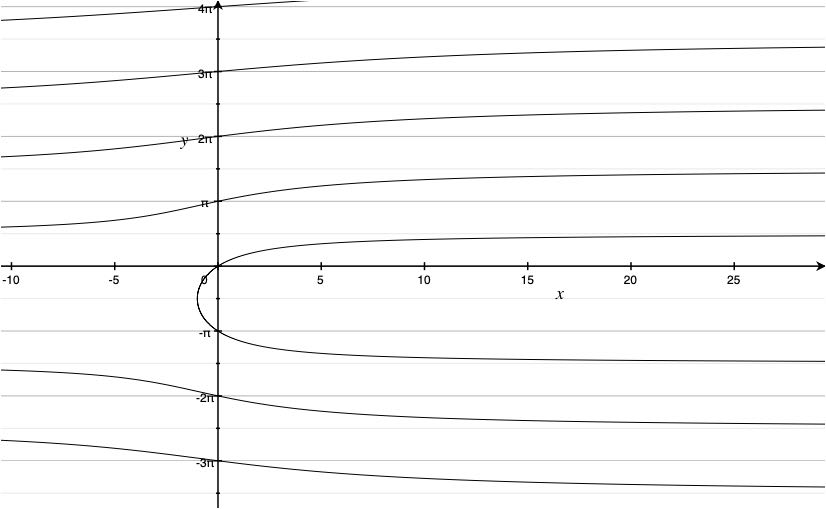}\end{minipage}
\caption{The curves $x = \lrpar{y+ \frac{\pi}{4}}\tan y$ (left) and $x = \lrpar{y+ \frac{\pi}{2}}\tan y$ (right).}\label{fig:tangente}
\end{figure}

%
%
%In polar coordinates $w=\rho e^{2\ii \pi \theta}$, 
%with $0\leq \rho<R\leq 1$ and $-1/2\leq \theta<1/2$, the equation for $R^{0,0}_{p,q}$ becomes (see Equations~\eqref{eq:R_ab_pq} and~\eqref{eq:tau_w_Ib}):
%\begin{equation}
% \log(\rho) \Rea(t(w)) = -2\pi \left( \frac{p}{q\beta} +  \theta \right) \Ima(t(w))
%\end{equation}
%if $q\neq 0$ and $\Ima(t(w))=0$ if $q=0$.
%Now, assume  $q\neq 0$ and, for simplicity, let us consider the case $t(w)=re^{2\ii \pi \alpha} w^k$; 
% then this equation reads 
%\begin{equation}
% \log(\rho)  = -2\pi \left( \frac{p}{q\beta} +  \theta \right) \tan(2\pi (k\theta+\alpha)).
%\end{equation}
%Thus, $\rho$ becomes a multivalued function of $\theta$; changing $\theta$ into $\theta+1$ corresponds to the monodromy action $(p,q)\mapsto (p+q\beta,q)$. When $k=0$ and $\alpha \neq 0$ we obtain $\rho(\theta+1)=\exp(-2\pi \tan(2\pi\alpha))\rho(\theta)$ and the curve spirals down to the origin (clockwise if $0<\alpha <1/2$ and
%counterclockwise if $-1/2 < \alpha < 0$).
%Now assume $k>0$. When $\theta$ goes to $\pm \frac{1}{4k}-\frac{\alpha}{k}+\frac{\ell}{k}$, one branch of this
%curve goes to the origin (except when $ \frac{p}{q\beta} \pm \frac{1}{4k}-\frac{\alpha-\ell}{k}=0$); but the curve does not spiral down to the origin. 
%
%Other types of singular fibers lead to a similar behavior of $\mathrm R^{\alpha,\beta}_{p,q}(U)$ since we can always operate a local stable reduction.  

%%%%%%%%%%%%%%%%%%%%%%%%%%%%%%%%%%%%%%%%%%%%%%
%%%%%%%%%%%%%%%%%%%%%%%%%%%%%%%%%%%%%%%%%%%%%%
\section{Proof of Theorem \ref{thm:main}: preliminaries,  first steps, and corollaries}\label{par:Main Proof1}
%%%%%%%%%%%%%%%%%%%%%%%%%%%%%%%%%%%%%%%%%%%%%%
%%%%%%%%%%%%%%%%%%%%%%%%%%%%%%%%%%%%%%%%%%%%%%

Let $X$ be a smooth, compact, K\"ahler surface. Fix a subgroup $\Gamma$ of $\Aut(X)$ such that:
\begin{enumerate}[(i)]\label{assumptions_Gamma}
\item $\Gamma$ is non-elementary,
\item $\Gamma$ contains a parabolic element.% (equivalently, a Halphen twist) $f$.
\end{enumerate}
Then, as shown in~\cite{stiffness}, $X$ is a smooth complex projective surface. 
%We shall also assume, for simplicity, that $X$ is not an abelian surface (see \S~\ref{}).
Our goal in this section is to prove Theorem~\ref{thm:main} under the stronger assumption: 
\begin{enumerate}[(i)]
\item[(ii')] $\Gamma$ contains a Halphen twist $g$ (see~\S~\ref{par:Halphen-intro}); equivalently $\Gamma$ contains a parabolic automorphism acting with finite order on the base of its invariant fibration.  
\end{enumerate}
%Recall that by definition a Halphen twist is parabolic maps acting trivially on the base  of its invariant fibration.
As explained in  Theorem \ref{thm:invariant_fibration}, 
this hypothesis   automatically follows from (ii) when $X$ is not an Abelian surface.  
  For the easier case of Abelian surfaces, a direct  proof of Theorem \ref{thm:main} is given in the Appendix.

\begin{nota}\label{nota:Hal} If $h$ is a Halphen twist, we denote by $\pi_h\colon X\to B_h$ 
its invariant fibration, and by $\Sing(\pi_h)$ the union of its singular and multiple fibers.
For $U\subset B_h$ (resp. $w\in B_h$), we set $X^h_U=\pi_h^{-1}(U)$ (resp. $X_w^h=\pi_h^{-1}(w)$). Similarly, 
we make use of the notation $\Rk_h(U)$ and $\Tor_h(U)$, with the index $h$.
We denote by $\Hal(\Gamma)$ the set of Halphen twists $h\in \Gamma$   
preserving each  irreducible component of each fiber of $\pi_h$. This set is invariant under 
conjugation: if $f\in \Gamma$ and $h\in \Hal(\Gamma)$, then $f^{-1}\circ h\circ f$ is an element 
of $\Hal(\Gamma)$ and its invariant fibration is given by $\pi_g\circ f$. 
\end{nota}

\begin{rem}\label{rem:pig}
The foliation defined by $\pi_h$ is uniquely determined by $h$, but  $\pi_h$ itself is not canonically defined: post-composition by an automorphism of $B_h$ would give another projection defining the same fibration. Thus, the notation means that a projection $\pi_h$ was chosen for every fibration invariant by an element $h\in \Hal(\Gamma)$, the choice being the same for two twists preserving the same fibration.
Then  $\pi_h\circ f$ equals  $\pi_{f^{-1}\circ h\circ f}$ up to post composition by 
an automorphism of the base.
\end{rem}
 
\subsection{Invariant curves, $\Tang $ and $\STang $}\label{par:invariant_curves}
According to~\cite[Lemma 2.13]{stiffness} and~\cite[Section 3]{finite_orbits}, there is a unique reduced, effective, and 
$\Gamma$-invariant divisor $\mathrm D_\Gamma$ in $X$ such that: 
\begin{enumerate}[(1)]
\item the $\Gamma$-periodic irreducible curves $C\subset X$
are exactly the  irreducible components $C_i$ of $\mathrm D_\Gamma$;
\item the intersection form is negative definite on 
\begin{equation}
V(\mathrm D_\Gamma):=\Vect([C_i], i=1, \ldots, k)\subset H^2(X;\R).
\end{equation} 
\end{enumerate}

\begin{rem}  If $g\in \Hal(\Gamma)$, the divisor  $\mathrm D_\Gamma$ is made of irreducible components of fibers of $\pi_g$, but {\emph{$\mathrm D_\Gamma$ does not contain any 
complete fiber}}: indeed   the intersection form is negative definite on $V(\mathrm D_\Gamma)\subset H^2(X;\R)$, while the self-intersection of a fiber is $0$.
\end{rem}

%Set $\mathrm F_\Gamma=\bigcap_{g\in \Hal(\Gamma)} \Sing(\pi_g)$; 
%it is an algebraic subset of $X$, and we denote by $\mathrm F_\Gamma^1$ and  $\mathrm F_\Gamma^0$ its $1$-dimensional and $0$-dimensional parts, respectively. For $g\in \Hal(\Gamma)$, the fibration $\pi_g$  is determined by any of its fibers $\pi^*(w)$;  as a consequence, 
%$\mathrm F_\Gamma^1$ is  
% the union of all irreducible curves $C$ such that  $\pi_g(C)$ is a point  for every $g\in \Hal(\Gamma)$.  

%\romain{je supprime la définition de $\mathrm{F}_\Gamma$ qui ne sert plus et introduis STang}
For $(g,h)\in \Hal(\Gamma)^2$, we let $\Tang(\pi_g, \pi_h)$ 
be the set of points $x\in X$ such that  $(d\pi_g\wedge d\pi_h)(x)=0$, and define 
\begin{equation}\label{eq:Tang_Gamma}
\Tang_\Gamma = \bigcap_{(g,h)\in \Hal(\Gamma)^2} \Tang(\pi_g, \pi_h)
\end{equation}
In plain words,  $x\notin\Tang_\Gamma$ if one can find  $g$ and $h$ in $\Hal(\Gamma)$ such that $\pi_g$ and $\pi_h$ are transverse projections in a neighborhood of $x$ 
(this is compatible with our convention for $\pi_g$, see Remark \ref{rem:pig}).  
Note that by definition if $F$ is a  multiple component of a fiber of 
$\pi_g$ then $d\pi_g\wedge d\pi_h=0$ along $F$ for all $h\in \Hal(\Gamma)$ (see \S\ref{par:tangencies} for more on this).
%Then, {{$\Tang_\Gamma$ is a $\Gamma$-invariant algebraic subset of $X$ that contains $\mathrm F_\Gamma$}}. 
Let $\Tang^1_\Gamma$  (resp. $\Tang^0_\Gamma$) 
be the union of the 1-dimensional (resp. 0-dimensional) components of $\Tang_\Gamma$. 
 We also put 
\begin{equation}\label{eq:STang}
\STang(\pi_g, \pi_h) = \Tang(\pi_g, \pi_h) \cup \Sing(\pi_g)\cup \Sing(\pi_h)
\end{equation}
and  
\begin{equation}\label{eq:STang_Gamma}
\STang_\Gamma = \bigcap_{(g,h)\in \Hal(\Gamma)^2} \STang(\pi_g, \pi_h), \end{equation}
and define 
$\STang^1_\Gamma$   and $\STang^0_\Gamma$ similarly.

\begin{lem}\label{lem:Tang_DGamma}
The reduced divisors given by $\mathrm D_\Gamma$,% $\mathrm F_\Gamma^1$ and  
 $\Tang^1_\Gamma$ and $\STang^1_\Gamma$ are all equal. The $0$-dimensional parts
 $\Tang^0_\Gamma$ and $\STang^0_\Gamma$ are  finite  $\Gamma$-invariant sets.  
\end{lem}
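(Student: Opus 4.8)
The plan is to prove the chain of equalities $\mathrm D_\Gamma = \Tang^1_\Gamma = \STang^1_\Gamma$ as reduced divisors by establishing the two inclusions $\mathrm D_\Gamma \subseteq \Tang^1_\Gamma$ and $\STang^1_\Gamma \subseteq \mathrm D_\Gamma$, together with the trivial inclusion $\Tang^1_\Gamma \subseteq \STang^1_\Gamma$ that follows from $\Tang(\pi_g,\pi_h)\subseteq \STang(\pi_g,\pi_h)$. The inclusion $\mathrm D_\Gamma \subseteq \Tang^1_\Gamma$ is the heart of the matter: given an irreducible component $C$ of $\mathrm D_\Gamma$, I must show $C \subseteq \Tang(\pi_g,\pi_h)$ for \emph{every} pair $(g,h)\in\Hal(\Gamma)^2$. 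Since $C$ is $\Gamma$-periodic, after passing to a finite-index subgroup it is $\langle g,h\rangle$-invariant; and since $\mathrm D_\Gamma$ contains no complete fiber of $\pi_g$ (by the Remark preceding the lemma), $C$ is not a fiber of $\pi_g$, hence $\pi_g\rest{C}$ is nonconstant and $C$ dominates $B_g$. But then $g$ preserves $C$ and acts on the base $B_g$ fixing $\pi_g(C) = B_g$ pointwise only if... — rather, the cleaner route: $g$ is a Halphen twist, so it acts trivially on $B_g$ and by translation on each fiber $X_w^g$; the curve $C$ is $g$-invariant and horizontal, so $g$ restricts to an automorphism of $C$ commuting with the (generically free) translation action on fibers, forcing $C$ to meet each fiber in a $g$-orbit that is finite (a periodic curve has bounded arithmetic genus / degree), so $C \subseteq \pi_g^{-1}(\Tor_g)$ up to finitely many points — and by Lemma~\ref{lem:rank_0_1}, $\Tor_g(B^\circ_g)$ being only countable while $\pi_g(C)=B_g$, this is absurd unless $C$ is contained in the non-twisting or critical locus. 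Here is where I expect the real work: the correct statement is that an $f$-invariant curve which is not a fiber must be tangent to the invariant foliation, i.e. contained in a fiber after all, \emph{unless} it sits over $\Crit(\pi_g)$ or is a multiple component; in fact, invoking Remark~\ref{rem:invariant_foliations}, any $g$-invariant curve transverse somewhere to $\pi_g$ would produce a $g$-invariant foliation distinct from $\pi_g$, contradicting Assertion (3) of Theorem~\ref{thm:invariant_fibration}. So $C$ is everywhere tangent to $\pi_g$, hence $d\pi_g$ vanishes along $C$ — wait, tangent means $d\pi_g\rest{TC}=0$, i.e. $C$ is a fiber or lies in a fiber, contradicting $\mathrm D_\Gamma$ containing no fiber — the resolution is that $C$ must be a \emph{multiple component} of a fiber of $\pi_g$, where by the parenthetical remark in the paper $d\pi_g\wedge d\pi_h=0$ along $C$ for all $h$. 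Thus $C\subseteq\Tang(\pi_g,\pi_h)$ for all $g,h$, giving $C\subseteq\Tang_\Gamma$, and being $1$-dimensional, $C\subseteq\Tang^1_\Gamma$.

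For the reverse inclusion $\STang^1_\Gamma\subseteq\mathrm D_\Gamma$: let $C$ be an irreducible curve contained in $\STang_\Gamma$. For each pair $(g,h)$, $C\subseteq\STang(\pi_g,\pi_h)=\Tang(\pi_g,\pi_h)\cup\Sing(\pi_g)\cup\Sing(\pi_h)$; since $C$ is irreducible it lies entirely in one of these three closed sets. Fixing a single $g_0$ and letting $h$ range over all elements of $\Hal(\Gamma)$ with $\pi_h$ distinct from $\pi_{g_0}$ (these exist because $\Gamma$ is non-elementary), if $C$ were always in $\Tang(\pi_{g_0},\pi_h)$ it would be tangent to $\pi_{g_0}$ at its generic point while being transverse to every other $\pi_h$ there — but by Remark~\ref{rem:invariant_foliations} / Theorem~\ref{thm:invariant_fibration}(3), distinct Halphen fibrations are mutually transverse on a dense open set, so $C$ tangent to $\pi_{g_0}$ forces $C$ to be a component of a fiber of $\pi_{g_0}$; iterating over all $g_0$ shows $C$ is simultaneously a fiber-component for all Halphen fibrations, or else $C\subseteq\Sing(\pi_{g_0})$ for some $g_0$, which already means $C$ is contained in a singular fiber. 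Either way $C$ is a component of some fiber, hence (fibers being permuted by $\Gamma$, and $\mathrm D_\Gamma$ being exactly the $\Gamma$-periodic curves) I must then check $C$ is $\Gamma$-periodic: this follows because $\Gamma^*$ preserves $\STang_\Gamma$ (it is defined $\Gamma$-equivariantly, the set $\Hal(\Gamma)$ being conjugation-invariant by Notation~\ref{nota:Hal}), so $\Gamma$ permutes the finitely many irreducible components of $\STang^1_\Gamma$, making each periodic; and the intersection form is negative definite on their span because a set of periodic curves always satisfies this (that is precisely the content of the defining property (2) of $\mathrm D_\Gamma$, and a $\Gamma$-invariant family of curves with no fiber-in-full has negative definite span by the Hodge index theorem once one knows it contains no class of non-negative self-intersection). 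Hence $C$ is a component of $\mathrm D_\Gamma$.

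Finally, for the $0$-dimensional parts: $\Tang^0_\Gamma$ and $\STang^0_\Gamma$ are intersections over all pairs $(g,h)$ of the $0$-dimensional parts of $\Tang(\pi_g,\pi_h)$ and $\STang(\pi_g,\pi_h)$, once the $1$-dimensional parts (equal to $\mathrm D_\Gamma$) are stripped off. Fixing one pair $(g,h)$ with $\pi_g\neq\pi_h$, the set $\Tang(\pi_g,\pi_h)\setminus\mathrm D_\Gamma$ is an analytic subset that is $1$-dimensional only along $\mathrm D_\Gamma$ (by Proposition~\ref{pro:NT_finite} and the transversality of distinct fibrations), so its $0$-dimensional part is a finite set; the global $\Tang^0_\Gamma$ is contained in this finite set, hence finite, and similarly for $\STang^0_\Gamma$ using that $\Sing(\pi_g)$ is a finite union of fibers whose intersections with $\Tang(\pi_g,\pi_h)$ contribute only finitely many extra points. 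Invariance under $\Gamma$ is immediate from the $\Gamma$-equivariance of the definition (Notation~\ref{nota:Hal}: $\Hal(\Gamma)$ is conjugation-stable, so $f_*\Tang(\pi_g,\pi_h)=\Tang(\pi_{f g f^{-1}},\pi_{f h f^{-1}})$ and the intersection over all pairs is preserved). The main obstacle I anticipate is handling multiple components of fibers cleanly — making sure the identification $d\pi_g\wedge d\pi_h=0$ along such components is used correctly so that no component of $\mathrm D_\Gamma$ is inadvertently excluded from $\Tang_\Gamma$, and conversely that a multiple fiber component which happens \emph{not} to be $\Gamma$-periodic (if such a thing could occur) does not sneak into $\STang^1_\Gamma$; this is where the $\Gamma$-invariance of $\mathrm D_\Gamma$ and the finiteness of $\Hal$-fibration data, via \S\ref{par:tangencies}, must be invoked with care.
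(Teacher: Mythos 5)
Your skeleton is the same as the paper's (prove $\Tang^1_\Gamma\subset\STang^1_\Gamma\subset \mathrm D_\Gamma$ from $\Gamma$-invariance, and $\mathrm D_\Gamma\subset\Tang^1_\Gamma$ by analyzing a component $C$ of $\mathrm D_\Gamma$ against each fibration), but the main direction contains a genuine error. You read the remark ``$\mathrm D_\Gamma$ contains no complete fiber'' as saying that a component $C$ of $\mathrm D_\Gamma$ is not contained in a fiber of $\pi_g$, hence dominates $B_g$. That inference is false: the remark only excludes an entire scheme-theoretic fiber from $\mathrm D_\Gamma$, while $C$ can perfectly well be one irreducible component of a reducible fiber. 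The countability argument you then run (a $g$-periodic curve meets each fiber in a finite invariant set, so its image would have to lie in the countable set $\Tor$) is exactly the paper's argument, via Lemma~\ref{lem:basic_dynamics}, but its correct conclusion is that $\pi_g(C)$ \emph{is} a point, i.e.\ $C$ sits inside a fiber of $\pi_g$ for every $g\in\Hal(\Gamma)$ — not that $C$ is a multiple component of such a fiber. Your ``resolution'' that $C$ must be a multiple component is false in general: in Blanc's example $\mathrm D_\Gamma$ is the proper transform of the cubic, a reduced component of the reducible fiber $C\cup L_{p,q}$, and the $(-2)$-curve on a Coble surface is likewise reduced in its fiber. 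Since your only justification of the tangency $d\pi_g\wedge d\pi_h=0$ along $C$ is the parenthetical remark about multiple components, your proof does not cover the typical (reduced) case. The intermediate appeal to Remark~\ref{rem:invariant_foliations} is also unfounded: a single invariant curve somewhere transverse to $\pi_g$ does not generate an invariant foliation.

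The repair is short and is what the paper does: once $C$ lies in a fiber of $\pi_g$ and in a fiber of $\pi_h$, then at any point of $C$ where both differentials are nonzero their kernels coincide with the tangent line to the fiber, so $d\pi_g$ and $d\pi_h$ are proportional and the wedge vanishes; where one differential vanishes the wedge vanishes trivially. Hence $C\subset\Tang(\pi_g,\pi_h)$ for every pair, giving $\mathrm D_\Gamma\subset\Tang^1_\Gamma$ directly. For the reverse inclusion your detour through transversality of distinct fibrations and the Hodge index theorem is unnecessary and its negativity step is only sketched: the characterization of $\mathrm D_\Gamma$ quoted in \S\ref{par:invariant_curves} already says the $\Gamma$-periodic irreducible curves are \emph{exactly} the components of $\mathrm D_\Gamma$, so it suffices to note that $\Tang_\Gamma$ and $\STang_\Gamma$ are $\Gamma$-invariant algebraic sets, whence every component of their $1$-dimensional parts is $\Gamma$-periodic and therefore a component of $\mathrm D_\Gamma$. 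Your treatment of the $0$-dimensional parts (finiteness from algebraicity, invariance from equivariance of the construction) is fine, though the invocation of Proposition~\ref{pro:NT_finite} there is not needed.
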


\begin{proof}
By definition, $\Tang(\Gamma)$ and $\STang_\Gamma$ are $\Gamma$-invariant algebraic sets, and  $\STang^1_\Gamma$ contains~$\Tang_\Gamma^1$. So, 
$ \Tang_\Gamma^1 \subset \STang^1_\Gamma\subset \mathrm D_\Gamma$.
Now, fix  $g\in \Hal(\Gamma)$. If $C$ is a $g$-periodic irreducible
curve, Lemma~\ref{lem:basic_dynamics} entails that $\pi_g(C)$ must be a point. 
%So, any component  of $\mathrm D_\Gamma$ is contained in 
%$\mathrm F_\Gamma$ and, as a consequence,
So, all fibrations $\pi_g$ must be tangent along any component of $D_\Gamma$, and it follows that 
 $ \mathrm D_\Gamma=\Tang^1_\Gamma$. 
The second  assertion is straightforward.
\end{proof}

\begin{pro}[See~\cite{finite_orbits}, Proposition 3.9]\label{pro:contraction_of_DGamma}  
There is a normal projective surface $X_0$ and a birational morphism $\eta\colon X\to X_0$ 
such that \begin{enumerate}[\em (1)]
\item $\eta$ contracts $D_\Gamma$ on a finite number of points;
\item $\Gamma$ induces a subgroup of $\Aut(X_0)$, {{i.e.}} there is an injective homomorphism $f\in \Gamma \mapsto f_0\in \Aut(X_0)$
such that $\eta\circ f = f_0\circ \eta$ for all $f\in \Gamma$. 
\end{enumerate}
\end{pro}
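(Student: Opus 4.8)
The plan is to contract the curves in $D_\Gamma$ and check that the group action descends. First I would recall the structural input from Lemma~\ref{lem:Tang_DGamma}: $D_\Gamma = \sum_i C_i$ is a reduced, effective, $\Gamma$-invariant divisor whose components are exactly the $\Gamma$-periodic curves, and the intersection form is negative definite on $V(D_\Gamma) = \Vect([C_i])$. Negative definiteness is precisely the hypothesis of Grauert's contractibility criterion (or, in the projective category, Artin's contractibility criterion): there exists a normal complex surface $X_0$ and a proper bimeromorphic morphism $\eta\colon X \to X_0$ contracting each connected component of $\supp(D_\Gamma)$ to a single normal point, and $\eta$ is an isomorphism over the complement. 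Since $X$ is projective (as recalled in \S\ref{par:Main Proof1}, the existence of a non-elementary $\Gamma$ forces $X$ to be projective), $X_0$ is projective as well — one checks this by pushing forward an ample class and using that contractions of negative-definite configurations preserve projectivity, or simply by invoking Artin's algebraization/projectivity statement. This gives assertion~(1).

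For assertion~(2), the key point is that $\eta$ is, up to isomorphism, canonically determined by the $\Gamma$-invariant set $\supp(D_\Gamma)$: it is the unique proper bimeromorphic morphism to a normal surface contracting exactly these curves and nothing else, and such a morphism is characterized by a universal property (any morphism from $X$ to a normal surface that contracts $\supp(D_\Gamma)$ factors uniquely through $\eta$). Now fix $f \in \Gamma$. Since $D_\Gamma$ is $\Gamma$-invariant, $f(\supp(D_\Gamma)) = \supp(D_\Gamma)$, so $\eta \circ f \colon X \to X_0$ is again a proper bimeromorphic morphism to a normal surface contracting precisely $\supp(D_\Gamma)$. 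By the universal property (or by uniqueness of the contraction), there is a unique isomorphism $f_0\colon X_0 \to X_0$ with $f_0 \circ \eta = \eta \circ f$. Concretely, $f_0$ is defined on $X_0 \setminus \eta(\supp D_\Gamma)$ by transporting $f$ through the isomorphism $\eta$, and it extends holomorphically across the finitely many normal points $\eta(\supp D_\Gamma)$ by Hartogs / the Riemann extension theorem for normal surfaces (a bounded holomorphic map defined off a finite set extends). The assignment $f \mapsto f_0$ is a homomorphism because $\eta \circ (f g) = (\eta f) g = (f_0 \eta) g = f_0 (\eta g) = f_0 g_0 \eta$ and then $f_0 g_0$ and $(fg)_0$ agree by uniqueness; it is injective because $f_0 = \id$ forces $f = \id$ on the dense open set $X \setminus \supp(D_\Gamma)$, hence everywhere.

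The main obstacle — really the only nontrivial point — is verifying that the contracted surface $X_0$ is genuinely projective (not merely a Moishezon/algebraic space) and that the $\Gamma$-action extends holomorphically at the singular points; both are handled by standard surface theory (Artin's criterion for the former, normality plus boundedness for the latter), so in practice one just cites~\cite{finite_orbits}*{Proposition~3.9} as the statement already does. I would therefore keep the argument brief: invoke Grauert/Artin contractibility using the negative definiteness from Lemma~\ref{lem:Tang_DGamma}, note $X_0$ is projective, and then deduce the equivariance from the uniqueness of the contraction together with the $\Gamma$-invariance of $D_\Gamma$.
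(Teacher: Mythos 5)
Your argument for assertion (2) is fine: once the contraction $\eta$ exists, the $\Gamma$-invariance of $D_\Gamma$ plus the uniqueness of the Grauert contraction (or, more simply, defining $f_0$ off the finitely many contracted points and extending across them by normality) gives the descended action, and the homomorphism/injectivity checks are routine. The genuine gap is in assertion (1), at exactly the point you flag and then dismiss: the projectivity of $X_0$. Grauert's criterion only produces a normal compact \emph{analytic} (Moishezon) surface, and the principle you invoke --- ``contractions of negative-definite configurations preserve projectivity'' --- is false. The classical counterexample is Grauert's: blow up ten or more very general points on a smooth cubic $C\subset \P^2$; the strict transform of $C$ is an irreducible curve with negative self-intersection, it contracts analytically, but for very general points the contracted normal surface is \emph{not} projective. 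Nor does ``Artin's criterion'' rescue you: Artin's contractibility theorem yields a contraction in the projective/scheme category only when the resulting singularity is rational (fundamental cycle of arithmetic genus $0$), and that hypothesis can fail here, since $D_\Gamma$ may contain curves of genus $1$ --- precisely the situation of Blanc's examples mentioned in the example immediately following the proposition.

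So projectivity of $X_0$ cannot be obtained by ``standard surface theory'' applied to an arbitrary negative-definite configuration; it must use the specific dynamical structure of $D_\Gamma$. For instance, when $\Gamma$ contains Halphen twists, the components of $D_\Gamma$ lie in fibers of $\Gamma$-invariant genus-$1$ fibrations, and fiber classes are semi-ample and restrict to trivial line bundles on fiber components; a suitable sum of fiber classes of two distinct fibrations is then nef, big and semi-ample with null locus the curves one wants to contract, which is the kind of argument that makes the contracted surface projective --- and this is the content of the cited Proposition 3.9 of \cite{finite_orbits}. Note also that the paper itself offers no proof here: the proposition is quoted from \cite{finite_orbits}. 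Your closing remark that ``in practice one just cites'' that reference is therefore the correct move, but the self-contained justification of projectivity you sketch beforehand would not survive scrutiny, and since you present it as the only nontrivial point of the proof, the proposal as written has a real gap.
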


\begin{eg} 
 The Coble surfaces and the surfaces constructed by Blanc (see \S\ref{par:examples_rational}) provide examples of pairs $(X,\Gamma)$ 
 such that $X$ is rational and $\Gamma$ preserves a smooth rational curve or a smooth curve of genus $1$, respectively.  \end{eg}
 %\serge{Remarque 4.7 (singularités quotient) supprimée}
 
%\begin{rem} \romain{pourrait être développé ailleurs cf.ma remarque sur la section \ref{sec:examples}}\serge{Je ne trouve pas cela si mal placé. Mais on pourrait éventuellement l'enlever} 
%Assume that $X$ is a  K3 or Enriques surface. Since the intersection form is negative definite on $V(D_\Gamma)$ and $K_X$ is numerically trivial,
%the genus formula shows that the irreducible components of $D_\Gamma$ are smooth rational curves of 
%self-intersection $-2$. Thus, the singularity obtained by contracting a connected component of $D_\Gamma$
%is a  Klein singularity, with Dynkin diagram $A_n$, $D_n$, $E_6$, $E_7$, or $E_8$ (see~\cite[\S III.7]{BHPVDV} and \cite{Cantat:Acta}): there is a neighborhood 
%of the singularity which is holomorphically equivalent to a quotient of a neighborhood of the origin in $\C^2$
%by a finite subgroup of $\SL_2(\C)$. \end{rem}

\subsection{Analytic subsets of positive mass, $d_\R(\mu)$, and $d_\C(\mu)$} Let $\mu$ be a $\Gamma$-invariant and ergodic 
probability measure. Denote by $d_\R(\mu)$ the minimum of the dimensions $k\in \{0,1,2,3,4\}$ such that 
%$\mu(W)>0$ for some immersed, irreducible, 
%real analytic subset $W\subset X$ of real dimension $k$. 
%(By {\em{local}}, we mean that $W$ is defined in some open subset of $X$; we do not assume $W$ to be a global, closed analytic subset.)
%\begin{lem}\label{lem:dR}
%If $d_\R(\mu)<\dim_\R(X)$, $\mu$ is supported on a $\Gamma$-invariant immersed real analytic subset of dimension $d_\R(\mu)$. 
%\end{lem}
%
%\begin{proof}
%Pick  an immersed irreducible real analytic subset  $W$ of dimension $d_\R(\mu)$ with $\mu(W)>0$. 
%For  $f\in \Gamma$, either $f(W)=W$ or $\mu(f(W)\cap W)=0$
% because if $f$ does not preserve $W$, then $f(W)\cap W$ is a countable union of immersed analytic subsets of 
%smaller dimension, each of them of measure zero by the definition of $d_\R(\mu)$. In this latter case, we obtain $\mu(f(W)\cup W)=\mu(f(W))+\mu(W)=2\mu(W)$. So, we
%deduce from $\mu(X)=1$ that a subgroup of $\Gamma$ of index $\leq \mu(W)^{-1}$ preserves $W$. 
%By ergodicity, the measure $\mu$ is in fact supported on an immersed, $\Gamma$-invariant, possibly  reducible, 
%real analytic subset of dimension $d_\R(\mu)$.
%\end{proof}
there exist an open set $U\subset X$, for the euclidean topology, and a real analytic submanifold  $W\subset U$ 
 of real dimension $k$ with $\mu(W)>0$. 
 
\begin{rem} 
Pick such a local analytic submanifold $W\subset U$ with $\mu(W)>0$ and $\dim_\R(W)=d_\R(\mu)$. 
By ergodicity, $\mu$ gives full mass to  
$\bigcup_{g\in \Gamma} g(W)$. If $W\cap g(W)$ has empty relative interior in 
 $W$ and $g(W)$,  
 then 
$g(W)\cap W$ is contained in at most countably many analytic  submanifolds of lower dimension,   so  $\mu(g(W)\cap W) = 0$.  
\end{rem}

Likewise, let $d_\C(\mu)$ be the minimal  dimension $k\in \{0,1,2\}$ such that 
$\mu(Z)>0$ for some irreducible local {complex analytic subset} $Z\subset X$ of (complex) dimension $k$. 
 
\begin{lem}\label{lem:dC}  Assume that $\Gamma$ satisfies (i) and (ii'), and let 
$\mu$ be an ergodic $\Gamma$-invariant measure.  
\begin{enumerate}[\em (1)]
\item If $d_\C(\mu)=0$, $\mu$ is supported on a  finite orbit of $\Gamma$.
 \item If $d_\C(\mu)=1$, $\mu$ is supported on $\mathrm D_\Gamma$. 
  \end{enumerate}
\end{lem}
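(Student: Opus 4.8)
\textbf{Proof plan for Lemma~\ref{lem:dC}.}
The plan is to exploit two features of an ergodic invariant measure with positive mass on a local analytic set: first, the collection of such analytic sets realizing the minimal dimension is essentially ``rigid'' (two of them either share a relative-interior piece or meet in small dimension, so only one can have positive mass through a generic point); second, the group $\Gamma$ contains a loxodromic element (since it is non-elementary), and loxodromic automorphisms have strong contraction/expansion properties that are incompatible with a nontrivial family of invariant local analytic sets of positive mass.

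For assertion~(1): suppose $d_\C(\mu)=0$, so there is a point $x$ with $\mu(\{x\})>0$. By ergodicity $\mu$ gives full mass to $\Gamma(x)$, and since all atoms must have the same mass (invariance) the orbit $\Gamma(x)$ is finite; ergodicity then forces $\mu$ to be the normalized counting measure on that single finite orbit. This is the easy case.

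For assertion~(2): assume $d_\C(\mu)=1$. Pick an irreducible local complex analytic curve $Z_0\subset X$ with $\mu(Z_0)>0$. The first step is to upgrade $Z_0$ to a \emph{global} object: using the remark following the definition of $d_\R,d_\C$, for $g\in\Gamma$ the intersection $g(Z_0)\cap Z_0$ is either a relatively open piece of both (hence, by irreducibility and analytic continuation, $g(Z_0)$ and $Z_0$ lie on a common irreducible analytic curve) or is finite (hence $\mu$-null). Since $\mu$ gives full mass to $\bigcup_{g\in\Gamma}g(Z_0)$ and each term has the same positive mass $\mu(Z_0)>0$, only finitely many of the $g(Z_0)$ are ``distinct'' in this sense; their union $\mathcal C$ is a $\Gamma$-invariant analytic curve with $\mu(\mathcal C)=1$, and by a Baire/analytic-continuation argument $\mathcal C$ extends to a genuine $\Gamma$-invariant (closed) algebraic curve in $X$ — here one uses that $X$ is projective and that an analytic curve invariant under an infinite group of automorphisms of a projective surface is algebraic. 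The second step is to conclude that this invariant algebraic curve consists of $\Gamma$-periodic components, hence is contained in $\mathrm D_\Gamma$ by the defining property of $\mathrm D_\Gamma$ recalled in \S\ref{par:invariant_curves}: indeed each irreducible component of $\mathcal C$ has finite $\Gamma$-orbit (there are finitely many components and $\Gamma$ permutes them), so each is a $\Gamma$-periodic curve, which by construction of $\mathrm D_\Gamma$ means it is one of the $C_i$. Therefore $\supp(\mu)\subset\mathcal C\subset\mathrm D_\Gamma$.

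The main obstacle is the passage from ``finitely many local analytic curves of positive mass'' to ``a $\Gamma$-invariant algebraic curve'': one must rule out that the propagated family $\bigcup_g g(Z_0)$ is an infinite immersed curve that is not locally closed (this is precisely the kind of pathology flagged in the introduction for case~(c)). The way around it is the mass argument — all translates carry the same atom of $\mu$-mass on analytic sets, so there can be only finitely many distinct ones — together with the fact that a $\Gamma$-invariant set which is a finite union of irreducible local analytic curves is automatically closed and, on a projective surface with an infinite automorphism group acting on it, algebraic. Once algebraicity is in hand, invoking the characterization of $\mathrm D_\Gamma$ from~\cite{stiffness,finite_orbits} is immediate.
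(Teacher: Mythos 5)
Your part (1) is fine and matches what the paper leaves to the reader. The problem is in part (2), at exactly the point you flag as the main obstacle: your resolution of it does not work. The mass-counting argument only bounds the number of \emph{equivalence classes} of translates (i.e.\ of maximal analytic continuations), because translates lying in different classes meet in discrete, hence $\mu$-null, sets. It says nothing about the number of local plaques \emph{inside} one class: within a single class the translates overlap on open pieces, their masses cannot be summed, and the union $\bigcup_{g}g(Z_0)$ may be an infinite union of plaques forming a non-closed immersed curve (a leaf-type object, possibly dense in a surface or in $X$), which is precisely the pathology the paper warns about. Your auxiliary claim that ``a $\Gamma$-invariant set which is a finite union of irreducible local analytic curves is automatically closed'' is not available here, since the class is in general an \emph{infinite} union of plaques; and the appeal to ``an analytic curve invariant under an infinite automorphism group of a projective surface is algebraic'' either means a \emph{closed} analytic curve (in which case algebraicity is just Chow's theorem, but closedness is exactly what is missing) or is false for immersed curves such as leaves of invariant foliations. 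So the passage from ``$\mu(Z_0)>0$'' to ``a closed $\Gamma$-invariant algebraic curve of full mass'' has a genuine gap.

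A telling symptom is that your argument for (2) never uses hypothesis (ii'), whereas the paper's proof uses it decisively, and this is how the immersed/dense scenario is excluded. Since $\mathrm D_\Gamma=\Tang^1_\Gamma$ (Lemma~\ref{lem:Tang_DGamma}), if the positive-mass curve $Z$ is \emph{not} contained in $\mathrm D_\Gamma$ one can choose $g\in\Hal(\Gamma)$ and a holomorphic disk $\Delta\subset Z$ of positive mass transverse to $\pi_g$; the twisting property (Lemma~\ref{lem:rank_0_1}) then forces $g^k(\Delta)\cap g^\ell(\Delta)$ to be finite for $k\neq\ell$, and since $\mu$ has no atoms ($d_\C(\mu)=1$) one gets $\mu\bigl(\bigcup_{n\geq 0}g^n(\Delta)\bigr)=\sum_{n\geq 0}\mu(\Delta)=\infty$, a contradiction. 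Some dynamical input of this kind (or a substitute such as unique ergodicity along fibers) is needed; the purely measure-theoretic propagation argument you propose cannot by itself rule out that $\mu$ charges a piece of a non-closed invariant immersed curve.
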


So, we see that  $d_\C(\mu)=2$ is equivalent to:  {\emph{$\mu$ gives no mass to algebraic sets}} (or more precisely {\emph{to proper algebraic subsets}}). 

\begin{proof} The zero-dimensional case is left to the reader. So, suppose that $\mu$ has no atom 
and let $Z\subset X$   be a local, $1$-dimensional, complex analytic set   such that $\mu(Z)>0$. If $Z$ is not
contained in $D_\Gamma$,  one can find  $g\in \Hal(\Gamma)$ and a  holomorphic disk $\Delta\subset Z$ of positive measure 
which is transverse to $\pi_g$. By Lemma~\ref{lem:rank_0_1},  for any $k\neq \ell$, $g^k(\Delta)\cap g^\ell(\Delta)$ is a finite set; since  $\mu$ is atomless and $g$-invariant, this implies $\mu(\bigcup_{n\geq 0} g^n (\Delta))=\sum_{n\geq 0} \mu(\Delta) =\infty$.  This contradiction completes the proof. 
\end{proof}

%%%%%%%%%%%%%%%%%%%%%%%%%%%%
\subsection{The smooth case} \label{subs:smooth_case}
%%%%%%%%%%%%%%%%%%%%%%%%%%%%
For $g\in \Hal(\Gamma)$, the (marginal) measure $\mu_g:=(\pi_g)_*\mu$ is a probability measure on~$B_g$. 
 Let $\Rk_g = \Rk(B_g^\circ)$ (cf. Equation~\eqref{eq:circles}).

\begin{pro}\label{pro:smooth_case} 
Let $\mu$ be an invariant and $\Gamma$-ergodic measure which 
  gives no mass to proper algebraic subsets 
(i.e. $d_\C(\mu) = 2)$). Assume that there exists  an element $g\in \Hal(\Gamma)$ such that 
\begin{equation}\label{eq:mug_positive}
\mu_g(B_g^\circ\setminus \Rk_g)>0.
\end{equation}
Then $\mu$ is absolutely continuous with respect to the Lebesgue measure on $X$
and its support is equal to $X$. Moreover, the density of $\mu$ with respect to any real analytic 
volume form on $X$ is a real analytic function in the complement of $\STang_\Gamma$. 
\end{pro}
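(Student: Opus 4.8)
The hypothesis \eqref{eq:mug_positive} gives a positive-measure set of fibers $X_w^g$, $w\in B_g^\circ\setminus\Rk_g$, on which the restriction $g_w$ of $g$ has every orbit dense (Lemma~\ref{lem:basic_dynamics}(3)). Since $\mu$ gives no mass to algebraic sets, $\mu$ is not carried by the fibration $\pi_g$; the idea is to disintegrate $\mu$ over $B_g$ and exploit ergodicity of the fiberwise dynamics. First I would write $\mu=\int_{B_g}\mu_w\,d\mu_g(w)$ for the disintegration along $\pi_g$, with $\mu_w$ a probability measure on $X_w^g$. Since $g$ preserves $\mu$ and acts by a translation on each fiber over $B_g^\circ$, each conditional $\mu_w$ ($w\notin\Crit(\pi_g)$) is $g_w$-invariant. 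For $w\in B_g^\circ\setminus\Rk_g$, $g_w$ is a translation with dense orbits, so its unique invariant probability measure is the Haar (Lebesgue) measure on $X_w^g$ (last sentence of Lemma~\ref{lem:basic_dynamics}). Hence on the positive-$\mu_g$-measure set $B_g^\circ\setminus\Rk_g$ the conditionals are Lebesgue on the fibers.

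The next step is to propagate this fiberwise absolute continuity to a neighborhood in $X$, and then over all of $X$, using a second Halphen twist. Because $\Gamma$ is non-elementary and contains the Halphen twist $g$, it contains a second Halphen twist $h$ with $\pi_h$ transverse to $\pi_g$ on a nonempty open set (this is exactly the content of the non-elementary assumption together with Theorem~\ref{thm:invariant_fibration}; cf.\ the definition of $\Tang_\Gamma$). Pick a point $x_0$ in the support of $\mu$, lying over $B_g^\circ\setminus\Rk_g$ and outside $\STang_\Gamma$, where $\pi_g,\pi_h$ are transverse submersions — this is possible since the bad set has $\mu$-measure zero by the previous paragraph (fibers over $\Rk_g$) together with Lemma~\ref{lem:dC}. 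In a bidisk chart $(\pi_g,\pi_h)$ near $x_0$, we now also know: the $\mu_g$-conditionals are Lebesgue on $\pi_g$-fibers, and symmetrically, running the same argument with $h$ in place of $g$ — which requires knowing $\mu_h(B_h^\circ\setminus\Rk_h)>0$, obtained by transporting the positive-measure fiberwise-Lebesgue set through the transverse coordinates — the $\mu_h$-conditionals are Lebesgue on $\pi_h$-fibers. A Fubini-type argument (a measure on a bidisk whose conditionals along both coordinate foliations are Lebesgue is itself absolutely continuous) then shows $\mu$ is locally absolutely continuous near $x_0$, with density a product-type function; by ergodicity and the density of $\bigcup_{\gamma\in\Gamma}\gamma(\text{chart})$, $\mu$ is absolutely continuous on all of $X$ and $\supp(\mu)=X$.

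For the real analyticity of the density off $\STang_\Gamma$: at any $x\notin\STang_\Gamma$ choose $g,h\in\Hal(\Gamma)$ with $\pi_g,\pi_h$ transverse submersions at $x$ and with $x$ lying over regular fibers of both. Disintegrating along $\pi_g$, the conditional measures are $g$-invariant; on the part over $B_g^\circ\setminus\Rk_g$ they are Lebesgue, and one must check (using that the complement of this set within any transverse disk has measure zero, which follows because $\Rk_g$ is a countable union of real-analytic curves in $B_g^\circ$ by Lemma~\ref{lem:basic_dynamics}(2), hence meets a generic disk in a set of zero length, combined with the already-established absolute continuity of $\mu$) that in fact the conditionals are Lebesgue on \emph{all} fibers near $x$. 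So $\mu$ has the form (density on the base)$\times$(Betti/Haar form on fibers) for \emph{both} $\pi_g$ and $\pi_h$. Writing $\mu = \rho_g(w)\,\omega_{B,g}\wedge(\pi_g^*\text{area})$ and likewise for $h$, the invariance under $g$ (which preserves $\omega_{B,g}$ and acts trivially on $B_g$) forces $\rho_g$ to descend from... — more directly, comparing the two product decompositions in the transverse bidisk coordinates shows the density of $\mu$ with respect to a real-analytic volume form is, locally near $x$, a function of $\pi_g$ alone times a function of $\pi_h$ alone; invariance under the Halphen twists (translations on fibers, trivial on bases) forces each factor to be constant along the respective monodromy/holonomy, and a short argument using that the Betti form is real-analytic (Remark~\ref{rem:betti-form-Ib} and the formulas of \S\ref{par:smooth-fibers-Betti}) yields that the density is real-analytic at $x$.

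\textbf{Main obstacle.} The delicate point is upgrading "conditionals are Lebesgue on a positive-measure set of fibers" to "conditionals are Lebesgue on \emph{every} fiber in a neighborhood", i.e.\ ruling out that over the analytic curves $\Rk_g$ the measure $\mu$ concentrates on circles or finite sets in a way incompatible with the global structure. The resolution is the transversality of a second fibration $\pi_h$: a circle in an $X_g$-fiber over a point of $\Rk_g$ is a real-analytic curve transverse to $\pi_h$, and the $h$-dynamics spreads it out, contradicting $h$-invariance of $\mu$ unless the $\mu$-mass of such configurations is zero — this is essentially the argument already used in the proof of Lemma~\ref{lem:dC}, applied now to the non-algebraic invariant set swept out by the exceptional circles. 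Making this airtight (in particular handling the countably many slopes $(p,q)$ and the behavior over $\Sing(\pi_g)$, which is why $\STang_\Gamma$ rather than $\Tang_\Gamma$ appears) is where the real work lies, and is presumably where the proof in the paper invests its effort (Propositions~\ref{pro:smooth_case} and \ref{pro:analytic_sigma}).
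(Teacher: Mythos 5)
Your opening steps (disintegration along $\pi_g$, unique ergodicity of $g_w$ over $B_g^\circ\setminus\Rk_g$, transport of absolute continuity to $\mu_h$ through the transverse fibration, and the double-translation argument for analyticity) do follow the paper's strategy, but there is a genuine gap exactly at the point you flag, and your proposed fix does not close it. The hypothesis only gives $\mu_g(B_g^\circ\setminus\Rk_g)>0$, so you must prove that $\mu$ gives \emph{no} mass to $\pi_g^{-1}(\Rk_g)$ (the paper's Step 2: $\mu_g(\Rk_g)=0$). Your Fubini-type bidisk argument requires Lebesgue conditionals along \emph{both} coordinate foliations for almost every fiber through the chart, whereas what you have is Haar conditionals only over the good sets of positive (not full) measure; a priori $\mu$ could split into a piece with Haar $g$-conditionals over $B_g^\circ\setminus\Rk_g$ plus a singular piece carried by $\pi_g^{-1}(\Rk_g)$, and nothing in your argument excludes the second piece. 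Your suggested resolution — ``the $h$-dynamics spreads out a circle over $\Rk_g$, as in Lemma~\ref{lem:dC}'' — fails: the mechanism in Lemma~\ref{lem:dC} is that the $g$-iterates of a holomorphic disk transverse to $\pi_g$ meet pairwise in \emph{finite} sets (Lemma~\ref{lem:rank_0_1}), so the masses add up; the $h$-iterates of a circle in a $g$-fiber, or of the $3$-manifold $W_\gamma=\pi_g^{-1}(\gamma)$, have no such null-intersection property. Their closure is typically a $2$-dimensional annulus carrying a finite invariant measure — this is precisely the structure of case (c) of Theorem~\ref{thm:main} — so no mass blow-up and no contradiction with $h$-invariance arises; at best you show that a single circle is $\mu$-null, which is far from $\mu(\pi_g^{-1}(\Rk_g))=0$.

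The paper closes this gap with an algebraic input you do not have: it replaces $\langle g,h\rangle$ by a \emph{special} pair $(g^n,h^n)$ via the ping-pong Lemma~\ref{lem:schottky-parabolic}, so that every $f$ in $\Gamma_1=\langle g^n,h^n\rangle$ either is a power of $g$ (hence preserves $W_\gamma$, which is disjoint from the good set) or maps no fiber of $\pi_g$ into a fiber of $\pi_g$, hence meets each good fiber along a countable union of $1$-dimensional sets, of Haar conditional measure zero. $\Gamma_1$-ergodicity then makes $\bigcup_{f\in\Gamma_1}f(W_\gamma)$ co-null and yields the contradiction; since $\mu$ need not be $\Gamma_1$-ergodic, a further ergodic-decomposition argument is required (\S\ref{subsub:proof_smooth_case_general}). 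An alternative that would partially rescue your route is to first produce a genuinely nonzero absolutely continuous component of $\mu$ on $X$ (combining the Haar conditionals along $\pi_h$ over $B_h^\circ\setminus\Rk_h$ with the absolutely continuous component of $\mu_h$ you constructed) and then use that automorphisms preserve the Lebesgue measure class together with extremality of ergodic measures to kill the singular part; but you did not make that argument, and as written your proposal neither produces such a component nor invokes extremality. Finally, your analyticity paragraph is not a proof as stated (the ``each factor is constant along the monodromy'' step is unjustified); the paper instead uses the local transitivity of the two families of fiberwise translations together with a Lebesgue-density-point and bounded-distortion argument following Eremenko.
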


The proof occupies \S\S~\ref{prelim:choice_of_a_good_group} to 
\ref{subsub:proof_smooth_case_general} below.  
 
\subsubsection{Preliminaries: special subgroups}\label{prelim:choice_of_a_good_group}  

\begin{defi}\label{defi:special} A pair $(g,h)$ of Halphen twists is \textbf{special}  if 
\begin{enumerate}%[\em (1)]
\item the group $\langle g, h\rangle$ is a non-abelian free group on two generators (in particular the fibrations $\pi_g$ and $\pi_h$ are distinct);
\item an element $f \in \langle g, h\rangle$ is
a power of $g$ if and only if it fixes the class of the fibration $\pi_g$ in the N\'eron-Severi group $\NS(X;\Z)$, if and only if it 
permutes the fibers of $\pi_g$, if and only if it maps some smooth fiber of $\pi_g$ to a fiber of $\pi_g$;
\item Property (2) holds also for $h$ in place of $g$.
\end{enumerate}
\end{defi}
In Assertion~(2), the important part is that an $f\in \langle g, h\rangle$ that permutes the fibers of $\pi_g$ is an element of $g^\Z$. Indeed, the following remark shows that the last three properties in Assertion~(2) are always equivalent.

\begin{rem}\label{rem:transport_of_smooth_fiber} 
Consider an arbitrary pair 
$(g, h)$  of Halphen twists, and 
let $E$ be a scheme theoretic fiber of $\pi_g$ (that is, if $w$ is a local coordinate near 
$w_0=\pi_g(E)$, the equation of $E$ is $\pi_g(\xi)=w$).  
If $f\in \Gamma$  maps $E$ in a fiber $E'$ of $\pi_h$, 
then the class $[f(E)]$ must be proportional to $[E']$, because the self-intersection of $f(E)$ 
is zero and in the vector space generated by
the classes of the components of $E'$, all isotropic vectors are proportional to $[E']$. 
Thus the classes of the fibers of 
$\pi_g\circ f$  and $\pi_h$ generate the same ray in $H^{1,1}(X;\R)$; by Remark~\ref{rem:theorem_invariant_fibration},  $\pi_g\circ f^{-1}$ and $\pi_h$ are equal, up to post composition by an isomorphism $B_h\to B_g$. 
In particular, if $g$ and $h$ preserve distinct fibrations, no fiber of $\pi_g$ is entirely contracted by~$\pi_h$.
\end{rem}

\begin{lem}\label{lem:schottky-parabolic} 
If $g$ and $h$ are Halphen twists associated to distinct fibrations, then for large enough $n$, 
the pair $(g^n, h^n)$ is special. 
\end{lem}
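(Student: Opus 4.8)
The plan is to run a ping\nobreakdash-pong argument inside the hyperbolic space $\mathbb{H}$ attached to the intersection form on $H^{1,1}(X;\R)$, i.e.\ the projectivization of the component of $\{v:v\cdot v>0\}$ containing Kähler classes; $\Gamma^*$ acts on $\overline{\mathbb{H}}=\mathbb{H}\cup\partial\mathbb{H}$ by isometries, and by \S\ref{par:Halphen-intro} the parabolic automorphisms $g,h$ induce parabolic isometries $g^*,h^*$, each with a unique fixed point on $\partial\mathbb{H}$. For a Halphen twist $g$ the class $[E_g]$ of a fiber of $\pi_g$ is isotropic and nef and is fixed by $g^*$ (a Halphen twist preserves every fiber setwise), so the ray $\R_+[E_g]$ defines the boundary fixed point $\xi_g$ of $g^*$; by Remark~\ref{rem:theorem_invariant_fibration} this ray characterizes $\pi_g$, hence the hypothesis $\pi_g\ne\pi_h$ forces $\xi_g\ne\xi_h$.

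First I would prove freeness. For small enough radii, choose disjoint open neighborhoods $U_g\ni\xi_g$ and $U_h\ni\xi_h$ in $\overline{\mathbb{H}}$ that are horoballs together with their base point; being parabolic, $g^*$ preserves every horoball at $\xi_g$, and $(g^*)^m\to\xi_g$ uniformly on the compact set $\overline{\mathbb{H}}\setminus U_g$ (this is where parabolicity, rather than just the absence of an interior fixed point, is used). Hence there is $N$ such that for all $n\ge N$ and all $k\ge 1$ one has $(g^{\pm kn})^*(\overline{\mathbb{H}}\setminus U_g)\subset U_g$, and likewise for $h$. The ping\nobreakdash-pong lemma then yields that $\langle (g^n)^*,(h^n)^*\rangle$ is free of rank $2$; since $\langle g^n,h^n\rangle$ is $2$\nobreakdash-generated and surjects onto this free group, the surjection is an isomorphism, which gives Assertion~(1) of Definition~\ref{defi:special} and identifies $\langle g^n,h^n\rangle$ with its image in $\GL(H^2(X;\Z))$.

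It remains to prove Assertion~(2), Assertion~(3) being symmetric. The implications ``$f$ is a power of $g^n$ $\Rightarrow$ $f^*$ fixes $[E_g]$ $\Rightarrow$ $f$ permutes the fibers of $\pi_g$ $\Rightarrow$ $f$ maps some smooth fiber of $\pi_g$ to a fiber of $\pi_g$'' are immediate (the first because a Halphen twist fixes every fiber setwise, the second by Remark~\ref{rem:theorem_invariant_fibration}). For the converse, assume $f\in\langle g^n,h^n\rangle$ sends a smooth fiber of $\pi_g$ to a fiber of $\pi_g$; by Remark~\ref{rem:transport_of_smooth_fiber} (with $g=h$ there), $\pi_g\circ f^{-1}=\pi_g$ up to an automorphism of $B_g$, so $f^*$ preserves $\R_+[E_g]$, i.e.\ $f^*(\xi_g)=\xi_g$. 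It therefore suffices to show that the stabilizer of $\xi_g$ in $\langle A,B\rangle$, with $A=(g^n)^*$ and $B=(h^n)^*$, equals $\langle A\rangle$. Given a reduced word $w\notin\langle A\rangle$, write $w=A^{a}\,w_2\,A^{b}$ where $w_2$ is a nonempty reduced word beginning and ending with a nonzero power of $B$ (peel the outer $A$\nobreakdash-syllables). Since $\xi_g\notin U_h$, processing $w_2$ with the ping\nobreakdash-pong inclusions, which alternate between $U_h$ and $U_g$, shows $w_2(\xi_g)\in U_h$; as $A^b$ fixes $\xi_g$ we get $w(\xi_g)=A^a\bigl(w_2(\xi_g)\bigr)$, which lies in $U_h$ if $a=0$, and otherwise lies in $U_g$ but is distinct from $\xi_g$ because $w_2(\xi_g)\notin U_g$ and $A^a$ is injective with $A^a(\xi_g)=\xi_g$. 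In all cases $w(\xi_g)\ne\xi_g$, so $f\in\langle g^n\rangle$, closing the cycle of equivalences.

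I expect the only delicate point to be the bookkeeping in this last step: because $\xi_g$ lies \emph{inside} the ping\nobreakdash-pong region $U_g$ rather than outside it, one cannot directly invoke the usual statement that a word starting and ending in $A$\nobreakdash-syllables moves a fixed base point into $U_g$; the reduction to the $B$\nobreakdash-bordered core $w_2$ and the case distinction on whether the outermost syllable $A^a$ is trivial are precisely what handles this. Everything else is a routine ping\nobreakdash-pong argument together with the two already\nobreakdash-established remarks on invariant fibrations.
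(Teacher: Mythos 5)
Your proof is correct, and its skeleton is the same as the paper's: ping-pong for the parabolic isometries $g^*,h^*$ on (the boundary of) the hyperbolic space attached to $H^{1,1}(X;\R)$, with the fixed points $\xi_g,\xi_h$ determined by the fiber classes, plus Remarks~\ref{rem:theorem_invariant_fibration} and~\ref{rem:transport_of_smooth_fiber} to reduce Assertion~(2) of Definition~\ref{defi:special} to the statement that the stabilizer of $\xi_g$ in $\langle (g^n)^*,(h^n)^*\rangle$ is $\langle (g^n)^*\rangle$. The one place where you genuinely diverge is that last step. The paper argues by induction on the syllable length of $f$: it first observes that $f^*$ fixes the integral class $[D]$ generating the isotropic line through $u_g$, hence cannot be loxodromic; ping-pong then rules out words starting with an $h$-syllable and ending with a $g$-syllable (those would be loxodromic), the fixed point $u_g$ rules out words bordered by $h$-syllables, and conjugating by the outer $g$-power shortens the word. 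You instead strip the outer $A$-syllables to get the $B$-bordered core $w_2$, track $\xi_g$ directly through the ping-pong inclusions to land in $U_h$, and conclude by injectivity of $A^a$. Your route is a bit more economical — it needs neither the induction nor the arithmetic input that $u_g$ is rational (i.e.\ that $f^*$ preserves a lattice point on the isotropic line, hence is not loxodromic) — while the paper's non-loxodromic observation is a reusable fact in its own right. Both arguments are sound.
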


\begin{proof}  Consider the action of $\Gamma$ on $H^{1,1}(X;\R)$. It preserves the isotropic cone 
 \begin{equation}
 \{u\in H^{1,1}(X;\R)\; ; \; \langle u\vert u\rangle=0\},
 \end{equation} where $\langle \cdot \vert \cdot\rangle$ is the intersection 
form. Projectively, this cone is a sphere ${\mathbb{S}}$ (which can be considered as the boundary of a hyperbolic space, see~\cite{Cantat:Milnor, stiffness}). 
Now, $g^*$  is a parabolic transformation on ${\mathbb{S}}$, fixing a unique point $u_g$. Let us fix a small neighborhood $U_g$ of $u_g$; 
if $x$ is a point of ${\mathbb{S}}\setminus \{u_g\}$ 
there is small neighborhood $V(x)$ of $x$ in ${\mathbb{S}}$ and a positive integer $m_g(V(x))$ such that $(g^n)^*(V(x))\subset U_g$ 
for every $n\in \Z$ with 
$\abs{n}\geq n_g(V(x))$. 
A similar property is satisfied by $h$. The points $u_g$ and $u_h$ are determined by the classes of the fibers of the invariant fibrations
$\pi_g$ and $\pi_h$.
We choose  $U_g$ and $U_h$  small and disjoint, and   $n_0$ 
such that for all $n\in \Z$ with $\abs{n}\geq n_0$
\begin{equation}\label{eq:pingpong}
(g^n)^*(U_h)\subset U_g\setminus\set{u_g} \text{ and }(h^n)^*(U_g)\subset U_h\setminus\set{u_h}.
\end{equation} 
In the following we fix   such an $n$.  
Then, the first assertion follows from the ping-pong lemma (see~\cite{delaHarpe:Book}, Chapter I). 

Let us show   
that  if $f\in  \langle g^n, h^n\rangle$, then $f^*u_g = u_g$ if and only if $f\in \langle g\rangle$.
 We assume that $f^*u_g = u_g$, and we want to show that $f$ is an iterate of $g$.
 Write $f$ as a  word in $g^n$ and $h^n$: 
$f=g^{nk_1}\circ h^{n\ell_1}\circ \cdots \circ h^{n\ell_s}$, 
the $k_i$ and $\ell_i$ being non-zero integers, except possibly for 
$k_1$ and $\ell_s$ which may vanish. 
The proof is by induction on $\abs{f}=\vert \{i\; ; \; k_i\neq 0\}\vert+ \vert \{i\; ; \;  \ell_i\neq 0\}\vert$. 
The result is obvious when $\abs{f}=0$ or $1$. 
 
 The point $u_g$  corresponds to an isotropic line in $H^{1,1}(X;\R)$, and this line intersects $H^2(X;\Z)$ on 
 a discrete set $\Z [D]$, for some integral class $[D]$ on the boundary of the ample cone.%\serge{J'ai enlevé la parenthèse. Elle n'est pas fausse, mais pour montrer que $D$ est effectif il faut travailler un peu.} %($D$ is a divisor, and some positive multiple of $[D]$ is the class of a fiber of $\pi_g$). 
 Since $f^*$ preserves $H^2(X;\Z)$ and the ray $\R_+ [D]$, it fixes $[D]$. Thus, $f^*$ cannot be loxodromic (see~\cite{Cantat:Milnor}).
 
If $f$ starts with $h$ and ends with $g$
(i.e. $\ell_sk_1\neq 0$), then $f^*$ maps $U_h$ strictly inside itself and $(f^*)^{-1}$ maps $U_g$ strictly inside itself. This implies that $f^*$ is  loxodromic, a
contradiction. Therefore $f$ starts and ends with the same letter. If this letter were $h$, then $u_g$ would in fact be mapped into $U_h$ by $f$; thus, 
$f$ starts and ends by a power of $g$. Conjugating $f$ by a power of $g$, we reduce its length without changing the property $f^*u_g=u_g$. 
Thus,  by induction, $f$ is a power of $g$.
\end{proof}

\subsubsection{Preliminaries: disintegration} 
If $g$ is any Halphen twist, %\romain{petit changement, $g$ n'est plus l'élément distingué de la proposition}
we may disintegrate $\mu$
with respect to $\pi_g$: there is a measurable family of probability measures $\mu_{g,w}$
on the fibers $X_w^g$ such that 
\begin{equation}\label{eq:desintegration}
\int_X \xi(x) \, d\mu(x) 
=\int_{B_g} \int_{X_w^g} \xi(z)\,  d\mu_{g,w}(z)\; d\mu_g(w)
\end{equation}
for every Borel function $\xi\colon X\to \R$. The measures $\mu_{g,w}$ are 
unique, in the following sense: if $\mu'_{g,w}$ is another family of probability measures
satisfying   Equation~\eqref{eq:desintegration}, then $\mu'_{g,w}=\mu_{g,w}$ for $\mu_g$-almost
every $w$. Thus, the measures $\mu_{g,w}$ are invariant under
the action of $g_{\vert X^g_w}$. %\romain{enlevé la dernière phrase}
%We perform a similar disintegration with respect to $\pi_h$, and get measures $\mu_h$ and $\mu_{h,w'}$.

\subsubsection{Proof of Proposition~\ref{pro:smooth_case}: special case}\label{subsub:proof_smooth_case_special}

In this paragraph we  assume that there exists $h\in \Hal(\Gamma)$ such that the pair 
 $(g,h)$ is special in the sense of Definition~\ref{defi:special}, and  let $\Gamma_1 = \langle{g, h}\rangle$.
 Note that $\Gamma_1$ satisfies assumptions~(i) and~(ii') from p.\pageref{assumptions_Gamma}. Let  
 $\mu$ be as in the statement of  Proposition~\ref{pro:smooth_case}. 
Let us show that the conclusions of the proposition hold  under the {\emph{additional 
assumption that  that $\mu$ is $\Gamma_1$-ergodic}} (also in this first part of the proof, the analyticity of the density 
will only be established outside $\STang(\pi_g, \pi_h)$).

Let ${\mathcal{B}}_g$ be a Borel subset of $B_g^\circ$ which is disjoint from $ \Rk_g$ and 
satisfies $\mu_g({\mathcal{B}}_g)>0$.
According to Lemma~\ref{lem:basic_dynamics}, 
the dynamics of $g$ on $X_w^g$ is uniquely ergodic for every $w\in {\mathcal{B}}_g$. Thus, we get: 

\smallskip 

{\bf{Step 1.--}} {\em{If $w\in {\mathcal{B}}_g$, then $g_{\vert X^g_w}$ is uniquely ergodic and $\mu_{g,w}$
is  equal to the Haar measure $\lambda_{g,w}$ on the fiber $X^g_w\simeq \C/\Lat_g(w)$}}.
Here, $\Lat_g(w)=\Z\oplus \Z\tau_g(w)$ and $\lambda_{g,w}= ({\mathrm{Im}}(\tau_g(w)))^{-1}\ii dz\wedge d{\overline{z}}$, 
as in Section~\ref{sec:halphen}.

\smallskip 

{\bf{Step 2.--}} {\em{We have $d_\C(\mu)=2$ and $\mu_g(\Rk_g)=0$}}.  

The first assertion follows directly  from Lemma~\ref{lem:dC} and Lemma~\ref{lem:Tang_DGamma}. 

%Let us prove $\mu_g(\im(\mathrm R_g))=0$.
For the second one, we argue by contradiction, assuming that 
 %Otherwise, 
there is 
 %an open disk $U\subset B_g^\circ$ and 
an analytic arc $\gamma \subset \Rk_g$ such that $\mu_g(\gamma)>0$. Since $d_\C(\mu)>1$, we can shorten $\gamma$ to 
ensure  that it does not contain any critical value of $\pi_g$.
Set  $W_\gamma=\pi_g^{-1}(\gamma)$. Then 
$\mu(W_\gamma)>0$, so  $d_\R(\mu)\leq 3$. %\serge{Je n'ai pas compris l'intérêt de découper $W_\gamma$ en disques plus petits.}
%: indeed, $W_\gamma$ can be written as  a  countable union of embedded
%analytic disks  of dimension at most 3. Let $W\subset W_\gamma$ 
%be a disk of dimension $d_\R(\mu)$ such that $\mu(W)>0$. 
By ergodicity of $\mu$, $\bigcup_{f\in \Gamma_1} f(W_\gamma)$ is a subset of full measure.

Pick  $f\in \Gamma_1$.  If it permutes the fibers of $\pi_g$, then
$f\in g^\Z$ because $(g,h)$ is special, and thus $f(W_\gamma)=W_\gamma$; 
thus $\mu(f(W_\gamma)\cap \pi_g^{-1}({\mathcal{B}}_g))=\mu(\emptyset)=0$ in that case.
Now, suppose $f$ does not permute the fibers of $\pi_g$. Note that if $W_\gamma$ intersects an irreducible curve $C\subset X$ on some non-empty open subset of $C$, then $C$ must be a fiber of $\pi_g$, because
its projection in $B_g$ is locally contained in $\gamma$. 
Thus, if $w\in B_g^\circ$ and $f(W_\gamma)\cap X^g_w$ contains a non-empty open 
subset of $X^g_w$ then  $f^{-1}$ maps  $X^g_w$ into a fiber of $\pi_g$ above $\gamma$, and
this contradicts the fact that $(g,h)$ is special. 
We deduce that $f(W_\gamma) \cap X^g_w$ is contained in a countable union of real analytic submanifolds of dimension $1$ in  $X^g_w$, for
every $w\in B_g^\circ$. In particular  if $w\in {\mathcal{B}}_g$, $\lambda_{g,w}(f(W_\gamma)\cap X^g_w)=0$, and 
we conclude that 
$\mu(f(W_\gamma)\cap \pi_g^{-1}({\mathcal{B}}_g))=0$ in that case too.
%Then $f(W_\gamma) $ is generically transverse to the fibers; more precisely  if  $w\in B_g$, 
%\begin{enumerate}[(a)]
%\item either   $f(W_\gamma) \cap \pi_g\inv(w)$ is contained in a countable union of analytic submanifolds of dimension 1 
%in  $\pi_g\inv(w)$, 
%\item or $f(W_\gamma)\cap \pi_g\inv(w) = \pi_g\inv(w)$. 
%\end{enumerate}
%Case (b) means that $f$ maps some fiber of $\pi_g$ above $\gamma$ to the fiber $\pi_g\inv(w)$, for the only algebraic
%curves in $W_\gamma$ are fibers of $\pi_g$. Since $f$ does not preserve the fibration, this is impossible. 
%In case (1), for every $w\in {\mathcal{B}}_g$, $h(W_\gamma) \cap \pi_g\inv(w)$ is a negligible 
%set  for the conditional measure $\mu_{g,w}=\lambda_{g,w}$.
%On the other hand   the second alternative  only occurs for at most  finitely many fibers: indeed 
%in this case $h\inv(\pi_g\inv(w))$ is an algebraic curve contained in $W_\gamma$, so
% it is contained in  some  fiber of $\pi_g$. Since every smooth fiber of $\pi_g$ determines the fibration (see Theorem~\ref{thm:invariant_fibration}.(2)) and $h$ does not preserve the fibration, 
%  the only possibility   is that this algebraic curve is a component of a singular fiber of $\pi_g$. 
%it is contained in  some  fiber of $\pi_g$. If this happens for an infinite set of fibers $\pi_g\inv(w)$ we would infer that 
%$h$ preserves the fibration, which is not the case. Therefore,  since  $\mu$ gives no
%mass to algebraic curves,   alternative~(2) 
%does not account for   $\mu(h(W_\gamma))$.
Since   $\mu\big(\bigcup_{f\in \Gamma_1} f(W_\gamma)\big) =1$ and simultaneously $\mu(\pi_g^{-1}({\mathcal{B}}_g))>0$ we obtain a 
contradiction, which concludes the proof of the second step. 
 
\smallskip 

{\bf{Step 3.--}} From Step 2 we can suppose $\mu_g({\mathcal{B}}_g)=1$. Let us show that
{\em{$\mu_h:=(\pi_h)_*\mu$ is absolutely continous with respect to the Lebesgue measure. In particular, $h$ satisfies $\mu_h(B_h^\circ\setminus \Rk_h)=1$ too.}}

Let $\Delta\subset B_h$ be a Borel 
set of Lebesgue measure $0$.
Remark~\ref{rem:transport_of_smooth_fiber} shows that if $X^g_w$ is a  smooth fiber of $\pi_g$, then $X^g_w$  is not contracted by $\pi_h$,  
so $\pi_{h\vert X^g_w}\colon X^g_w\to B_h$
is a finite ramified cover, and $\lambda_{g,w}((\pi_{h\vert X^g_w})^{-1}(\Delta))=0$. This shows that 
\begin{equation}
\lambda_{g,w}(\pi_{h}^{-1}(\Delta)\cap X^g_w)=0
\end{equation}
for $\mu_g$ almost every point $w\in B_g$. 
Since $\mu_g(\mathcal{B}_g )=  1$ and $\mu_{g, w}$ coincides with the Haar measure  
$\lambda_{g,w}$ for $\mu_g$ almost every $w\in \mathcal {B}_g$,  
Equation~\eqref{eq:desintegration} implies  $\mu(\pi_h^{-1}(\Delta))=0$; thus $\mu_h(\Delta)=0$, as required.

\smallskip 

{\bf{Step 4.--}} {\emph{The support of $\mu$ is $X$. }} 

Indeed,  $\mu_h$ being absolutely continuous with respect to the Lebesgue measure of $B_h$, $\mu_h( \mathrm R_h)=0$.
 Thus, symetrically, $\mu_g$ is absolutely continuous with respect to the Lebesgue measure of $B_g$, and from Equation~\eqref{eq:desintegration}
we deduce that $\mu$ is absolutely continuous with respect to the Lebesgue measure on $X$. 
If $U$ is an open subset of $B_g$, $X^g_U$ intersects every smooth fiber of $\pi_h$ on a set of positive Haar-measure; thus 
$\mu(X^g_U) = \mu_g(U)$ is positive, and we infer that the support of $\mu_g$ is equal to $B_g$.  Since $ \mu_{g,w}$ is $\mu_g$-almost surely the Haar measure $ \lambda_{g,w}$, we conclude that 
the support of $\mu$ is equal to $X$. 

\smallskip 

{\bf{Step 5.--}} {\emph{The density is analytic outside $\STang (\pi_g, \pi_h)$}}.

 Let $x_0$ be a point of $X\setminus{\STang{(\pi_g, \pi_h)}}$ 
 and $V$ be a small neighborhood of $x_0$ such that 
\begin{itemize}
\item $\pi_g$ and $\pi_h$ are everywhere transverse on  $V$, 
\item $U_g:=\pi_g(V)$  and $U_h:=\pi_h(V)$ are small disks in $B_g $ and $B_h$, respectively,
\item $\pi_g$ (resp. $\pi_h$) is a proper submersion above $U_g$ (resp. $U_h$).
\end{itemize}
In a chart $\Psi_g$ mapping $X^g_{U_g}$ to $U_g\times \R^2/\Z^2$,  Equation~\eqref{eq:desintegration} 
implies that $(\Psi_g)_*(\mu_{\vert X^g_{U_g} })$ is invariant 
under the action of {\emph{all}} vertical translations; the same property holds with respect to  $\pi_h$. Coming back to $X$, these 
translations act analytically and locally transitively on $V$: for every $y$ in $V$ there is a pair of such translations
such that their composition maps $x_0$ to $y$ ($V$ is not invariant under such translations). Following the proof of  \cite[Proposition 1]{Eremenko:1989}, we deduce
that $\mu$ has a real analytic density on $V$, with respect to the analytic structure of~$X$.
Indeed, embed $V$  in $\R^4$ and denote by $\vol$ 
a volume form on $\R^4$. In these coordinates, $\mu=\xi \, \vol$, for some non-negative integrable function $\xi$. 
Changing $x_0\in V$ if necessary, we suppose that $x_0$ is a Lebesgue density
point for~$\mu$; this means that $\mu(\e  K)\vol(\e  K)^{-1}$  converges to  $\xi(x_0)$ when $\e\to 0$,
 for every ellipsoid $K$ centered at $x_0$. If $x_0$ is mapped to $y$
by a diffeomorphism $\varphi_y$ preserving $\mu$, then the boundedness of the distortion of $\varphi_y$ shows that 
$y$ is also a Lebesgue density point of $\mu$, 
with density $\xi(y)=(\jac(\varphi_y)(x_0))^{-1}\xi(x_0)$. Now, choosing $\varphi_y$ as a composition of two translations, 
we can assume that $y\mapsto \jac(\varphi_y)(x_0)$ is an analytic function; thus, the density $\xi$ is real-analytic in a neighborhood of $x_0$.\qed
 
%Now, for every point $x\in X\setminus \STang_{\Gamma_1}$ there is a pair of elements $(g',h')\in \Hal(\Gamma)^2$, depending on $x$, %that satisfies the above transversality assumption. 
%to which the above reasoning can be applied. 
%So, the density of $\mu$ is real analytic on the complement of $\Tang_{\Gamma_1}$, and we are done.\hfill $\square$

\subsubsection{Proof of Proposition~\ref{pro:smooth_case}: general case}\label{subsub:proof_smooth_case_general}

Let $\mu$ and $g$ be as in the statement of the proposition. Fix $h\in \Hal(\Gamma)$ such that 
$d\pi_g\wedge d\pi_h\not\equiv0$. Then by Lemma~\ref{lem:schottky-parabolic}, we find $n\geq 1$ such that $(g^n, h^n)$ is special; set $\Gamma_1 = \langle g^n, h^n\rangle$. Since
$\Rk_{g^n} = \Rk_g$, the assumption~\eqref{eq:mug_positive} holds with $g^n$ instead of $g$. 
However $\mu$ is not necessarily $\Gamma_1$-ergodic, 
so the results of \S~\ref{subsub:proof_smooth_case_special} cannot be applied directly. To get around  this 
difficulty, we use the  $\Gamma_1$-ergodic decomposition of $\mu$ (see~\cite{varadarajan}): $\mu=\int_X \beta_x \; d\mu(x)$ 
for an essentially unique, $\Gamma$-invariant, Borel map 
$\beta:x \mapsto \beta_x$ such that $\beta_x$ is $\mu$-almost surely a $\Gamma_1$-ergodic probability measure.   

Set $\Omega_j=\{ x\in X\; ; \; d_\C(\beta_x)=j\}$, for $j=0,1$.
By ergodicity, $\beta_x(D_{\Gamma_1})=1$ for every $x\in \Omega_1$. Since $\mu(D_{\Gamma_1})=0$, we deduce that $\mu(\Omega_1)=0$.  For $x\in \Omega_0$, $\beta_x$ gives full mass 
to the union of the (fixed) countable set $\pi_g\inv(\Tor(B_g^\circ))\cap \pi_h\inv(\Tor(B_h^\circ))$ and the Zariski closed set $\Sing(\pi_g)\cup \Sing(\pi_h)$; since $d_\C(\mu)=2$, we also get $\mu(\Omega_0)=0$. Thus $\dim_\C(\beta_x)=2$ for $\mu$-almost every $x$.  

So, for $x$ in a subset $\Omega\subset X$ with $\mu(\Omega)=1$, the results of \S~\ref{subsub:proof_smooth_case_special}
apply to $\beta_x$. There are two possibilities: 
\begin{itemize}
\item either $\beta_x(\pi_g\inv(B_g^\circ\setminus \Rk_g))>0$, $\beta_x$ is absolutely continuous, and its support is $X$;
\item or $\beta _x(\pi_g\inv(\Rk_g))=1$. 
\end{itemize}
Denote by $\Omega_\mathrm{ac}$ (for ``absolutely continuous'') and by $\Omega_\mathrm{si}$ (for ``singular'') the set of points such that the first or second alternative holds, respectively. 
Both  are Borel subsets and  
$\mu(\Omega_\mathrm{ac}\cup \Omega_\mathrm{si}) = 1$.  
Assumption~\eqref{eq:mug_positive} implies that $\mu(\Omega_\mathrm{ac})>0$.
If $\mu(\Omega_\mathrm{si})>0$, then $\mu(\pi_g\inv(\Rk_g))>0$, and since $\mu$ is 
$\Gamma$-ergodic, we infer that $\mu(\Gamma\cdot \pi_g\inv(\Rk_g))=1$. But the Lebesgue measure of 
$\Gamma\cdot \pi_g\inv(\Rk_g)$ is zero, so $\int_{x\in \Omega_ \mathrm{ac}} \beta_x(\Gamma\cdot \pi_g\inv(\Rk_g)) \; d\mu(x)=0$ and we deduce that $\mu(\Omega_\mathrm{ac})=0$. This contradiction shows that $\mu(\Omega_\mathrm{si})=0$ and  $\mu(\Omega_\mathrm{ac})=1$.

Finally, if  $x$ and $y$ are in $\Omega_\mathrm{ac}$, then
$\beta_x$ and $\beta_y$ are $\Gamma_1$-ergodic measures 
of full support, the densities of which are analytic on the complement of proper analytic sets. 
So $\beta_x=\beta_y$, and this implies that $\beta_x = \mu$ almost surely. In particular, $\mu$ is $\Gamma_1$-ergodic and satisfies the conclusions of 
 \S~\ref{subsub:proof_smooth_case_special}. 
 
 \begin{rem}\label{rem:any_Halphen}
This argument shows that if $\mu$ is a $\Gamma$-ergodic probability measure  $\mu$ gives no mass to 
algebraic subsets, and  $\mu_g(B_g^\circ\setminus \Rk_g) >0$ for some $g\in \Hal(\Gamma)$, then 
$\mu_{g'}(B_{g'}^\circ\setminus \Rk_{g'}) =1$ for \emph{any} $g'\in \Hal(\Gamma)$: this follows from the absolute continuity of~$\mu$.
\end{rem}

 To conclude the proof of  
 Proposition~\ref{pro:smooth_case}, it remains to show that  the 
 density of $\mu$ is analytic outside $\STang_\Gamma$. For every   $x\in X\setminus \STang_\Gamma$ 
 there exists a pair  $(g',h')\in \Hal(\Gamma)^2$ such that 
  $x\notin \STang(\pi_g, \pi_h)$,
 %$d\pi_{g'}\wedge d\pi_{h'}\neq 0$ in a neighborhood of $x$, 
 and by Lemma~\ref{lem:schottky-parabolic} we can assume that this pair is special. By the previous remark, the results of  \S~\ref{subsub:proof_smooth_case_special} apply to $(g',h')$. So,   
 $\mu$ is smooth near $x$, and we are done.  \qed

%So, the density of $\mu$ is real analytic on the complement of $\Tang_\Gamma$, and we are done

%
%\subsubsection{K3 and Enriques surfaces} 
%\textcolor{red}{Dans ce cas, la densité est lisse au sens orbifold, et en fait c'est la mesure donnée
%par la forme volume (voir section dédiée ci-dessous, ou le mettre ici ?)}\romain{je vote pour la section dédiée mettre un pointeur}\serge{OK}
%
\subsection{Dimension $\leq 1$}

\begin{lem}\label{lem:dR1}
If $d_\R(\mu)\leq 1$ then $\mu$ is either supported on   a finite orbit, or on $\mathrm D_\Gamma$.
\end{lem}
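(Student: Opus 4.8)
The plan is to reduce the statement to a question about $d_\C(\mu)$ and then invoke Lemma~\ref{lem:dC}. If $d_\R(\mu)=0$, then $\mu$ charges a point, so by $\Gamma$-invariance and ergodicity it is the uniform measure on a finite orbit (the zero-dimensional case of Lemma~\ref{lem:dC}), and we are done. So from now on assume $d_\R(\mu)=1$; in particular $\mu$ is atomless, since an atom would be a $0$-dimensional real analytic submanifold of positive mass, forcing $d_\R(\mu)=0$. The goal is then to prove $d_\C(\mu)=1$, i.e.\ that $\mu$ gives positive mass to a complex analytic curve, and Lemma~\ref{lem:dC}$(2)$ will immediately give that $\mu$ is supported on $\mathrm D_\Gamma$.

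By definition of $d_\R(\mu)$ and second countability, there is a connected real analytic arc $W\subset X$ with $\mu(W)>0$. Fix $g\in\Hal(\Gamma)$ (this set is nonempty by assumption (ii'), a suitable power of the given Halphen twist lying in $\Hal(\Gamma)$, cf.\ Notation~\ref{nota:Hal}), and disintegrate $\mu$ along $\pi_g$, writing $d\mu = d\mu_{g,w}\,d\mu_g(w)$ as in \S\ref{subsub:proof_smooth_case_special}. The main claim is that $\mu_g$ has an atom. Granting this, say $\mu_g(\{w_0\})>0$, we get $\mu\big(\pi_g^{-1}(w_0)\big)>0$; but $\pi_g^{-1}(w_0)$ is a proper algebraic subset of $X$ of complex dimension $1$ (since $\pi_g\colon X\to B_g$ is a morphism of projective varieties with $\dim B_g=1$), with finitely many irreducible components, so one component $Z$ has $\mu(Z)>0$. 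Hence $d_\C(\mu)\le 1$, and since $\mu$ is atomless, $d_\C(\mu)=1$, as wanted.

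To prove the claim, suppose for contradiction that $\mu_g$ is atomless. Since $\mu(W)>0$, the disintegration gives a Borel set $A\subset B_g$ with $\mu_g(A)>0$ such that $\mu_{g,w}\big(W\cap X^g_w\big)>0$ for every $w\in A$. Fix $w\in A$. The set $W\cap X^g_w = (\pi_g\rest{W})^{-1}(w)$ is a real analytic subset of the connected real analytic $1$-manifold $W$: in a local holomorphic coordinate $\zeta$ centered at $w$ it is the common zero set on $W$ of the real analytic functions $\Rea(\zeta\circ\pi_g)$ and $\Ima(\zeta\circ\pi_g)$. By the identity theorem on $W$, either $\pi_g\rest{W}\equiv w$, so $W\subset X^g_w$, or this zero set is discrete, hence countable. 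In the first case $\mu(X^g_w)\ge\mu(W)>0$ contradicts $\mu_g$ being atomless; so for every $w\in A$ the set $W\cap X^g_w$ is countable, and therefore $\mu_{g,w}$ has an atom. Now $\mu_{g,w}$ is invariant under $g\rest{X^g_w}$, and Lemma~\ref{lem:basic_dynamics} shows that for $w\in B_g^\circ\setminus\Tor_g(B_g^\circ)$ every $g\rest{X^g_w}$-invariant probability measure is atomless: it is the Haar measure of $X^g_w$ when $w\notin\Rk_g$, and a combination (via the ergodic decomposition) of uniform measures on finite unions of circles when $w\in\Rk_g$. Consequently $A\subset\Crit(\pi_g)\cup\Tor_g(B_g^\circ)$, a countable set, contradicting $\mu_g(A)>0$ for the atomless measure $\mu_g$. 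This proves the claim, and hence the lemma.

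The heart of the argument, and the step I expect to require the most care, is this fiberwise dichotomy: a real analytic arc of positive $\mu$-mass must meet $\mu_g$-almost every fiber it does not lie in along a discrete set, and by Lemma~\ref{lem:basic_dynamics} an atomic invariant measure on a fiber only occurs over the countably many ``resonant'' fibers (critical fibers, or fibers in $\Tor_g$), which forces $\mu_g$ to charge one of them. The only technical input is the real analyticity used to show $W\cap X^g_w$ is countable unless $W$ sits inside the fiber; this relies on $\pi_g$ being holomorphic, hence real analytic, together with the one-dimensional identity theorem on the connected arc $W$.
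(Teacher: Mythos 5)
Your proof is correct and follows essentially the same route as the paper's: disintegrate $\mu$ along $\pi_g$, observe that a real analytic arc meets each fiber it is not contained in along a discrete set, and use Lemma~\ref{lem:basic_dynamics} to see that atomic $g_w$-invariant measures only occur over the countable set $\Crit(\pi_g)\cup\Tor_g(B_g^\circ)$. The only (harmless) difference is organizational: you deduce directly that $\mu_g$ must have an atom and then invoke Lemma~\ref{lem:dC}, whereas the paper argues by contradiction from $d_\C(\mu)=2$ and also cites Proposition~\ref{pro:smooth_case}, an appeal your countability argument renders unnecessary.
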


\begin{proof}  By Lemma~\ref{lem:dC}, we may assume that $d_\C(\mu)=2$, in particular $\mu_g$ is atomless, and we seek a contradiction. Pick $g\in \Hal(\Gamma)$.
By assumption  there is  a (local) real analytic curve $W\subset X$ 
  such that $\mu ( W)>0$ and $W$ is Zariski dense in $X$. 
Shrinking it, we can assume that $W$ is an analytic path,   transverse to the fibration $\pi_g$, that
 intersects each fiber above $\pi_g(W)$
in a unique point. By Proposition~\ref{pro:smooth_case},  $\pi_g(W)$ 
is contained  in $\Rk_g$. 
Let us disintegrate $\mu$ with respect to $\pi_g$. Since $\mu_g$ is atomless, Lemma~\ref{lem:basic_dynamics} shows that, for $\mu_g$-almost every  $w$, the measures which are invariant under $g_w$  are atomless; thus, the conditional 
measure $\mu_{g,w}$ is almost surely atomless  and $\mu_{g,w}(W\cap X^g_w)=0$. This contradicts $\mu(W)>0$. 
\end{proof}

\subsection{The totally real case}

We are now reduced to the case where $d_\R(\mu)\geq 2$ and $\mu_g$ 
is supported on $\Rk_g$ for  every $g\in \Hal(\Gamma)$. 
The properties of $\mu$ are summarized in the following proposition, the proof of which will 
be given in Section~\ref{sec:Main_Proof}. 

%\begin{pro}\label{pro:analytic_sigma} Assume that $\mu$ is not supported on a proper algebraic subset of~$X$, and that
%$\mu_g(\Rk_g)>0$ for some $g\in \Hal(\Gamma)$. Then $d_\R(\mu)=2$, and
%there is  a closed semi-analytic set  
%$\Sigma\subset X$ of pure dimension 2 such that 
%\begin{enumerate}[\em (1)]
%\item $\mu(\Sigma)=1$ (i.e. $\mu$ is supported on $\Sigma$);
%\item $\pi_h(\Sigma)\cap B_h^\circ \subset  {\Rk_h}$ for every $h\in \Hal(\Gamma)$;
%\item outside $\mathrm D_\Gamma$ the singular locus of $\Sigma$ is a finite subset of $\Tang_\Gamma^0$;  \serge{attention ausi à l'orbite de $\Sigma$ par $\Gamma$; il faudra écrire un lemme supplémentaire pour gérer les intersections $f(\Sigma)\cap \Sigma$}
%\item on the regular part of $\Sigma$, $\mu$ is given by a real analytic density with respect to any real analytic area form;
%\item $\Sigma$ is totally real.
%\end{enumerate}
%In particular, $\Sigma$ is the support of $\mu$ and is $\Gamma$-invariant. 
%\end{pro}

\begin{pro}\label{pro:analytic_sigma}  
Let $\mu$ be an ergodic $\Gamma$-invariant measure. 
Assume that $\mu$ %is not supported on a proper algebraic subset of~$X$, 
gives no mass to algebraic subsets and that
$\mu_g(\Rk_g)>0$ for some $g\in \Hal(\Gamma)$. Then $d_\R(\mu)=2$, and
there exists 
%a $\Gamma$-invariant proper algebraic subset $Y\subset X$ and 
  a  totally real  analytic subset  
$\Sigma$ of $X\setminus  \STang_\Gamma$  
 of pure dimension 2 such that: 
\begin{enumerate}[\em (1)]
%\item the 1-dimensional part of $Y$ coincides with  $\mathrm D_\Gamma$;
\item $\mu(\Sigma)=1$ and  $\supp(\mu) = \overline \Sigma$; 
\item $\Sigma$ has finitely many irreducible components; 
%\item $\pi_h(\Sigma)\cap B_h^\circ \subset  {\Rk_h}$ for every $h\in \Hal(\Gamma)$;
\item the singular locus of $\Sigma$ is locally finite;  %\romain{je vois pas pourquoi ce serait fini}\serge{Cela va résulter de $NT_f$ fini, non? Les problèmes ne viennent pas que de NT mais aussi des auto-intersections}
\item on the regular part of $\Sigma$, $\mu$ has a 
real analytic density with respect to any real analytic area form;
%\item $\Sigma$ is totally real.
\end{enumerate}
%In particular, $\Sigma$ is the support of $\mu$ and is $\Gamma$-invariant. 
\end{pro}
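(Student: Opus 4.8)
The plan is to exploit the dichotomy established in Proposition~\ref{pro:smooth_case}: since $\mu$ gives no mass to algebraic subsets and we are not in the absolutely continuous case, we must have $\mu_g(B_g^\circ \setminus \Rk_g) = 0$ for every $g \in \Hal(\Gamma)$, i.e.\ $\mu_g$ is supported on $\Rk_g$. First I would fix a special pair $(g,h)$ (Lemma~\ref{lem:schottky-parabolic}) and disintegrate $\mu$ along $\pi_g$. For $\mu_g$-a.e.\ $w$ the fiber conditional $\mu_{g,w}$ is $g_w$-invariant, and since $w \in \Rk_g \setminus \Tor_g$ for a.e.\ $w$ (torsion parameters form a countable set, hence are $\mu_g$-null because $d_\C(\mu)=2$ forces $\mu$ to not charge the corresponding algebraic fibers), Lemma~\ref{lem:basic_dynamics} tells us $\mu_{g,w}$ is a convex combination of Haar measures on the $k$ circles forming $L_{g}(z)$, $z \in X_w^g$. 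The key local picture: near a $\mu$-generic point $x_0 \notin \STang_\Gamma$, choose $g,h$ with $\pi_g,\pi_h$ transverse at $x_0$; then $\mu$ is locally invariant under a one-parameter family of translations in the $\pi_g$-fibers (along the circle direction) and another in the $\pi_h$-fibers. Two distinct one-dimensional translation directions in the two transverse foliations span, generically, either a $2$-dimensional or a full $4$-dimensional (real) local orbit. The full case is excluded by the hypothesis (it would put us back in Proposition~\ref{pro:smooth_case} via Remark~\ref{rem:any_Halphen}), so locally $\mu$ is carried by a real $2$-dimensional submanifold $\Sigma_{\rm loc}$ which is the joint orbit of these translations.

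The next step is to show $\Sigma_{\rm loc}$ is real analytic and totally real. Real analyticity: the translation flows act real-analytically and locally transitively on $\Sigma_{\rm loc}$, so $\Sigma_{\rm loc}$ is a real-analytic leaf, and by the density-point / bounded-distortion argument copied from Step~5 of the proof of Proposition~\ref{pro:smooth_case} (following \cite{Eremenko:1989}), $\mu$ restricted to $\Sigma_{\rm loc}$ has a real-analytic density with respect to a real-analytic area form — this gives item~(4). Total reality: at a smooth point, $T_x\Sigma_{\rm loc}$ contains the real tangent line to a circle in $X_w^g$ (not a complex line, since a circle is totally real in the elliptic curve) \emph{and} the real tangent line to a circle in the transverse fiber $X_{w'}^h$; because $\pi_g$ and $\pi_h$ are transverse at $x_0$, these two real lines together with their $\jj_X$-images span $T_xX$, so $T_x\Sigma_{\rm loc} \oplus \jj_X(T_x\Sigma_{\rm loc}) = T_xX$. (One must rule out the degenerate possibility that the circle direction is a complex line — but a closed one-dimensional \emph{real} subgroup of an elliptic curve is never a complex subtorus.) This also forces $d_\R(\mu) = 2$: it is $\le 2$ by the local picture, and it cannot be $0$ or $1$ by Lemma~\ref{lem:dR1} together with $d_\C(\mu)=2$.

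Then I would globalize. Let $\Sigma$ be the analytic continuation / union of $\Gamma$-translates of the local pieces inside $X \setminus \STang_\Gamma$; since $\STang_\Gamma$ is algebraic and $\mu$ gives it no mass, $\mu(\Sigma)=1$ and, by minimality of closed support plus ergodicity, $\supp(\mu) = \overline{\Sigma}$, giving item~(1). For item~(2), finitely many irreducible components: the components of $\Sigma$ project under $\pi_g$ into the \emph{finitely many} real-analytic curves making up $\Rk_g^k(B_g^\circ)$ restricted to the relevant compact part — here one uses that $\mu_g$, having support in $\Rk_g$, must be carried by finitely many of the curves $\Rk^k_{p,q}$ (a compactness/finiteness argument: $\NT_g$ is finite by Proposition~\ref{pro:NT_finite}, and only finitely many slopes can carry positive mass because the total mass is $1$ and the curves of distinct slopes intersect in locally finite sets, so by ergodicity $\Gamma$ permutes a finite collection), and over each such curve the fiberwise circle bundle has finitely many components. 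Item~(3), local finiteness of $\Sing(\Sigma)$: singularities of $\Sigma$ occur over points of $\NT_g$ (where the defining curves $\Rk^{\alpha,\beta}_{p,q}$ are singular, cf.\ the discussion around Equation~\eqref{eq:R_ab_pq}) together with self-intersections of the translated circle-bundles, both of which are locally finite away from $\STang_\Gamma$.

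The main obstacle I expect is the finiteness of the number of irreducible components (item~(2)), equivalently controlling how many distinct slope-curves $\Rk^k_{p,q}(B_g^\circ)$ the marginal $\mu_g$ can charge: the monodromy group acts on slopes $(p,q)$ with infinite orbits, so a priori the analytic continuation of one circle-bundle could meet a fixed chart in infinitely many components; ruling this out requires the dynamical input that $\mu$, being a finite invariant measure, forces the supporting data to be $\Gamma$-periodic, hence finite. Establishing analytic-continuation control near the singular fibers — which is exactly the delicate point flagged in \S\ref{par:notes_on_gamma} and deferred to Section~\ref{sec:Main_Proof} — is where the real work lies; everything else is a local translation-invariance argument plus the Eremenko-style regularity bootstrap.
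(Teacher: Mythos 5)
Your local analysis (the squares $S=\pi_g^{-1}(\gamma)\cap\pi_h^{-1}(\gamma')$, the two transverse circle directions, total reality, the Eremenko-style bootstrap giving an analytic density, and $d_\R(\mu)=2$) matches the paper's \S\ref{par:local_structure} and is fine. The genuine gap is exactly the point you flag at the end but then dispose of with two incorrect arguments: the finiteness of the family of curves $\Rk^{k}_{p,q}$ charged by $\mu_g$, equivalently the fact that the analytic continuation of $\gamma$ is an honest analytic curve $\sigma_g\subset B_g^\circ$ rather than a possibly dense union of branches (cf.\ Remark~\ref{rem:saturation}). Your first argument, ``only finitely many slopes can carry positive mass because the total mass is $1$ and distinct slope-curves meet in locally finite sets,'' only yields \emph{countably} many charged curves: infinitely many pairwise essentially disjoint sets can each have positive mass with masses summing to $1$. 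Your second argument, ``ergodicity forces the supporting data to be $\Gamma$-periodic, hence finite,'' is circular: ergodicity says the $\Gamma$-saturation of $\pi_g^{-1}(\gamma)$ has full mass, but precisely because the monodromy acts with infinite orbits on slopes, that saturation could a priori return to a fixed chart along infinitely many new curves $\Rk^{k}_{p,q}(U)$, each carrying a small share of the mass. Also, the finiteness of $\NT_g$ (Proposition~\ref{pro:NT_finite}) controls where the curves $\Rk^{\alpha,\beta}_{p,q}$ are singular, not how many of them are charged, so it does not help here.

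The paper closes this gap with a genuinely new idea that is absent from your proposal, and it splits into two cases. If $\Tang^{\mathrm{tt}}(\pi_g,\pi_h)\neq\emptyset$ (Lemma~\ref{lem:analytic_continuation_gamma_general}): if arcs of $\Rk^{k_n}_{p_n,q_n}$ with $\max(k_n,\norm{(p_n,q_n)})\to\infty$ were charged, the corresponding orbit closures would be $\e_n$-dense in their fibers, hence would pass arbitrarily close to a tangency point of $\Tang^{\mathrm{tt}}(\pi_g,\pi_h)$, where $\pi_h$ restricted to the fiber is modeled on $z\mapsto z^k$ with $k\geq 2$; Lemma~\ref{lem:simple_curvature} then forces the curvature of the $\pi_h$-projections of these circles to blow up, contradicting the uniform size bound of Lemma~\ref{lem:local_extension_gamma} for evenly charged arcs of $\Rk_h$ away from $\Crit(\pi_h)$. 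If instead $\Tang^{\mathrm{tt}}(\pi_g,\pi_h)=\emptyset$, the curvature argument is unavailable, but then both fibrations are isotrivial (Lemma~\ref{lem:isotrivial}) and one argues differently: after a finite base change the monodromy is finite and the curves $\Rk^{k}_{p,q}$ are \emph{global} analytic subsets of $B_g$ (Lemma~\ref{lem:analytic_continuation_gamma_isotrivial}). Your proposal treats neither mechanism and does not distinguish the isotrivial case at all. Two smaller points: the local finiteness of $\Sing(\Sigma)$ is not ``self-intersections are locally finite'' by inspection --- a priori two analytic sheets could meet along a curve; the paper rules this out by noting that an invariant arc in $\Sing(\Sigma)$ would be contracted by every $\pi_{g'}$, contradicting its being outside $\STang^1_\Gamma$. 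And the globalization step needs the C-analytic framework (Cartan, Whitney--Bruhat) to speak of finitely many irreducible components of a real-analytic set; ``union of $\Gamma$-translates of local pieces'' does not by itself produce that structure.
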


\begin{eg} The main example 
 is when the projective surface $X$ is defined over $\R$, and  $\mu$ is a measure 
 supported on $X(\R)$ giving no mass to algebraic subsets. 
  This  occurs for instance when $\Gamma\subset \Aut(X_\R)$ 
preserves an area form on $X(\R)$ 
(see Section \ref{sec:examples}). An example of a different kind 
 is given in Section~\ref{sec:not_real_parts}.  
\end{eg}

 A noteworthy consequence of Propositions~\ref{pro:analytic_sigma} and~\ref{pro:smooth_case} is:

\begin{cor}\label{cor:dimension}
The dimension $d_\R(\mu)$  cannot be equal to 3. 
\end{cor}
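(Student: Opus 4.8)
The plan is to derive Corollary~\ref{cor:dimension} as a direct consequence of the trichotomy established by Propositions~\ref{pro:smooth_case} and~\ref{pro:analytic_sigma} together with Lemmas~\ref{lem:dC} and~\ref{lem:dR1}. Let $\mu$ be an ergodic $\Gamma$-invariant probability measure and suppose, for contradiction, that $d_\R(\mu)=3$. First I would dispose of the cases where $\mu$ charges a proper algebraic subset: by Lemma~\ref{lem:dC}, if $d_\C(\mu)\le 1$ then $\mu$ is carried by a finite orbit or by $\mathrm D_\Gamma$, and in either case $d_\R(\mu)\le 2<3$, a contradiction. Hence we may assume $d_\C(\mu)=2$, i.e.\ $\mu$ gives no mass to proper algebraic subsets.

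Next I would run the dichotomy coming from the quantity $\mu_g(B_g^\circ\setminus\Rk_g)$ for a fixed Halphen twist $g\in\Hal(\Gamma)$ (which exists under assumptions (i), (ii')). If $\mu_g(B_g^\circ\setminus\Rk_g)>0$, then Proposition~\ref{pro:smooth_case} applies: $\mu$ is absolutely continuous with respect to Lebesgue measure on $X$ and has full support, so $d_\R(\mu)=4\ne 3$. In the complementary case $\mu_g(\Rk_g)=1$ for some (hence, by Remark~\ref{rem:any_Halphen}, every) $g\in\Hal(\Gamma)$; but then, since $\mu_g(B_g^\circ\setminus\Rk_g)=0$, we are precisely in the hypotheses of Proposition~\ref{pro:analytic_sigma} (we have $\mu_g(\Rk_g)>0$ and $\mu$ gives no mass to algebraic subsets), which asserts $d_\R(\mu)=2\ne 3$. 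Either way we reach a contradiction, so $d_\R(\mu)$ cannot equal $3$.

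The only genuinely delicate point in writing this cleanly is making sure the case division is exhaustive and that the value of $d_\R(\mu)$ is pinned down exactly in each branch: Proposition~\ref{pro:smooth_case} really does give $\supp(\mu)=X$ and absolute continuity on $X$, which forces $d_\R(\mu)=4$ (no analytic submanifold of dimension $\le 3$ can carry positive mass of a measure that is a.c.\ on $X$ with full support); and Proposition~\ref{pro:analytic_sigma} states $d_\R(\mu)=2$ outright. Since the intermediate cases $d_\R(\mu)=0$ and $d_\R(\mu)=1$ are handled by the atomic case of Lemma~\ref{lem:dC} and by Lemma~\ref{lem:dR1} respectively (both landing in finite orbits or $\mathrm D_\Gamma$, hence algebraic, contradicting $d_\C(\mu)=2$), the value $3$ is the unique one excluded by the combination, and there is no remaining case to check. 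I expect the main obstacle to be purely expository — correctly invoking Remark~\ref{rem:any_Halphen} to pass from ``for some $g$'' to ``for every $g$'' when splitting on $\mu_g(\Rk_g)$ — rather than any new mathematical content, since both propositions have already been reduced to (Proposition~\ref{pro:analytic_sigma}) or fully proved (Proposition~\ref{pro:smooth_case}) above.

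\begin{proof}
Let $\mu$ be an ergodic $\Gamma$-invariant probability measure and assume $d_\R(\mu)=3$. If $d_\C(\mu)\le 1$, then by Lemma~\ref{lem:dC} the measure $\mu$ is supported on a finite orbit of $\Gamma$ or on $\mathrm D_\Gamma$, so $d_\R(\mu)\le 2$, a contradiction. Hence $d_\C(\mu)=2$, that is, $\mu$ gives no mass to proper algebraic subsets. Fix some $g\in\Hal(\Gamma)$, which is possible by assumptions~(i) and~(ii'). If $\mu_g(B_g^\circ\setminus\Rk_g)>0$, Proposition~\ref{pro:smooth_case} shows that $\mu$ is absolutely continuous with respect to the Lebesgue measure on $X$ and that $\supp(\mu)=X$; then no real analytic submanifold of $X$ of dimension at most $3$ can have positive $\mu$-measure, so $d_\R(\mu)=4$, again a contradiction. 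Therefore $\mu_g(\Rk_g)=1$; in particular $\mu_g(\Rk_g)>0$, and since $\mu$ gives no mass to algebraic subsets, Proposition~\ref{pro:analytic_sigma} applies and gives $d_\R(\mu)=2$. This final contradiction shows that $d_\R(\mu)\ne 3$.
\end{proof}
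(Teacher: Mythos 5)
Your proof is correct and follows exactly the route the paper intends: the paper states the corollary as an immediate consequence of Propositions~\ref{pro:smooth_case} and~\ref{pro:analytic_sigma}, and your case split (Lemma~\ref{lem:dC} for $d_\C(\mu)\le 1$, then the dichotomy on $\mu_g(B_g^\circ\setminus\Rk_g)$ forcing $d_\R(\mu)=4$ or $d_\R(\mu)=2$) is precisely that argument. The only micro-detail left implicit is that $\mu_g(B_g^\circ)=1$ (since $\mu$ gives no mass to the fibers over $\Crit(\pi_g)$), which is needed to pass from $\mu_g(B_g^\circ\setminus\Rk_g)=0$ to $\mu_g(\Rk_g)>0$, but this is immediate.
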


 \subsection{Proof of the main theorem, and consequences} \label{par:proof_main_Hal}

\begin{proof}[Proof of Theorem~\ref{thm:main}] 
If $\mu$ gives positive mass to  a proper algebraic subset of $X$, then according to 
Lemma~\ref{lem:dC}, either Assertion~(a) or~(b) of Theorem~\ref{thm:main} is satisfied.  Otherwise, 
 we know from Lemma~\ref{lem:dR1} and Corollary~\ref{cor:dimension} that $d_\R(\mu)$
is either equal to $2$ or $4$, and exactly one of Proposition~\ref{pro:analytic_sigma} or Proposition~\ref{pro:smooth_case} applies. 
If $d_\R(\mu)=2$, Proposition~\ref{pro:analytic_sigma} 
shows that the conclusions of case~(c) of the theorem hold.  
The remaining case~(d) is covered by Proposition~\ref{pro:smooth_case}. In both cases 
the exceptional set   $Z$ is equal  to 
 $\STang_\Gamma$.
\end{proof}

\begin{proof}[Proof of Corollary~\ref{cor:no_invariant_subset}] 
When $\Gamma$ does not preserve any proper algebraic subset, then we must be in one of the 
cases~(c) (if $d_\R(\mu)=2$) or~(d) (if $d_\R(\mu)=4$) of Theorem \ref{thm:main}. 
Furthermore %the respective exceptional sets $Y$ and $Z$ are empty. 
 $\STang_\Gamma$ is empty.
Thus if  $d_\R(\mu)=4$ we infer that $\mu$ has an analytic  positive density on $X$. If 
$d_\R(\mu)=2$, note that the analytic surface $\Sigma$ is smooth everywhere, 
since otherwise its singular locus 
would be a finite invariant subset (see Assertion~(3) of Proposition~\ref{pro:analytic_sigma}). 
% $\mathrm D_\Gamma$ and  
%$\Tang_\Gamma$ are empty, and so is ${\mathrm{Sing}}(\Sigma)$  in the case $d_\R(\mu)=2$ are all empty.\serge{Pour ce dernier argument il faut $\Sing(\Sigma)$ fini. Mais ici c'est plus simple (pas d'accumulation sur des courbes invariantes)} 
%If $d_\R(\mu)=4$, Proposition~\ref{pro:smooth_case} shows that Assertion~(b) is verified. From Lemma~\ref{lem:dR1}, 
%Proposition~\ref{pro:analytic_sigma}, and the absence 
%of invariant algebraic subset, we deduce that $\mu$ is supported on a smooth compact real analytic surface $\Sigma$, and that
%the the density of $\mu$ on $\Sigma$ is smooth. Thus, Assertion~(a) is satisfied.
\end{proof}

Let us point out the following immediate consequence of Theorem~\ref{thm:main}.  

\begin{cor}
If $X$ is a real projective surface, $X(\R)$ is nonempty, $\Gamma\subset \Aut(X_\R)$, and $\mu$ is a $\Gamma$-invariant ergodic probability measure on $X(\R)$, then 
\begin{enumerate}[\em (a)]
\item either $\mu$ is supported by a $\Gamma$-invariant proper real algebraic subset of $X$, 
\item or there is a $\Gamma$-invariant proper real algebraic subset $Z$ of $X$ such that $\mu$ is supported by a union of connected components of $X(\R)\setminus Z(\R)$ and $\mu$ has a real analytic density on that set.
\end{enumerate} 
In particular if $\Gamma$ 
does not preserve any proper real algebraic subset of $X$, $\mu$ is given by a real analytic area form on $X(\R)$, restricted
to a $\Gamma$-invariant union of connected components of $X(\R)$. \end{cor}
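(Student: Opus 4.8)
The plan is to read off this corollary from Theorem~\ref{thm:main} applied to $\mu$, viewed as a probability measure on $X$ through the inclusion $X(\R)\subset X$, and to unwind each of the four alternatives (a)--(d) of that theorem in the presence of the extra datum $\mu(X(\R))=1$. Alternative (d) is excluded right away: if $\mu$ were absolutely continuous with respect to Lebesgue measure on $X$ it would give no mass to $X(\R)$, which is a real analytic submanifold of real codimension $2$ in $X$; this contradicts $\mu(X(\R))=1$. Throughout we use that, since $\Gamma\subset\Aut(X_\R)$, every element of $\Gamma$ commutes with the anti\-holomorphic involution $\sigma$ defining the real structure, so $\sigma$ normalizes $\Gamma$ and fixes every $\Gamma$\-intrinsic object; note also that $\sigma$, being $\C$\-antilinear on a surface, is orientation\-preserving and hence preserves the intersection form on $H^2(X;\Z)$.

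Alternatives (a) and (b) yield case (a) of the corollary. In case (a) of Theorem~\ref{thm:main}, $\mu$ is the uniform measure on a finite $\Gamma$\-orbit, which is a $\Gamma$\-invariant finite, hence proper (as $X(\R)$ is infinite), real algebraic subset. In case (b) we have $d_\C(\mu)=1$, so by Lemma~\ref{lem:dC} the support of $\mu$ is contained in the maximal invariant curve $\mathrm{D}_\Gamma$; using the characterization of $\mathrm{D}_\Gamma$ recalled in \S\ref{par:invariant_curves} (its components are the $\Gamma$\-periodic curves and the intersection form is negative definite on $V(\mathrm{D}_\Gamma)$) together with the fact that $\sigma$ maps curves to curves and preserves the intersection form, one gets $\sigma(\mathrm{D}_\Gamma)=\mathrm{D}_\Gamma$, so $\mathrm{D}_\Gamma$ is defined over $\R$ and $\mathrm{D}_\Gamma(\R):=\mathrm{D}_\Gamma\cap X(\R)$ is a $\Gamma$\-invariant real algebraic subset; it is proper in $X(\R)$ because the totally real surface $X(\R)$ contains no complex curve.

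The main case is alternative (c), with $Z=\STang_\Gamma$ as in the proof of Theorem~\ref{thm:main}. First, $Z$ is defined over $\R$: $\sigma$ conjugates $\Hal(\Gamma)$ to itself, maps each invariant fibration $\pi_g$ to a $g$\-invariant holomorphic foliation, hence to itself by the uniqueness in Theorem~\ref{thm:invariant_fibration}, and therefore preserves $\Tang(\pi_g,\pi_h)$ and $\Sing(\pi_g)$; so $\sigma(\STang_\Gamma)=\STang_\Gamma$, and by Lemma~\ref{lem:Tang_DGamma} this is a $\Gamma$\-invariant, proper real algebraic subset. Now $\mu(\overline\Sigma)=1$, $\mu(Z)=0$ and $\mu(X(\R))=1$ give $\mu(\Sigma\cap X(\R))=1$; moreover $\mu(\Sigma_{\mathrm{sing}})=0$ because the singular locus of $\Sigma$ is locally finite and $\mu$ is atomless. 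The key point is a real\-analytic rigidity statement: $\{x\in\Sigma_{\mathrm{reg}}:\sigma(x)=x\}$ is a real analytic subset of the real analytic $2$\-manifold $\Sigma_{\mathrm{reg}}$, hence on each connected component of $\Sigma_{\mathrm{reg}}$ it is either the whole component or Lebesgue\-negligible; since $\mu$ restricted to $\Sigma$ is absolutely continuous with respect to Lebesgue measure on $\Sigma$ (Theorem~\ref{thm:main}(c)(4)), $\mu$ gives full mass to the union $G$ of those components of $\Sigma_{\mathrm{reg}}$ that are contained in $X(\R)$. Each such component is a $2$\-dimensional real analytic submanifold of the smooth $2$\-manifold $X(\R)\setminus Z(\R)$, hence open in it; and one checks that $\overline G$ (closure in $X(\R)\setminus Z(\R)$) is again open, the only points of $\overline G\setminus G$ lying in $\Sigma_{\mathrm{sing}}$, near which a sheet of $\Sigma$ contained in $X(\R)$ coincides with $X(\R)$ and thus provides a full neighbourhood. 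Therefore $\overline G$ is a union of connected components of $X(\R)\setminus Z(\R)$, it carries all the mass of $\mu$, and near every point of $G$ the manifolds $\Sigma_{\mathrm{reg}}$ and $X(\R)$ coincide, so Theorem~\ref{thm:main}(c)(5) gives $\mu$ a real analytic density with respect to a fixed real analytic area form (or density, if $X(\R)$ is non\-orientable) on $X(\R)$. This is case (b) of the corollary. The ``in particular'' assertion is then immediate: if $\Gamma$ preserves no non\-empty proper real algebraic subset, then case (a) cannot occur and the exceptional set $Z=\STang_\Gamma$ in case (b) must be empty, so $\mu$ is given by a real analytic area form on a $\Gamma$\-invariant union of connected components of $X(\R)$ (the union being $\Gamma$\-invariant since $\overline G$ is).

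The main obstacle is the reconciliation, in case (c), of the abstract invariant totally real surface $\Sigma$ furnished by Theorem~\ref{thm:main} with the genuine real locus $X(\R)$: one must combine the absolute continuity of $\mu|_\Sigma$ with the analytic\-continuation principle ``two $2$\-dimensional real analytic surfaces coinciding on a non\-negligible set coincide on a whole component'' to see that, up to a $\mu$\-null set, $\Sigma$ is an open subset of $X(\R)\setminus Z(\R)$ whose closure is a union of connected components, and to transfer the real analyticity of the density. A secondary technical point is the behaviour at the locally finite set $\overline G\setminus G$ of points where $\Sigma$ may be singular inside $X(\R)$ — these are isolated and absorbed as above (and, when $\Gamma$ has no invariant proper algebraic subset, they do not occur at all since $\Sigma$ is then smooth by Corollary~\ref{cor:no_invariant_subset}).
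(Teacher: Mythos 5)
The paper offers no written proof of this corollary (it is stated as an ``immediate consequence'' of Theorem~\ref{thm:main}), so your text is essentially the intended unwinding of cases (a)--(d), and most of it is sound: the exclusion of case (d), the $\sigma$-invariance of $\mathrm D_\Gamma$ and of $\STang_\Gamma$, and the identity-principle/dichotomy arguments are correct.

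There is, however, one genuine loose end in your case (c), and it is exactly the point your final paragraph waves away. You establish real analyticity of the density only on $G$, the union of components of $\Sigma_{\mathrm{reg}}$ contained in $X(\R)$, while the corollary asserts it on the whole union of connected components of $X(\R)\setminus Z(\R)$, hence also at the locally finite set $\overline G\setminus G\subset \mathrm{Sing}(\Sigma)$. Saying these points are ``absorbed as above'' is not an argument: the openness argument for $\overline G$ says nothing about the density, Theorem~\ref{thm:main}(c)(5) gives analyticity only on the regular part of $\Sigma$, and an analytic integrable density on a punctured disk need not extend analytically across the puncture. The clean fix, which also collapses your whole components-dichotomy, is an ingredient you never use: Proposition~\ref{pro:analytic_sigma}(1) gives $\supp(\mu)=\overline\Sigma$, and since $\supp(\mu)\subset X(\R)$ this forces $\Sigma\subset X(\R)$ outright. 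Then, by the identity-principle argument you already employ (regular points are dense because the singular locus is locally finite, and the local defining functions of $\Sigma$, vanishing on an open subset of $X(\R)$ accumulating at a point, vanish on a neighborhood of that point in $X(\R)$), $\Sigma$ contains a neighborhood in $X(\R)$ of each of its points; combined with $\Sigma\subset X(\R)$, this shows $\Sigma$ is open, and also closed, in $X(\R)\setminus Z(\R)$, hence a union of its connected components, and automatically smooth, so $\mathrm{Sing}(\Sigma)=\emptyset$ and your exceptional points simply do not exist; Theorem~\ref{thm:main}(c)(4)--(5) then applies on all of it. With this observation the measure-theoretic detour through $G$ becomes unnecessary, and the rest of your argument (cases (a), (b), exclusion of (d), and the ``in particular'' statement, where it only matters that $Z(\R)$, not $Z$ itself, be taken into account) goes through as written.
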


%\romain{nouveau paragraphe qui remplace l'ex section 8 (voir aussi la remarque \ref{rmk:real_area_form})}
Let $X$ be (a blow up of) an abelian,  K3, or Enriques surface, and let $\vol_X$ be the natural probability
 measure on $X$ (see~\ref{par:examples_K3surfaces}). Likewise, if $\Sigma$ is a 
totally real submanifold in $X$, let $\vol_\Sigma$ be the   measure  induced by
 the normalized 2-form $\Omega_X$ (see Remark~\ref{rmk:real_area_form}). 
By   positivity of the density of $\mu$ and ergodicity, we obtain: 
 
 \begin{cor}
 If in Theorem~\ref{thm:main}, $X$ is a blow-up of an abelian, K3, or Enriques surface, then in case~(c) $\mu$ is a multiple of $\vol_\Sigma$, and in case~(d) it coincides with $\vol_X$.  
 \end{cor}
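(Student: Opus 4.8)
The plan is to show that the Radon--Nikodym density of $\mu$ with respect to the relevant canonical invariant measure is constant, whence $\mu$ is a multiple of it. The mechanism is the same in cases~(c) and~(d): both $\mu$ and the reference measure (namely $\vol_X$ in case~(d), $\vol_\Sigma$ in case~(c)) are $\Gamma$-invariant, so their density is a $\Gamma$-invariant function; ergodicity of $\mu$ forces it to be $\mu$-almost everywhere constant, and the real-analyticity of the density supplied by Theorem~\ref{thm:main}, together with the fact that $\mu$ has full support exactly on the locus where the density is analytic, upgrades this to genuine constancy. One preliminary remark used throughout: the exceptional divisor $E$ of the blow-up $X\to X'$ (with $X'$ abelian, K3, or Enriques) is a union of $\Gamma$-periodic curves, hence $E\subset D_\Gamma\subset\STang_\Gamma=Z$; in particular $\Omega_X$ restricts to a genuine (nowhere vanishing, real-analytic) form away from $Z$.

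First I would treat case~(d). By \S\ref{par:examples_K3surfaces} and Remark~\ref{rmk:real_area_form}, $\vol_X$ is a $\Gamma$-invariant probability measure given on $X\setminus E\supset X\setminus Z$ by a positive real-analytic volume form. Write $\mu=\rho\,\vol_X$; by Theorem~\ref{thm:main}(d) the density $\rho\geq 0$ is real-analytic on $X\setminus Z$. For each $g\in\Gamma$ one has $g_*\mu=\mu$ and $g_*\vol_X=\vol_X$, so $\rho\circ g=\rho$ holds $\vol_X$-almost everywhere; since $\rho\circ g$ and $\rho$ are both real-analytic on the connected manifold $X\setminus Z$, they coincide there. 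Thus $\rho$ is a $\Gamma$-invariant function, and ergodicity of $\mu$ gives a constant $c$ with $\rho=c$ $\mu$-almost everywhere. As $\supp(\mu)=X$, every nonempty open subset of $X\setminus Z$ has positive $\mu$-mass, so the open set $\{\rho\neq c\}$ must be empty: $\rho\equiv c$ on $X\setminus Z$. Finally $\mu$ and $\vol_X$ both vanish on the lower-dimensional set $Z$, hence $\mu=c\,\vol_X$ on $X$, and $c=1$ since both are probability measures.

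Case~(c) is the same argument carried out on the regular part $\Sigma^{\mathrm{reg}}$ of $\Sigma$. By Remark~\ref{rmk:real_area_form}, $\vol_\Sigma$ is a $\Gamma$-invariant measure, and since $\Sigma\subset X\setminus Z$ avoids $E$ it is given on $\Sigma^{\mathrm{reg}}$ by a nowhere vanishing real-analytic area form. By Proposition~\ref{pro:analytic_sigma}, $\mu(\Sigma)=1$, $\supp(\mu)=\overline\Sigma$, the singular locus of $\Sigma$ is locally finite, and $\mu=\rho\,\vol_\Sigma$ on $\Sigma^{\mathrm{reg}}$ with $\rho\geq 0$ real-analytic; since $\mu$ is nonatomic it gives no mass to the (countable) singular locus, so $\mu(\Sigma^{\mathrm{reg}})=1$ and the $\mu$-almost-everywhere constant value $c$ of the $\Gamma$-invariant function $\rho$ (obtained exactly as above) is strictly positive. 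Because $\Sigma^{\mathrm{reg}}\subset\overline\Sigma=\supp(\mu)$, any nonempty open subset of $\Sigma^{\mathrm{reg}}$ has positive $\mu$-mass, so $\{\rho\neq c\}$ is empty and $\rho\equiv c$. Both $\mu$ and $c\,\vol_\Sigma$ give zero mass to $X\setminus\Sigma^{\mathrm{reg}}$ (the former because $\mu(\Sigma)=1$ and $\mu$ is nonatomic, the latter by definition of $\vol_\Sigma$), so $\mu=c\,\vol_\Sigma$; in particular $\vol_\Sigma(\Sigma)=1/c<\infty$.

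There is no serious obstacle here, this being a corollary of the much harder Theorem~\ref{thm:main}. The only points needing care are the bookkeeping of negligible sets ($E$, $\STang_\Gamma$, and $\Sing(\Sigma)$, all Lebesgue-null and carrying no mass for either measure) and the passage from ``$\rho$ constant $\mu$-almost everywhere'' to ``$\rho$ constant'', which relies on the support statements in Theorem~\ref{thm:main} and Proposition~\ref{pro:analytic_sigma} placing $\supp(\mu)$ exactly on the set where $\rho$ is known to be real-analytic.
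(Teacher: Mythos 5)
Your proof is correct and follows exactly the route the paper intends (the paper's justification is the one-line remark ``by positivity of the density of $\mu$ and ergodicity''): the Radon--Nikodym density of $\mu$ against the canonical invariant measure is a $\Gamma$-invariant function, ergodicity makes it $\mu$-a.e.\ constant, and real-analyticity together with the support statements of Theorem~\ref{thm:main} and Proposition~\ref{pro:analytic_sigma} upgrade this to genuine constancy. Your bookkeeping of the negligible sets ($E\subset D_\Gamma\subset\STang_\Gamma$, the locally finite singular locus of $\Sigma$) is the right way to make the one-line argument rigorous, and no step is missing.
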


 In case~(c), this implies in particular that $\Sigma$ has finite area.  Similarly,   case~(d) does not appear in 
 Blanc's example (see \S~\ref{par:examples_rational}). 

%\subsection{Remarks on the   classification of orbit closures}
%Under the assumptions of Theorem~\ref{thm:main},  a  classification of orbit closures should be achievable 
% along the same lines. Let $x\in X$ be such that $\Gamma\cdot x$ is Zariski dense. If there exists 
% $g\in \Hal(\Gamma)$ such that $x\notin \pi_g\inv(\mathrm R_g)$, then it is easy to show exactly as in 
% \S~\ref{subs:smooth_case} that $\Gamma\cdot x$ is dense in $X$. Otherwise we know that 
% $\overline {\Gamma\cdot x}$ is a closed subset contained in 
% $\bigcap_{g\in \Hal(\Gamma)} \pi_g\inv(\mathrm R_g)$, 
% so outside $\STang_\Gamma$ it is contained in a countable union of analytic totally real surfaces. As in 
% Proposition~\ref{pro:analytic_sigma}, the difficulty is   to show that this union is locally finite. Since 
% the proof of  Proposition~\ref{pro:analytic_sigma} makes use of 
%  the measurable structure, in particular of the notion of 
% ergodicity, a non-trivial adaptation is necessary.   
% Still, we expect that the general scheme of the proof carries over to this  topological setting.  

%%%%%%%%%%%%%%%%%%%%%%%%%%%%%%%%%%%%%%%%%%%%%%
%%%%%%%%%%%%%%%%%%%%%%%%%%%%%%%%%%%%%%%%%%%%%%
\section{Proof of Proposition \ref{pro:analytic_sigma}}\label{sec:Main_Proof}
%%%%%%%%%%%%%%%%%%%%%%%%%%%%%%%%%%%%%%%%%%%%%%
%%%%%%%%%%%%%%%%%%%%%%%%%%%%%%%%%%%%%%%%%%%%%%

In this section, we prove Proposition~\ref{pro:analytic_sigma}. Thus, unless otherwise specified, we study 
ergodic invariant measures with $d_\C(\mu)=2$ and 
$\mu_g(\gamma)>0$ for some $g\in \Hal(\Gamma)$ and some local, smooth, analytic arc $\gamma\subset \Rk_g$. 
The main step of the proof  is to show that $\gamma$ extends to 
an  analytic curve $\sigma$ in  $B_g^\circ$.  From Proposition~\ref{pro:smooth_case}, we already know  that 
the support of $\mu_g$ is contained in the closure of $\Rk_g$; however the analytic continuation of
 $\gamma$ and the support 
of $\mu_g$ still could a priori be a complicated subset  in $B_g$ (see Remark~\ref{rem:saturation}). 
The main point is  to exclude such  a phenomenon.  
%Unless $\pi_g$ be isotrivial, we shall also obtain a finiteness result for the possible curves $\sigma$: 
%this will be the main ingredient towards the proof of Theorem~\ref{thm:finiteness}.

After some  preliminaries in \S\S~\ref{par:semi-analytic-vocabulary} and \ref{par:preliminaries_conventions}, 
the proof of Proposition~\ref{pro:analytic_sigma} is carried out in 
\S\S~\ref{par:geometry_of_g-orbits} to~\ref{par:proof_pro_analytic_sigma}. 
%In \S\ref{par:complements} we 
%study the extendability of $\sigma$ (resp. $\Sigma$) to a semi-analytic curve in $B_g$ (resp. a semi-analytic surface in $X$). 
 
%%%%% 
\subsection{Vocabulary of analytic and semi-analytic  geometry}\label{par:semi-analytic-vocabulary} 
%%%%%
Let us recall  briefly a few basic facts from analytic and 
semi-analytic geometry.  
A {\bf{semi-analytic}}  set $E$
in a ($\sigma$-compact) real-analytic manifold  $M$
is a subset such that   every $x\in M$ admits an open neighborhood $U$ in which  $E\cap U$ 
is   defined by finitely many inequalities of the form $f\geq 0$ or $f>0$, where  $f\colon U\to \R$ is analytic. 
The class of semi-analytic sets is stable under many operations such as taking a finite union or intersection, 
the closure, the boundary, the connected components, or the preimage under an analytic map. 
However the image of a semi-analytic set 
by a proper analytic map needs not be semi-analytic: adding such projections, one obtains the class of {\bf{subanalytic sets}}. 
The main fact  that we will need from  subanalytic geometry is: 
{\em any subanalytic set of dimension $\leq 1$ is semi-analytic}. 
In this way we will only have to deal with semi-analytic sets 
(see \cite{bierstone-milman, lojasiewicz} for more details on these facts). 
 
 %Semi-analytic sets can be {\em stratified}, that is 
 %a semi-analytic set $E$ can be written as a disjoint union $E = \bigcup A_i$ 
% where each $A_i$ is a semi-analytic set and an analytic manifold. 
 A point $x$ in $E$ is {\bf{regular}} if $E$ is an analytic submanifold 
 in the neighborhood of $x$, otherwise $E$ is 
  {\bf{singular}} at $x$.  The 
 {\bf {dimension}} of $E$ is the maximal dimension of $E$ at its regular points. %;  it is also the maximal dimension of  its strata.
 We say that $E$ is of pure dimension if its dimension is the same at all regular points. 
 % A semi-analytic set of dimension $k$  has locally  finite $k$-dimensional Hausdorff measure (see~\cite{herrera} for instance). 
 %\romain{peut être pas ref standard}\serge{Je ne sais pas}\romain{je ne sais pas si on a encore besoin de cette remarque de toute façon}) 
 
  A delicate point in semi-analytic and real-analytic geometry is that the notion of irreducible component is not 
  well behaved (we will discuss only the analytic case). To get around this problem, the following notion was introduced by Cartan~\cite{cartan}
  and Whitney and Bruhat~\cite{whitney-bruhat}: a subset $E$ of a real analytic manifold $M$ 
  is \textbf{C-analytic}
   (or global
   analytic) if it the common zero set of a finite number of analytic functions  defined on the whole of $M$. 
   Equivalently, there is a coherent analytic sheaf whose zero set is $E$ (see~\cite[Prop. 10]{whitney-bruhat}). 
   This class is stable by union and  intersection.  Every C-analytic set $E$ admits a unique 
   locally finite    decomposition $E = \bigcup_i E_i$ into irreducible components;
   here, the  irreducibility means that $E_i$ is not the union of  two distinct C-analytic sets (beware that it might be 
   reducible as an analytic set, see~\cite[\S 11.a]{whitney-bruhat}).  
If $E\subset M$   is a   smooth analytic submanifold, or if $E$ is 
  locally a finite union of smooth plaques of the same 
dimension, then $E$ is C-analytic. Indeed, in this case it is easy to see that 
 for every $x\in E$ there exists an open neighborhood
$U\ni x$ and a finite family of analytic functions $f_i$ on $U$ such that for every open subset $V\subset U$, the intersection of the zero sets of the $f_i$ coincides with $E$ in $V$: this implies that $E$ is the zero set of a 
coherent analytic sheaf, hence it is C-analytic. 
Another useful fact is that every  1-dimensional analytic set is C-analytic; more generally, if $E$ is analytic, 
  the canonical ideal sheaf of analytic functions vanishing on $E$ is coherent 
  outside a codimension 2 subset of $E$,
(see~\cite{galbiati}).

 A semi-analytic subset $\Sigma\subset X$ of pure dimension $2$ is {\bf{totally real}} if  
 at every regular point  $x\in \Sigma$, $T_x\Sigma$ is a totally real subspace of $T_xX$, that is, 
 ${\mathsf{j}}_x(T_x\Sigma)\oplus_\R T_x\Sigma=T_xX$, where ${\mathsf{j}}_x$ is the complex structure (multiplication by $\sqrt{-1}$).

%%%%%
\subsection{Preliminaries and conventions} \label{par:preliminaries_conventions}
%%%%%
%%%
\subsubsection{Choice of Halphen twists}\label{par:choice_halphen_twist} 

Recall from Remark~\ref{rem:any_Halphen} that under the assumptions of 
Proposition~\ref{pro:analytic_sigma}, $\mu_g(\Rk_g)=1$ for any $g\in \Hal(\Gamma)$. 
We fix a pair of elements $(g,h)\in \Hal(\Gamma)^2$ %such that $(g,h)$ is special in the sense of 
associated to   different    fibrations. %\romain{je retire la référence à spécial et conjugué, qui ne sert semble-t-il à rien}
This pair will be kept fixed until 
Subsection~\ref{par:proof_pro_analytic_sigma} .
Recall from   Notation~\ref{nota:Hal}, that we denote by $X^g_w$ the fiber $\pi_g\inv(w)$, %we use similar notations for $h$ and the other $g_i$.  
and similarly for~$h$. 

\subsubsection{A decomposition of the tangency locus}\label{par:tangencies}
 By definition, the tangency locus $\Tang(\pi_g,\pi_h)$ is the locus where the map $(\pi_g,\pi_h)\colon X \to B_g\times B_h$
is not a local diffeomorphism; $\Tang(\pi_g,\pi_h)$ is a curve that contains all multiple components of fibers of $\pi_g$ and $\pi_h$, 
as well as the curves along which the foliations determined by these fibrations are tangent. To be more precise, we split $\Tang(\pi_g,\pi_h)$ 
into four parts, 
\begin{equation}
\Tang^{\mathrm {ff}}(\pi_g, \pi_h)\cup \Tang^{\mathrm {ft}}(\pi_g, \pi_h) \cup \Tang^{\mathrm {tf}}(\pi_g, \pi_h) \cup 
\Tang^{\mathrm {tt}}(\pi_g, \pi_h)
\end{equation}
which are defined as follows. An irreducible component $C$ of $\Tang(\pi_g,\pi_h)$ is 
\begin{itemize}
\item contained in $\Tang^{\mathrm {ff}}(\pi_g, \pi_h)$ if and only if $C$ is contained in a fiber of $\pi_g$ and in a fiber of $\pi_h$ (in that
case, $C$ is both $g$ and $h$-invariant, and its self intersection  is negative, see \S~\ref{par:invariant_curves}); 
\item contained in $\Tang^{\mathrm {ft}}(\pi_g, \pi_h)$ if and only if $C$ is a multiple component of a fiber of $\pi_g$ but is generically transverse to $\pi_h$;  
\item contained in $\Tang^{\mathrm {tf}}(\pi_g, \pi_h)$ if and only if $C$ is a multiple component of a fiber of $\pi_h$ but is generically transverse to $\pi_g$;
\item contained in $\Tang^{\mathrm {tt}}(\pi_g, \pi_h)$ if it is generically transverse to both fibrations.
\end{itemize}
The superscripts $\mathrm f$, $\mathrm t$ stand for fiber and transverse.

\begin{lem}\label{lem:isotrivial}
If $\Tang^{\mathrm{tt}}(\pi_g,\pi_h)$ is empty, the fibrations $\pi_g$ and $\pi_h$ are both isotrivial. 
\end{lem}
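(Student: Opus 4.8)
The plan is to exploit the generically finite morphism $\phi=(\pi_g,\pi_h)\colon X\to B_g\times B_h$ and to prove, via rigidity of branched covers, that the $j$-invariant of each fibration is constant.

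First I would set up the geometry. Since $\pi_g$ and $\pi_h$ determine distinct foliations, Remark~\ref{rem:transport_of_smooth_fiber} shows that no fiber of $\pi_g$ is contracted by $\pi_h$ and, symmetrically, no fiber of $\pi_h$ is contracted by $\pi_g$; hence the fiber classes $[X^g_v]$ and $[X^h_w]$ are non-proportional nef classes of self-intersection $0$, so $d:=[X^g_v]\cdot[X^h_w]>0$ by the Hodge index theorem and $\phi$ is dominant. The ramification locus of $\phi$ is $\Tang(\pi_g,\pi_h)=\{d\pi_g\wedge d\pi_h=0\}$, a closed set of pure codimension one, being locally the zero set of a Jacobian determinant which is not identically zero. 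By hypothesis $\Tang^{\mathrm{tt}}(\pi_g,\pi_h)=\emptyset$, so every irreducible component of $\Tang(\pi_g,\pi_h)$ is of type $\mathrm{ff}$, $\mathrm{ft}$ or $\mathrm{tf}$: components of type $\mathrm{ff}$ or $\mathrm{tf}$ are contained in a reducible or non-reduced fiber of $\pi_h$, and components of type $\mathrm{ft}$ are multiple components of fibers of $\pi_g$.

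Next I would restrict $\pi_g$ to a general fiber $E_w=X^h_w$ of $\pi_h$, which is a smooth irreducible curve of genus $1$, and study the resulting degree-$d$ finite morphism $\rho_w\colon E_w\to B_g$ (it is onto, since $E_w$ is not contracted). The claim is that, for $w$ outside a finite subset of $B_h$, the branch data of $\rho_w$ does not depend on $w$. Indeed, a ramification point $q$ of $\rho_w$ is either a singular point of a fiber of $\pi_g$ --- there are finitely many, hence avoided by a general $E_w$ --- or a point at which $E_w$ is tangent to a fiber of $\pi_g$, i.e.\ a point of $\Tang(\pi_g,\pi_h)$. In the latter case $q$ cannot lie on an $\mathrm{ff}$- or $\mathrm{tf}$-component, since a general $E_w$ misses the finitely many special fibers of $\pi_h$ that contain those; so $q$ lies on a multiple component $C$ of some fiber of $\pi_g$, and there $\rho_w$ is totally ramified of index $\mult_C$. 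Since for general $w$ the curve $E_w$ meets each fiber component $C$ of $\pi_g$ transversally in exactly $[X^h_w]\cdot[C]$ points, and these intersection numbers depend only on the homology classes, the branch locus of $\rho_w$ and the ramification profile over each of its points are independent of $w$.

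Finally I would invoke rigidity: as $w$ ranges over the connected curve $B_h$ minus a finite set, the covers $\rho_w\colon E_w\to B_g$ form a holomorphic family with constant branch locus $\Delta_g$ and constant ramification profiles, so the induced monodromy representations $\pi_1(B_g\setminus\Delta_g)\to S_d$ are locally constant up to conjugacy, hence all conjugate. By Riemann's existence theorem, a branched cover of $B_g$ with given branch locus is determined up to isomorphism over $B_g$ by its monodromy representation up to conjugacy; so the curves $E_w$ are mutually isomorphic, the morphism $j_h\colon B_h\to\mathbb{P}^1$ is constant on a dense open set, hence constant, and $\pi_h$ is isotrivial. Exchanging the roles of $g$ and $h$, which is legitimate because both the hypothesis and the notion of distinctness are symmetric, yields that $\pi_g$ is isotrivial as well. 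The delicate point, and the step I would write out most carefully, is the $w$-independence of the branch data of $\rho_w$: one must check that once $\Tang^{\mathrm{tt}}=\emptyset$ the only tangencies of a general $E_w$ with the fibers of $\pi_g$ occur along the prescribed multiple components, and that the nodes of $I_b$-cycles and the other fiber singularities are genuinely avoided by a general $E_w$; the Riemann-existence input should then be quoted in a form valid for an arbitrary base $B_g$, so that the rational and the (here, unlikely) elliptic base cases are covered uniformly.
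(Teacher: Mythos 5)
Your proof is correct in substance, but it takes a genuinely different route from the paper's, whose argument is essentially two lines: once $\Tang^{\mathrm{tt}}(\pi_g,\pi_h)=\emptyset$, the foliation induced by $\pi_h$ is transverse to the fibers of $\pi_g$ over $B_g^\circ$ (a tangency over $B_g^\circ$ would force a tangency component generically transverse to both fibrations, or a smooth fiber of $\pi_g$ contained in a fiber of $\pi_h$, both excluded), and the holonomy of this foliation along paths $\beta$ in $B_g^\circ$ gives holomorphic isomorphisms $X^g_{\beta(0)}\to X^g_{\beta(1)}$; hence all smooth fibers of $\pi_g$ are isomorphic, and symmetrically for $\pi_h$. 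You instead view the smooth fibers of $\pi_h$ as branched covers $\rho_w\colon X^h_w\to B_g$ and use rigidity of covers with fixed branch locus (Riemann existence), which replaces the holomorphic holonomy by a topological rigidity statement, at the cost of the bookkeeping you flag: the finite bad set in $B_h$, transversality with the multiple components, constancy of the ramification profiles. Two small inaccuracies in your write-up: the dichotomy ``singular point of a fiber (finitely many) or tangency point'' conflates the singular points of the reduced fibers with the points of multiple components, along which $d\pi_g$ vanishes on a whole curve, and ``totally ramified of index $\mult_C$'' should just read ``ramified of index the multiplicity of $C$''. Both issues, and indeed your whole delicate point, can be bypassed: take $\Delta_g=\Crit(\pi_g)$ and restrict to $w\in B_h^\circ$. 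For such $w$, a ramification point of $\rho_w$ is exactly a point of $X^h_w\cap\Tang(\pi_g,\pi_h)$, and since every component of $\Tang(\pi_g,\pi_h)$ is of type $\mathrm{ff}$, $\mathrm{ft}$ or $\mathrm{tf}$, it lies either in a singular fiber of $\pi_h$ (hence misses $X^h_w$) or in a fiber of $\pi_g$ over $\Crit(\pi_g)$; so the branch locus of every $\rho_w$, $w\in B_h^\circ$, is contained in $\Crit(\pi_g)$, with no genericity or profile analysis needed. Moreover $(\pi_g,\pi_h)$ restricted over $B_g^\circ\times B_h^\circ$ is a proper local biholomorphism with no contracted curves (those are $\mathrm{ff}$-components), hence a finite covering of a connected base, and slicing this covering at $w$ and $w'$ and using homotopy invariance gives directly $X^h_w\simeq X^h_{w'}$, so $\pi_h$ is isotrivial, and symmetrically for $\pi_g$. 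In this streamlined form your argument is essentially the covering-space shadow of the paper's proof: the covering structure over $B_g^\circ\times B_h^\circ$ is precisely what makes the holonomy of the $\pi_h$-foliation along paths in $B_g^\circ$ well defined.
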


The isotriviality of $\pi_g$ means that the $j$-invariant of the fibers $X^g_w$ is constant on $B_g^\circ$.  
 In this case, the discussion of \S~\ref{par:type-Ib} shows that  no fiber of $\pi_g$ is 
of type $I_b$ or  $I_b^*$, $b\geq 1$,   and that after a finite
base change, $\pi_g$ becomes birationnally equivalent to a trivial fibration. 

\begin{proof}
If $\Tang^{\mathrm{tt}}(\pi_g,\pi_h)$ is empty, the foliation associated to $\pi_h$ is transverse to $\pi_g$ on $\pi_g^{-1}(B_g^\circ)$ (though a 
 multiple fiber of $\pi_h$  may  intersect every fiber of $\pi_g$).  If $\beta\colon [0,1]\to B_g^\circ$ is 
a smooth path, the holonomy of this foliation determines a holomorphic diffeomorphism ${\mathrm{hol}}_\beta\colon X_{\beta(0)}^g\to 
X_{\beta(1)}^g$. Thus, all fibers of $\pi_g$ are isomorphic, and likewise for $\pi_h$. 
\end{proof}

\subsection{Geometry of $g$-orbits}\label{par:geometry_of_g-orbits}
 Let us fix a K\"ahler metric on $X$, given by a K\"ahler form $\kappa$, as 
well as a K\"ahler form $\kappa_g$ on $B_g$ (hence also on $B_h$, see \S~\ref{par:choice_halphen_twist}). Lengths, areas, and diameters will be computed with respect to these metrics.

According to the Notation~\ref{nota:L_f}, a {circle} in an elliptic curve is  a translate of a $1$-dimensional, 
closed, and connected subgroup.  
Being invariant under translation, this notion is well defined on $X_w^g$, for every $w\in B_g^\circ$. %, independently of the choice of a local section. 
If $w\in \Rk^k_g$ and 
$z\in X_w^g$, the closure of its $g$-orbit is a union of $k$ circles; the circle containing $z$ is denoted $L_g^0(z)$. In the next lemma we use the notation introduced in Section~\ref{par:the_dynamics_R}; the norm of a slope $(p,q)$ is, by definition $\norm{(p,q)}=(p^2+q^2)^{1/2}$.
Since circles are homotopically non-trivial and long circles become asymptotically dense, we get:

\begin{lem}\label{lem:circles_are_epsilon_dense}
Let $U\Subset B_g^\circ$ be a disk, endowed with a continuous choice of basis for $H_1(X_w^g;\Z)$ and a local section of $\pi_g$.  
\begin{enumerate}[\em (1)]
\item  There is a real number $\ell(U)$ such that the length of every circle
of every fiber $X_w^g$, for $w\in U$, is bounded from below by~$\ell(U)$.
\item For every $\e  >0$, there is a real number $D>0$ such that for all $(p,q)$ with  $\norm{(p,q)}>D$, and all $w\in U$,  every circle of 
slope $(p,q)$ is $\e$-dense in $X_w^g$.
In particular, for every $\e  >0$, there is a real number $D>0$ such that  if $\norm{(p,q)}>D$, and 
 $w\in \Rk^k_{p,q}(U)\setminus \Tor(U)$ for some $k$ then  the circle $L_g^0(z)$ is $\e$-dense in $X_w^g$ for every $z\in X_w^g$.
 \item For every $\e  >0$, there is an integer $k_0>0$ such that for every $k\geq k_0$, every $w\in \Rk^k_{p,q}(U)\setminus \Tor(U)$, and every 
 $z\in X_w^g$, the orbit closure $L_g(z)$ is $\e$-dense in $X_w^g$.
 \end{enumerate}

 \end{lem}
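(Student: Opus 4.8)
The plan is to reduce everything to a uniform statement about linear flows on the family of flat tori $X_w^g \simeq \R^2/\Z^2$, $w\in U$, using the real-analytic trivialization $\Psi_g$ of Section~\ref{par:smooth-fibers-Betti}. First I would fix the data: over the relatively compact disk $U\Subset B_g^\circ$ with its chosen basis of $H_1(X_w^g;\Z)$ and section, the map $\Psi_g$ identifies $\pi_g^{-1}(U)$ with $U\times \R^2/\Z^2$, and the function $\tau_g\colon U\to \Hyp_+$ is continuous, hence $\tau_{g,1}(w)$ and $\tau_{g,2}(w)=\Ima\tau_g(w)$ range in a compact subset of $\R\times\R_{>0}$. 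A circle of slope $(p,q)$ in $X_w^g$ is, by definition (\S\ref{par:definition_L_k_p_q}), a translate of the connected subgroup $L_w^0 = \ker\big((x,y)\mapsto qx-py\big)$ of $\R^2/\Z^2$; its Euclidean length and its $\e$-density in $X_w^g$ depend only on $(p,q)$ and on the flat metric $g_w$ on $\R^2/\Z^2$ induced by pushing forward the metric of $\C/\Lat_g(w)$ (equivalently by \eqref{eq:psi_explicit_formula}, the metric $\big|dx + \tau_g(w)\,dy\big|^2$ up to scale). Since the coefficients of $g_w$ vary continuously with $w$ and $U$ is relatively compact, all the estimates below will be uniform in $w\in U$.

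For (1): a translate of $L_w^0$ is a closed geodesic on the flat torus $(\R^2/\Z^2, g_w)$ whose free homotopy class is the primitive class $(p,q)\in\Z^2\simeq H_1$. Its length equals the $g_w$-norm of the shortest representing lattice vector, namely $\ell_w(p,q)=\big|p + q\tau_g(w)\big|\cdot c(w)$ for the appropriate scale factor $c(w)$ coming from \eqref{eq:psi_explicit_formula}; this is bounded below over $(p,q)$ primitive and $w\in U$ by the systole of $(\R^2/\Z^2,g_w)$, which is a continuous positive function of $w$, hence bounded below on the compact closure of $U$. Set $\ell(U)$ to be that infimum.

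For (2): a closed geodesic $\gamma$ of class $(p,q)$ on $(\R^2/\Z^2,g_w)$ is the image of a $g_w$-straight line of that slope; its complement is a union of strips, and the $g_w$-distance from any point of the torus to $\gamma$ is at most half the ``width'' of the torus transverse to $\gamma$, which is comparable to $\mathrm{area}(g_w)/\length(\gamma) = \mathrm{area}(g_w)/\ell_w(p,q)$. Since $\mathrm{area}(g_w)$ is bounded above on $\overline U$ and $\ell_w(p,q)\to\infty$ uniformly in $w\in U$ as $\norm{(p,q)}\to\infty$ (because $|p+q\tau_g(w)|\geq \dist((p,q),\R(1,\tau_g(w)))$ and, using that $\Ima\tau_g$ is bounded away from $0$, this grows like $\norm{(p,q)}$), given $\e>0$ we may choose $D$ so that $\norm{(p,q)}>D$ forces every such geodesic, in every fiber over $U$, to be $\e$-dense. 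A translate of $L_w^0$ is such a geodesic, and for $w\in\Rk^k_{p,q}(U)\setminus\Tor(U)$ the circle $L_g^0(z)$ is precisely a translate of $L_w(1,(p,q))$'s identity component (of slope $(p,q)$), so the ``in particular'' clause follows. For (3): if $w\in\Rk^k_{p,q}(U)\setminus\Tor(U)$ then $L_g(z)$ is, by Lemma~\ref{lem:basic_dynamics}, a union of $k$ parallel circles of slope $(p,q)$, equally spaced (translates of the cyclic subgroup of order $k$); hence $L_g(z)$ is $(\length\text{ of the gap}/2)$-dense, and the gap between consecutive circles is at most $\mathrm{area}(g_w)/(k\,\length(\gamma))\le \mathrm{area}(g_w)/(k\,\ell(U))$. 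For fixed $U$ this $\to 0$ as $k\to\infty$ uniformly in $w\in U$ and in $(p,q)$, so choosing $k_0$ with $\mathrm{area}(g_w)/(k_0\ell(U))<2\e$ over $\overline U$ finishes it.

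The only mildly delicate point — and the place I'd be careful — is making the constants genuinely \emph{uniform} in $w\in U$ and in the slope: this rests on (a) continuity of $\tau_g$ and the scale factor on the relatively compact $U$, giving uniform two-sided bounds on $\mathrm{area}(g_w)$ and $\Ima\tau_g(w)$, and (b) the elementary lower bound $|p+q\tau_g(w)|\gtrsim_U \norm{(p,q)}$ for primitive $(p,q)$, which uses $\Ima\tau_g(w)$ bounded away from $0$. Everything else is the standard geometry of closed geodesics and linear foliations on a flat torus, so no real obstacle arises beyond bookkeeping of these uniformities.
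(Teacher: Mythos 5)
Your argument is correct and is exactly the standard flat-torus geometry that the paper itself invokes in one line (``circles are homotopically non-trivial and long circles become asymptotically dense''): the systole bound for (1), the width-equals-area-over-length bound for (2), and the equal spacing of the $k$ components of $L_w(k,(p,q))$ for (3), all made uniform over $\overline{U}$ by continuity of $\tau_g$. No issues.
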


As a consequence, if  $K$ is a compact subset of $B_g^\circ$,   there is a real number $\ell(K)$ such that the length of every circle
of every fiber $X_w^g$, for $w\in K$, is bounded from below by~$\ell(K)$. Another consequence is:

\begin{lem}\label{lem:circle_escapes}
For every $\delta_1>0$ there exists $\delta_2>0$ such that if 
$w\in B_g$ is $\delta_2$-far from $\Crit(\pi_g)$, any circle in $X_w^g$   escapes the  $\delta_1$-neighborhood of 
 $\Tang(\pi_g, \pi_h)$. 
\end{lem}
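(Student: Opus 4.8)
The plan is to reduce the statement to a uniform geometric fact over a compact part of $B_g^\circ$. The point is that a circle in a fiber $X^g_w$ is a homotopically non-trivial loop in $X^g_w$, whereas, once $w$ stays away from $\Crit(\pi_g)$, the curve $\Tang(\pi_g,\pi_h)$ meets $X^g_w$ in a uniformly bounded finite set, so a sufficiently small neighbourhood of $\Tang(\pi_g,\pi_h)$ traces on $X^g_w$ only a bounded union of topological disks, which cannot contain such a loop.

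First I would note that no fiber $X^g_w$ with $w\in B_g^\circ$ is a component of $\Tang(\pi_g,\pi_h)$: by the decomposition recalled in \S\ref{par:tangencies}, every component of $\Tang(\pi_g,\pi_h)$ either lies in a reducible fiber (hence has negative self-intersection), or is a multiple component of a fiber of $\pi_g$ (hence is supported over $\Sing(\pi_g)$), or is generically transverse to $\pi_g$; none of these can coincide with the smooth irreducible genus-$1$ curve $X^g_w$, of self-intersection $0$. Enlarging $\Crit(\pi_g)$ by the finitely many base points of multiple fibers of $\pi_g$ (this only restricts the range of $w$ in the statement), I may then assume that as soon as $w$ is $\delta_2$-far from $\Crit(\pi_g)$ the fiber $X^g_w$ is a smooth elliptic curve and every component of $\Tang(\pi_g,\pi_h)$ meeting it is generically transverse to $\pi_g$; in particular $\Tang(\pi_g,\pi_h)\cap X^g_w$ is finite, of cardinality $\le N:=[\Tang(\pi_g,\pi_h)]\cdot[X^g_w]$, a constant independent of $w$.

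Now fix $\delta_1>0$ and a trial $\delta_2>0$, and set $K=K_{\delta_2}:=\{w\in B_g\;;\;\dist(w,\Crit(\pi_g))\ge\delta_2\}$; if $K=\emptyset$ there is nothing to prove, and otherwise $K$ is a compact subset of $B_g^\circ$. Over $K$ I would use three uniform estimates: (1) by Lemma~\ref{lem:circles_are_epsilon_dense}(1) and the consequence stated just after it, every circle of every $X^g_w$ ($w\in K$) has length $\ge\ell(K)>0$, so the injectivity radius of the flat torus $X^g_w$ is $\ge\ell(K)/2$; (2) since the smooth fibers $X^g_w$ ($w\in K$) form a compact family, there is $\iota(K)>0$ such that for $w\in K$ and $x\in X^g_w$ the ambient ball $B_X(x,r)\cap X^g_w$ is contained in an embedded disk of $X^g_w$ once $r<\iota(K)$; (3) there is a modulus of continuity $\omega_K$, with $\omega_K(r)\to 0$ as $r\to 0$, such that for all $w\in K$ and $p\in X^g_w$, $\dist\bigl(p,\Tang(\pi_g,\pi_h)\cap X^g_w\bigr)\le\omega_K\bigl(\dist(p,\Tang(\pi_g,\pi_h))\bigr)$; informally, a point of a fiber that is close to $\Tang(\pi_g,\pi_h)$ is close to the trace of $\Tang(\pi_g,\pi_h)$ on that same fiber. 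Granting these, I would choose $\delta_1$ small enough, in terms of $N$, $\ell(K)$ and $\iota(K)$, that $2N\,\omega_K(\delta_1)<\min\bigl(\iota(K),\ell(K)/2\bigr)$. Then, for every $w\in K$, the intersection of the $\delta_1$-neighbourhood of $\Tang(\pi_g,\pi_h)$ with $X^g_w$ lies inside the union of the at most $N$ balls $B_X(x_i,\omega_K(\delta_1))\cap X^g_w$ about the points $x_i$ of $\Tang(\pi_g,\pi_h)\cap X^g_w$; each connected component of this union has ambient diameter $<\min(\iota(K),\ell(K)/2)$, hence is contained in an embedded disk of $X^g_w$ and is simply connected. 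As a circle is homotopically non-trivial in $X^g_w$, it cannot lie in such a component, hence not in the $\delta_1$-neighbourhood of $\Tang(\pi_g,\pi_h)$. Finally, given the original $\delta_1$, it suffices to take $\delta_2$ large: either $K_{\delta_2}$ becomes empty and the assertion is vacuous, or the threshold attached to $K_{\delta_2}$ by the argument above (which increases as $K_{\delta_2}$ shrinks, and is vacuously infinite once $K_{\delta_2}$ is empty) exceeds $\delta_1$; since $\Crit(\pi_g)\ne\emptyset$ when $X$ is not a torus, such a $\delta_2$ exists.

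The step I expect to be the main obstacle is estimate (3): a point $p\in X^g_w$ can be close to a piece of $\Tang(\pi_g,\pi_h)$ lying over a nearby but different value of $\pi_g$, and at points where $\Tang(\pi_g,\pi_h)$ is tangent to a fiber of $\pi_g$ the naive ``slide along $\Tang(\pi_g,\pi_h)$ down to $X^g_w$'' estimate is only H\"older, not Lipschitz, in $\dist(p,\Tang(\pi_g,\pi_h))$. The clean way to obtain (3) is by contradiction, extracting a convergent subsequence from hypothetical pairs $(w_n,p_n)\in\pi_g^{-1}(K)$ violating it; this uses only that $\Tang(\pi_g,\pi_h)$ is closed, that $\pi_g$ restricted to $\Tang(\pi_g,\pi_h)$ is open near each of its points lying over $B_g^\circ$ (which is exactly transversality to $\pi_g$), and compactness of $\pi_g^{-1}(K)$, so that no H\"older exponent ever has to be computed.
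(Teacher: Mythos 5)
Your proof is correct and takes essentially the same route the paper intends: Lemma~\ref{lem:circle_escapes} is stated there without a written proof, as a direct consequence of the uniform lower bound $\ell(K)$ on circle lengths over compact subsets of $B_g^\circ$ following Lemma~\ref{lem:circles_are_epsilon_dense} — circles are uniformly long, homotopically non-trivial loops, while the trace of a small neighborhood of $\Tang(\pi_g,\pi_h)$ on fibers over such a compact set is confined to finitely many small disks, which is exactly your argument with the compactness details (the modulus $\omega_K$, the uniform embedded-disk radius) filled in. Two minor remarks: multiple fibers already lie over $\Crit(\pi_g)$ (since $d\pi_g$ vanishes along them), so your ``enlargement'' of $\Crit(\pi_g)$ is vacuous — as it must be, since one is not free to shrink the set of allowed $w$ — and your core compactness estimate in fact proves the statement in the quantifier form actually used in the proof of Lemma~\ref{lem:local_extension_gamma} (distance from $\Crit(\pi_g)$ given, distance from $\Tang(\pi_g,\pi_h)$ produced), the literal form of the lemma being only vacuously true for large $\delta_1$.
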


%Let us now study the local geometry of the projections of these circles by $\pi_h$. 
 Let $\eta$ denote half of the injectivity radius of 
the metric $\kappa_g$. For every $w\in B_g$, the (riemannian) exponential map is a diffeomorphism from the disk of 
radius $\eta$ in $T_wB_g$ to some open subset of $B_g$.
By definition, the diameter of an interval $J\subset T_wB_g$ is its length 
with respect to $\kappa_{g,w}$; its radius is half its diameter. 

 Now, let $I\subset B_g$ be a smooth real analytic arc, and let 
$w$ be a point of $I$. The tangent direction to $I$ at $w$ determines an orthogonal decomposition of $T_wB_g$ into 
a direct sum $T_wI\oplus_\R (T_wI)^\perp$. Let
$r$ be a positive number $\leq \eta$.
We say  that $I$ {\bf{is of size}} (at least) $r$ at 
$w$  if its preimage in $T_wB_g$ by the exponential map contains the graph $\{s+\psi(s)\; ; s\in J\}$ of a function $\psi\colon J\subset  T_wI \to (T_wI)^\perp$ such that:
\begin{enumerate}[(i)]
\item $\psi$ is defined on  an interval $J$
of radius $r$ around $0$ in $T_wI$;
\item $\psi(0)=0$,  its first   derivative $\psi'$ is bounded by $1$, and its  second derivative $\psi''$ is bounded by $1/r$.
\end{enumerate} 
Note that since  $\psi'$ is bounded by $1$, $\psi$  takes its values in an interval of length $\leq r$ in $(T_w I)^\perp$. 

This definition 
is scale invariant. Similar notions can be defined on $B_h$, or  on $X$, or along the fibers $X_w^g$. When 
talking about the size of an arc, it should be clear from the context whether we are working in $X$,   $B_g$, or  $B_h$.  
  
 Since circles on a torus are geodesics for the  flat metrics, we get: 
  
\begin{lem}\label{lem:circle_size}
For every $\delta>0$ there exists $r_1 = r_1(\delta)>0$ such that if  $w\in B_g$ is such that 
 $\dist(w,\Crit(\pi_g))>\delta$, then any circle of $X_w^g$ is of size  at least  $r_1$ at any of its points. 
 \end{lem}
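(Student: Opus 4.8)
The plan is to reduce the statement, by compactness, to a uniform bounded-geometry estimate carried out on a fixed finite family of real-analytic trivializations of $\pi_g$, exploiting the observation recalled just before the statement that a circle of $X^g_w$ is a geodesic of the flat structure of the fiber.

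First I would fix $\delta>0$ and work over the compact set $K_\delta := \{w\in B_g : \dist(w,\Crit(\pi_g))\ge\delta\}$ (if it is empty there is nothing to prove). Since $\Crit(\pi_g)$ is finite, $K_\delta$ is contained in $B_g^\circ$, so I may cover $K_\delta$ by finitely many open disks $U_1,\dots,U_N\subset B_g^\circ$, each equipped with a local section of $\pi_g$ and a continuous framing of $H_1(X^g_w;\Z)$. As in \S\ref{par:smooth-fibers-Betti}, this yields on each $U_j$ a holomorphic map $\tau_j\colon U_j\to\Hyp_+$ with $X^g_w\simeq\C/(\Z+\Z\tau_j(w))$ and a real-analytic trivialization $\Psi_j\colon \pi_g^{-1}(U_j)\to U_j\times\R^2/\Z^2$. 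I would also choose compact sets $K_j\subset U_j$ with $\bigcup_j K_j=K_\delta$. The point of this set-up is that, under $\Psi_j$, a circle of $X^g_w$ — a translate of a closed connected one-dimensional subgroup — corresponds, for $w\in U_j$, to a translate of a line $\R\cdot(p,q)$ with $(p,q)$ primitive, hence to a geodesic of the standard flat metric $h_0 = dx^2+dy^2$ of $\R^2/\Z^2$.

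Next I would transport to the fixed model $\R^2/\Z^2$ the metric with respect to which sizes are measured: for $w\in U_j$ set $h_{j,w} := (\Psi_{j,w})_*\bigl(\kappa|_{X^g_w}\bigr)$, which is a smooth family of smooth Riemannian metrics on the fixed compact manifold $\R^2/\Z^2$ (the same discussion applies verbatim to the intrinsic flat metric $|dz|^2$ of $X^g_w=\C/(\Z+\Z\tau_j(w))$ via the formula~\eqref{eq:psi_explicit_formula}, and the two are uniformly comparable over $K_\delta$, so it is immaterial which convention one uses on the fibers). Since $K_j$ is compact and this family is smooth, there are constants $C\ge1$, $M>0$, $\rho>0$, independent of $j$ and of $w\in K_j$, such that $C^{-1}h_0\le h_{j,w}\le C\,h_0$, the injectivity radius of $h_{j,w}$ is at least $\rho$, and the Christoffel symbols of $h_{j,w}$ in the standard coordinates are bounded by $M$. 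It then remains to prove the following local statement with $h=h_{j,w}$: there is $r_1=r_1(C,M,\rho)>0$ such that every $h_0$-geodesic line $L$ through a point $p$ is, in $h$-normal coordinates centered at $p$, the graph over $T_pL$ of a function $\psi$ defined on an interval of radius $r_1$ around $0$, with $\psi(0)=0$, $|\psi'|\le1$, and $|\psi''|\le1/r_1$. Indeed, the inverse of the $h$-exponential map at $p$ is a $C^\infty$ diffeomorphism on an $h$-ball of radius $\ge\rho$, whose $C^2$-norm is bounded in terms of $C,M,\rho$ and whose differential at $p$ is the identity; it therefore carries the straight segment $t\mapsto p+tv$ (with $v$ the $h_0$-unit direction of $L$) to a $C^2$ curve $\gamma$ with $\gamma(0)=0$, $\dot\gamma(0)$ parallel to $T_pL$, and $|\ddot\gamma|$ bounded in terms of $C,M,\rho$; reparametrizing $\gamma$ by the first coordinate of an $h_p$-orthonormal frame adapted to $T_pL$ shows, by an elementary estimate, that it is the graph of such a $\psi$ on an interval whose radius depends only on $C,M,\rho$. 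Taking $r_1$ to be the minimum over $j=1,\dots,N$ of the resulting radii, and recalling $\bigcup_j K_j=K_\delta$, completes the argument.

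The only point that is not purely formal is the uniformity asserted in the third paragraph: that as long as $w$ stays at distance $\ge\delta$ from $\Crit(\pi_g)$, the fibers together with their flat and induced metrics form a compact family with bounded geometry. This is exactly where the hypothesis $\dist(w,\Crit(\pi_g))>\delta$ enters — it keeps $\tau_j$ (hence the conformal type of the fibers) inside a compact subset of $\Hyp_+$ and prevents the fibers from degenerating; the rest is the standard fact that a curve of bounded geodesic curvature in a surface of bounded geometry is a $C^2$-controlled graph in normal coordinates.
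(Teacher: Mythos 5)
Your proof is correct and follows essentially the same route the paper takes: the paper deduces the lemma directly from the observation that circles are geodesics of the flat metrics on the fibers, together with the (implicit) compactness of the family of fibers over $\{w:\dist(w,\Crit(\pi_g))\ge\delta\}$, which is exactly the bounded-geometry argument you spell out. You have merely made explicit the finite cover by trivializing disks and the uniform normal-coordinate estimate that the paper leaves to the reader.
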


The ramification points of the restriction of $\pi_g$ to the leaves of the foliation induced by $\pi_h$ are located in 
$\mathrm{Tang}^{\mathrm{tt}}(\pi_g, \pi_h)\cup \mathrm{Tang}^{\mathrm{ft}}(\pi_g, \pi_h)\cup 
 \mathrm{Tang}^{\mathrm{ff}}(\pi_g, \pi_h)$. This implies:

\begin{cor}\label{cor:circle_size}
For every $\delta>0$ there exists $r_2 = r_2(\delta)>0$ such that if $\xi\in X$ is $\delta$-far from $\mathrm{Tang}^{\mathrm{tt}}(\pi_g, \pi_h)\cup \mathrm{Tang}^{\mathrm{ft}}(\pi_g, \pi_h)\cup 
 \mathrm{Tang}^{\mathrm{ff}}(\pi_g, \pi_h)$, then the projection under $\pi_g$ of any circle in 
$X^h_{\pi_h(\xi)}$ has size at least $r_2$ at $\pi_g(\xi)$. %\romain{J'avais noté "problème" ici, mais je crois que ça signifie juste "à vérifier"}
\end{cor}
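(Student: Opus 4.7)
My plan is to deduce the corollary by transporting a circle-size estimate in the fiber $X^h_{\pi_h(\xi)}$, obtained from a symmetric version of Lemma~\ref{lem:circle_size}, forward via the restricted projection $\pi_g|_{X^h_{\pi_h(\xi)}}$, whose regularity is controlled by the preceding lemma on ramification of $\pi_g$ along $\pi_h$-leaves.

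First I would check that the hypothesis on $\xi$ translates into a uniform lower bound on $\dist(\pi_h(\xi),\Crit(\pi_h))$. Critical values of $\pi_h$ come either from singular fibers, whose nodes lie in the closure of $\Tang^{\mathrm{tt}}(\pi_g,\pi_h)$ (since $d\pi_g\wedge d\pi_h$ vanishes at such a node and its zero locus is a divisor), or from multiple fibers, whose reduced supports lie in $\Tang^{\mathrm{tf}}(\pi_g,\pi_h)$---the only tangency locus not excluded by the hypothesis. In the nodal case $\xi$ sits $\delta$-far from the node, and since $d\pi_h=0$ there this gives $\dist(\pi_h(\xi),\Crit(\pi_h))\geq c\delta^2$ for a uniform $c>0$. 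In the multiple-fiber case, the stable reduction of \S\ref{par:multiple_fibers} replaces the reduced support by a smooth elliptic curve and circles persist on it with uniformly controlled geometry. A symmetric version of Lemma~\ref{lem:circle_size} applied to $\pi_h$ then yields a uniform lower bound $r_1=r_1(\delta)$ on the size of any circle in $X^h_{\pi_h(\xi)}$ at any of its points.

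Second, using the lemma on ramification together with the compactness of $\{\xi\in X : \dist(\xi,\Tang^{\mathrm{tt}}(\pi_g,\pi_h)\cup\Tang^{\mathrm{ft}}(\pi_g,\pi_h)\cup\Tang^{\mathrm{ff}}(\pi_g,\pi_h))\geq\delta\}$, I would show that the restriction of $\pi_g$ to the reduced support of $X^h_{\pi_h(\xi)}$ is a local biholomorphism at $\xi$ with first and second derivatives bounded by a uniform $M_1=M_1(\delta)$ and with inverse derivative bounded away from $0$ by a uniform constant. It then remains to transfer the fiber estimate to the base: parametrize a circle through $\xi$ as a graph of radius $r_1$ over its tangent in $X^h_{\pi_h(\xi)}$ with $\psi'\leq 1$ and $\psi''\leq 1/r_1$; its image under the controlled local biholomorphism $\pi_g|_{X^h_{\pi_h(\xi)}}$ is a graph over $d\pi_g(T_\xi C)$ of radius comparable to $r_1/M_1$ with second derivative bounded by a constant times $1/r_1$, producing the desired $r_2(\delta)$.

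The main obstacle I foresee is the uniform control in the vicinity of $\Tang^{\mathrm{tf}}(\pi_g,\pi_h)$: the hypothesis allows $\xi$ to be arbitrarily close to a multiple component of a fiber of $\pi_h$, where $d\pi_h$ degenerates and the scheme-theoretic fiber is non-reduced. Verifying that circles on the reduced support retain uniformly controlled geometry and that $\pi_g$ remains a submersion there with uniform bounds relies on the Kodaira-type analysis of Section~\ref{sec:halphen}, notably the stable-reduction picture of \S\ref{par:multiple_fibers}.
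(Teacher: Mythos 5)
There is a genuine gap in your Step 1. From ``$\xi$ is $\delta$-far from the nodes'' you deduce $\dist(\pi_h(\xi),\Crit(\pi_h))\geq c\delta^2$, but this implication is false: the \emph{whole} singular fiber of $\pi_h$, not just its node, is mapped to the critical value. A point $\xi$ lying on a smooth fiber of $\pi_h$ close to a smooth point of an $I_1$-fiber, far from the node and from $\Tang^{\mathrm{tt}}\cup\Tang^{\mathrm{ft}}\cup\Tang^{\mathrm{ff}}$, satisfies the hypothesis of the corollary while $\pi_h(\xi)$ is arbitrarily close to $\Crit(\pi_h)$. Consequently the symmetric version of Lemma~\ref{lem:circle_size} cannot be invoked, and in fact its conclusion genuinely fails in the regime allowed by the hypothesis: for $w'$ close to a critical value of type $I_b$, the translates of the vanishing-cycle subgroup of $X^h_{w'}$ that pass through the ``neck'' near a node have length, hence size, tending to $0$, so there is no uniform lower bound valid at \emph{all} points of \emph{all} circles of $X^h_{\pi_h(\xi)}$. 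Your plan, which pushes a global fiberwise size bound forward under $\pi_g$, therefore does not get started; and the delicate case is not only the multiple fibers you flag at the end, but already the ordinary singular fibers of $\pi_h$, from which the hypothesis does not keep $\xi$ away.

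What makes the statement true — and what the paper's terse argument actually uses — is that only a \emph{local} bound near $\xi$ is needed: the size of $\pi_g(C)$ is evaluated at $\pi_g(\xi)$, so only the arc of the circle lying in a ball $B(\xi,\delta/2)$ matters. The points near which circles of nearby fibers degenerate (the nodes of $I_b$-fibers, and the corresponding loci produced by the reductions of \S\ref{par:multiple_fibers}--\ref{par:stable_reduction}) lie on $\Tang(\pi_g,\pi_h)$ because $d\pi_h$ vanishes there, and the local branch of $\Tang(\pi_g,\pi_h)$ through such a point is not contained in a fiber of $\pi_h$ (it maps with degree $\geq 2$ onto a neighborhood of the critical value), so it is of type $\mathrm{tt}$ or $\mathrm{ft}$; hence these points are at distance $\geq\delta$ from $\xi$. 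In the local model of \S\ref{par:type-Ib}, the arcs of circles staying $\delta/2$-far from the nodes have uniformly bounded first and second derivatives, independently of how close $\pi_h(\xi)$ is to $\Crit(\pi_h)$; this local control replaces Lemma~\ref{lem:circle_size}. With that input, your Step 2 — unramifiedness of $\pi_g$ on the leaf of the $\pi_h$-foliation at points $\delta$-far from the three loci, two-sided derivative bounds by compactness, and transfer of a bounded-geometry graph — is precisely the remark the paper places before Corollary~\ref{cor:circle_size}, and it is fine. So your transfer mechanism matches the intended proof; the missing piece is the local geometry of the circles near $\xi$, which must come from the Kodaira local models rather than from a reduction to ``$\pi_h(\xi)$ far from $\Crit(\pi_h)$''.
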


%\serge{No comprendo }\romain{je crois que j'ai juste raconté n'importe quoi}\serge{Don à enlever a posteriori} Observe that in this statement   $\xi$ can be close to $\Ram(\pi_g)$, as soon as $\pi_h$ is regular near $\xi$. This observation will be important in \S\ref{par:gamma_does_not_spiral}. 
Of course   a  similar  result holds by swapping $g$ and $h$.

\subsection{Local structure of $\mu$}
\label{par:local_structure}
Recall that we work under the hypotheses of Proposition~\ref{pro:analytic_sigma}. By assumption, $\mu_g(\Rk_g)$ is positive and  $\mu(\Tang(\pi_g, \pi_h))=0$, because
the tangency locus $\Tang(\pi_g, \pi_h)$  is a proper algebraic subset of $X$. 
And by Remark~\ref{rem:any_Halphen},  
$\mu_g(\Rk_g)= \mu_h(\Rk_h)=1$. %\romain{petit changement}

Pick an analytic arc $\gamma\in \mathrm R_g$ such that $\mu_g(\gamma)>0$; shrinking $\gamma$ if necessary,
we  choose an open subset $U$ as in Section~\ref{par:the_dynamics_R}, and parameters $(\alpha,\beta)$ and $(p,q)$ such that
$\gamma$ is a smooth, real analytic subset of $R^{\alpha,\beta}_{p,q}(g;U)$ diffeomorphic to an interval.
%recall that $\gamma\subset B_g^{\circ\circ}$ is the analytic continuation 
%${\tilde{p}}_g({\tilde{R}}_{(p,q)}^{(\alpha,\beta)})$ of some (connected component of) $R_{(p,q)}^{(\alpha,\beta)}(g;U)$,  with notation as in  Section~\ref{par:the_dynamics_R}. 
Then%\serge{Je change un peu les lignes suivantes, j'aimais pas trop la somme sur des $\gamma'$.}
%\begin{equation}\label{eq:4.4}
%0 < \mu(\pi_g^{-1}(\gamma))= \sum_{\gamma'\in \mathrm R_h}\, 
%\mu(\pi_g^{-1}(\gamma)\cap \pi_h^{-1}(\gamma')) \leq 1;
%\end{equation}
\begin{equation}\label{eq:4.4}
0 < \mu(\pi_g^{-1}(\gamma))= \mu(\pi_g^{-1}(\gamma)\cap \pi_h^{-1}(\Rk_h)) \leq 1;
\end{equation}
consequently, there is an open subset $U'$ of $B_h^\circ$ and a smooth analytic arc $\gamma'$ in some $R^{a',b'}_{p',q'}(h;U')$ 
such that  $\mu(\pi_g^{-1}(\gamma)\cap \pi_h^{-1}(\gamma'))>0$. 
 On the complement of 
$\Tang(\pi_g,\pi_h)$,  the intersection $\pi_g^{-1}(\gamma)\cap \pi_h^{-1}(\gamma')$ 
is transverse; thus reducing $\gamma$, $U$, $\gamma'$ and $U'$ again if necessary, we may assume that 
$(\pi_g, \pi_h)$ is a local diffeomorphism from $X_U^g\cap X_{U'}^h$ to $U\times U'$; in particular the intersection $\pi_g^{-1}(\gamma)\cap \pi_h^{-1}(\gamma')$ 
is transverse. Then, 
the  set 
\begin{equation}
\pi_g^{-1}(\gamma)\cap \pi_h^{-1}(\gamma')= S_1 \sqcup \cdots \sqcup S_l 
\end{equation}
is a disjoint union of  small ``squares'' -- diffeomorphic to $\gamma\times \gamma'$ -- which, for $w\in \gamma$,
 intersect the  fiber $X_w^g$ along pieces of circles (of the same slope). In what follows, 
 we denote by $S$ any one of the  squares $S_j$ such that  $\mu(S_j)>0$; hence 
 \begin{equation}
S\subset \pi_g^{-1}(\gamma)\cap \pi_h^{-1}(\gamma'),  \quad \mu(S)>0, 
\end{equation}
and   $ (\pi_g, \pi_h)\colon S\to \gamma\times \gamma'$ is a diffeomorphism.
Thus,   $d_\R(\mu)\leq 2$, and Lemma~\ref{lem:dR1} implies 

\begin{lem} $d_\R(\mu)=2$. \end{lem}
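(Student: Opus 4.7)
The plan is to combine an immediate upper bound coming from the square $S$ constructed just before the lemma with the lower bound supplied by Lemma~\ref{lem:dR1}.

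First I would observe that the square $S \subset \pi_g^{-1}(\gamma) \cap \pi_h^{-1}(\gamma')$ constructed just above, being diffeomorphic to $\gamma \times \gamma'$ via $(\pi_g,\pi_h)$, is a real analytic submanifold of $X$ of real dimension exactly $2$ (the two arcs $\gamma$ and $\gamma'$ being smooth). Since $\mu(S) > 0$ by construction, the definition of $d_\R(\mu)$ gives immediately the upper bound $d_\R(\mu) \leq 2$.

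For the lower bound, I would argue by contradiction: suppose $d_\R(\mu) \leq 1$. By Lemma~\ref{lem:dR1}, $\mu$ is then supported either on a finite orbit of $\Gamma$ or on the $\Gamma$-invariant curve $\mathrm D_\Gamma$. In either case the support is a proper algebraic subset of $X$, so $d_\C(\mu) \leq 1$, contradicting our standing hypothesis (in force throughout this section) that $d_\C(\mu) = 2$, i.e., that $\mu$ gives no mass to proper algebraic subsets of $X$. Hence $d_\R(\mu) \geq 2$.

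Combining these two bounds yields $d_\R(\mu) = 2$. There is no real obstacle here; the work has already been done in constructing the positive-measure square $S$ (which gives the upper bound) and in proving Lemma~\ref{lem:dR1} (which gives the lower bound under the hypothesis $d_\C(\mu) = 2$). This short lemma should be viewed as a bookkeeping step isolating the exact real dimension of $\mu$ before attacking the more substantive geometric conclusions (1)--(4) of Proposition~\ref{pro:analytic_sigma}.
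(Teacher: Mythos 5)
Your proof is correct and follows exactly the paper's argument: the positive-measure square $S$ gives $d_\R(\mu)\leq 2$, and Lemma~\ref{lem:dR1} together with the standing hypothesis $d_\C(\mu)=2$ rules out $d_\R(\mu)\leq 1$. Nothing further is needed.
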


\begin{figure}[h]
\input{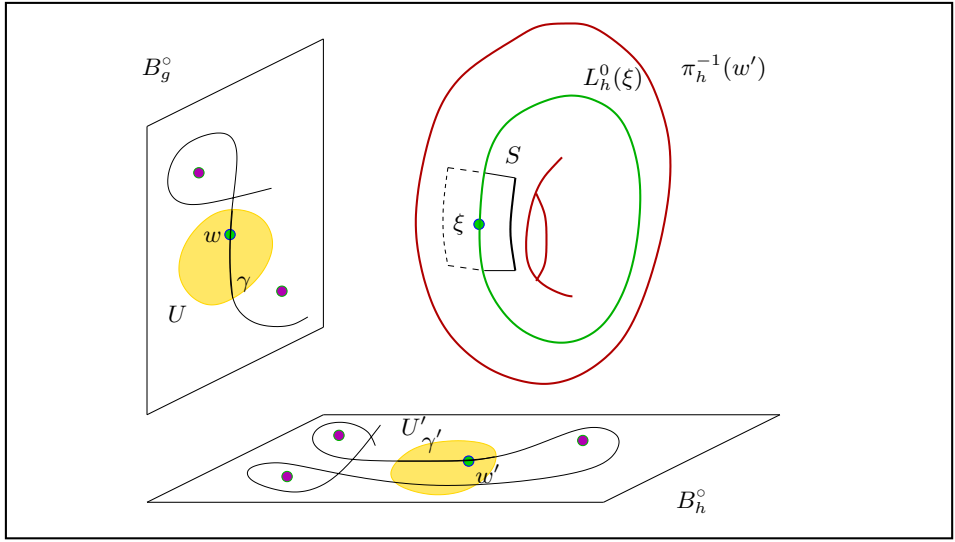}
\caption{{\small{This figure represents (one of) the square(s) $S$. The intersection of $S$ with  $X^h_{w'}$ is contained in the  orbit closure $L_h^0(z)$.
The points $\xi$, $w$, $w'$ are in green; the magenta points are critical values of $\pi_g$ and $\pi_h$. The pre-image of 
$\gamma$ in $X^h_{w'}$ is locally contained in $L_h^0(\xi)$.}}}\label{fig:two_fib}
\end{figure}

We now initiate the study of the analytic continuation of $\gamma$. 
 We say that an arc $J\subset B_g$ is \emph{evenly charged} by $\mu_g$  if  
 $\mu_g(J')>0$ for every non-empty, open interval $J'\subset J$. 
Likewise, we say that a surface $S$ in $X$ is evenly charged by $\mu$ if $\mu(S')>0$ for any 
non-empty relatively open 
 subset $S'\subset S$.    
 
For  $w'\in \gamma'\setminus \Tor(B_h^\circ)$ and $\xi'\in X^h_{w'}$, $L_h(\xi')$   is a finite union of  circles of $X^h_{w'}$, and   
$L_h^0(\xi')$ is invariant under some iterate of $h$. 
For $\mu_h$-almost every $w'$, the conditional measure $\mu_{h,w'}$ is invariant under $h_{w'}$;
as such, it is supported on a union of such orbit closures. 
The set $S\cap X^h_{w'}$ is an interval, which is a piece of the circle $L_h^0(\xi')$, for $\xi'\in S\cap X^h_{w'}$. The set\footnote{We   must take the   closure in Equation~\ref{par:annulus_h} because   $L_h^0(z')$ is reduced to $\{z'\}$ when $\pi_h(z')\in \Tor(B_h^\circ)$.} %\romain{petit changement de notation ici, et distinction entre $L_h(S)$ et  $L_h^0(S)$}
\begin{equation}\label{par:annulus_h}
L_h(S)={\overline{\bigcup_{\xi'\in S} L_h(\xi')}}\ \text{ (resp. }
L_h^0(S)={\overline{\bigcup_{\xi'\in S} L_h^0(\xi')}} \text{ )},
\end{equation}
is a finite union of real analytic annuli -- each of which   a 
circle bundle above $\gamma'$--  which is $h$-invariant % under some positive iterate of $h$  
 (resp.  is an  $h^k$-invariant annulus,  for some $k> 0$). 
For $\xi'\in S$ such that $w':=\pi_h(\xi')\notin \Tor_h(B_h^\circ)$ the restriction of the conditional measure $\mu_{h,w'}$ to $L_h^0(\xi')$ is the Haar measure. Projecting under   the real analytic map $\pi_g$, we get:

\begin{lem}\label{lem:proj_annulus} 
With the above notation, $\pi_g(L_h^0(S))$ is a semi-analytic curve containing $\gamma$, 
which is evenly charged by 
$\mu_g$.  In particular any 
(semi-)analytic continuation of $\gamma$ contains $\pi_g(L_h^0(S))$. 
\end{lem}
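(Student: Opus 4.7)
The plan is to prove in turn: (i) $\gamma \subset \pi_g(L_h^0(S))$; (ii) $\pi_g(L_h^0(S))$ is a one-dimensional semi-analytic subset of $B_g$; and (iii) $\pi_g(L_h^0(S))$ is evenly charged by $\mu_g$. The final ``in particular'' assertion will come out as a by-product of the argument for~(ii). Item~(i) is immediate from $S \subset L_h^0(S)$ and $\pi_g(S) = \gamma$.

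Item~(ii) is the heart of the argument and uses uniqueness of real analytic continuation. Fix $w' \in \gamma'$ and let $C(w') = L_h^0(\xi'(w'))$ be the circle in $X_{w'}^h$ containing the arc $S \cap X_{w'}^h$. Since we work away from $\Tang(\pi_g,\pi_h)$, the restriction $\pi_g|_{X_{w'}^h}$ is a local holomorphic diffeomorphism in a neighborhood of that arc, so $\pi_g(C(w'))$ is locally a smooth real analytic arc prolonging $\gamma$. Composing $\pi_g$ with a real analytic parametrization of $C(w')$ yields a real analytic extension of a real analytic parametrization of~$\gamma$; by the identity principle for real analytic functions on an interval, this extension is governed by the unique germ of analytic continuation of $\gamma$ at any of its points. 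Hence, as $w'$ varies in $\gamma'$, all the closed curves $\pi_g(C(w'))$ lie inside one and the same one-dimensional real analytic set $\tilde\Gamma \subset B_g$, namely the maximal analytic continuation of $\gamma$. Thus $\pi_g(L_h^0(S)) = \bigcup_{w' \in \gamma'} \pi_g(C(w')) \subset \tilde\Gamma$ is one-dimensional. Since $L_h^0(S)$ is a closed (hence compact) semi-analytic subset of $X$, its image under the analytic map $\pi_g$ is subanalytic, and any subanalytic set of dimension at most one is in fact semi-analytic, by the fact recalled in \S\ref{par:semi-analytic-vocabulary}.

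For item~(iii), first shrink $S$, $\gamma$ and $\gamma'$ around a Lebesgue density point of $\mu|_S$, so that $\mu_h$ evenly charges $\gamma'$. For $\mu_h$-almost every $w' \in \gamma'$ outside $\Tor_h$, the conditional measure $\mu_{h,w'}$ is $h_{w'}$-invariant and charges the arc $S \cap X_{w'}^h$; thus by Lemma~\ref{lem:basic_dynamics} its restriction to $C(w')$ is a positive multiple of Haar measure. Given a non-empty open sub-arc $J \subset \pi_g(L_h^0(S))$, the continuity of $w' \mapsto C(w')$ (and hence of $w' \mapsto \pi_g(C(w'))$ in the Hausdorff topology) produces a non-empty open set $U_J \subset \gamma'$ on which $J \cap \pi_g(C(w')) \neq \emptyset$; even-charging of $\mu_h$ then gives $\mu_h(U_J) > 0$. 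For $\mu_h$-almost every $w' \in U_J$ the set $\pi_g^{-1}(J) \cap C(w')$ is a non-empty relatively open sub-arc of the circle $C(w')$ and so has positive $\mu_{h,w'}$-mass; integrating against $\mu_h$ through the disintegration formula yields $\mu(\pi_g^{-1}(J) \cap L_h^0(S)) > 0$, and therefore $\mu_g(J) \geq \mu(\pi_g^{-1}(J)) > 0$.

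The main obstacle will be item~(ii): a priori the two-real-dimensional set $L_h^0(S)$ could project to a two-dimensional subset of $B_g$, so it is crucial to use the identity principle to force all the circle projections $\pi_g(C(w'))$ to lie on a common one-dimensional analytic extension of $\gamma$. The closing sentence of the lemma is then a direct consequence of this analysis: since every local piece of $\pi_g(L_h^0(S))$ is produced from $\gamma$ by real analytic continuation through the circles $C(w')$, any (semi-)analytic set that extends $\gamma$ analytically in $B_g$ must contain $\pi_g(L_h^0(S))$.
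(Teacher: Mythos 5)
Your overall strategy is the one the paper intends (it states the lemma as a direct consequence of the preceding discussion): view $L_h^0(S)$ as an annulus fibered in circles $C(w')$ over $\gamma'$, use that the conditionals $\mu_{h,w'}$ restrict to Haar measure on these circles, project by $\pi_g$, and get semi-analyticity from ``subanalytic of dimension $\leq 1$ implies semi-analytic''. However, one step fails as written: you derive that $\mu_h$ evenly charges $\gamma'$ (and that the conditional charges the slice $S\cap X^h_{w'}$ for $\mu_h$-\emph{almost every} $w'$) by ``shrinking $S$, $\gamma$, $\gamma'$ around a Lebesgue density point of $\mu|_S$''. A density point only controls the mass of small balls centred at that point; after shrinking, $\mu|_S$ could still vanish on open sub-rectangles of the smaller square, so some sub-arcs of $\gamma'$ could carry no $\mu_h$-mass, and the parts of $\pi_g(L_h^0(S))$ coming only from circles above such sub-arcs would then not be seen by $\mu_g$. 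What is actually needed is that $\mu$ charges every relatively open subset of $S$; this is exactly the content of Lemma~\ref{lem:basic_facts_gamma} (positivity and real analyticity of the density of $\mu|_S$, proved by the two-transverse-translation-families argument of Step~5 of Proposition~\ref{pro:smooth_case}, independently of the present lemma), and your proof of the even-charging part must invoke it — the density-point shortcut does not replace it.

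A second, milder weakness is in your item~(ii): you assert that all the curves $\pi_g(C(w'))$ lie in ``the maximal analytic continuation of $\gamma$'', presented as a one-dimensional real analytic subset $\tilde\Gamma\subset B_g$. Such a closed analytic set need not exist a priori: the continuation of $\gamma$ is in general only an immersed, possibly dense curve (this is precisely the difficulty stressed in Remark~\ref{rem:saturation} and dealt with in \S\ref{par:analytic_continuation_gamma_general}), and the circles $C(w')$ may moreover pass through ramification points of $\pi_g$ restricted to the $h$-fibers, where the images are singular. The dimension bound you need can be obtained without this overstatement: parametrize $L_h^0(S)$ analytically by $\gamma'\times\R/\Z$ and consider the analytic map $G$ obtained by composing with $\pi_g$; since $G$ sends the non-empty open subset of the connected annulus corresponding to $S$ into the curve $\gamma$, the pullback by $G$ of any analytic area form on $B_g$ vanishes on that open set, hence identically, so $G$ has rank at most $1$ everywhere, its image has measure zero, hence empty interior and dimension at most $1$; combined with subanalyticity of the image this gives semi-analyticity, and the same degeneracy argument yields the final ``in particular'' assertion as you state it.
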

  
\begin{rem} \label{rem:saturation}
Pick $\xi'$ in $S$ and set $w'=\pi_g(\xi')$.
If $L_h^0(\xi')$ is disjoint from the  ramification points
of  $\pi_{g}\rest{ X^h_{w'}} \colon X^h_{w'} \to B_g$, 
 then $\pi_g(L_h^0(S))$ is an analytic loop in $B_g$, and 
this loop provides a complete description of the analytic continuation of $\gamma$. 
On the other hand, if $L_h^0(\xi')$ hits a ramification point of $\pi_{g}\rest{ X^h_{w'}}$, 
  its image $\pi_g(L_h^0(\xi'))$ should  be thought of as a segment which is
strictly contained in the analytic continuation of $\gamma$, and whose 
% the extremities of such a segment are projections of the intersection points of $L_h^0(\xi')$ 
%with $\Tang(\pi_g, \pi_h)$; equivalently, they are projections of the ramification points of 
endpoints are contained in the projections of the ramification points. 
%$\pi_{g\vert X^h_{\pi_h(\xi')}}$ contained in $L_h^0(\xi')$. 
As $\xi'$ moves in $S$, the  endpoints and the length of $\pi_{g}(L_h^0(\xi'))$ 
may vary within a curve contained 
in $\mathrm R_g$ and after successively saturating as in~\eqref{par:annulus_h}, $\mu_g$ might fill up 
a complicated, possibly dense, curve in $B_g$. 
\end{rem}

\begin{lem}\label{lem:basic_facts_gamma}~
 The restriction of $\mu$ to $S$   is absolutely continuous with respect to the $2$-dimensional 
Lebesgue measure on $S$ and 
  its density with respect to any real analytic area form on $S$ is given by a positive real analytic function. 
%\item the same properties hold for the restriction of $\mu$ to the annuli $L_h^0(S)$ and $A^g_S$.
\end{lem}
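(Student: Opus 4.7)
The plan is to exploit the local product structure on $S$ arising from the transverse fibrations $\pi_g$, $\pi_h$, combined with the translation invariance of the conditional measures on fibers. Using the diffeomorphism $(\pi_g,\pi_h) \colon S \to I\times J$ (where $I=\pi_g(S)$, $J=\pi_h(S)$) I will coordinatize $S$ by $(u,v)\in I\times J$. First note that $\mu_g$ has no atom (otherwise $\mu$ would charge a single fiber $X^g_w$, contradicting $d_\C(\mu)=2$), and symmetrically for $\mu_h$; since $\Tor_g(B_g^\circ)$ and $\Tor_h(B_h^\circ)$ are countable, they are $\mu_g$- resp.\ $\mu_h$-negligible. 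In particular, for $\mu_g$-a.e.\ $u\in I$ we have $u\in \Rk_g\setminus\Tor_g$, so by Lemma~\ref{lem:basic_dynamics}, $\mu_{g,u}$ is a convex combination of Haar measures on circles of prescribed slope. The intersection $S\cap X^g_u$ lies on one such circle, and therefore $\mu_{g,u}|_S$ is a multiple of arc length on this arc; pulling back $dv$ via $\pi_h$, we write $\mu_{g,u}|_S = c_g(u)\,\Phi_g(u,v)\,dv$, where $\Phi_g(u,v)=ds_g/dv$ is a strictly positive real analytic function on $I\times J$ (the circles depend real analytically on $u$ via the Betti foliation of $\pi_g$, and $\pi_h$ is holomorphic). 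Symmetrically, $\mu_{h,v}|_S = c_h(v)\,\Phi_h(u,v)\,du$ with $\Phi_h>0$ real analytic.

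Equating the two disintegrations,
\[
c_g(u)\,\Phi_g(u,v)\,dv\otimes d\mu_g(u) \;=\; c_h(v)\,\Phi_h(u,v)\,du\otimes d\mu_h(v),
\]
the left-hand side being absolutely continuous in $v$ forces $d\mu_h(v)=\beta(v)\,dv$, and by the symmetric argument $d\mu_g(u)=\alpha(u)\,du$, for some measurable non-negative $\alpha,\beta$. Therefore $\mu|_S = \rho(u,v)\,du\,dv$ with
\[
\rho(u,v) \;=\; c_g(u)\alpha(u)\,\Phi_g(u,v) \;=\; c_h(v)\beta(v)\,\Phi_h(u,v),
\]
and $\mu|_S$ is absolutely continuous with respect to Lebesgue measure on $S$.

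For the analyticity of $\rho$, on the open set where $\rho>0$ (which is non-empty since $\mu(S)>0$) I consider $\log\rho$ and subtract the two expressions, obtaining
\[
\log(c_g\alpha)(u) - \log(c_h\beta)(v) \;=\; \log\Phi_h(u,v) - \log\Phi_g(u,v) \;=:\; H(u,v),
\]
with $H$ real analytic. Differentiating in $u$ shows $\partial_u H$ is independent of $v$, and likewise for $\partial_v H$, so $H$ decomposes as $H_1(u)+H_2(v)$ with $H_j$ real analytic; consequently $c_g\alpha$ and $c_h\beta$ are real analytic positive functions (defined up to an overall constant on each factor). Hence $\rho=(c_g\alpha)(u)\cdot\Phi_g(u,v)$ is real analytic and strictly positive on $I\times J$ (after shrinking $S$ inside the positivity set, which is dense open because $\rho$ is a nonzero real analytic function). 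Since any real analytic area form on $S$ is a positive real analytic multiple of $du\,dv$, the density of $\mu|_S$ with respect to such a form is positive and real analytic.

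The routine part is the disintegration bookkeeping; the main subtlety is the separation-of-variables step, which is what turns the purely measure-theoretic equality of disintegrations into a genuine analytic regularity statement. A minor cautionary point is to ensure that the arc $S\cap X^g_u$ really lies on one of the circles supporting $\mu_{g,u}$ for $\mu_g$-a.e.\ $u$; this follows from $\mu(S)>0$ together with the atomlessness of $\mu_g$ on $\Tor_g$, so no further work is needed there.
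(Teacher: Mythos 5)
Your proof is correct, but it takes a genuinely different route from the paper's. The paper disposes of this lemma in one line by invoking Step~5 of the proof of Proposition~\ref{pro:smooth_case}: the Haar nature of the conditionals on the circles through $S$ makes $\mu\rest{S}$ invariant under two transverse analytic families of translations acting locally transitively on $S$, and an Eremenko-type argument (bounded distortion at a Lebesgue density point, with a Jacobian depending analytically on the target point) then yields the analytic positive density. You instead exploit the explicit product coordinates $(u,v)=(\pi_g,\pi_h)$ on $S$, write both disintegrations of $\mu\rest{S}$ with positive real analytic kernels $\Phi_g,\Phi_h$, and obtain analyticity from a separation-of-variables analysis of the functional equation $\tilde\alpha(u)\Phi_g(u,v)=\tilde\beta(v)\Phi_h(u,v)$. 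Both work; the paper's argument is softer and is the same one used in the absolutely continuous case~(d), while yours is more explicit in this two-dimensional product situation and shows directly that the marginal weights are themselves analytic. Two small imprecisions to clean up: (i) comparing marginals only gives that $c_g(u)\,d\mu_g(u)$ and $c_h(v)\,d\mu_h(v)$ are absolutely continuous (the weights $c_g, c_h$ may vanish on sets of positive measure), not $\mu_g,\mu_h$ themselves; but only these products enter $\rho$, so nothing is lost. (ii) You cannot literally differentiate $\log(c_g\alpha)(u)-\log(c_h\beta)(v)=H(u,v)$ in $u$, since these factors are merely measurable; instead fix, by Fubini, a generic $v_0$ and read off $\tilde\alpha(u)=\exp\bigl(H(u,v_0)+\log\tilde\beta(v_0)\bigr)$ for a.e.\ $u$, which is analytic and positive on all of $I$ (so no shrinking of $S$ is needed for positivity), or observe that $H(u,v_1)-H(u,v_2)$ is a.e.\ constant in $u$ for a.e.\ $(v_1,v_2)$, which yields the splitting $H=H_1(u)+H_2(v)$ without differentiating the measurable terms.
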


The argument is the same as in Step 5 in the proof of Proposition~\ref{pro:smooth_case}. Now,
from Section~\ref{par:geometry_of_g-orbits}, we get the following 
  {a priori} bound on the analytic continuation of~$\gamma$.

\begin{lem}\label{lem:local_extension_gamma}
 For every $\delta>0$ there exists $r = r(\delta)>0$ such that if 
$\gamma\in \Rk_g$  is an (analytic) arc with $\mu_g(\gamma)>0$ and 
 $w$ is a point of $\gamma$ such that $\dist(w, \Crit(\pi_g))>\delta$, then $\gamma$ admits an  analytic continuation to an analytic 
arc   of size $r$ at $w$, which is evenly charged by $\mu_g$.
\end{lem}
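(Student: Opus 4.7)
The idea is to produce the analytic continuation via the projection of $h$-invariant circles, as in Lemma~\ref{lem:proj_annulus}, and to bound its size at $w$ by combining Lemma~\ref{lem:circle_size} (applied to $h$) with Corollary~\ref{cor:circle_size}.

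First, I shrink $\gamma$ around $w$ so that $w$ lies in its interior and $\gamma$ is evenly charged by $\mu_g$. Following \S\ref{par:local_structure}, I construct $U\ni w$ in $B_g^\circ$, an open set $U'\subset B_h^\circ$, an arc $\gamma'\subset U'$ meeting $\mathrm{R}_h$, and a square $S\subset \pi_g^{-1}(\gamma)\cap \pi_h^{-1}(\gamma')$ with $\mu(S)>0$. By Lemma~\ref{lem:proj_annulus}, $\tilde\gamma:=\pi_g(L_h^0(S))$ is a semi-analytic curve containing $\gamma$ and evenly charged by $\mu_g$, so it provides the desired continuation; it only remains to check that near $w$ it contains an analytic arc of size $r(\delta)$.

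Since $d_\C(\mu)=2$ and $\Tang(\pi_g,\pi_h)\cup\pi_h^{-1}(\Crit(\pi_h))$ is a proper algebraic subset of $X$, $\mu$-almost every $\xi\in S$ avoids it. For such a $\xi$, the orbit closure $L_h^0(\xi)$ is a genuine analytic circle in the smooth fiber $X^h_{\pi_h(\xi)}$ and $\pi_g|_{L_h^0(\xi)}$ is an analytic immersion at $\xi$; in particular $\pi_g(L_h^0(\xi))$ is an analytic arc through $\pi_g(\xi)$ contained in $\tilde\gamma$. By Lemma~\ref{lem:circle_size} applied to $\pi_h$, $L_h^0(\xi)$ has size in $X$ bounded below in terms of $\dist(\pi_h(\xi),\Crit(\pi_h))$, and Corollary~\ref{cor:circle_size} then yields that $\pi_g(L_h^0(\xi))$ has size at least $r_2(\delta')$ at $\pi_g(\xi)$, where
\[
\delta'=\min\bigl(\dist(\xi,\Tang^{\mathrm{tt}}\cup\Tang^{\mathrm{ft}}\cup\Tang^{\mathrm{ff}}),\ \dist(\pi_h(\xi),\Crit(\pi_h))\bigr).
\]

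The main obstacle is to upgrade this to a bound $r(\delta)$ depending only on $\delta$, independently of the choice of $\gamma$, $w$ and $S$. Note that $\Tang^{\mathrm{ff}}\cup \Tang^{\mathrm{ft}}$ is entirely contained in $\pi_g^{-1}(\Crit(\pi_g))$, so its distance to $\pi_g^{-1}(w)$ is bounded below by a function of $\delta$ using the properness of $\pi_g$; only $\Tang^{\mathrm{tf}}\cup\Tang^{\mathrm{tt}}$, an algebraic curve in $X$, can come close to the fiber over $w$. To handle this, I use the freedom to choose $\xi\in S$ with $\pi_g(\xi)$ merely close to $w$ rather than exactly equal: the even charging of $\gamma$, together with the disintegration formula~\eqref{eq:desintegration} and the fact that the $\mu$-measure of the $\varepsilon$-neighborhood of $\Tang\cup\pi_h^{-1}(\Crit(\pi_h))$ tends to $0$ as $\varepsilon\to 0$, produces a uniform $\delta'(\delta)>0$ and a $\xi\in S$ at distance $\geq\delta'(\delta)$ from the tangency locus and with $\pi_h(\xi)$ at distance $\geq\delta'(\delta)$ from $\Crit(\pi_h)$, such that $\pi_g(\xi)$ is arbitrarily close to $w$. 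The arc $\pi_g(L_h^0(\xi))$ then has size $\geq r_2(\delta'(\delta))$ at $\pi_g(\xi)$, and since this arc is analytic with uniformly bounded first and second derivatives (by the uniform transversality coming from $\delta'$), the size estimate transfers to $w$ itself at the cost of a universal constant. Even charging of the resulting arc is inherited from that of $\tilde\gamma$.
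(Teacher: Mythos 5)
There is a genuine gap, and it sits at the heart of the lemma: the claimed ``uniform $\delta'(\delta)$''. The fact that $\mu$ gives zero mass to $\Tang(\pi_g,\pi_h)\cup\pi_h^{-1}(\Crit(\pi_h))$ only tells you that for each fixed neighborhood $N$ of $w$ there is \emph{some} $\e(N)>0$ and a point $\xi\in S$ over $N$ at distance $\geq\e(N)$ from the bad set; nothing in this argument prevents $\e(N)\to 0$ as $N$ shrinks, i.e.\ the support of $\mu$ could hug the tangency curve near the fiber $X^g_w$ at a rate depending on $\mu$. You need $\delta'$ fixed \emph{before} you choose how close $\pi_g(\xi)$ is to $w$ (since the admissible distance from $\pi_g(\xi)$ to $w$ is of order $r_2(\delta')$), and the measure-theoretic smallness of $\e$-neighborhoods cannot produce such a $\delta'$ depending only on $\delta$ --- neither for a fixed $\mu$, nor, a fortiori, uniformly in $\mu$, which is exactly what is needed later (in \S\ref{par:no_accumulation_of_gamma_in_B_g} the lemma is applied to a sequence of different measures $\mu_n$ with the same $r(\delta)$). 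The paper's mechanism is geometric, not measure-theoretic: pick $\xi\in S\cap X^g_w$ in the fiber over $w$ itself; by $g$-invariance the whole circle $L_g^0(\xi)\subset X^g_w$ plays the same role as $\xi$, and since $\dist(w,\Crit(\pi_g))>\delta$ this circle has length bounded below, hence escapes a $\delta_2(\delta)$-neighborhood of $\Tang(\pi_g,\pi_h)$ (Lemma~\ref{lem:circle_escapes}). The point $\xi'$ so obtained satisfies $\pi_g(\xi')=w$ \emph{exactly}, so Corollary~\ref{cor:circle_size} gives the size bound at $w$ with purely geometric constants, and Lemma~\ref{lem:proj_annulus} supplies the even charging. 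This use of the $g$-dynamics along the fiber over $w$ is the missing idea in your proof.

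Two secondary problems. First, your opening step ``shrink $\gamma$ around $w$ so that $w$ lies in its interior and $\gamma$ is evenly charged'' is circular as written: even charging of $\gamma$ near $w$ is not part of the hypotheses, it is a consequence of Lemma~\ref{lem:proj_annulus} (applied to a square over some charged subarc, whose projection contains all of $\gamma$), so it must come after that construction, not before. Second, the final ``transfer of the size estimate from $\pi_g(\xi)$ to $w$'' is delicate precisely when $w\in\NT_g$: there the continuation of $\gamma$ may be singular with several branches through $w$, and an arc of definite size at a nearby point of one branch does not obviously yield an arc of comparable size \emph{at $w$} that continues $\gamma$. The paper avoids this entirely because its arc $\pi_g(L_h^0(\xi'))$ passes through $w$ itself.
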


\begin{proof} 
We may assume that  $w\in \gamma\setminus \Tor(B_g)$. With notation as above, pick  $\xi\in X_w^g\cap S$. By Lemma~\ref{lem:circle_escapes}, there exists $\xi'\in L_g^0(\xi)$ such that $\dist (\xi', \Tang(\pi_g, \pi_h))>\delta_2 
= \delta_2(\delta)$. 
By $g$-invariance of $\mu$, $\xi'$ plays the same role as $\xi$. 
Now by  Corollary~\ref{cor:circle_size}, $\pi_g(L_h^0(\xi'))$ has size $r_2(\delta_2)$ at $\pi_g(\xi') = w$ and 
applying Lemma~\ref{lem:proj_annulus} concludes the proof.  
 \end{proof}

\begin{rem} It may happen that the local equation for some ${\Rk}^{\alpha,\beta}_{p,q}(B_g^\circ)$ is of type $\Ima(w^k)=0$, with $w$ in a small disk $\disk_\varepsilon \subset B_g^\circ$ (see the definition of $\NT_g$ in \S~\ref{par:the_dynamics_twisting}). At the origin of such a 
 disk, the curve is singular, with several branches going through the origin.  Lemma~\ref{lem:local_extension_gamma} says that if $\e\ll r $ and $\gamma$ is a smooth analytic arc 
 in one of these branches, 
 then it can be continued to an evenly charged analytic arc accross the origin. 
So, if one of the branches is charged by $\mu$, its symmetric with respect to the origin is charged too.
\end{rem}

\subsection{Analytic continuation of $\gamma$: the isotrivial case}\label{par:analytic_continuation_gamma_isotrivial}
From this point the proof splits in two separate arguments 
according to the isotrivial or non-isotrivial nature of the fibrations (more precisely, according to the emptyness, or not, of 
$\Tang^{\mathrm{tt}}(\pi_g, \pi_h)$).  
%Recall that  $g$ and $h$  are  conjugate so the associated  fibrations  are of the same nature (isotrivial or not). 
%In the isotrivial case, the analytic continuation result is the following. Recall that we are given an ergodic invariant measure $\mu$ and an analytic arc $\gamma\subset B_g^\circ$ such that $\mu_g(\gamma)>0$. 

\begin{lem}\label{lem:analytic_continuation_gamma_isotrivial}
If  $\pi_g$ is isotrivial  there is an analytic subset $\tilde \sigma$ of $B_g$ such that: %\romain{Attention! à ce stade je ne sais pas démontrer  que $\mu_g(\sigma)=1$ donc formulation nécessaire pour démontrer la finitude du nombre de composantes dans la démo finale. Remarquer qu'on ne peut pas montrer directement $\mu_g(\sigma) = 1$ ici car ça doit utiliser l'ergodicité d'une manière ou d'une autre}

%\begin{enumerate}[\em (1)]
%\item there exists a semi-analytic subset $\sigma$ of $B_g$ of dimension 1 \romain{nouvelle notation $\sigma$; je trouve ça meilleur que $\hat\gamma$, $\tilde\gamma$, etc.}
%extending $\gamma$,  such that $\mu_g(\sigma) = 1$; 
%\item 
%$\sigma$ is a finite  union of   immersed real analytic loops in $B_g^\circ$  and 
%immersed real analytic arcs  with endpoints in $\Crit(\pi_g)$.
%\end{enumerate} 
\begin{enumerate}[\em (1)]
\item  $\tilde \sigma$ is of pure dimension $1$ and extends $\gamma$;
\item $\mu_g(\tilde \sigma)>1/2$ and 
 $\supp(\mu_g\rest{\tilde\sigma})$ is a semi-analytic set $\sigma\subset \tilde\sigma$; it is a finite  union of 
  immersed real analytic loops in $B_g^\circ$  and 
immersed real analytic arcs  with endpoints in $\Crit(\pi_g)$. 
\end{enumerate} 
\end{lem}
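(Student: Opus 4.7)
My plan is to exploit the isotriviality of $\pi_g$ to globalize the description of $\Rk_g$ from \S\ref{par:notes_on_gamma}, and then to apply the local analytic continuation furnished by Lemma \ref{lem:local_extension_gamma} to promote $\gamma$ to a union of connected components of a globally defined real analytic curve.

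First, I would carry out the stable reduction sketched in \S\ref{par:stable_reduction}. Since $\pi_g$ is isotrivial, after a suitable finite branched cover $\eta\colon B'\to B_g$ there is a K\"ahler surface $Y$ with a relatively minimal genus $1$ fibration $\pi'\colon Y\to B'$ which is trivial over $B'^\circ:=\eta^{-1}(B_g^\circ)$, i.e.\ $(\pi')^{-1}(B'^\circ)\simeq B'^\circ\times E$ for some fixed elliptic curve $E$; moreover some iterate $g^m$ lifts to a Halphen twist $g'$ of $Y$ acting on fibers as $(w,z)\mapsto(w,z+t(w))$ for a non-constant holomorphic map $t\colon B'^\circ\to E$. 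Since $\Rk_g=\Rk_{g^m}$ we may replace $g$ by $g^m$, and proving the statement on $B'$ will yield $\tilde\sigma\subset B_g$ upon taking the $\eta$-image (which is analytic by the proper mapping theorem).

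The payoff of trivialization is that the monodromy subtlety of \S\ref{par:notes_on_gamma} disappears: each $\Rk^k_{p,q}(B'^\circ)$ is globally equal to $t^{-1}(L_{k,(p,q)})$, where $L_{k,(p,q)}\subset E$ is the (finite union of translates of a) closed connected $1$-dimensional subgroup of slope $(p,q)$ with torsion of order~$k$. Since $L_{k,(p,q)}$ is a compact real analytic curve in $E$ and $t$ is a non-constant holomorphic map between compact Riemann surfaces (upon compactifying $B'$), each $C_{k,(p,q)}:=t^{-1}(L_{k,(p,q)})$ is a C-analytic curve in $B'$ with only finitely many connected components, whose singularities lie in the critical set of $t$ together with the finite set $\NT_{g'}$. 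The lift of $\gamma$ to $B'$ decomposes into finitely many smooth analytic arcs, each contained in some $C_{k_i,(p_i,q_i)}$.

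The central step is an open--closed argument. Pick any $w_0\in\supp(\mu'_{g'})\cap C_{k,(p,q)}$ at distance at least $\delta$ from $\Crit(\pi')$; Lemma \ref{lem:local_extension_gamma} furnishes an analytic arc in $C_{k,(p,q)}$ of size $r(\delta)$ at $w_0$ that is evenly charged by $\mu'_{g'}$. Hence $\supp(\mu'_{g'})\cap C_{k,(p,q)}$ is relatively open in $C_{k,(p,q)}\setminus (\pi')^{-1}(\Crit(\pi'))$, and being closed it is a union of connected components of that set. Since $C_{k,(p,q)}$ has finitely many components, this intersection is a finite union of closed analytic loops and arcs with endpoints in $(\pi')^{-1}(\Crit(\pi'))$. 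The obstacle I expect to be the hardest is to rule out that $\supp(\mu'_{g'})$ meets infinitely many of the $C_{k,(p,q)}$: for this I would combine Lemma \ref{lem:basic_facts_gamma} (positivity and real-analyticity of the density on the square~$S$) with the uniform circle-length bound (Lemma \ref{lem:circle_size}) to extract a uniform lower bound $\mu'_{g'}(C_{k,(p,q)})\geq c>0$ for every charged $C_{k,(p,q)}$ meeting a fixed compact exhaustion of $B'^\circ$; the finite total mass of $\mu'_{g'}$ then restricts the number of charged curves to be finite. Their union, pushed back to $B_g$ via $\eta$, provides $\tilde\sigma$; its support $\sigma:=\supp(\mu_g\rest{\tilde\sigma})$ is a finite union of immersed analytic loops and arcs with endpoints in $\Crit(\pi_g)$, and the bound $\mu_g(\tilde\sigma)>1/2$ (in fact one gets $\mu_g(\tilde\sigma)=1$) follows from $\mu_g(\Rk_g)=1$ (Remark \ref{rem:any_Halphen}) together with $\supp(\mu_g)\subset\tilde\sigma$.
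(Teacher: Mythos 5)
Your overall skeleton---isotriviality gives finite monodromy, so the curves $\Rk^k_{p,q}$ become global analytic subsets of (a finite cover of) $B_g$; Lemma~\ref{lem:local_extension_gamma} shows that every positive-mass component is evenly charged; $\sigma$ is the union of charged components---is the same as the paper's. The genuine gap is the step you yourself flag as hardest: the claimed uniform lower bound $\mu'_{g'}(C_{k,(p,q)})\geq c>0$ for every charged curve. Neither Lemma~\ref{lem:basic_facts_gamma} nor Lemma~\ref{lem:circle_size} yields this: the former gives a positive real-analytic density on one particular square $S$, whose infimum depends on that square (hence on $(k,p,q)$ and on the chosen arcs) and can be arbitrarily small, while the latter controls the geometric size of circles, not the measure carried by the corresponding annuli. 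So your finiteness of the family of charged curves---and with it the assertion $\mu_g(\tilde\sigma)=1$---is unsupported: a priori a single ergodic $\mu$ could charge infinitely many curves $\Rk^k_{p,q}$ with summable masses, and nothing you cite excludes this. Note that the mechanism which does exclude it in the non-isotrivial case is the curvature blow-up argument of Lemma~\ref{lem:analytic_continuation_gamma_general} (see also Remark~\ref{rem:kpq_bounded}), and it is unavailable here precisely because $\Tang^{\mathrm{tt}}(\pi_g,\pi_h)=\emptyset$.

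The paper sidesteps the issue entirely, and this is exactly why the statement only asks for $\mu_g(\tilde\sigma)>1/2$: since $\mu_g(\Rk_g)=1$, one picks \emph{finitely many} smooth arcs $\gamma_1,\dots,\gamma_\ell\subset\Rk_g$ with $\mu_g(\gamma_1\cup\cdots\cup\gamma_\ell)>1/2$, lets $\tilde\sigma_j$ be the global analytic curve containing $\gamma_j$ (this is the only place isotriviality/finite monodromy is used), and sets $\tilde\sigma=\tilde\sigma_1\cup\cdots\cup\tilde\sigma_\ell$; finiteness is then automatic, and the rest of your argument (even charging of positive-mass components, so that $\supp(\mu_g\rest{\tilde\sigma})$ is a finite union of loops and of arcs with endpoints in $\Crit(\pi_g)$) goes through. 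If you amend your proof this way it becomes essentially the paper's. A minor further point: the image of a real-analytic curve under the finite map $\eta\colon B'\to B_g$ is not handled by Remmert's proper mapping theorem, which is a complex-analytic statement; either symmetrize the defining equations over the deck group of a Galois closure of $\eta$, or use that subanalytic sets of dimension $\leq 1$ are semi-analytic (\S\ref{par:semi-analytic-vocabulary}).
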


\begin{proof} As   observed in \S~\ref{par:tangencies},  $\pi_g$ becomes birationally  equivalent to a trivial 
fibration after a finite 
base change $B'\to B_g$. In particular its monodromy is finite and the curves $R_{p,q}^k$ define 
 global analytic subsets of $B'$. 
 Coming back to $\pi_g\colon X\to B_g$,  the local curves $R_{p,q}^k(U)$
extend as (singular) global  analytic subsets of $B_g$. Since $\mu_g(\Rk_g) = 1$ we can find a finite 
number of smooth analytic arcs $\gamma_1, \ldots, \gamma_\ell$ contained in $\Rk_g$  such 
that $\mu_g (\gamma_1\cup\cdots\cup \gamma_\ell)>1/2$. 
 Since every 
$\gamma_j$ is contained in some $R_{p,q}^k(U)$, 
 it is contained in a global analytic subset $\tilde \sigma_j$ of $B_g$. We put $\tilde \sigma = 
 \tilde \sigma_1\cup \cdots \cup\tilde \sigma_\ell$. 
By Lemma~\ref{lem:local_extension_gamma}, every irreducible component of 
$\tilde \sigma \cap B_g^\circ$ of positive mass is evenly charged by $\mu_g$. 
To conclude, we  define 
$\sigma$ to be   the closure of the union of the components of $\tilde \sigma\cap B_g^\circ$ charged by $\mu$.\end{proof}

\subsection{Analytic continuation of $\gamma$: the general case}\label{par:analytic_continuation_gamma_general} 

By Lemma~\ref{lem:isotrivial}, if $\Tang^{\mathrm{tt}}(\pi_g,\pi_h)= \emptyset$ then $\pi_g$ and $\pi_h$ are isotrivial; in that case, 
Lemma~\ref{lem:analytic_continuation_gamma_isotrivial} applies.   Now,       we   
 assume   $\Tang^{\mathrm{tt}}(\pi_g,\pi_h)\neq \emptyset$, and our goal is to establish the following lemma.
 
\begin{lem}\label{lem:analytic_continuation_gamma_general}
If $\Tang^{\mathrm{tt}}(\pi_g,\pi_h)\neq  \emptyset$, then there exists 
a unique analytic curve 
$\sigma_g$ in $B_g^\circ$  such that
\begin{enumerate}[{\em (1)}]
\item if $\mu$ is any
 ergodic $\Gamma$-invariant probability measure such that 
$\mu_g(\mathrm R_g)>0$, then $\mu_g(\sigma_g) = 1$; 
\item if $\gamma\subset \sigma_g$ is any   arc,
then $\mu(\gamma)>0$ for at least one such measure. 
\end{enumerate}
\end{lem}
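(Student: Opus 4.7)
The plan is to construct $\sigma_g$ as the canonical envelope of all local arcs carrying positive $\mu_g$-mass, and then to show that this envelope is in fact a finite union of analytic curves in $B_g^\circ$ rather than the pathological saturation described in Remark~\ref{rem:saturation}. The assumption $\Tang^{\mathrm{tt}}(\pi_g,\pi_h)\neq\emptyset$ plays a double role: it rules out the isotrivial regime already treated in Lemma~\ref{lem:analytic_continuation_gamma_isotrivial}, and it forces at least one singular fiber of type $I_b$ on which the local analysis of \S\ref{par:notes_on_gamma} applies.

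First I would set
\[
\sigma_g := \overline{\bigcup \bigl\{\gamma\subset \Rk_g \text{ analytic arc}\; ;\; \mu_g(\gamma)>0 \text{ for some admissible }\mu\bigr\}}^{B_g^\circ}.
\]
By Lemma~\ref{lem:local_extension_gamma}, every point $w\in \sigma_g\setminus\Crit(\pi_g)$ lies on an analytic arc of definite size which is evenly charged by $\mu_g$ for at least one admissible $\mu$; this gives $\sigma_g$ a locally analytic structure outside $\Crit(\pi_g)$. Moreover, by the discussion following Lemma~\ref{lem:proj_annulus}, the image $\pi_g(L_h^0(S))$ always belongs to a single $\Rk^{\alpha,\beta}_{p,q}$-type branch, so locally $\sigma_g$ is contained in a finite union of such real analytic curves.

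The central step, and the main obstacle, is to promote this local picture to a global one: to show that $\sigma_g$ is a finite union of irreducible real analytic curves in $B_g^\circ$. The two obstructions are the saturation pathology of Remark~\ref{rem:saturation}, whereby the iterated projection $\xi\mapsto \pi_g(L_h^0(\xi))$ could propagate $\gamma$ onto a dense subset, and the monodromy action $(p,q)\mapsto(p+bq,q)$ around a type-$I_b$ fiber, which a priori would let analytic continuation visit infinitely many distinct slope curves. My plan to overcome both is to use three ingredients simultaneously: (i)~the non-isotriviality supplied by $\Tang^{\mathrm{tt}}\neq\emptyset$, giving access to the explicit equation~(4.4) near a type-$I_b$ fiber, so that the germ of $\sigma_g$ at a point of $\Crit(\pi_g)$ is either an analytic arc or a curve with finitely many $C^1$ branches tangent to prescribed directions; (ii)~the non-elementarity of $\Gamma$, which by Lemma~\ref{lem:schottky-parabolic} supplies a wealth of special pairs $(g',h')$ with distinct invariant fibrations; and (iii)~the fact that the same $\mu$ is simultaneously saturated under every such pair, so its projection on each $B_{g'}$ must obey the corresponding local equations. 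Transporting arcs of $\sigma_g$ by elements of $\Gamma$ and re-projecting under $\pi_g$ should then force the admissible slopes to lie in a finite union of monodromy orbits, yielding finiteness of the components.

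Once $\sigma_g$ is known to be analytic with finitely many components, properties (1) and (2) follow quickly. For (1): any ergodic $\Gamma$-invariant measure $\mu$ with $\mu_g(\Rk_g)>0$ actually satisfies $\mu_g(\Rk_g)=1$ by Remark~\ref{rem:any_Halphen}, and by Lemma~\ref{lem:local_extension_gamma} its support is contained in a countable union of evenly charged analytic arcs; by definition, all such arcs lie in $\sigma_g$, so $\mu_g(\sigma_g)=1$. Property (2) is immediate from the defining union and the even-charging conclusion of Lemma~\ref{lem:local_extension_gamma}. Uniqueness is automatic: any analytic curve satisfying~(1) for every admissible $\mu$ must contain $\sigma_g$, and any curve satisfying~(2) must be contained in $\sigma_g$.
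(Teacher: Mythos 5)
Your overall scaffolding (define $\sigma_g$ as the closure of all arcs charged by some admissible measure, check (1), (2) and uniqueness once finiteness is known) matches the paper's intent, but the heart of the lemma is missing. The entire content is the exclusion of the saturation pathology you yourself flag: one must show a \emph{uniform} bound on $\max(k,\norm{(p,q)})$ over all indices with $\mu_g(\Rk^k_{p,q}(U))>0$ for \emph{some} invariant ergodic $\mu$, so that over each disk $U\Subset B_g^\circ$ only finitely many of the curves $\Rk^k_{p,q}(U)$ can be charged. Your proposed mechanism for this --- transporting arcs by elements of $\Gamma$, re-projecting under $\pi_g$, and arguing that ``the admissible slopes lie in a finite union of monodromy orbits'' --- is not an argument but a hope, and it points in the wrong direction: the monodromy orbit of a slope is typically infinite, and the paper explicitly notes that the analytic continuation of a single charged arc may legitimately re-enter $U$ along infinitely many distinct $\Rk^k_{p',q'}(U)$; so ``finitely many monodromy orbits'' neither holds in the naive sense nor yields the needed bound on $\max(k,\norm{(p,q)})$. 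Nothing in your sketch produces a quantitative obstruction to charging curves of larger and larger slope.

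The paper's proof uses a completely different, quantitative mechanism in which $\Tang^{\mathrm{tt}}(\pi_g,\pi_h)\neq\emptyset$ plays a role you have misread. It is not used to ``force a type $I_b$ fiber'' or to control the germ of $\sigma_g$ at $\Crit(\pi_g)$ --- indeed the lemma deliberately says nothing about $\Crit(\pi_g)$ (that analysis is deferred to Section~6). Rather, a transverse tangency point $\xi_{\mathrm t}\in\Tang^{\mathrm{tt}}(\pi_g,\pi_h)$, lying on a smooth fiber and with $\pi_h(\xi_{\mathrm t})$ far from $\Crit(\pi_h)$, is the site of a curvature blow-up argument: if curves $\gamma_n=\Rk^{k_n}_{p_n,q_n}(U)$ with $\max(k_n,\norm{(p_n,q_n)})\to\infty$ were charged by measures $\mu_n$, then by Lemma~\ref{lem:circles_are_epsilon_dense} the corresponding circles $L_g^0(\xi)$ are $\e_n$-dense in their fibers, hence pass within $2\e_n$ of $\xi_{\mathrm t}(w_n)$; near that point $\pi_h$ restricted to the $g$-fiber is conjugate to $z\mapsto z^m$ with $m\geq 2$, so by the elementary Lemma~\ref{lem:simple_curvature} the curvature of $\pi_h(L_g^0(\xi'_n))$ tends to infinity; but this projection is an evenly charged arc of $\Rk_h$ at definite distance from $\Crit(\pi_h)$, so Lemma~\ref{lem:local_extension_gamma} (whose bound is uniform in the measure) forces its geometry to be uniformly bounded --- a contradiction. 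This interplay between $\e$-density of long circles, the local $z\mapsto z^m$ model at the tangency, and the measure-independent bounded geometry of charged arcs has no counterpart in your proposal, so as written the proof has a genuine gap at its central step.
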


 This result is both stronger and weaker than 
 Lemma~\ref{lem:analytic_continuation_gamma_isotrivial}: indeed its conclusion holds for {\emph{all
   invariant measures with  $\mu_g(R_g)>0$}} (this fact will be important for Theorem~\ref{thm:finiteness});
    on the other hand it gives no information on the structure of the 
   analytic continuation $\sigma_g$ 
   near $\Crit(\pi_g)$ (this issue will be investigated in the next section). 

The curve  $\sigma_g$ is {\bf{defined}} by this lemma. Note that, at this stage of the proof, 
it could  contain for instance a  sequence of small topological circles 
converging to a critical value of~$\pi_g$. We shall exclude such a possibility later. 

%%%
\subsubsection{An elementary lemma} 
%%%
 
Let $M_k:\C\to \C$ be the monomial $M_k(z) = z^k$.  

\begin{lem}\label{lem:simple_curvature}
Let $r$ be a positive real number. For any $0 < \e  \ll r$ there is a constant $C_r(\e)>0$ with the following property. 
If $z_0\in \C$ satisfies $\abs{z_0} \leq \e$, and if $\gamma$ is an analytic arc  of size $\geq r$ at $z_0$ with
 $0\notin  \gamma$, then $M_k( \gamma)$ contains a point at which the 
 curvature   is $\geq C_r(\e)\e^{-k}$.   
\end{lem}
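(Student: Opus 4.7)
The strategy is to locate on $\gamma$ a point at which $\gamma$ lies close to $0$ and the tangent is orthogonal to the radial direction; such a point is the one of closest approach to the origin, and there the map $M_k$ amplifies curvature radially by a factor of order $|\gamma|^{-k}$. Parametrize $\gamma$ by arclength and choose coordinates with the real axis along $T_{z_0}\gamma$; the size-$r$ hypothesis lets us realize $\gamma$ locally as an arclength reparametrization of the graph $\{s+\ii\psi(s):s\in[-r,r]\}$, with $\psi(0)=\psi'(0)=0$, $|\psi'|\leq 1$, $|\psi''|\leq 1/r$ (so $|\gamma''|\leq 1/r$ in arclength). Since $|\gamma(0)|^2\leq \e^2$ and $|\gamma|^2$ grows quadratically along the tangent, the hypothesis $\e\leq r/4$ forces $|\gamma|^2$ to attain an interior minimum at some $\gamma(t^*)$, at which $\gamma'(t^*)$ is real-orthogonal to $\gamma(t^*)$. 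Writing $d:=|\gamma(t^*)|\in(0,\e]$ (positive since $0\notin\gamma$), one gets $|\Ima(\overline{\gamma(t^*)}\gamma'(t^*))|=d$.

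Next, from $(M_k\circ\gamma)'=k\gamma^{k-1}\gamma'$ and $(M_k\circ\gamma)''=k(k-1)\gamma^{k-2}(\gamma')^2+k\gamma^{k-1}\gamma''$, the planar curvature formula $\kappa=\Ima(\bar\eta'\eta'')/|\eta'|^3$ yields, in arclength,
\begin{equation*}
\kappa_{M_k\circ\gamma}(t)=\frac{(k-1)\,\Ima(\overline{\gamma(t)}\gamma'(t))}{k\,|\gamma(t)|^{k+1}}+\frac{\Ima(\overline{\gamma'(t)}\gamma''(t))}{k\,|\gamma(t)|^{k-1}}.
\end{equation*}
At $t^*$, the first summand has modulus $(k-1)/(k\,d^k)$, while the second is bounded by $1/(k\,r\,d^{k-1})$ since $|\gamma''|\leq 1/r$.

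Combining, for $k\geq 2$ (the case $k=1$ is essentially vacuous as $M_1=\mathrm{id}$),
\begin{equation*}
|\kappa_{M_k\circ\gamma}(t^*)|\;\geq\;\frac{k-1}{k\,d^k}\!\left(1-\frac{d}{(k-1)r}\right)\;\geq\;\frac{3(k-1)}{4k\,d^k}\;\geq\;\frac{3}{8\,\e^k},
\end{equation*}
where we used $d/((k-1)r)\leq \e/r\leq 1/4$ and $(k-1)/k\geq 1/2$ for $k\geq 2$. This gives the conclusion with $C_r(\e)=3/8$, which is in fact uniform in $r,\e,k$ under the smallness assumption $\e\leq r/4$. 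The only non-computational step is verifying the existence of the interior minimum, which reduces to an elementary comparison using the size bounds on $\psi'$, $\psi''$ and $\e\ll r$; the rest is an explicit derivative computation, so no serious obstacle arises.
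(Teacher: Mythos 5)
Your proof is correct and follows essentially the same route as the paper: both locate the point of closest approach to the origin (where $d=\abs{\gamma(t^*)}\leq \e$), use the bounded-geometry hypothesis to control the curve's own curvature there, and show that under $z\mapsto z^k$ the radial term dominates and produces curvature of order $d^{-k}\geq \e^{-k}$. The only difference is cosmetic — you use the parametric formula $\kappa=\Ima(\bar\eta'\eta'')/\abs{\eta'}^3$ in arclength where the paper computes in polar coordinates — and your explicit treatment of the error term (and the remark that $k\geq 2$ is needed) is, if anything, slightly cleaner.
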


 \begin{figure}[h]\label{fig:curvature}
\includegraphics[width=8cm]{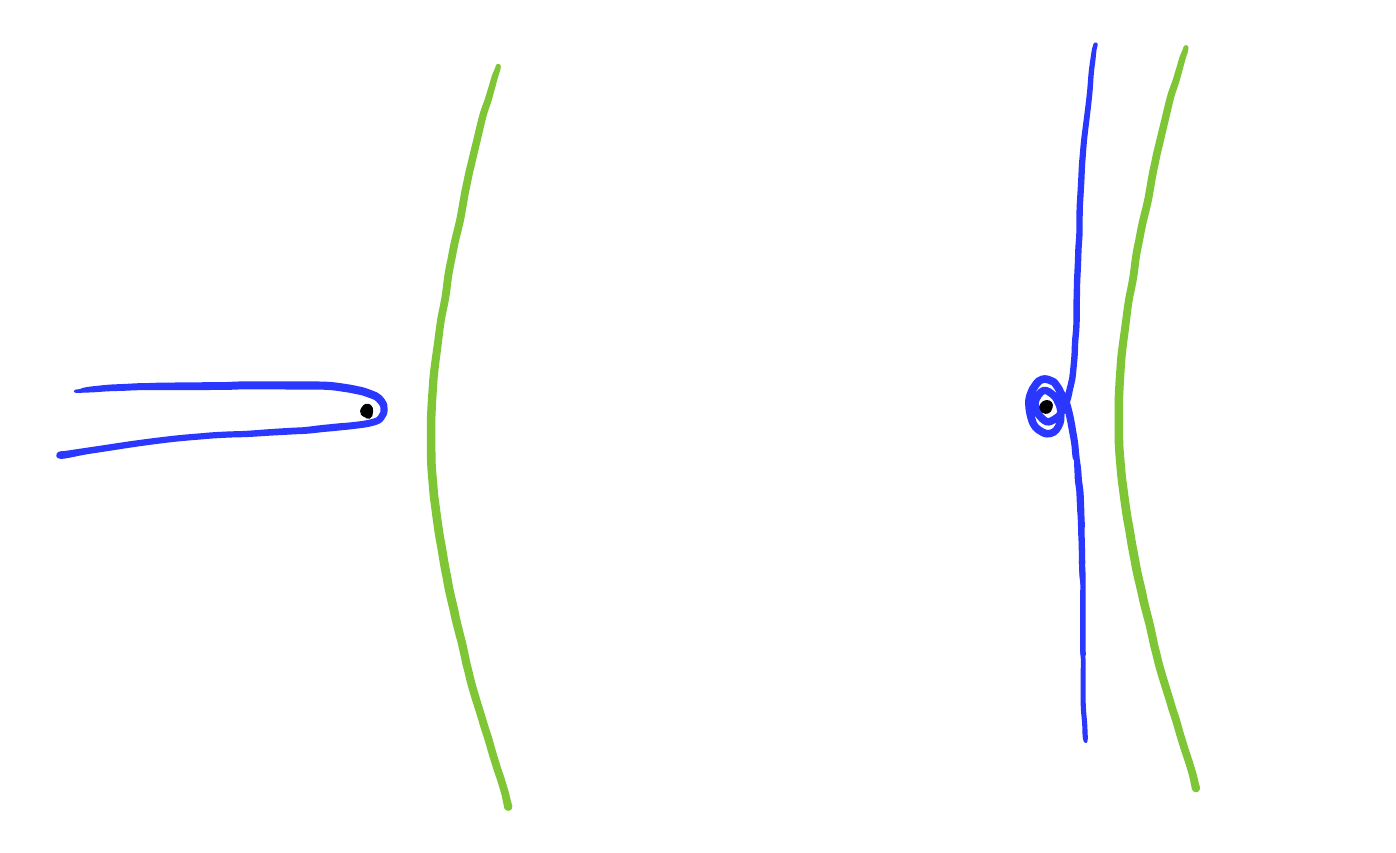}
\caption{A (green) curve of bounded geometry   passing near the origin and its image (in blue) by $z\mapsto z^2$ on the left, and $z\mapsto z^5$ on the right.}
\end{figure}

\begin{proof} We prove it via an explicit computation; see Figure~\ref{fig:curvature} for a graphical explanation. 
We may assume that $r=1$, so that $0<\e \ll 1$, and that $\gamma$ is of size $1$ (we truncate $\gamma$ if necessary). 
 Let $z_1$ be the point 
closest to 0 on $\gamma$.  
Applying  a rotation  we may assume that $z_1 = \eta\in \R_+$ for some $\eta\leq\e$. %, and without loss of generality we rename  $\e'$ into $\e$. 
Since $z_1$ is closest to the origin, $\gamma$ is a graph over the $y$-axis: it is of the form $x = \varphi(y)$, with $\varphi(y) = \eta+ \alpha y ^2+ O(y^3)$. Since $\gamma$ has size $1$ at $z_0$ and $\e\ll 1$, it has size $\geq 1/2$ at $z_1$,  in particular $\abs{\alpha}\leq 2$. In 
 polar coordinates $(\rho, \theta)$,   the equation of $\gamma$ is of the form 
 \begin{equation}
  \rho = \psi(\theta) = \eta\lrpar{1+\lrpar{\frac12+\alpha\eta}\theta^2  + O(\theta^3)} 
  \end{equation}
 for  $\theta$  small.  Thus the polar equation of 
 $M_k(\gamma)$ near $\theta = 0$ is 
  \begin{equation}
\rho = \psi(\theta/k)^k = \eta^k \lrpar{1+\unsur{k} \lrpar{\frac12+\alpha\eta}\theta^2  + O(\theta^3)},
 \end{equation}
 and finally the value of the curvature at $\theta = 0$ is  
 \begin{equation} 
 \left.\frac{\rho^2 + 2(\rho')^2 - \rho\rho''}{\lrpar{\rho^2+ (\rho')^2}^{3/2}} \right|_{\theta = 0} = 
 \eta^{-k} \frac{k-1-2\alpha\eta}{k}\geq \eta^{-k}\geq \e^{-k}.
 \end{equation} This completes the proof. 
\end{proof}

%%%
\subsubsection{Extension of $\gamma$ in $B_g^\circ$: proof of Lemma~\ref{lem:analytic_continuation_gamma_general}}
\label{par:no_accumulation_of_gamma_in_B_g} 
%%%
%Here, we show that {\emph{the image of the analytic 
%prolongation $\widetilde{\gamma}$ of $\gamma$ in $B_g^\circ$
%is an analytic subset of $B_g^\circ$}}. 

Let $\mu$ be an arbitrary $\Gamma$-invariant and ergodic measure, giving no mass to algebraic subsets, 
and assume that $\mu_g(\mathrm R_g)>0$. 
Then, %from  \S\ref{par:local_structure},  
by Remark~\ref{rem:any_Halphen},  
$\mu_g$ gives full mass to $R_g$.  

Fix a small open disk  $U \Subset B_g^\circ$ in which we have fixed a section of $\pi_g$ and a 
continuous choice of basis for $H^1(X_w^g, \Z)$. 
In $U$,  $\Rk_g$ is the  union of the analytic curves $\Rk^{k}_{p, q}(U)$ (see \S\ref{par:the_dynamics_R}).
To prove the lemma, we seek a uniform bound on $\max(k, \norm{(p, q)})$ for the indices $(k, (p,q))$ such that $\mu_g(\Rk^{k}_{p, q}(U))>0$ for at least one $\Gamma$-invariant ergodic measure~$\mu$. Note that  
even  for  a single $\mu$, $\mu_g$ could charge infinitely many of the $\Rk^{k}_{p, q}(U)$; due to the monodromy of the fibration, the analytic continuation of an arc $\gamma$ of positive mass could come back infinitely many times in $U$, each time along a new $\Rk^{k}_{p, q}(U)$. 

Suppose that there is a sequence of curves $\gamma_n=\Rk^{k_n}_{p_n,q_n}(U)$ with $\max(k_n, \norm{(p_n, q_n)})\to \infty$
such that $\mu_{n,g}(\gamma_n)>0$ for some $\Gamma$-invariant measures $\mu_n$ (that depend a priori on $n$).
 The $\gamma_n$ are evenly charged (by $\mu_n$), and by Lemma~\ref{lem:local_extension_gamma} they
have uniformly bounded geometry (the constant $r(\delta)$ in Lemma~\ref{lem:local_extension_gamma} does not depend on $\mu_n$). 
In particular the accumulation locus of $(\gamma_n)$
is uncountable.   

Over $U$, there exists 
$\e = \e(k,p,q)$, with $\e(k,p,q)\to 0$ as $\max(k, \norm{(p, q)})\to \infty$ such that 
for every $w\in U$, any translate of 
$L_{g,w}(k, (p,q))$ is $\e$-dense in $X_w^g$ (see \S~\ref{par:the_dynamics_R} and Lemma~\ref{lem:circles_are_epsilon_dense}). Hence, there is a sequence $(\e_n)\in (\R_+^*)^\N$ converging to $0$ 
such that $L_g(\xi)$ is $\e_n$-dense in $X_w^g$ for any $w\in \gamma_n\setminus \Tor_g(U)$ and $\xi\in X_w^g$. 

Since   $\Tang^{\mathrm{tt}}(\pi_g, \pi_h)$ is non-empty, it 
intersects every  $X_w^g$ along some non-empty finite subset, which by definition is 
not persistently contained in $\Sing(\pi_h)$. Furthermore $\Tang^{\mathrm{tt}}(\pi_g, \pi_h)$ is generically 
transverse to $\pi_g$. So we can pick $w_0$ in the accumulation set of $(\gamma_n)$ together with 
$\xi_{\mathrm t}\in \Tang^{\mathrm{tt}}(\pi_g, \pi_h) \cap X_{w_0}^g$ such that $\pi_h(\xi_{\mathrm t})$ 
is $2\delta$-far from 
$\Crit(\pi_h)$ for some $\delta>0$, and $\xi_{\mathrm t}$ can be locally 
holomorphically followed as a point  $\xi_{\mathrm t}(w)$ in 
$\Tang^{\mathrm{tt}}(\pi_g, \pi_h) \cap X_{w}^g$ for $w$ near $w_0$. 

Pick a sequence $w_n\in \gamma_n\setminus \Tor_g(U)$ converging to $w_0$, and consider the 
corresponding sequence $\xi_{\mathrm t}(w_n)$. For each $n$ there is a finite union of annuli 
$A_n\subset X_{\gamma_n}^g$ such that  $\pi_g$ maps $A_n$ onto $\gamma_n$, 
$\pi_g\colon A_n\to \gamma_n$
 is a fiber bundle whose fibers are unions of $k_n$ circles of type $L_g^0(\xi)$, and 
 $\mu$ charges $A_n$ evenly.
We can choose $\xi'_n\in A_n$ %\cap V$ 
such that  
 $0< \dist(\xi'_n,\xi_{\mathrm t}(w_n)<2\e_n$. 
 In  $X_{w_n}^g$ the curve $L_g^0(\xi'_n)$ is a circle, with uniformly bounded geometry.
 Near $\xi_{\mathrm{t}}(w_n)$ the restriction of $\pi_h$ to $X_{w_n}^g$ 
 is locally conjugate to $z\mapsto z^k$
 where $k\geq 2$ is  
 the order of tangency between $\pi_g$ and $\pi_h$ along 
 $\Tang^{\mathrm{tt}}(\pi_h,\pi_h)$ at $\xi_{\mathrm{t}}(w_n)$. 
The local change of coordinates that transforms $\pi_h$ into $z\mapsto z^k$ depends on the fiber, 
hence on $w_n$, 
 but the first and second derivatives of these changes of coordinates are uniformly controlled, independently of $n$. Thus, Lemma~\ref{lem:simple_curvature} shows that the curvature of 
 the projections $\pi_h(L_g^0(\xi'_n))$ goes to $+\infty$ as $n\to\infty$. 
But $\pi_h(L_g^0(\xi'_n))$ is a piece of  $\mathrm R_h$ which is 
charged by $\mu_h$ %(that is, with notation as above, it is included in    $\widetilde {\mathrm R}_h$) 
and is $\delta$-far from 
$\Crit(\pi_h)$ for large $n$; so by Lemma~\ref{lem:local_extension_gamma}, 
its curvature must be  uniformly bounded (with respect to $n$). This is a contradiction, and the proof of 
Lemma~\ref{lem:analytic_continuation_gamma_general} is complete. \qed

 \begin{rem}\label{rem:kpq_bounded}
 The proof shows that under the assumptions of Lemma~\ref{lem:analytic_continuation_gamma_general}, 
 if $\gamma$ is an arc of a curve $\mathrm R^k_{p,q}$ such that $\mu_g(\gamma)>0$ for some invariant probability measure $\mu$, then 
 $\max(k,\norm{(p,q)})$ is uniformly bounded. 
 \end{rem}

\subsection{Conclusion of the proof of Proposition~\ref{pro:analytic_sigma}} \label{par:proof_pro_analytic_sigma} 

\begin{proof}[Proof of Proposition~\ref{pro:analytic_sigma}]  
%Let $g$ be as in the statement of the proposition.  
%\serge{J'ai changé l'introduction de $Y$ et des $g_i$. C'est bon ?}\romain{je rechange un peu la définition et ce paragraphe car dans $Y$  il faut ajouter la contrainte de conjugaison} 
%Consider the following proper algebraic subset $Y$ of $X$, whose definition is similar to that 
%of $\STang_\Gamma$:  
%\begin{equation}
%Y=\bigcap_{(h_1, h_2)\in \Hal(\Gamma)^2\atop h_1, h_2 \text{ conjugate to } g} \left(\Sing(\pi_{h_1})\cup \Sing(\pi_{h_2})\cup \Tang(\pi_{h_1},\pi_{h_2})\right)
%\end{equation}
%%A point $x$ is in $X\setminus Y$ if and only if there is a pair $(g,h)\in \Hal(\Gamma)^2$ such that 
%%the fibers of $\pi_g$ and of $\pi_h$ through $x$ are smooth, and the fibrations $\pi_g$ and $\pi_h$ are
%%transverse in a neighborhood of $x$. 
%Since the set of Halphen twists is invariant under conjugacy, 
%$Y$ is $\Gamma$-invariant. Since, by definition, $\STang_\Gamma$ is contained in $Y$, by arguing as 
%in Lemma~\ref{lem:Tang_DGamma}
%we get  
%that the $1$-dimensional part of $Y$ coincides with $D_\Gamma$.  
Let $g$ be as in the statement of the proposition.  
Recall the definition of $\STang_\Gamma$ from \eqref{eq:STang_Gamma}:
\begin{equation}
\STang_\Gamma=\bigcap_{(h_1, h_2)\in \Hal(\Gamma)^2} \left(\Sing(\pi_{h_1})\cup \Sing(\pi_{h_2})\cup \Tang(\pi_{h_1},\pi_{h_2})\right)
\end{equation}
By the Noether property, this infinite intersection  
%defining $Y$ 
can be written as a finite intersection. 
Thus, we can choose a finite set of Halphen twists $(g_i)_{1\leq i\leq s}$ 
%with the following properties:
%\begin{enumerate}[(a)]
%\item for every $i\neq j$, the pair $(g_i, g_j)$ is special;
%\item the $g_i$ are pairwise conjugate; more precisely for every $i$ there exist $f_i\in \Gamma$ such that 
%$g_i=f_i\circ g_1\circ f_i^{-1}$, $\pi_{g_i}=\pi_{g}\circ f_i^{-1}$ (of course, we take $f_1=\id_X$). 
%\item if $x\notin Y$, there exist $i\neq j$ such that the fibers of 
%$\pi_{g_i}$ and $\pi_{g_j}$  at $x$ are smooth fibers and are  transverse at $x$.
%\end{enumerate}
%To justify this, we consider the set $\mathcal S$   of singular fibers of 
%$\set{fgf\inv, \ f\in \Gamma}$, which  is the $\Gamma$-orbit of 
%$\Sing (\pi_g)$. Therefore the set $Y_{\mathrm{tang}}$ (resp. $Y_{\mathrm{sing}}$) of points $x\in X$ such that all curves  of $\mathcal S$ are tangent at $x$ (resp.  belonging to all curves of $\mathcal S$) 
%is a $\Gamma$-invariant algebraic set  containing $\Tang_\Gamma$ (resp. $F_\Gamma$). Set $Y = Y_{\mathrm{tang}} \cup Y_{\mathrm{sing}}$. 
%The  1-dimensional part of $Y$ is an invariant algebraic curve containing $\Tang_\Gamma ^1 = F_\Gamma^1 = \mathrm D_\Gamma$ so it coincides with 
%$\mathrm D_\Gamma$. This shows that we can find a family of conjugate Halphen twists $g=g_1, g_2, \ldots , g_s$,
%whose associated fibrations are distinct, and satisfying~(b) and~(c).
%By Lemma~\ref{lem:schottky-parabolic}, replacing each $g_i$ 
%by a sufficiently large iterate, we get property~(a). 
with $g_1=g$ and such that if $x\notin \STang_\Gamma$, there exist $i\neq j$ such that the fibers of 
$\pi_{g_i}$ and $\pi_{g_j}$  at $x$ are smooth fibers and are  transverse at $x$.

For  $x\notin \STang_\Gamma$, there are $k<\ell$  and a (Zariski) neighborhood $V\ni x$ 
disjoint from $\Sing(\pi_{g_k})\cap \Sing(\pi_{g_\ell})$ such that 
$\pi_{g_k}$ and $\pi_{g_\ell}$ are transverse in $V$.  Let us  show that in $V$, 
$\supp(\mu)$ is a real analytic set 
satisfying the conclusions of the proposition. There are two possibilities: 
\begin{enumerate}
\item[(A1)] either there exists 
$k\neq \ell$ such that $\Tang^{\mathrm{tt}}(\pi_{g_k}, \pi_{g_\ell}) = \emptyset$; 
then by Lemma~\ref{lem:isotrivial}
$\pi_{g_k}$ and $\pi_{g_\ell}$ are isotrivial;
%, and since by assumption the $g_i$ are conjugate, $\pi_{g_i}$ is isotrivial for every $1\leq i\leq s$;
\item[(A2)] or for every $k\neq \ell$, $\Tang^{\mathrm{tt}}(\pi_{g_k}, \pi_{g_\ell})  \neq \emptyset$.
\end{enumerate}

%\serge{J'ai l'impression qu'on pourrait d'abord construire $\tilde\sigma$ pour (A1) et pour (A2); puis ensuite montrer les propriétés à démontrer pour les deux à la fois...}\romain{pas clair les 2 cas sont assez différents: pour les arguments globaux (autour des composantes irréductibles), dans le 1er cas on utilise les prolongements globaux $\tilde \sigma$ et dans le 2e }
$\bullet$ Let us first complete the proof in case~(A1). Fix $k\neq \ell$ given by Condition (A1) and  apply Lemma~\ref{lem:analytic_continuation_gamma_isotrivial}: there exist 
analytic curves $\tilde\sigma_{g_k}$ in $B_{g_k} $ and $\tilde\sigma_{g_\ell}$ in $B_{g_\ell}$ such that 
 $\mu_{g_k}(\tilde \sigma_k)>1/2$ and $\mu_{g_\ell}(\tilde \sigma_\ell)>1/2$. Then
 $ \tilde  \Sigma_{kl} := \pi_{g_k}\inv(\tilde\sigma_{g_k})\cap \pi_{g_\ell}\inv(\tilde\sigma_{g_\ell})$  is a   
 real analytic subset of~$X$ such that $\mu (\tilde  \Sigma_{kl})  >0$. 
 Observe that $ \tilde  \Sigma_{kl}$ is C-analytic in $X$ 
 (see \S~\ref{par:semi-analytic-vocabulary} for this notion): indeed the curves $\tilde\sigma_{g_k}$  
 and $\tilde\sigma_{g_k}$, being of dimension 1, are defined by global equations in 
 $B_{g_k} $ and  $B_{g_\ell}$, hence so does $ \tilde  \Sigma_{kl} = \pi_{g_k}\inv(\tilde\sigma_{g_k})\cap \pi_{g_\ell}\inv(\tilde\sigma_{g_\ell})$ on $X$. 
 Let $\tilde \Sigma_0$ be an irreducible 
 component   of $ \tilde  \Sigma_{kl} $ such that  $\mu(\tilde \Sigma_0)>0$.  If $f$ is an element of $\Gamma$, 
$\mu(f(\tilde \Sigma_0))=\mu(\tilde \Sigma_0)$, 
and $\mu(f(\tilde \Sigma_0)\cap \tilde \Sigma_0)=0$, unless $ f(\tilde \Sigma_0)\cap \tilde \Sigma_0$
contains a non-empty, relatively open subset. In the latter  case 
 $ f(\tilde \Sigma_0)= \tilde \Sigma_0$.
Thus,  a  finite index subgroup $\Gamma_{0}$ of $\Gamma$  fixes $\tilde \Sigma_0$.
Define 
 \begin{equation}\label{eq:components}
\tilde \Sigma= \bigcup_{f\in \Gamma} f(\tilde \Sigma_0)=\bigcup_{f\in \Gamma/\Gamma_0} f(\tilde \Sigma_0)
\end{equation}
This is a  C-analytic subset of $X$ (hence of $X\setminus \STang_\Gamma$)
 such that $\mu(\tilde \Sigma) = 1$. We finally define $\Sigma$ to 
be the union of the irreducible components of 
$\tilde \Sigma\setminus \STang_\Gamma $ that are charged by $\mu$. 
Using the $\Gamma$-invariance of $X\setminus \STang_\Gamma$ 
and repeating the previous argument shows that 
$\Sigma$ has finitely many irreducible components which are permuted by 
$\Gamma$ (Assertion~(2) of the proposition), and by construction 
 $\Sigma$ (hence $\overline \Sigma$) 
is semi-analytic in $X$. 
By definition $\mu(\Sigma) = 1$. 

 Now choose any irreducible component $\Sigma_1$  of $\Sigma$ and any $x\in \Sigma_1$. Around $x$, there are two   transverse   projections $\pi_{g_k}$ and $\pi_{g_\ell}$. The results of 
\S\S~\ref{par:geometry_of_g-orbits} and ~\ref{par:local_structure} imply 
that  $\Sigma_1$ is locally a finite union of ``squares''  of the form $\pi_g\inv(\gamma) \cap \pi_h\inv(\gamma')$ (where $\gamma$ and $\gamma'$ are smooth analytic curves), along which $\mu$ has a positive real analytic density.
In particular $\mu(\Sigma_1)>0$ and $\Sigma_1$ is evenly charged by~$\mu$. Thus, Assertions~(1)  and~(4) of the proposition are established. 

 It remains to prove Assertion~(3). For this,  note that if non-empty, the $1$-dimensional part of $\Sing(\Sigma)$ is $\Gamma$-invariant. 
Suppose we can find an arc $\beta$ in $\Sing(\Sigma)$. If $\pi_g(\beta)$ were
 not a point, the closure of the $g$-orbit of $\beta$ would contain a
$2$-dimensional annulus that would be contained in $\Sing(\Sigma)$: contradiction. 
Thus  $\pi_g(\beta)$ is a point, and so is $\pi_{g'}(\beta)$
 for any $g'\in \Hal(\Gamma)$.  This contradicts $\beta \subset X\setminus \STang^1_\Gamma$. Thus, $\Sing(\Sigma)$ is a discrete 
subset of $X\setminus \STang^1_\Gamma$.

%The above description of $\Sigma_1$ shows that any  1-dimensional analytic subset of    
 %$\Sing(\Sigma)$  must   contain  a circle in a  smooth fiber $X_w^g$ of (say) $\pi_g$. 
 %But then for a generic point of this circle, its $h$ orbit it dense in a circle in a fiber of $h$.     By invariance, this yields a 
  %2-dimensional component of   $\Sing(\Sigma)$, 
 %which is impossible. This shows that $\dim (\Sing(\Sigma)) = 0$  and we are done with case~(A1).
  %\romain{point délicat ici: $\Sigma$ n'est pas un ensemble analytique de $X$ mais de 
 %$X\setminus (\Tang_\Gamma\cup \mathrm F_\Gamma)$, donc  l'argument précédent utilisant le fait qu'une composante de dim 1 de $\Sing(\Sigma)$ devrait être invariante par un sg d'indice fini ne me paraît pas marcher} 
%The above description of $\Sigma_1$ shows that if $c$ is a  1-dimensional component of    
% $\Sing(\Sigma)$, then it must be  contained in a  smooth fiber $X_w^g$ of $\pi_g$ for some $g\in \Hal(\Gamma)$
% and invariant by a finite index subgroup of $\Gamma$
%But then $\mathrm{Zar}(c) =  X_w^g$ is invariant under a finite index subgroup of $\Gamma$, which is contradictory. 

$\bullet$ In case~(A2),   we fix $k\neq \ell$ and apply Lemma~\ref{lem:analytic_continuation_gamma_general} to 
$\pi_{g_k}$ and $\pi_{g_\ell}$: it yields 
analytic curves $ \sigma_{g_k}$ in $B_{g_k}^\circ $ and $ \sigma_{g_\ell}$ in $B_{g_\ell}^\circ$ such that 
 $\mu_{g_k}(\sigma_k)=1$ and $\mu_{g_\ell}(\sigma_\ell)=1$. Set 
 $ \tilde  \Sigma_{kl} := \pi_{g_k}\inv( \sigma_{g_k})\cap \pi_{g_\ell}\inv( \sigma_{g_\ell})$; it is a 
 2-dimensional totally real  C-analytic subset of 
 $X\setminus(\Sing(\pi_{g_k})\cup\Sing(\pi_{g_\ell}))$. 
 We then further restrict it to 
 %\begin{equation}
 %X_{k\ell}:=X\setminus(\Sing(\pi_{g_k})\cup\Sing(\pi_{g_\ell})\cup \Tang(\pi_{g_k}, \pi_{g_\ell}))
 %\end{equation} and 
 $X_{k\ell}:=X\setminus \STang(\pi_{g_k}, \pi_{g_\ell}))$ and 
 define $ \Sigma_{k\ell}$ to be the union of 
  irreducible components of  $ \tilde  \Sigma_{k\ell} \cap X_{k\ell}$ of positive $\mu$-mass.  
  Note that  $\mu(\Sigma_{k\ell})=1$. 
   From the analysis of \S~\ref{par:local_structure}, we know 
   that any irreducible component of $\Sigma_{k\ell}$ is evenly charged by $\mu$. 
So, for any other pair $(k',\ell')$, the equality 
  $\mu(\Sigma_{k'\ell'}\cap \Sigma_{k\ell})=1$ implies that the analytic sets 
  $\Sigma_{k\ell}$ and $\Sigma_{k'\ell'}$ coincide on  $X_{k\ell}\cap X_{k'\ell'}$. Thus 
  the $\Sigma_{k\ell}$  patch together to form 
  a real  analytic subset  $\Sigma$  of     $\bigcup_{k\neq \ell}X_{k\ell} = 
  X\setminus \STang_\Gamma$. Since it is locally a finite union of $2$-dimensional  real analytic (and totally real) plaques, we infer that 
  $\Sigma$ is C-analytic in $X\setminus  \STang_\Gamma$. 
Using the $\Gamma$-invariance of $X\setminus  \STang_\Gamma$, we see that 
any component $\Sigma_0$ of positive mass is invariant under a finite index subgroup of $\Gamma$ (see Equation~\eqref{eq:components} and the lines preceding it); hence, 
$\Sigma$ has  finitely many irreducible components. 
The remaining properties of $\Sigma$ are obtained exactly as in case~(A1), and the proof   is complete.    \end{proof}

\begin{rem}\label{rem:semi-analytic}
The proof shows that if the curve $\sigma_g$ constructed in 
Lemma~\ref{lem:analytic_continuation_gamma_general} 
is semi-analytic in $B_g$, then the surface $\Sigma$ is semi-analytic in $X$. This holds automatically in case~(A1). 
\end{rem}

%%%%%%%%%%%%%%%%%%%%%%%%%%%%%%%%%%%%%%%%%%%%%%%%%%%%%%%%%%%
%%%%%%%%%%%%%%%%%%%%%%%%%%%%%%%%%%%%%%%%%%%%%%%%%%%%%%%%%%%
\section{Semi-analyticity of $\overline \Sigma$  and complements}\label{sec:semi-analytic}
%%%%%%%%%%%%%%%%%%%%%%%%%%%%%%%%%%%%%%%%%%%%%%%%%%%%%%%%%%%
%%%%%%%%%%%%%%%%%%%%%%%%%%%%%%%%%%%%%%%%%%%%%%%%%%%%%%%%%%%

In this section we continue the  investigation  of case~(c) of Theorem~\ref{thm:main} by studying   
the   semi-analyticity of $\overline\Sigma$.  
This leads to 
Theorem~\ref{thm:semi-analytic}  in \S~\ref{par:global_Rabpq}, 
 and also prepares the ground for  Theorem~\ref{thm:finiteness}.
We keep the choice of Halphen twists $g,h$ from  \S~\ref{par:choice_halphen_twist}.
By Remark~\ref{rem:semi-analytic} the semi-analyticity of $\Sigma$ 
 is already established in the isotrivial case,
so: 
\begin{center}
\emph{throughout this section we assume that $\Tang^{\mathrm{tt}}(\pi_g, \pi_h)\neq\emptyset$.} 
\end{center}
By Remark~\ref{rem:semi-analytic}, we only need to show that the curve  $\sigma_g$  from  
Lemma~\ref{lem:analytic_continuation_gamma_general} 
admits a semi-analytic continuation to $B_g$. So, the work takes place near the singular fibers of~$\pi_g$.

\subsection{Preparation and strategy} \label{subs:preparation_strategy}Recall that locally  in $B_g^\circ$, $\sigma_g$ 
is  a union of smooth branches with uniformly bounded geometry.
% In particular  the 
% irreducible components of $\sigma_g$ are well-defined and obtained by analytic continuation. \serge{Je ne suis pas sûr de comprendre ce que 
% veut dire ``obtained by analytic continuation''}
Recall also from \S~\ref{par:semi-analytic-vocabulary} that analytic curves have well-defined irreducible components. 

\begin{lem}\label{lem:preparation_strategy}
Assume that $\Tang^{\mathrm{tt}}(\pi_g, \pi_h)\neq\emptyset$. Any irreducible component of 
$\sigma_g$  is  either an analytic loop  in $B_g^\circ$ or an immersed line converging to 
$\Crit(\pi_g)$ at its two endpoints. 
\end{lem}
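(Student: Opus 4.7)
The plan is to pass to the normalization of $\sigma_g$ and exploit the uniform local-branch structure supplied by the previous section. First I would record that Lemma~\ref{lem:analytic_continuation_gamma_general} gives us $\sigma_g$ as a genuine $1$-dimensional analytic curve in $B_g^\circ$; combining this with Lemma~\ref{lem:local_extension_gamma} (and Remark~\ref{rem:kpq_bounded}), at every $w_0\in B_g^\circ$ the germ of $\sigma_g$ at $w_0$ is a finite union of smooth real-analytic branches, each of size bounded below in terms of $\dist(w_0,\Crit(\pi_g))$ only. I would then form the normalization $\nu\colon\widetilde\sigma_g \to \sigma_g\subset B_g^\circ$, where $\widetilde\sigma_g$ is a $1$-dimensional real analytic manifold without boundary, and $\nu$ is an immersion whose fibers above singular points of $\sigma_g$ separate the local branches. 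The irreducible components of $\sigma_g$ (in the sense of \S\ref{par:semi-analytic-vocabulary}) correspond bijectively to the connected components $\widetilde C$ of~$\widetilde\sigma_g$.

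Next I would check that $\nu$, regarded as a map $\widetilde\sigma_g\to B_g^\circ$, is proper. If $K\Subset B_g^\circ$ is compact and $(\tilde w_n)\subset \nu^{-1}(K)$ has no accumulation point in $\widetilde\sigma_g$, then after extracting a subsequence $\nu(\tilde w_n)\to w_\infty\in K\subset B_g^\circ$; but the finite-branch description above means that $\nu^{-1}$ of a small neighborhood of $w_\infty$ in $\sigma_g$ is a finite disjoint union of analytic arcs in $\widetilde\sigma_g$, which must contain a convergent subsequence of $(\tilde w_n)$ -- a contradiction. Since every connected $1$-dimensional real analytic manifold without boundary is diffeomorphic to either $S^1$ or $\R$, there are exactly two cases for a component $\widetilde C$. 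If $\widetilde C\cong S^1$, then $C:=\nu(\widetilde C)$ is a compact immersed analytic loop in $B_g^\circ$, which is the first alternative of the lemma.

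Finally, suppose $\widetilde C\cong \R$, parametrized by $t\in\R$. Recall that the Riemann surface $B_g$ is compact (it is the base of a fibration on the compact surface $X$) and that $\Crit(\pi_g)$ is finite. Define the accumulation set at $+\infty$ by
\[
\Omega_+ \;=\; \bigcap_{T\in \R}\overline{\nu([T,+\infty))} \;\subset\; B_g.
\]
Each $\overline{\nu([T,+\infty))}$ is compact and connected, so $\Omega_+$ is a nonempty compact connected subset of $B_g$; by properness of $\nu$ into $B_g^\circ$, $\Omega_+\cap B_g^\circ =\emptyset$, hence $\Omega_+\subset \Crit(\pi_g)$. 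A connected subset of a finite set is a single point, so $\Omega_+=\{p_+\}$ for some $p_+\in \Crit(\pi_g)$, which forces $\nu(t)\to p_+$ as $t\to+\infty$. The same reasoning applied at $-\infty$ yields an endpoint $p_-\in \Crit(\pi_g)$, and $C=\nu(\widetilde C)$ is then an immersed line converging to $\Crit(\pi_g)$ at both ends, as required.

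The main obstacle is not conceptual but bookkeeping: one must verify that the normalization of our $1$-dimensional analytic curve behaves as a well-behaved Riemann-surface-type object and, above all, that its properness into $B_g^\circ$ reduces to the local finite-branch structure of $\sigma_g$ -- which is precisely what the uniform size estimate of Lemma~\ref{lem:local_extension_gamma}, combined with the analyticity from Lemma~\ref{lem:analytic_continuation_gamma_general}, delivers. The hypothesis $\Tang^{\mathrm{tt}}(\pi_g,\pi_h)\neq\emptyset$ enters only to place us in the regime where Lemma~\ref{lem:analytic_continuation_gamma_general} is available.
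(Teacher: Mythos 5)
Your proposal is correct and takes essentially the same route as the paper: the dichotomy between compact components (analytic loops) and components with non-compact ends, settled by observing that the accumulation set of an end is compact, connected, and disjoint from $B_g^\circ$ (via the analyticity of $\sigma_g$ from Lemmas~\ref{lem:local_extension_gamma} and~\ref{lem:analytic_continuation_gamma_general}), hence a single point of the finite set $\Crit(\pi_g)$. The normalization $\nu\colon\widetilde\sigma_g\to\sigma_g$ is just extra formalism to make ``component'' and ``semi-infinite branch'' precise; the paper argues directly on the branches, so the two proofs coincide in substance.
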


Here by analytic loop we mean an analytic immersion of the circle, with possible self-intersections.  
And by an immersed line we mean an analytic immersion of the real line $\R \to B_g^\circ$.

\begin{proof}
Let $\sigma$ be a component of $\sigma_g$. If  $\sigma$  is compactly contained in $B_g^\circ$ then 
by Lemmas~\ref{lem:local_extension_gamma} and~\ref{lem:analytic_continuation_gamma_general}
it is an analytic curve in $B_g$  and we are in the first situation.  Otherwise, there is a semi-infinite branch  $\sigma^+$ of $\sigma$; since $\sigma$ is analytic outside the finite set $\Crit(\pi_g)$ and the accumulation set of $\sigma^+$ is connected, we deduce that    the accumulation set of $\sigma^+$  is reduced to a singleton $\set{c_0}\subset \Crit(\pi_g)$. Then the second branch must accumulate $\Crit(\pi_g)$ as well (otherwise $\sigma$ would be an analytic loop) and we are done. 
\end{proof}

%\begin{pro}\label{pro:finite_number_components}
%Assume that $\Tang^{\mathrm{tt}}(\pi_g, \pi_h)\neq\emptyset$. 
%There exists a compact subset $K\Subset B_g^\circ$ such that any irreducible component of $\sigma_g$ 
%intersects $K$. In particular $\sigma_g$ has finitely many irreducible components. 
%\end{pro}

From now on we study the structure of $\sigma_g$  locally near  a fixed  $s \in \Crit(\pi_g)$. 
%and show that there  exists a neighborhood $V$ of $s$ such that any  component of $\sigma_g$ escapes $V$. 
We already know that $\sigma_g$ is locally a union of smooth branches of 
the form  $\mathrm{R}^{\alpha,\beta}_{p,q}$, with $(\alpha,\beta,p,q) \in \Q^2/\Z^2\times \Z^2$. 
The study of %the semi-analytic continuation of 
$\sigma_g$ 
will employ two types of arguments: 
\begin{itemize}
\item first, we 
only use  the dynamics of  $g$ and analyze the curves  $\mathrm{R}^{\alpha,\beta}_{p,q}$ near $s$, as 
started in Section~\ref{sec:halphen}. The rest of the group $\Gamma$ is not taken into account.  This allows us
to make some operations (base change, blowing  down  $(-1)$-curves in fibers of $\pi_g$, etc) 
which are not  $\Gamma$-equivariant but preserve the curves $\mathrm{R}^{\alpha,\beta}_{p,q}$. 
This analysis, which is also crucial for Theorem~\ref{thm:finiteness}, 
 is developed in  \S\ref{par:local_Rabpq}.  
\item  then  in \S\ref{par:global_Rabpq}
we take into account the whole action of $\Gamma$ on the singular fibers; doing so,
we have to work on $X$, without  simplifying the fibration $\pi_g$. 
This is where condition (AC) enters into play. 
\end{itemize}

\subsection{Geometry of the curves $\mathrm{R}^{\alpha,\beta}_{p,q}$}\label{par:local_Rabpq}
As explained above, in this paragraph, the 
only information that we retain from the dynamics of $\Gamma$ is the existence 
of the curve $\sigma_g$, its geometric properties, and 
the fact that it is  locally a union of smooth branches of 
the form  $\mathrm{R}^{\alpha,\beta}_{p,q}$. 
After base change and some birational modification, as described in \S\S~\ref{par:multiple_fibers}
and~\ref{par:stable_reduction}, which we simply refer to as the ``reduction'' of the singular fiber, 
we only have  to consider
 a central fiber $X_s^g$  of type $I_0$ 
(that is, a regular fiber) or  $I_b$ with $b\geq 1$.    

\subsubsection{Case 1: type $I_0$.} This corresponds to the stable reduction of 
 fibers of type ${}_mI_0$, $II$, $III$, $IV$, $I_0^*$, $II^*$, $III^*$, $IV^*$ and their blow-ups.

\begin{lem}\label{lem:typeI0}
If the reduction of $X_s^g$ is of type $I_0$, then 
$\sigma_g$ admits a semi-analytic extension at $s$. In particular it admits only finitely 
many irreducible components in a neighborhood of $s$. 
\end{lem}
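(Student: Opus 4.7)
The plan is to combine the stable reduction of \S\ref{par:stable_reduction} with the uniform bound on indices provided by Remark~\ref{rem:kpq_bounded}.

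First I would apply the local stable reduction at $s$: there is a finite base change $p\colon (\disk,0)\to (V,s)$, $\zeta\mapsto\zeta^m=w$, together with the birational modifications described in \S\ref{par:multiple_fibers} and \S\ref{par:stable_reduction}, producing a relatively minimal fibration $\pi^{(m)}\colon X^{(m)}\to\disk$ with smooth central fiber and a Halphen twist $g^{(m)}$ lifting some iterate of $g$. Since the local monodromy is now trivial, the period function $\tau(\zeta)$ and the translation $t(\zeta)$ attached to $g^{(m)}$ are genuine holomorphic functions on a disk around $0$, without the logarithmic singularity of the type $I_b$ case (compare Equation~\eqref{eq:tau_w_Ib}).

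Next, the branches of $\sigma_g$ near $s$ correspond, after pullback by $p$, to branches of the form $\mathrm R^{\alpha,\beta}_{p,q}$ relative to $\pi^{(m)}$. By Remark~\ref{rem:kpq_bounded}, $\max(k,\norm{(p,q)})$ is bounded uniformly, so only finitely many quadruples $(\alpha,\beta,p,q)$ can appear in the pullback. For each such quadruple, Equation~\eqref{eq:R_ab_pq} becomes
\begin{equation*}
\Ima\left(\frac{t(\zeta)-\alpha-\beta\tau(\zeta)}{p+q\tau(\zeta)}\right)=0,
\end{equation*}
and since $\tau$ takes values in $\Hyp_+$ the denominator is everywhere non-zero for $(p,q)\in\Z^2\setminus\{(0,0)\}$; the expression inside is therefore a holomorphic function on $\disk$. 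Hence each branch is the zero set of a real analytic function, so the lift $\tilde\sigma:=p^{-1}(\sigma_g)$ is a finite union of real analytic curves on $\disk$, in particular a semi-analytic set.

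Finally, since $p$ is a proper finite analytic map and $\tilde\sigma$ is semi-analytic of dimension at most $1$, its image $\sigma_g=p(\tilde\sigma)$ is subanalytic in $V$, hence semi-analytic by the dimension $\leq 1$ fact recalled in \S\ref{par:semi-analytic-vocabulary}; the finiteness of local irreducible components then follows from the finiteness of admissible quadruples combined with the finite degree $m$ of $p$. The step that I expect to require the most care is the verification that the bound of Remark~\ref{rem:kpq_bounded}, whose proof takes place on a disk $U\Subset B_g^\circ$, remains valid on an arbitrarily small punctured neighborhood of $s$; this should follow from the proof of Lemma~\ref{lem:analytic_continuation_gamma_general}, whose curvature obstruction depends only on Lemma~\ref{lem:local_extension_gamma} and on staying away from $\Crit(\pi_h)$, so that approaching a critical value of $\pi_g$ does not degrade the constants.
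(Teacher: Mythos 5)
Your reduction to a smooth central fiber, and the observation that after the base change each curve $\mathrm R^{\alpha,\beta}_{p,q}$ is the zero set of a real-analytic function on a full disk (the denominator $p+q\tau(\zeta)$ never vanishing since $\tau$ takes values in $\Hyp_+$), is correct and matches the first half of the paper's argument; the descent of semi-analyticity through the finite map $p$ is also fine. The gap is in the finiteness step. You invoke Remark~\ref{rem:kpq_bounded} to bound $\max(k,\norm{(p,q)})$ near $s$, but that bound is established (in the proof of Lemma~\ref{lem:analytic_continuation_gamma_general}) only for arcs charged inside a fixed disk $U\Subset B_g^\circ$: the argument uses Lemma~\ref{lem:local_extension_gamma}, whose constant $r(\delta)$ does degenerate as $\delta=\dist(\cdot,\Crit(\pi_g))\to 0$, and, more importantly, it needs the curves $\gamma_n$ to accumulate on an uncountable set inside $B_g^\circ$, where one picks $w_0$ and a tangency point $\xi_{\mathrm t}\in\Tang^{\mathrm{tt}}(\pi_g,\pi_h)\cap X^g_{w_0}$ that can be followed holomorphically and whose nearby circles have controlled geometry. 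Hypothetical extra components of $\sigma_g$ near $s$ could be entirely contained in arbitrarily small punctured neighborhoods of $s$, so their accumulation set is just $\{s\}\subset\Crit(\pi_g)$ and the curvature mechanism gives nothing; the paper warns about exactly this phenomenon (small circles converging to a critical value) right after Lemma~\ref{lem:analytic_continuation_gamma_general}, and in the $I_b$ case excluding it requires the separate, measure-theoretic argument of Proposition~\ref{pro:finite_number_components}. Moreover, after the non-equivariant base change there is no group, no measure and no $h$ anymore, so Remark~\ref{rem:kpq_bounded} cannot simply be quoted for $\pi^{(m)}$. Hence your closing claim that approaching a critical value of $\pi_g$ does not degrade the constants is not justified, and with it the finiteness of admissible quadruples.

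What the paper does instead in the $I_0$ case is purely topological and avoids Remark~\ref{rem:kpq_bounded} altogether: choose $V\Subset V'$ so that, by Lemma~\ref{lem:rank_0_1}, $T$ has no critical point in $V'$ except possibly $s$; if some component of $\sigma_g$ were entirely contained in $V$ (an analytic loop, or an arc with both ends at $s$), then $T$ restricted to it takes values in a fixed affine line and has equal values at its endpoints, so a Rolle-type argument produces a critical point of $T$ in $V\setminus\{s\}$, a contradiction. Therefore every component of $\sigma_g$ in $V$ reaches $\partial V$, where $\sigma_g$ is analytic by Lemma~\ref{lem:analytic_continuation_gamma_general}, and finiteness (hence semi-analyticity, since each branch is the $T$-preimage of a line) follows. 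Some argument of this kind, or a substitute for Proposition~\ref{pro:finite_number_components} valid in a punctured neighborhood of $s$, is needed to close your proof.
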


\begin{proof} 
We may, and do assume that $X_s^g$ is of type $I_0$ (but the results obtained so far for $\sigma_g$ 
hold only outside $s$).

Fix a pair of disks $V\Subset V'$ centered at $s$, with a local section of $\pi_g$ and a fixed basis for $H_1(X_{V'}^g,\Z)$. 
Recall the real-analytic map $T\colon V'\to \R^2$ introduced in \S\ref{par:smooth-fibers-Betti} 
and the definition~\eqref{eq:RabpqT} of $\mathrm{R}^{\alpha,\beta}_{p,q}(V)$ in terms of $T$, which says that locally
$\mathrm{R}^{\alpha,\beta}_{p,q}(V)$   is the preimage of a straight line under $T$, in particular every branch of 
 $\mathrm{R}^{\alpha,\beta}_{p,q}(V)\setminus\set{s}$ admits a  semi-analytic extension at $s$. 
In $V$,
$\sigma_g$ is a union of branches  of $\mathrm{R}^{\alpha,\beta}_{p,q}(V)\setminus\set{s}$. We need
to show that $\sigma_g$ includes only finitely many such branches. 
For this, observe that by Lemma~\ref{lem:analytic_continuation_gamma_general}, $\sigma_g$ is analytic in $V'\setminus V$. 
So if we can show that any irreducible component of $\sigma_g$ in $V$ reaches $\fr V$, the finiteness follows. 
This relies on a topological argument that avoids explicit computations and will be used again below to prove  Lemma~\ref{lem:no_compact_component}.
%If $s$ is not a critical point of $T$, reducing $V'$ if necessary, $T$ induces a diffeomorphism $V'\to T(V')\subset \R^2$,  and the result is obvious. 

By Lemma~\ref{lem:rank_0_1} we may assume that either $T$ is a diffeomorphism from $V'$ to $V'$ or that $s$ is the unique critical point of $T$  in $V'$. 
%    From \S\ref{par:next_level} we know that in this case 
% $\mathrm{R}^{\alpha,\beta}_{p,q}(V)$ looks locally near $s$ like the curve defined by $\Ima(w^k) = 0$, 
%however the size of the corresponding neighborhood might not be uniform\serge{``might not be uniform'':  je ne comprends pas pourquoi, j'ai l'impression qu'on se complique la vie ici.} with respect to $(\alpha,\beta,p,q)$. 
 Suppose that some branch $\gamma$ of 
$\mathrm{R}^{\alpha,\beta}_{p,q}(V)\setminus\set{s}$ is completely contained in $V$ (see 
Figure~\ref{fig:T} below):
it is either   an analytic loop in $V\setminus\set{s}$ or  an immersed 
arc clustering at $s$ at its two ends. We claim that $\gamma$ contains a critical point of $T$, which is a contradiction. %, which yields a  contradiction, for $s$ is the unique critical point of $T$ in $V$. 
Indeed, parameterize $\gamma$
 by a smooth immersion $\varphi:(0, 1)\to V$, which 
extends continuously to $[0,1]$,  with $\varphi(0) = \varphi(1) = c$ (with $c=s$ if $\gamma$ is not a loop).
\begin{figure}[h]
\includegraphics[width=12cm]{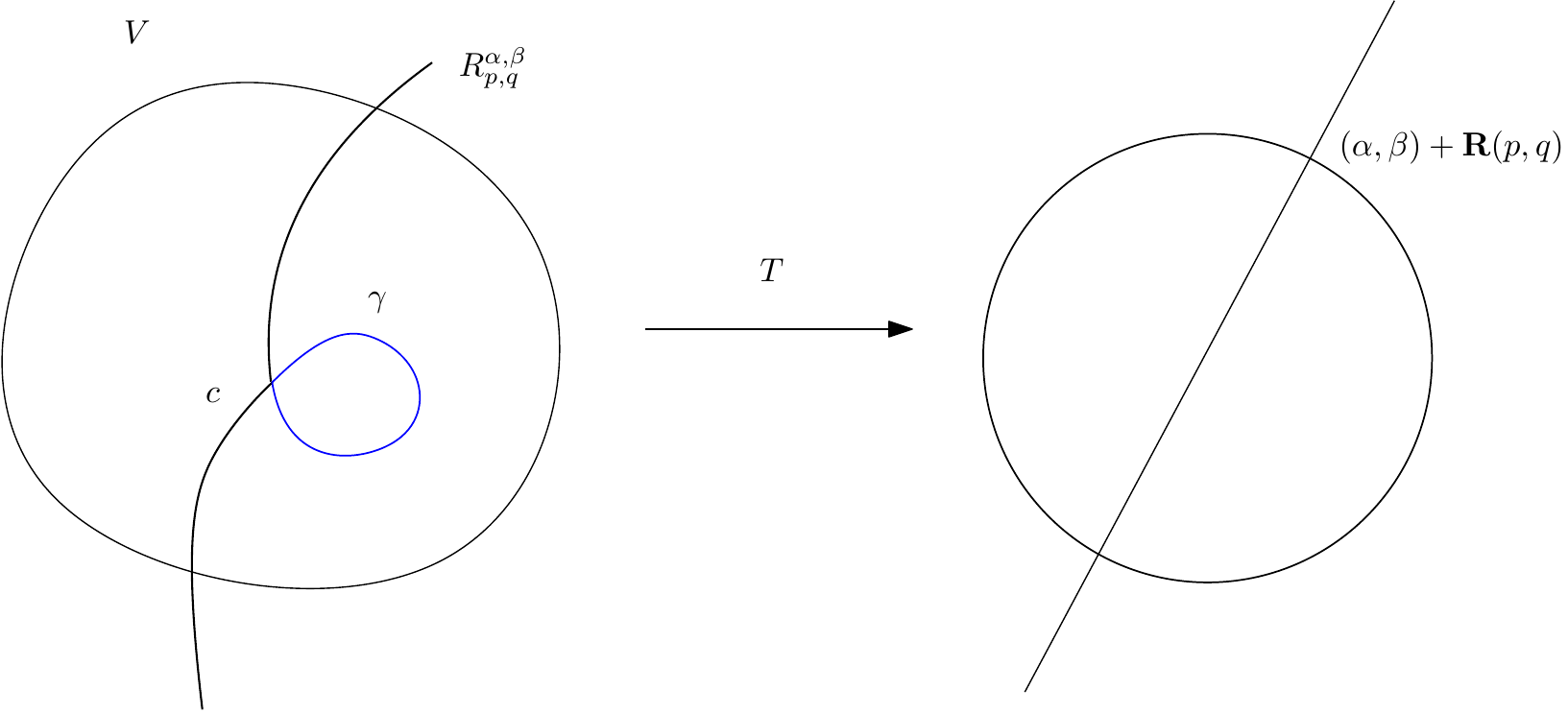}
\caption{}\label{fig:T}
\end{figure}
 Then, $T\circ \varphi$ is a  smooth map from $(0,1)$ to the line $(\alpha,\beta)+\R(p,q)$, it is continuous up to the boundary,    and 
 $T\circ\varphi(0) = T\circ\varphi(1)$;   
by Rolle's Theorem it admits  a critical point in $(0,1)$, as claimed. 
\end{proof}

  \subsubsection{Case 2: type $I_b$, $b\geq 1$.} This corresponds to the stable reduction of 
 fibers of type $I_b$, ${}_mI_b$, $I_b^*$ (with $b\geq 1$), and their blow-ups. 
 As before we fix a pair of disks  $V\Subset V'$ centered at $s$.
The real analytic map $T$ is only locally defined in $V\setminus\set{s}$, but its critical points 
form a well defined subset of $V'\setminus\{s\}$ and,
by Proposition~\ref{pro:NT_finite}, we can assume  this set to be empty. 
We resume the computations from \S\ref{par:notes_on_gamma}.  From Equations~\eqref{eq:R_ab_pq} and~\eqref{eq:tau_w_Ib}, 
there is a coordinate $w$ around $s$ in which $s=0$ and the equation of 
$R^{\alpha,\beta}_{p,q}$ is  
\begin{equation}
\Ima\left( \frac{t(w)  - \lrpar{\alpha+ \beta \frac{b}{2\ii\pi}\log w } }{p+q \frac{b}{2\ii\pi}\log w }\right)=0; 
\end{equation}
here $\log w$ is viewed as a multivalued function and $t(w)$ is a well-defined 
holomorphic function near the origin (see \S~\ref{par:type-Ib}). As already observed, the curve  $R^{\alpha,\beta}_{p,q}$ depends on $(\alpha,\beta)\in \Q^2$ only
 through $q\alpha-p\beta\in \Q/\Z$; we denote by $k$ the order of this torsion point $q\alpha-p\beta\in \Q/\Z$.  %The function $t(w)$ depends on the choice of a local section of $\pi_g$ which is fixed from now on.\romain{peut être pas très bien dit}\serge{Cela dépend vraiment de la section? Je ne pense pas}
%We have to show that every branch of this curve escapes a neighborhood of 0 which is independent of $(p,q)$.  
%From this point  to conclude the proof we have to show that there cannot exist a component which is an immersed arc with both endpoints at $s$. 
%Several subcases will  be considered, depending on the values of $(\alpha,\beta)$, $(p, q)$, and $t(0)$. 
%Here is a preliminary result. 
%\serge{Phrases enlevées, à voir}\romain{me paraît OK}

\begin{lem}\label{lem:no_compact_component}
If $R^{\alpha,\beta}_{p,q}$   admits a compact component in $V\setminus \set{s}$, then 
$q=0$ and this component winds non-trivially around $s$. 
%If $q\neq 0$, the curve  $R^{\alpha,\beta}_{p,q}$ does not admit a compact component in $V\setminus \set{s}$. 
\end{lem}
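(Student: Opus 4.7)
The plan is to mimic the Rolle-theorem argument from the proof of Lemma~\ref{lem:typeI0}, but carried out on the universal cover of $V\setminus\set{s}$ in order to track the monodromy of the fibration around $s$.

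First, I will work on the universal cover $\pi\colon \tilde V^{*}\to V\setminus\set{s}$. The map $T$ of \S\ref{par:smooth-fibers-Betti} lifts to a real analytic map $\tilde T\colon \tilde V^{*}\to \R^{2}$ which is monodromy-equivariant, namely $\tilde T\circ \gamma = M\cdot \tilde T + c$, where $\gamma$ generates the deck group, $c\in \R^{2}$, and $M\in \SL_2(\Z)$ is the monodromy matrix of the fiber of type $I_b$. From the presentation $\tau(w)=\frac{b}{2\ii\pi}\log w$ of \S\ref{par:type-Ib} one can take $M=\begin{pmatrix}1 & b\\ 0 & 1\end{pmatrix}$ in a suitable basis, so $M$ is unipotent with $(1,0)$ as its unique invariant direction. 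By Proposition~\ref{pro:NT_finite}, after shrinking $V'$ we may further assume that $\tilde T$ has no critical point in $\tilde V^{*}$.

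Given a compact component $C$ of $R^{\alpha,\beta}_{p,q}$ in $V\setminus\set{s}$, I will split on whether $C$ is null-homotopic in $V\setminus\set{s}$ or not. If it is, then $C$ lifts to a compact loop $\tilde C$ in $\tilde V^{*}$ contained in a single affine line $L$ of direction $(p,q)$ in $\R^{2}$; parameterizing $\tilde C$ as $\tilde\varphi\colon S^{1}\to \tilde V^{*}$ and applying Rolle's theorem to $\tilde T\circ \tilde\varphi$ (a smooth closed curve in a $1$-dimensional affine space, hence a $1$-periodic real-valued function) yields a critical point of $\tilde T$ on $\tilde C$, contradicting the previous step. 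Thus the contractible case cannot occur, and $C$ must wind non-trivially around $s$.

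In the remaining, non-contractible case, $C$ winds $k$ times around $s$ for some $k\neq 0$, and a connected component $\tilde C$ of $\pi^{-1}(C)$ is invariant under $\gamma^{k}$ and lies on a line $L$ of direction $(p,q)$. Applying the equivariance relation $\tilde T\circ \gamma^{k} = M^{k}\tilde T + c'$ along $\tilde C$ forces $M^{k}$ to preserve the direction of $L$. Since $M^{k}=\begin{pmatrix}1 & kb\\ 0 & 1\end{pmatrix}$ is a non-trivial unipotent matrix (as $kb\ne 0$), its unique invariant direction is $(1,0)$, which in the conventions of \S\ref{par:definition_L_k_p_q} corresponds precisely to $q=0$, establishing the lemma. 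The main technical point will be the careful bookkeeping of the monodromy action on the slope variable $(p,q)$, so as to guarantee that the $M^{k}$-invariance of the direction of $L$ is equivalent to $q=0$; this is essentially the computation already carried out in \S\ref{par:notes_on_gamma}.
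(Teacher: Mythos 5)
Your argument is correct and is essentially the paper's proof transported to the universal cover: the null-homotopic case is excluded by the same Rolle-type argument against critical points of $T$ (after shrinking $V'$ via Proposition~\ref{pro:NT_finite}), and the winding case rests on the same unipotent monodromy $(p,q)\mapsto(p+bq,q)$, which the paper phrases through the appearance of infinitely many distinct slopes along $C$ rather than through the invariant direction of $M^{k}$. The only step to make explicit is why the equivariance ``forces'' $M^{k}$ to preserve the direction of $L$: if it did not, then $\tilde T(\tilde C)$ would be contained in the intersection of two distinct affine lines, hence reduced to a point, so $T$ would be constant along the one-dimensional curve $\tilde C$ and, by the rank dichotomy of Lemma~\ref{lem:rank_0_1}, every point of $\tilde C$ would be a critical point of $T$, contradicting the choice of $V'$.
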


%\serge{Phrase ajoutée}\romain{je modifie légèrement}
Note that the notion of slope $(p,q)$ depends a priori on our choice of local coordinates, but the property $q=0$ does
not: indeed the action of the monodromy around the loop is given by 
$(p, q)\mapsto (p+bq, q)$ (geometrically, circles of slope $(1,0)$ correspond to the ``vanishing cycle'').

\begin{proof}
Let $C$ be such a component. It can be parametrized by a degree $1$ immersion $\varphi: \mathbb{S}^1 \to C$  
   (not necessarily injective, for $C$ may have self-crossings).
%There are two possibilities: either it winds non-trivially around $s$ or not. 
If $C$ does not wind around the origin,  %then $\varphi: \mathbb{S}^1\to V\setminus \set{s}$ lifts to the universal cover, hence  
there is a continuous determination of $T\circ\varphi$ on $\mathbb{S}^1$. In this way, 
$T\circ\varphi$ is a smooth map from $\mathbb{S}^1$ to a line of slope $(p,q)$ in $\R^2$, so it admits a critical point. 
This contradicts our choice of $V'$. So, the winding number $\ell_C$ around the origin is not zero.  
 Consider a small open disk $U\subset V\setminus\{s\}$
 containing a point $w_0$ of $C$. 
If we follow the local determination of $T$ along $C$ by  turning counterclockwise around the origin, 
$T$ is composed with  the monodromy $(u,v)\mapsto (u+\ell_C bv,v)$. Thus, in $U$, 
$C$   locally satisfies an equation of the form 
 $kT(x,y)\in \R (p,q)$  as well as $kT(x,y)\in \R (p+n\ell_C bq, q)$ for every $n\in \Z$. 
 Since $C$ has only finitely many components in $U$,
this implies $q=0$.  
\end{proof}

%\textit{Subcase 2.1: $q=0$.} 

\begin{lem}\label{lem:q=0}
Assume that the reduction of $X_s^g$ is of type $I_b$, $b\geq 1$. Let $C$ be a branch of $\sigma_g$ in 
$V\setminus \set{s}$ with $q=0$ along some open subset of $C$. Then $C$ admits a semi-analytic extension at $s$. %\serge{Reprendre fin de la partie 3 pour définir/préciser ``type $q=0$'' et ``type rabpq''}
\end{lem}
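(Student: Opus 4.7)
The plan is to first apply the stable-reduction process of \S\ref{par:local_Rabpq} so that $X_s^g$ may be assumed to be literally of type $I_b$, $b\geq 1$. Working in the local coordinate $w$ around $s=0$ from \S\ref{par:type-Ib}, the equation of $R^{\alpha,\beta}_{p,q}$ displayed just before Lemma~\ref{lem:no_compact_component} collapses dramatically when $q=0$. Indeed, primitivity forces $p=\pm 1$, and $\alpha\in\R$ disappears upon taking imaginary parts; a short computation then yields the single-valued equation
$$
\Ima(t(w)) + \frac{b\beta}{2\pi}\log|w| = 0,
$$
where $t$ is the holomorphic translation function of $g$ on $V$ and $\beta\in\Q$ is determined modulo $\Z$ (its order being the integer $k$ attached to $\Rk^k_{p,q}$). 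Although $\log w$ is multivalued, only $\log|w|$ enters this equation, which is well-defined on $V\setminus\{s\}$; so the monodromy $(p,q)\mapsto (p+bq,q)$ is of no concern here, since it fixes $q=0$ and acts trivially on $\beta$.

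By analytic connectedness, $C$ is contained in the real analytic set cut out by this single equation for a fixed class $\beta\bmod\Z$. I would then split into two cases. If $\beta\equiv 0 \bmod\Z$, take $\beta=0$, so the equation reduces to $\Ima(t(w))=0$, which is real analytic on all of $V$. (Note that $t\not\equiv 0$ on $V$: otherwise, up to changing the local section $\sigma$ via Remark~\ref{rem:prelim-to-lem-T-depends-on-w}, $g$ would be non-twisting on an open set, contradicting Proposition~\ref{pro:NT_finite}.) Hence $\overline C\cap V$ is contained in a real analytic curve in $V$, so it is semi-analytic. If instead $\beta\not\equiv 0\bmod\Z$, pick a representative $\beta\in(0,1)\cap\Q$. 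As $w\to s$, $\log|w|\to-\infty$, so the equation would force $|\Ima(t(w))|\to+\infty$; but $t$ is holomorphic, hence bounded, near $s$. So $C$ must be bounded away from $s$ in $V$, meaning $s\notin\overline C$, and the semi-analytic extension across $s$ is trivial.

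The anticipated obstacle---which does not arise here, precisely because $q=0$---is the interplay between the multivaluedness of $\log w$ and the monodromy, which in the general case $q\ne 0$ produces the transcendental curves of \S\ref{par:notes_on_gamma} with infinitely many branches clustering at $s$ (cf.\ Figure~\ref{fig:tangente}); such curves are typically not semi-analytic. The substance of the lemma is the observation that the hypothesis $q=0$ kills this phenomenon by eliminating $\arg w$ from the equation, leaving either a genuine real-analytic curve on all of $V$ ($\beta=0$) or a curve that fails to approach $s$ at all ($\beta\neq 0$).
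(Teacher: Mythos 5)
Your proof is correct and follows essentially the same route as the paper: after reduction to type $I_b$, the hypothesis $q=0$ turns the defining equation into the single-valued relation $\Ima(t(w))+\tfrac{\beta b}{2\pi}\log\abs{w}=0$, which for $\beta=0$ gives an analytic curve of $V$ and for $\beta\neq 0$ forces $C$ to stay in an annulus away from $s$ because $t$ is bounded. The only difference is that the paper additionally records, via Lemma~\ref{lem:no_compact_component}, that in the case $\beta\neq 0$ the branch $C$ is a loop winding around $s$ — a fact you omit, but it is used later (Remark~\ref{rem:loops}, Proposition~\ref{pro:finite_number_components}) rather than for the semi-analyticity statement itself.
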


\begin{proof}
Since  $(p,q)$ is primitive,  $p=1$ and in some small disk $U\subset V\setminus\set {s}$, 
$C$ is of the form $\mathrm R^{\alpha,\beta}_{1,0}$. 
We can choose $(\alpha,\beta)$ of the form $(0,\beta)$, and the local equation of $C$ becomes 
\begin{equation}\label{eq:R_beta_0q}
\Ima\left(  {t(w)  -  \beta \frac{b}{2\ii\pi}\log w }  \right)=0. 
\end{equation}

If $\beta = 0$ this defines an analytic subset of $V$. This set contains the origin if and only if the
imaginary part ot $t(w)$ vanishes at $w=0$, in which case $C$ coincides with a branch of $\set{\Ima\left(  {t(w)  }  \right)=0}\setminus\set{0}$.

If $\beta \neq 0$,  we write $w= e^{-s}$ as in \S\ref{par:notes_on_gamma}, where 
 $s = x+iy$ ranges  in some right half plane $x\geq x_0$. 
 %We have to show that in the $s$-plane,  every branch $\tilde C$
 %of the curve  defined by  Equation~\eqref{eq:R_beta_0q} reaches $x=x_1$ for some uniform 
 %$x_1$ independent of $\beta$. 
 The Equation~\eqref{eq:R_beta_0q} rewrites as 
 \begin{equation}\label{eq:R_beta_0q2}
\Ima (t(e^{-s})) -   \frac{\beta b}{2\pi} x = 0, \quad \text{ that is, }\quad  \tilde{t}(x,y) - \frac{\beta b}{2\pi} x = 0,
\end{equation} 
where $\tilde{t}(x,y)$ is $(2\pi)$-periodic in $y$ and admits a finite limit $\Ima (t(0))$ as $x\to +\infty$; this defines an analytic 
curve $\tilde C$ in the $s$-plane. 
 In particular 
$\tilde t $ is uniformly bounded along  $\tilde C$, 
hence  $\tilde C$ is contained in a vertical strip $\set{x_0\leq x\leq x_1}$. %There are two possibilities. 
The branch $C$ is the projection under $s\mapsto e^{-s}$ 
of   a connected component $\tilde C_0$ of $\tilde C$. It   is contained 
in the annulus $  \set{ \exp(-x_1)\leq\abs{w}\leq\exp(-x_0)}$ and it 
is an analytic subset of this annulus because it is contained in $\sigma_g$.
According to Lemma~\ref{lem:no_compact_component}, $C$ must then  be a loop that winds around the critical value $s$ of $\pi_g$. %If $\tilde C$ contains a compact, connected component $\tilde C_0$, its  projection under $s\mapsto e^{-s}$ is a loop   in $V\setminus \set{s}$ that does not wind around the origin: this loop must coincides with $C$, and this contradicts  Lemma~\ref{lem:no_compact_component}. Thus, one can choose a non-compact connected component $\tilde C_0$ of $\tilde C$ (unbounded in the $y$-direction). 
%Since $\tilde{t}(x,y)$ is $2\pi$-periodic in $y$ and $\sigma_g$ is analytic in $V\setminus\set{s}$, 
%the projection of $\tilde C_0$ is a compact component 
%of   $\mathrm R^{0, \beta}_{1,0}$ in the $w$-plane  which winds around the origin and coincides with $C$. 
\end{proof}

\begin{rem}\label{rem:loops}
If $\Ima(t(0))\neq 0$ and $\beta\neq 0$ is small,   Equation~\eqref{eq:R_beta_0q2} defines a    small loop around the origin. A priori, 
at this stage of the proof, 
$\sigma_g$ might have arbitrarily many small components of this type, converging to $s$. We shall rule out this phenomenon in  Proposition~\ref{pro:finite_number_components}.  
\end{rem}

\begin{lem}\label{lem:qneq0}
If the reduction of $X_s^g$ is of type $I_b$, $b\geq 1$ and if $C$ is a branch of $\sigma_g$ in 
$V\setminus \set{s}$  such that $q\neq 0$ along some open subset of $C$, 
then one end of $C$ converges to $s$  and the other one escapes   $V$. 
There are at most finitely many such branches in $\sigma_g$.  
\end{lem}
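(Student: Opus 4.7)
The plan is to follow the broad strategy of Lemma~\ref{lem:typeI0} but to work in the logarithmic variable $s=-\log w$ from \S\ref{par:notes_on_gamma}, since for $b\geq 1$ the curves $\mathrm R^{\alpha,\beta}_{p,q}$ are no longer analytic at $s$. I reduce, as in \S\ref{par:local_Rabpq}, to the case where $X_s^g$ itself is of type $I_b$, and fix disks $V\Subset V'$ centered at $s$ with $V'$ small enough that $\NT_g\cap V'=\emptyset$ (Proposition~\ref{pro:NT_finite}). By Lemma~\ref{lem:analytic_continuation_gamma_general}, any connected component $C$ of $\sigma_g\cap (V\setminus\{s\})$ is a smooth real-analytic arc whose closure in $\overline V$ meets $\{s\}\cup \partial V$; under the hypothesis $q\neq 0$, Lemma~\ref{lem:no_compact_component} forces at least one endpoint of $C$ to lie in $\{s\}\cup\partial V$.

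It remains to exclude that both endpoints of $C$ lie at $s$. If $\beta\neq 0$, an expansion of the ratio in the $\Ima(\cdot)=0$ equation for $\abs{s}$ large produces a constant term $-\beta/q$ (real, hence contributing nothing to $\Ima$) plus a term of order $1/s$ that reduces to the equation of a line through the origin in the $(x,y)$-plane; the branch of $\mathrm R^{\alpha,\beta}_{p,q}$ approaching $s$ is therefore a logarithmic-spiral perturbation with a single asymptote at $s$ (the $x\to+\infty$ end), forced to leave $V$ as $x$ decreases. If $\beta=0$, Equation~\eqref{eq:R_00_pq_final} gives the explicit graph $x=(y-y_1)\tan(ky)$; within each strip $y_j<y<y_{j+1}$ the function $x(y)$ is either monotone from $\pm\infty$ to $\mp\infty$, or, when $y_1$ lies in the strip, dips to $x=0$ at $y=y_1$ with both strip-ends at $x=+\infty$. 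In this latter case, taking $V$ of radius $R<1$ forces the portion of the branch inside $V$ (where $x>-\log R>0$) to split into two disjoint pieces, each running from $s$ to $\partial V$. After a uniform shrinking of $V$, every component $C$ with $q\neq 0$ therefore has exactly one endpoint at $s$ and the other on $\partial V$.

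For the finiteness, Remark~\ref{rem:kpq_bounded} bounds $(k,\norm{(p,q)})$ uniformly across all $\mathrm R^{\alpha,\beta}_{p,q}$ charged by some $\Gamma$-invariant ergodic probability measure; modulo $\Z^2$-translation in $(\alpha,\beta)$, the relevant labels $(\alpha,\beta,p,q)$ then form a finite list. For each such label with $q\neq 0$, the $s$-plane analysis above produces at most $O(k)$ connected components of $\mathrm R^{\alpha,\beta}_{p,q}\cap (V\setminus\{s\})$, namely at most one per strip in a fundamental domain of $s\mapsto e^{-s}$, possibly split into two by the dip when $y_1$ lies in the strip. Summing over the finite list of labels yields the stated finiteness.

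The main obstacle is the case $\beta=0$, $k\geq 1$, where $\mathrm R^{\alpha,\beta}_{p,q}$ has infinitely many $C^1$-branches accumulating at $s$ with distinct tangent directions --- the non-semi-analytic phenomenon highlighted at the end of \S\ref{par:notes_on_gamma} and illustrated in Figure~\ref{fig:tangente} --- so one must leverage the uniform bound on $k$ together with the uniform shrinking of $V$ to prune these branches down to finitely many and to exclude any ``both endpoints at $s$'' configuration uniformly over the finite list of labels. A subsidiary geometric check is that in the split case the dip genuinely reaches $x=0$ and hence falls outside $V$ for any $R<1$. Once these points are secured, the lemma follows from Lemma~\ref{lem:no_compact_component} and Remark~\ref{rem:kpq_bounded} by the direct enumeration sketched above.
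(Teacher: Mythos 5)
Your strategy is the explicit-asymptotics route, and it breaks down exactly at the point you yourself flag as the ``main obstacle''. The formula $x=(y-y_1)\tan(ky)$ of Equation~\eqref{eq:R_00_pq_final} is a toy computation: it was derived in \S\ref{par:notes_on_gamma} under the simplifying assumptions $(\alpha,\beta)=(0,0)$ and $t(w)=t_0w^k$ exactly, whereas in the lemma $t(w)$ is an arbitrary holomorphic function (and your $\beta\neq 0$ versus $\beta=0$ split is not the relevant dichotomy: since $q\neq0$ one can always normalize $\beta=0$, the curve depending only on $q\alpha-p\beta$; what matters is whether the numerator vanishes at $w=0$). The correction terms are only $O(e^{-x})$, i.e.\ controlled near $s$, so they tell you nothing about whether a branch can turn around at moderate $x$ and return to $s$; the claims ``forced to leave $V$ as $x$ decreases'' and ``the dip genuinely reaches $x=0$'' are therefore not secured, and the degenerate subcases (numerator vanishing at $w=0$, imaginary part of the limit equal to zero, $y_1$ on a strip boundary) defeat the naive monotonicity. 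Excluding the ``both ends at $s$'' configuration is the actual content of the lemma, and the paper does it without any expansion: parameterize $C$ by an immersion $\varphi\colon\R\to V\setminus\{s\}$; since $\R$ is simply connected, $T$ continues analytically along $C$ and $T\rest{C}$ stays in a single line of slope $(p,q)$; if both ends converged to $s$, then $t(\varphi(u))\to t(0)$ while $\tau_2\to+\infty$ (Equation~\eqref{eq:tau_w_Ib}), so by \eqref{eq:psi_explicit_formula} $T\rest C\to(t_1(0),0)$ at both ends, Rolle gives a zero of $(T\circ\varphi)'$, hence a critical point of $T$ in $V'\setminus\{s\}$, contradicting Lemma~\ref{lem:rank_0_1} and Proposition~\ref{pro:NT_finite} (the disk $V'$ is chosen free of non-twisting fibers). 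That single global argument is what your sketch lacks; if you insist on asymptotics you would still need this kind of critical-point input in every degenerate subcase.

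The finiteness count is also incorrect as stated. For a fixed label with $q\neq0$ and numerator vanishing to order $k\geq1$, the model curve already has infinitely many components in $V\setminus\{s\}$, one asymptotic to each direction $y=\frac1k(\frac\pi2+j\pi)$, $j\in\Z$ (this is precisely Figure~\ref{fig:tangente}); restricting $s$ to one fundamental strip of $s\mapsto e^{-s}$ does not parametrize the curve downstairs, since passing to the next strip shifts the label by the monodromy $(p,q)\mapsto(p+bq,q)$. So ``at most $O(k)$ components per label'' does not follow from Remark~\ref{rem:kpq_bounded} without an additional argument bounding the winding via label growth, which you do not give. The paper's finiteness is instead a one-line consequence of the first assertion: every branch with $q\neq0$ escapes $V$, hence meets the compact annulus $V'\setminus V$, where $\sigma_g$ is an honest analytic curve by Lemma~\ref{lem:analytic_continuation_gamma_general}, and an analytic curve meets a compact set in only finitely many components. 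Once you repair the first part, you should conclude finiteness that way rather than by counting labels.
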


\begin{proof}
The last statement follows from the first because every such branch reaches $V'\setminus V$ and $\sigma_g$ 
is analytic in $V'\setminus V$. 

Since $q\neq 0$, we can choose $(\alpha,\beta)$ of the form $(\alpha, 0)$ for some $\alpha \in \Q$, and the equation of $C$ becomes 
\begin{equation}\label{eq:R_alpha0_pq}
\Ima\left( \frac{t(w)  - \alpha }{p+q \frac{b}{2\ii\pi}\log w }\right)=0
\end{equation}
for some (primitive) slope $(p,q)$. The denominator of this expression does not vanish because $\abs{w}<1$. 
Again, we shall write $w= e^{-s}$, $s=x+\ii y$, with $x > x_0\geq 0$.

By Lemma~\ref{lem:no_compact_component}, $C$ contains a branch that accumulates towards $s$. If the other
branch escapes $V$, we are done. So, by Lemma~\ref{lem:preparation_strategy},
 all we have to do is to show that the second branch does not converge 
towards $s$. %\serge{Faut-il ajouter qu'il n'y a pas de spirale infinie à distance finie pour justifier que les deux branches tendent vers $s$?}\romain{ajouté une ref au Lemme 6.1, je pense que ça suffit}
 We argue by contradiction, and parameterize $C$ by an analytic immersion $u\in \R\mapsto \varphi(u)\in V\setminus\{s\}$ such that $\varphi(u)$ goes to $s=0$ as $u$ goes to $+\infty$ and $-\infty$. 
Along that curve, the function $u\mapsto t(\varphi(u))$ converges towards $t(0)$ as $u$ goes to $+\infty$ and to $-\infty$.
Writing $w=\rho \exp(2\ii \pi \theta)$,   Equation~\eqref{eq:tau_w_Ib} 
yields  $\frac{1}{b}\tau(w)=\theta - \frac{\ii}{2\pi}\log(\rho)$; hence, the real part $\tau_1$ of $\tau$
remains bounded while its imaginary part $\tau_2$ goes to $+\infty$ along both ends of $C$. Now, let us consider the function $T$ on the curve $C$.
We start with a local definition of $T$ in a small open subset $U\subset V\setminus\{s\}$ that intersects $C$; locally, $T\rest{C}$ takes values
in a line $L\subset \R^2$ of slope $(p,q)$.
Now, since $\R$ is simply connected, the function $T\rest{C}$ can be analytically continued
 along $C$; since its values are
locally contained in  $L$, they are indefinitely contained in that line. Using Equation~\eqref{eq:psi_explicit_formula}, we can write
$T=(t_1-\frac{\tau_1}{\tau_2}t_2, \frac{1}{\tau_2}t_2)$ in $U$, and this local equality propagates along $C$ by analytic continuation. 
Thus, $T\rest{C}$ converges to $(t_1(0),0)$ at both ends. By Rolle's theorem, we deduce that
the derivative of $T\rest{C}$ vanishes at least once and, by~Lemma~\ref{lem:rank_0_1}, $T$ has a critical point in $V\setminus\{s\}$. This contradiction 
concludes the proof.
\end{proof}

% Ancienne preuve copiée/collée après end{document}
   
\subsubsection{A general finiteness result}
The above results give a rather complete account of the geometry of the branches of $\sigma_g$ near a critical 
value of $\pi_g$. However, for fibers of type $I_b$, $b\geq 1$, our results do not yet imply that $\sigma_g$ is 
semi-analytic at $s$: this is already apparent 
for  the case of curves  $\mathrm{R}^{\alpha,\beta}_{p,q}$ with $q\neq 0$ in the 
toy calculations of \S~\ref{par:notes_on_gamma}. 
Still, we have the following  finiteness result. It does not rely on the choice of a particular invariant measure, so it is stronger 
 than the finiteness of  the number of components of $\Sigma$ in 
Proposition~\ref{pro:analytic_sigma}. 
This will play a key role in Theorem~\ref{thm:finiteness}. 

\begin{pro}\label{pro:finite_number_components}
If $\Tang^{\mathrm{tt}}(\pi_g, \pi_h)\neq\emptyset$, then $\sigma_g$ has finitely many irreducible components. 
\end{pro}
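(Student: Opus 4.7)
The strategy is to localize the problem near each $s \in \Crit(\pi_g)$. Outside any neighborhood of $\Crit(\pi_g)$, the curve $\sigma_g$ is analytic with locally uniformly bounded geometry by Lemma~\ref{lem:local_extension_gamma}, hence has only finitely many irreducible components on any compact subset of $B_g^\circ$. So it is enough to show that inside a small disk $V$ around each $s \in \Crit(\pi_g)$, $\sigma_g$ has finitely many components. By the reduction procedures of \S\S\ref{par:multiple_fibers},~\ref{par:stable_reduction}, and because they only alter the curves $\mathrm{R}^{\alpha,\beta}_{p,q}$ by a finite quotient, we may assume that the central fiber $X_s^g$ is either of type $I_0$ or of type $I_b$ with $b \geq 1$; the $I_0$ case is handled by Lemma~\ref{lem:typeI0}.

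Assume therefore that $X_s^g$ is of type $I_b$ with $b \geq 1$, and choose a disk $V' \Supset V$ around $s$ together with a slit $\ell$ joining $s$ to $\partial V'$, so that $V' \setminus \ell$ is simply connected and carries a coherent choice of basis of $H_1(X_w^g;\Z)$. On $V'\setminus \ell$, any branch of $\sigma_g$ lies inside some $\mathrm{R}^{\alpha,\beta}_{p,q}$, and by Remark~\ref{rem:kpq_bounded}, the associated $(k, (p,q))$ satisfies $\max(k, \norm{(p,q)}) \leq K$ for some constant $K$ independent of the branch and of the measure charging it. Since $\mathrm{R}^{\alpha,\beta}_{p,q}$ depends on $(\alpha,\beta)$ only through $q\alpha-p\beta \in \R/\Z$ (see \S\ref{par:next_level}), and this class has torsion order $k\leq K$, only \emph{finitely many} such curves can contribute to $\sigma_g$ on $V'\setminus\ell$.

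It remains to bound, for each admissible $(k,(\alpha,\beta),(p,q))$, the number of irreducible components of $\mathrm{R}^{\alpha,\beta}_{p,q}$ appearing in $\sigma_g \cap V$. When $q \neq 0$, Lemma~\ref{lem:qneq0} ensures that each such component has one endpoint at $s$ and the other leaving $V$; since $\sigma_g \cap \partial V$ is finite by analyticity on $V' \setminus V$, this yields only finitely many branches. When $q = 0$, Lemma~\ref{lem:q=0} represents each branch by the equation $\tilde t(x,y) - \frac{\beta b}{2\pi}x = 0$, where $w = e^{-s}$, $s = x+\ii y$, and $\tilde t(x,y) = \Ima(t(e^{-s}))$. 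The case $\beta = 0$ gives an analytic subset of $V$, hence finitely many components. For $\beta \neq 0$, boundedness of $\tilde t$ forces the zero set to lie in a finite vertical strip $\{x_0 \leq x \leq x_1(\beta)\}$; restricting to one period $\{0 \leq y < 2\pi\}$ yields a real analytic zero set on a compact region, which has finitely many connected components, each projecting to a single loop around $s$ in $V$.

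The crux of the argument is the $q = 0$, $\beta \neq 0$ case identified in Remark~\ref{rem:loops}, where infinitely many small loops could a priori accumulate at $s$. This is prevented by combining (i)~Remark~\ref{rem:kpq_bounded}, which restricts $\beta$ to a finite subset of $\Q/\Z$ via the bounded torsion order, and (ii)~the analyticity and boundedness of $t$ at $w = 0$, which confines the solution set of $\tilde t(x,y) = \frac{\beta b}{2\pi}x$ to a compact cylinder in $(x,y)$-coordinates. Combining the local finiteness near each of the finitely many critical values with the finiteness on their complement completes the proof.
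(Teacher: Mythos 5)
Your reduction to the critical values, the treatment of type $I_0$ via Lemma~\ref{lem:typeI0}, and the treatment of branches with $q\neq 0$ via Lemma~\ref{lem:qneq0} all match the paper. The gap is in the crux case you correctly identify at the end: fibers reducing to type $I_b$, $b\geq 1$, and small loops $\mathrm{R}^{0,\beta}_{1,0}$ with $\beta\to 0$ (Remark~\ref{rem:loops}). There you dispose of it by invoking Remark~\ref{rem:kpq_bounded} to bound the torsion order $k$ of $\beta$, but that remark does not apply where you need it. The bound of Remark~\ref{rem:kpq_bounded} is established, in the proof of Lemma~\ref{lem:analytic_continuation_gamma_general}, for arcs charged inside a \emph{fixed} disk $U\Subset B_g^\circ$, and the constant depends on $U$: the argument needs the charged curves to accumulate at a point $w_0\in B_g^\circ$ with a smooth fiber (to follow a tangency point of $\Tang^{\mathrm{tt}}$ holomorphically) and uses the $\e$-density of unions of $k$ circles (Lemma~\ref{lem:circles_are_epsilon_dense}), whose constants are uniform only on compact subsets of $B_g^\circ$. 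Near an $I_b$ fiber the modulus $\Ima\tau(w)\sim\frac{b}{2\pi}\log\frac1{\abs w}$ blows up, so $k$ circles of slope $(1,0)$ (the vanishing-cycle direction, which is exactly the slope of your problematic loops) are \emph{not} $\e$-dense unless $k\gtrsim \Ima\tau(w)/\e$; hence the bound $K(U)$ degenerates as $U$ approaches $s$. Your loops $\mathrm{R}^{0,\beta_n}_{1,0}$ accumulate only at $s\in\Crit(\pi_g)$, so they never meet any fixed admissible $U$, and no uniform bound on the order $k_n$ of $\beta_n$ follows from Remark~\ref{rem:kpq_bounded}; applying it on the slit disk $V'\setminus\ell$, whose closure contains $s$, is therefore unjustified. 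Your item (ii) does not repair this: for each fixed $\beta$ the level set is indeed confined to a strip $\set{x_0\leq x\leq x_1(\beta)}$, but $x_1(\beta)\to\infty$ as $\beta\to 0$, so it says nothing about infinitely many distinct small $\beta_n$.

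This is precisely the scenario the paper flags in Remark~\ref{rem:loops} as still open after Lemma~\ref{lem:q=0}, and it is excluded there by a different, measure-theoretic mechanism rather than by a uniform bound on $(k,(p,q))$: if an arc of $\mathrm{R}^{0,\beta_n}_{1,0}$ is charged by some invariant measure, the $g$-invariance of the conditional measures forces positive mass on all $k_n$ parallel circles of the orbit closure, and hence $\sigma_g$ must also contain components of $\mathrm{R}^{0,j\beta_n}_{1,0}$ for $j=1,\ldots,\lfloor\beta_n^{-1}\rfloor$. Since $k_n\to\infty$, choosing $j$ with $j\beta_n\in[\delta,2\delta]$ produces infinitely many distinct components of $\sigma_g$ inside a fixed annulus at definite distance from $\Crit(\pi_g)$, contradicting the analyticity (local finiteness) of $\sigma_g$ in $B_g^\circ$. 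Some argument of this kind, producing constraints away from $s$ out of the hypothetical loops near $s$, is what your proof is missing; without it, the finiteness of the family of admissible $\beta$'s is asserted but not proved.
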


\begin{proof}
Since $\sigma_g$ is analytic in $B_g^\circ$, only finitely many 
components of $\sigma_g$ intersect any given compact  $K\Subset B_g^\circ$. So,  we can work locally   near some fixed $s\in \Crit(\pi_g)$. 
%By Lemma~\ref{lem:typeI0} there are finitely many components near a singular fiber reducing to type $I_0$, and likewise  for fibers reducing to  type $I_b$,  when $q\neq 0$, by Lemma~\ref{lem:qneq0}.  Therefore 
By Lemmas~\ref{lem:typeI0} and~\ref{lem:qneq0}, 
  the only case to deal with is that of fibers reducing to type $I_b$ and branches of type $\Rk^k_{1,0}$ ({i.e.\ }with $q= 0$). According to (the proof of) Lemma~\ref{lem:q=0}, we have to rule out 
  the existence of an infinite sequence of small loops of the form $\mathrm{R}_{1, 0}^{0, \beta_n}$, with $\beta_n\to 0$,   winding  around the origin $s$ and converging to it (see Remark~\ref{rem:loops}). 
  For this we  come back to the analysis of the 
  local structure of   invariant measures from  \S~\ref{par:local_structure}. If an arc $\gamma\subset \Rk_{1, 0}^{0, \beta}$ is locally contained in $\sigma_g$ in some open set $U\subset V\setminus\set{s}$,  there exists an ergodic invariant measure $\mu$ such that for  $w\in \gamma\setminus \Tor_g(U)$, its conditional 
   $\mu_{g,w}$ on $X_g^w$ puts positive weight on  some  circle   $L_g^0(\xi)$
  of slope $(1,0)$ through $\xi$  (see \S~\ref{par:local_structure}). 
 Then, the $g$-invariance shows that $\mu_{g,w}$  puts positive mass on each of the $k$ components of $L_g(\xi)\simeq L_w(k, (1, 0))$, where 
  $k$ is the order of $\beta$ in $\R/\Z$ (see the definition of $L_w(k, (p, q)) $ in~\S~\ref{par:definition_L_k_p_q} and the discussion around Equation~\eqref{par:annulus_h}%\romain{faudra peut être un énoncé auquel se référer, peut être une remarque là bas?}
  ). Now, since $\beta_n$ tends to $0$,  its order $k_n$ tends to infinity. 
Near $s$, this means that when $\sigma_g$ contains a component of type $\mathrm{R}_{1, 0}^{0, \beta_n}$, it contains also a component of 
$\mathrm{R}_{1, 0}^{0, j \beta_n}$ for all $j  = 1, \ldots, \lfloor \beta_n\inv \rfloor $. Fix $\delta > 0$ small and consider the level sets  
$\mathrm{R}_{1, 0}^{0, j \beta_n}$ for $\delta    \leq j  \beta_n \leq 2\delta $. Then, from  Lemma~\ref{lem:q=0} (see Remark~\ref{rem:loops}),
this creates an accumulation of components of $\sigma_g$ away from $\Crit(\pi_g)$,   which is a contradiction.  
\end{proof}

\subsection{Condition (AC) and conclusion}\label{par:global_Rabpq} 
%Recall that in  \S\ref{par:choice_halphen_twist} \romain{à reprendre!!} we started by choosing a 
%Halphen twist $g = g_1\in \Gamma$ and then  a finite set of twists $g_i$ conjugate to $g$,
% satisfying certain properties. The geometric condition (AC) deals with the properties of 
% the singular fibers of $\pi_g$ (in particular it is not affected by changing $g$ into an iterate).

 If $X^g_s $ is a singular fiber of 
 $\pi_g$ with reduction of type $I_b$, $b\geq 1$, we define 
the  {\bf{active components}} of  $X^g_s$ to be the set of  its irreducible components  
which are not contracted during the reduction process. 
More precisely, if $X^g_s$ is not relatively minimal,   there exists a unique birational morphism 
$\e:X \to X'$, with $X'$ smooth, so that $\pi_g\circ\e\inv$ is relatively minimal: no fiber contains a $(-1)$-curve. 
By definition, the components of the exceptional divisor of $\e$ are not active. After this contraction, 
$X^g_s$ becomes a fiber $\e(X^g_s)$ of type ${}_mI_b$, for some $m\geq 1$,  or   $I_{b}^*$. 
In the first case the active components of $X^g_s$
  are the  components which are not contracted by $\e$; equivalently, they are the components of the proper transform of $\e((X')^g_s)$. In the 
 $I_{b}^*$ case, we retain only the proper transform of the $b+1$ components of multiplicity 2 in $\e((X')^g_s)$. 
 
 The Active Components condition reads as follows: 
\begin{itemize}
\item[(AC)] there exists $g\in \Hal(\Gamma)$ such that every  fiber of $\pi_g$ reducing
 to type $I_b$, $b\geq 1$, contains an active component which is not  in $D_\Gamma$.  
\end{itemize}

 \begin{mthmprime}\label{thm:semi-analytic}
If in  Theorem~\ref{thm:main} we further assume the   
non-degeneracy condition (AC), then in the totally real case~$(c)$ we can add the conclusion: (5) $\overline{\Sigma}$ is a semi-analytic subset of $X$. 
\end{mthmprime}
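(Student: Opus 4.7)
By Remark~\ref{rem:semi-analytic}, the semi-analyticity of $\overline\Sigma$ reduces to the semi-analyticity of the analytic continuation $\sigma_g\subset B_g$ from Lemma~\ref{lem:analytic_continuation_gamma_general}, where $g\in\Hal(\Gamma)$ is chosen to satisfy condition~(AC). The isotrivial case is already automatic by Remark~\ref{rem:semi-analytic}, so I assume $\Tang^{\mathrm{tt}}(\pi_g,\pi_h)\neq\emptyset$. Away from $\Crit(\pi_g)$, $\sigma_g$ is analytic by Lemma~\ref{lem:analytic_continuation_gamma_general}, and Proposition~\ref{pro:finite_number_components} ensures that only finitely many irreducible branches of $\sigma_g$ converge to any given $s\in\Crit(\pi_g)$; so the problem is purely local at each such $s$ and it suffices to show that every individual branch admits a semi-analytic extension across~$s$.

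Applying the local analysis of \S\ref{par:local_Rabpq}, after the standard Kodaira stabilization one has to deal with central fibers of type $I_0$ or $I_b$, $b\geq 1$. Lemma~\ref{lem:typeI0} takes care of the $I_0$ case. For $I_b$ fibers, Lemma~\ref{lem:q=0} combined with Lemma~\ref{lem:no_compact_component} handles the branches of vanishing slope (i.e.\ $q=0$): they are either global analytic level sets of $\Ima(t(w))$, or loops winding around $s$ in a fixed annulus around~$s$, and both extend semi-analytically at $s$. The only remaining case is that of branches of $\sigma_g$ lying on some $\Rk^{\alpha,\beta}_{p,q}$ with $q\neq 0$ at an $I_b$ fiber, $b\geq 1$. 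By Lemma~\ref{lem:qneq0} each such branch converges to $s$ at one end, but the explicit computation of \S\ref{par:notes_on_gamma} shows that it can enter $s$ along a spiralling $C^1$ arc that is not semi-analytic (see Figure~\ref{fig:tangente}). The whole role of the non-degeneracy condition~(AC) is precisely to forbid the occurrence of such a spiralling branch in~$\sigma_g$.

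My plan for this last step is as follows. Condition~(AC) supplies, at each $s$ with $X^g_s$ reducing to $I_b$ ($b\geq 1$), an active component $E\subset X^g_s$ with $E\not\subset D_\Gamma$, hence an element $f\in\Gamma$ such that $f(E)\neq E$ and $f(E)\not\subset D_\Gamma$. Choosing a local holomorphic section of $\pi_g$ transverse to $E$ at a smooth point $x_0$, a putative non-semi-analytic branch of $\sigma_g$ at $s$ would correspond to an accumulation of level sets of the $g$-translation function $t(w)$ near $x_0$ of the type exhibited in \S\ref{par:notes_on_gamma}. The $\Gamma$-invariance of $\overline\Sigma$ forces $f$ to carry this local picture to an analogous one near $f(x_0)\in f(E)$ for the conjugate Halphen twist $g'=fgf\inv\in\Hal(\Gamma)$. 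But because $f(E)$ is not $\Gamma$-periodic, a generic point of $f(E)$ can be reached by a smooth fiber of some other Halphen projection, where the analytic regularity of the corresponding continuation curve (furnished by Lemma~\ref{lem:analytic_continuation_gamma_general} away from critical values) is in direct contradiction with the transported spiralling structure. Pulling back via $f\inv$ then rules out the spiralling branch in $\sigma_g$ at~$s$.

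The hardest part will be to make this transport argument rigorous, because $f$ does not in general preserve $\pi_g$: one must carefully compare the curves $\Rk^{\alpha,\beta}_{p,q}$ for $g$ and for $g'=fgf\inv$, and track how the local monodromy $(p,q)\mapsto(p+bq,q)$ around the $I_b$ fiber interacts with the $\Gamma$-orbit of $E$. In spirit, this refines the curvature/propagation mechanism underlying Proposition~\ref{pro:finite_number_components}, with the key new input being that $E\not\subset D_\Gamma$: were a non-semi-analytic spiral present at $s$, it would propagate under $\Gamma$ into an uncountable family of spiralling branches of $\sigma_g$ (or, equivalently, of components of $\overline\Sigma$), contradicting the local finiteness already obtained. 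Once each branch at $s$ is semi-analytic, the finiteness of components (Proposition~\ref{pro:finite_number_components}) yields semi-analyticity of $\sigma_g$ at $s$, hence globally on $B_g$, and Remark~\ref{rem:semi-analytic} concludes.
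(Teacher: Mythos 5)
Your reduction is exactly the paper's: by Remark~\ref{rem:semi-analytic} everything comes down to the semi-analyticity of $\sigma_g$ at each $s\in\Crit(\pi_g)$, the $I_0$ case and the $q=0$ branches are settled by Lemmas~\ref{lem:typeI0} and~\ref{lem:q=0}, Proposition~\ref{pro:finite_number_components} gives local finiteness, and the whole content of (AC) is concentrated on the branches $\Rk^{\alpha,\beta}_{p,q}$ with $q\neq 0$ at fibers reducing to type $I_b$, $b\geq 1$. But precisely at that point your proof stops being a proof: the ``transport'' step is only a plan (you say yourself that making it rigorous is the hard part), and as sketched it does not work. Conjugating $g$ by an $f\in\Gamma$ with $f(E)\neq E$ replaces $\pi_g$ by $\pi_{g}\circ f^{-1}$ and moves the critical value; since $\Gamma$ acts on $X$ and not on $B_g$, a non-semi-analytic branch of $\sigma_g$ at $s$ is not ``carried'' to a comparable local object for $g'=fgf^{-1}$ in any way that confronts it with an analytic curve, and the claimed contradiction with Proposition~\ref{pro:finite_number_components} is illusory: that proposition bounds the \emph{number} of irreducible components of $\sigma_g$, and a single $C^1$ but non-semi-analytic branch (as in \S\ref{par:notes_on_gamma}) produces no uncountable family of new branches under the group action. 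Nothing in your outline actually converts the hypothesis $E\not\subset D_\Gamma$ into a geometric statement about the curve $\sigma_g$ near $s$.

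What the paper does with (AC) is quite different and constructive rather than by contradiction. One first chooses $g$ as in (AC) and then $h$ so that the active component $A\subset X^g_s$ is not contained in any fiber of $\pi_h$ (possible because $A\not\subset D_\Gamma=\Tang^1_\Gamma$). Lemma~\ref{lem:active_component} shows that for $w_n\in\gamma\setminus\Tor(B_g^\circ)$ tending to $s$, the $g$-circles $L^0_g(\xi_n)$ over $w_n$ accumulate on a logarithmic spiral inside $A$; this allows one to slide $\xi_n$ along its $g$-orbit to a point $\xi'_n$ in $\supp(\mu)$ whose $\pi_h$-image is uniformly far from $\Crit(\pi_h)$, from $\Tor(B_h)$, and from the $\pi_h$-projection of $\Tang^{\mathrm{tt}}(\pi_g,\pi_h)$ near $X^g_s$. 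The $h$-circle $L^0_h(\xi'_n)$ then has uniformly bounded geometry, and the connected component $I_n$ of $L^0_h(\xi'_n)\cap\pi_g^{-1}(V)$ through $\xi'_n$ lives in a disk $V'_n\subset X^h_{\pi_h(\xi'_n)}$ on which $\pi_g$ is a covering ramified only at the single point $s'=V'_n\cap A$. If $I_n$ missed $s'$, its $\pi_g$-image would be an arc with endpoints on $\partial V$, contradicting the convergence of $\gamma$ to $s$; hence $s'\in I_n$ and $\pi_g(I_n)$ is a semi-analytic subset of $V$ containing $s$ of which $\gamma$ is a component. This directly exhibits the semi-analytic extension of $\overline\gamma$ — it does not rule out a spiral by propagation, it shows the branch is the $\pi_g$-image of an analytic circle through the ramification point on the active component. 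To repair your proof you would need to supply an argument of this kind (or an equivalent one) in place of the transport heuristic.
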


\begin{eg}   \label{eg:condition_(AC)}
If $D_\Gamma$ is empty, then (AC) is satisfied. This is the case for general Wehler surfaces
and general Enriques surfaces if one takes $\Gamma=\Aut(X)$ (see~\cite{finite_orbits} and the references therein). 
If $X$ is minimal and $\pi_g$ does not contain  singular fibers of type $I_b^*$, 
then (AC) is satisfied.  

Let us show that the examples obtained  by Blanc's construction for three points (see \S~\ref{par:examples_rational}) satisfy condition (AC). 
One starts
with a smooth cubic $C\subset \P^2(\C)$ and three general points $p$, $q$, $r$ on $C$. Then, $X$
is the blow-up of $ \P^2(\C)$ at $p$, $q$, $r$, and at the $12$ points $a(p)$, $b(p)$, $c(p)$, $d(p)$, $a(q)$, $\ldots$, $d(q)$, 
$a(r)$, $\ldots$, $d(r)$ of $C$ such that the tangent to $C$ 
at one of these points intersects $C$ in $p$, $q$, or $r$, respectively. The Jonquières involution $s_p$ (resp. $s_q$, $s_r$) that preserves the 
pencil of lines through $p$ (resp. $q$, $r$) and fixes $C$ pointwise lifts to an automorphism of $X$. According to~\cite{Blanc:Michigan}, the subgroup of $\Aut(X)$
generated by $s_p$, $s_q$, $s_r$ is a free product $\Z/2\Z\star \Z/2\Z\star \Z/2\Z$. 
From the formulas given in \cite[Lem. 17]{Blanc:Michigan} for the action of 
$s_\star$ on $\NS(X)$, it can be deduced  that  the composition $g=s_p\circ s_q$ 
is a parabolic automorphism, preserving the pencil of plane quartic curves 
passing through $p$ and $q$ with multiplicity 
$2$ and through the eight points $a(p)$, $\ldots$, $d(q)$ with multiplicity $1$.  
The union of $C$ and the line $L_{p,q}$ through $p$ and $q$ belongs to this pencil; this gives a reducible
fiber of $\pi_g\colon X\to \P^1(\C)$ which, after blowing down the strict transform of $L_{p,q}$, is of type $I_0$.  
It is easy to see that the unique effective divisor 
which is invariant by $s_p$, $s_q$ and $s_r$ is the proper transform of $C$, 
i.e.\ $D_{\langle s_p, s_q, s_r\rangle}=C$. 
Since   condition (AC) deals only with fibers of type $I_b$, $b\geq 1$, we conclude that it holds
  for $\langle s_p, s_q, s_r\rangle$.\end{eg}

\begin{eg} 
 General Coble surfaces with $\Gamma=\Aut(X)$ {\emph{do not satisfy condition}} (AC). More precisely, if $g$ is a Halphen twist, then $\pi_g$ has $10$ singular fibers with
 \begin{itemize}
 \item $8$ singular fibers of type $I_1$, each of them made of a single, active component; 
 \item $1$ fiber made of two rational curves (intersecting transversally 
 in two points), the first is a $(-1)$-curve $E_g$ (contracted by the reduction process), the second is a $(-2)$-curve $S$ which reduces  to a $I_1$ fiber, so it is active;
 \item one multiple fiber $M_g$ of type $2I_0$. 
 \end{itemize}
 The curve $S$ does not depend on $g$, and  $D_\Gamma=S$, so we see that condition~(AC) is violated. 
  On the other hand, if we blow-down the $(-2)$-curve $S$ onto a 
 point, we get a singular surface on which Theorem~\ref{thm:semi-analytic} applies. 
  \end{eg}
  
The relevance of condition (AC)  comes from the following lemma. 
  We resume the context of  Lemma~\ref{lem:qneq0}.
 
 \begin{lem} \label{lem:active_component}
 Let $s\in \Crit(\pi_g)$ be such that $X^g_s$ reduces to type $I_b$, $b\geq 1$. Let
 $C$ be a branch of $\sigma_g$ accumulating $s$, of type 
 $\Rk^{k}_{p,q}$ with $q\neq 0$. Let  $(w_n)$ be a sequence of points of $C\setminus \Tor(B_g^\circ)$
converging toward $s$. Pick an arbitrary  sequence $\xi_n\in X^g_{w_n}$.  If $A$ is any active component 
  of $X_s^g$,  then $L_g^0(\xi_n)$ accumulates $A$ along a non-trivial curve.
  \end{lem}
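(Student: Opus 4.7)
The plan is to combine the stable reduction of \S\S\ref{par:multiple_fibers}--\ref{par:stable_reduction} with an explicit coordinate computation in the description of \S\ref{par:type-Ib}. First I reduce to the case where the central fiber of the ambient surface is of pure type $I_b$ with $b\geq 1$: after replacing $g$ by a positive iterate fixing each component of $X_s^g$, the reduction procedure produces an auxiliary K\"ahler surface $Y$, a Halphen twist $g_Y$ on $Y$, and a finite map $P\colon Y\to X$ with $P\circ g_Y = g^m\circ P$ for some $m\geq 1$, such that $\pi_{g_Y}^{-1}(\tilde s)$ is a cycle of type $I_b$ and $P$ maps its components (possibly after quotienting by the deck group of $P$) onto the active components of $X_s^g$. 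Since the circles $L_g^0(\xi_n)$ lift to circles of the same slope $(p,q)$ in the fibers of $\pi_{g_Y}$ over the preimages of $w_n$, it suffices to prove the analogous statement on $Y$.

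Fix an irreducible component $\tilde A$ of $\pi_{g_Y}^{-1}(\tilde s)$ together with a local section $\sigma$ of $\pi_{g_Y}$ meeting $\tilde A$ at a smooth point. Following \S\ref{par:type-Ib}, a neighborhood of $\tilde A\setminus\{\mathrm{nodes}\}$ is identified with the quotient of $\disk_R\times \C^\times$ by the equivalence $(w,v)\sim (w,w^b v)$, so that $\tilde A\setminus\{\mathrm{nodes}\}$ corresponds to $\{w=0\}\simeq \C^\times$. In this chart the circle $L_{g_Y}^0(\tilde\xi_n)\subset Y^{g_Y}_{w_n}$ lifts to the arc
\[
v(t) = v_0^{(n)}\, e^{2\pi i t p}\, w_n^{btq},\qquad t\in [0,1],
\]
for some $v_0^{(n)}\in\C^\times$. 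Normalizing the lift so that $|v_0^{(n)}|\in [1,|w_n|^{-b}]$ and passing to a subsequence, we may assume $\arg v_0^{(n)}\to \phi\in\R/2\pi\Z$.

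Two observations then close the argument: (a) as $t$ varies over $[0,1]$, $\log|v(t)|$ varies linearly over an interval of length $b|q|\cdot|\log|w_n||$, which is exactly $|q|$ times the period $b|\log|w_n||$ of the identification $\sim$ in the $|v|$-direction, so in the quotient the arc meets any level circle $\{|v|=r_\star\}$ in exactly $|q|$ points; (b) the slope of $v(\cdot)$ in log-polar coordinates, namely $(2\pi p + bq\arg w_n)/(bq\log|w_n|)$, tends to $0$ as $n\to\infty$. A direct computation shows that the $|q|$ intersection points, renormalized to lifts with $|v|=r_\star$, have arguments converging to $\phi+2\pi j/q$ for $j=0,\ldots,|q|-1$, which are pairwise distinct because $(p,q)$ is primitive. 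Letting $r_\star$ range over $(0,\infty)$, the accumulation set of $L_{g_Y}^0(\tilde\xi_n)$ in $\tilde A$ therefore contains $|q|$ distinct radial rays in $\tilde A\setminus\{\mathrm{nodes}\}\simeq\C^\times$, each of them a non-trivial curve; pushing forward under $P$ produces non-trivial curves in the active component $A\subset X_s^g$.

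The main obstacle is the bookkeeping in the first step: for each possible type (${}_m I_b$, $I_b^*$, and their blow-ups) one has to verify that the reduction (base change of degree $m$, blow-ups to nodalize the unstable singularities, further base change, and contraction of $(-1)$-curves) identifies the components of the resulting $I_b$ cycle on $Y$ precisely with the images of the active components of $X_s^g$---the subtlest case being $I_b^*$, where by definition only the multiplicity-two components of the original fiber are retained as active. Once this dictionary is in place, the local coordinate computation above, combined with the fact that finite maps send $1$-dimensional subsets to $1$-dimensional subsets, suffices.
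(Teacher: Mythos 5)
Your route is essentially the paper's: reduce to a central fiber of type $I_b$ by observing that the active components survive the reduction process, then work in the local model of \S\ref{par:type-Ib}, where a circle of slope $(p,q)$ in $X^g_{w_n}$ lifts to an arc $v(t)=v_0^{(n)}e^{2\ii\pi tp}w_n^{btq}$ and one studies its degeneration into the central fiber $X^{g,\sharp}_s\simeq\C^\times$. The difference is that the paper only records that a subsequence converges to a translate of a one-parameter subgroup of $\C^\times$ going from $0$ to $\infty$, whereas you claim a precise limit: $\abs{q}$ equally spaced radial rays.

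That precise claim is where there is a genuine gap. Step (b) asserts that the log-polar slope $(2\pi p+bq\arg w_n)/(bq\log\abs{w_n})$ tends to $0$, which implicitly assumes that $\arg w_n$ stays bounded in the branch of the logarithm for which the circles have constant slope $(p,q)$. But the slope is only defined up to the monodromy $(p,q)\mapsto(p+bq,q)$: if the branch $C$ winds infinitely often around $s$ as it converges to $s$, then either the continued argument of $w_n$ is unbounded for the fixed label $(p,q)$, or, using the principal branch, $p$ must be replaced by $p_n=p+N_nbq$ with $N_n\to\infty$. Nothing excludes such spiraling at this stage: Lemma~\ref{lem:qneq0} only says that one end of $C$ converges to $s$, the toy computation of \S\ref{par:notes_on_gamma} (case $k=0$, $q\neq0$) shows that branches of $\Rk^{\alpha,\beta}_{p,q}$ which are logarithmic spirals in the $w$-coordinate genuinely occur, and the non-spiraling of $\sigma_g$ is only established later, in Theorem~\ref{thm:semi-analytic}, whose proof uses the present lemma. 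In the spiraling case the slope in (b) may converge to a nonzero constant (or diverge), the cancellation producing the arguments $\phi+2\pi j/q$ breaks down, and the limit of the circles is a genuine logarithmic spiral (or a circle $\{\abs{v}=r\}$) rather than a union of radial rays. The conclusion of the lemma still holds — since $q\neq0$, $\log\abs{v(t)}$ sweeps a full fundamental domain, so the circles meet every level $\{\abs{v}=r_\star\}$ and are locally long, almost straight segments in the $\log v$-coordinate, whose limits are translates of one-parameter subgroups — but to repair your argument you must drop the radial-ray claim and allow such general limits, which is exactly what the paper's formulation does. (A minor further point: for unstable fibers the recovery of $X_V$ from the reduced model is only up to bimeromorphic equivalence and a finite quotient, not a finite morphism $P\colon Y\to X$; since the active components are by definition those not contracted, this is harmless, and the paper glosses it in the same way.)
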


 \begin{proof}
 Since $A$ is not contracted during the reduction process, we can assume that $X^g_s$ is already of 
  type $I_b$. We now rely on  the description  of fibers of type $I_b$ given in 
  \S\S~\ref{par:local_sections} and~\ref{par:type-Ib}. 
  On a small disk $V\subset B_g$ containing $s$, we pick a local section of $\pi_g$  intersecting $A$ at a 
  smooth point of $X^g_s$ and we construct the surface 
  $X_V^{g,\sharp}$, with central fiber $X^{g,\sharp}_s$ corresponding to $A$. 
 Since $q\neq 0$, the circle $L_g^0(\xi_n)$ is not homotopic to a vanishing cycle, 
 so its length is bounded from below by the injectivity radius of $X$.  
 More precisely, 
 we can extract a subsequence   
 $L_g^0(\xi_{n_i})$ that converges in $X^{g,\sharp}_V$  towards a 
 logarithmic spiral in the central fiber $X^{g,\sharp}_s  \simeq \C^\times$. 
 Here,  by a  {logarithmic spiral}  we mean a   translate of a one
parameter subgroup that goes from $0$ to $\infty$ in the complex multiplicative group $\C^\times$. 
The result follows. 
\end{proof}

\begin{proof}[Proof of Theorem~\ref{thm:semi-analytic}] 
By assumption $\Gamma$ satisfies~(AC). We shall revisit the choice of the Hal\-phen twists $g$ and $h$ from~\S\ref{par:choice_halphen_twist}. 
First, we choose $g$ with the required property in (AC); an extra condition will also be imposed on $h$ (see below).

From the first lines of \S\ref{subs:preparation_strategy}, we may assume 
$\Tang^{\mathrm{tt}}(\pi_g, \pi_h)\neq \emptyset$, and we have to show that  $\sigma_g$ admits a
semi-analytic continuation to $B_g$. Fix $s\in \Crit(\pi_g)$ and small disks  $V\Subset V'$ centered at $s$, as in~\S~\ref{par:local_Rabpq}. If the singular fiber $X^g_s$ reduces to type $I_0$
then $\sigma_g$ is semi-analytic at $s$ by Lemma~\ref{lem:typeI0}. If it reduces to type $I_b$, $b\geq 1$, 
any branch of $\sigma_g$ with $q=0$ is semi-analytic by Lemma~\ref{lem:q=0}. 
By Proposition~\ref{pro:finite_number_components}, 
$\sigma_g$ has   finitely many components near $s$;  thus, we only need to show that any given branch of $\sigma_g$ with  $q\neq0$ is semi-analytic.

Fix a branch $\gamma$ converging to $s$, of type $\Rk^{\alpha,\beta}_{p,q}$ for some $q\neq 0$.  
Since $\Tang_\Gamma$ contains $\mathrm D_\Gamma$, we may assume that there is an active component $A\subset X^g_s$ which is not contained in any fiber of $h$.  Now, fix an invariant, ergodic probability measure $\mu$ such that $\mu_g$ evenly charges $\gamma$.
  We will argue as 
in Lemma~\ref{lem:local_extension_gamma}, except that the 
uniform geometric estimates from \S\ref{par:geometry_of_g-orbits} are replaced by 
Lemma~\ref{lem:active_component}.

The details are as follows. Let $r$ be such that any logarithmic spiral in $A$ contains an arc of size $r$. Fix 
$\e\ll\delta\ll r$.  
Set $k:=\abs{\Tang^{\mathrm{tt}}(\pi_g,\pi_h)\cap X_s^g}$; $k<+\infty$ by definition of $\Tang^{\mathrm{tt}}(\pi_g,\pi_h)$.  Identify $V$ with $\disk_R$, and for $0<R'<R$,
consider the set $\pi_g^{-1}(\disk_{R'})\cap \Tang^{\mathrm{tt}}(\pi_g,\pi_h)$. As $R'$ converges towards $0$, 
this open subset of $\Tang^{\mathrm{tt}}(\pi_g,\pi_h)$ converges towards the finite set 
$\Tang^{\mathrm{tt}}(\pi_g,\pi_h)\cap X_s^g$ in 
the Hausdorff topology. Thus, if $R'$ is small enough, 
$\pi_g^{-1}(\disk_{R'})\cap \Tang^{\mathrm{tt}}(\pi_g,\pi_h)$ is a union of $k$ subsets, 
each of   diameter $\leq \e$; 
its projection under $\pi_h$ is contained in a union of $k$ disks $\Delta_i\subset B_h$, 
each of diameter less than $O(\e)$.

%Set $k:=\abs{\Tang^{\mathrm{tt}}(\pi_g,\pi_h)\cap A}$.  Identify $V$ with $\disk_R$, and for $0<R'<R$,
%consider the set $\pi_g^{-1}(\disk_{r'})\cap \Tang^{\mathrm{tt}}(\pi_g,\pi_h)$. Since $A$ is not contracted 
%by $\pi_h$, as $R'$ converges towards $0$, 
%this open subset of $\Tang^{\mathrm{tt}}(\pi_g,\pi_h)$ converges towards the finite set  $\Tang^{tt}(\pi_g,\pi_h)\cap A$ in 
%the Hausdorff topology. Thus, if $R'$ is small enough, $\pi_g^{-1}(\disk_{r'})\cap \Tang^{\mathrm{tt}}(\pi_g,\pi_h)$ is a union of $k$ subsets, 
%each of which of diameter less than $\e$; its projection under $\pi_h$ is contained in a union of $k$ disks $\Delta_i\subset B_h$, 
%each of diameter less than $O(\e)$.

Fix a sequence  $(w_n)$ in $\gamma\setminus\Tor(B_g^\circ)$  converging towards $s$.
For large $n$, pick a point $\xi_n\in X_{w_n}^g$ which is contained in a small square above $\gamma$ on which 
$\mu$ restricts to a smooth measure, as in Section~\ref{par:local_structure}; 
the orbit $L_g^0(\xi_n)$ is a circle in $X_{w_n}^g$ and by Lemma~\ref{lem:active_component}  a subsequence of $L_g^0(\xi_n)$ converges to a logarithmic spiral in $A$.  
Changing $\xi_n$ into another point $\xi'_n \in 
L_g^0(\xi_n)$, we can therefore assume that (a) $\xi_n$ is in the support of $\mu$, 
(b) $w_n':=\pi_h(\xi'_n)$ is $\delta$-far 
from $\Crit(\pi_h)\cup \bigcup_{i=1}^k \Delta_i$, and (c) $w_n'$ is not in $\Tor(B_h)$; 
then, (d) $L_h^0(\xi'_n)$ is a circle in $X_{w'_n}^h$, whose size at every point is 
bounded from below by a constant that 
does not depend on $n$ (here we apply Lemma~\ref{lem:circle_size} to $h$). 

The set $\pi_g^{-1}(\disk_R)\cap X_{w'_n}^h$ is an open subset of $X_{w'_n}^h$ that
contains $\xi_n'$. If $w'_n$ is close enough to $s$ and $R$ is small, we may assume that the connected
component $V'_n$ of this open set that contains $\xi_n'$ contains a unique point $s'$ of $A$,  that $V'_n$
is a disk, and that $\pi_{g}\rest{V'}\colon V'_n\to V$ is a covering which is ramified at $s'$ only 
(indeed, property (b) above
implies that $\Tang^{\mathrm{tt}}(\pi_g, \pi_h)$ does not intersect $V'$). Consider
the connected component $I_n\subset V'_n$ of $L_h^0(\xi_n')$ that contains $\xi_n'$. The projection 
$\pi_g(I_n)$ is locally contained in ${\gamma}$ around $w_n$.  
If  $I_n$ did not contain $s'$, then $\pi_{g}\rest{I_n}$ would have no ramification point, so $\pi_g(I_n)$ would be an arc with boundary points   in $\partial V$; 
being contained in this arc, $\gamma$ would not converge to $s$, a contradiction.  
Thus $I_n$ contains $s'$ and $\pi_g(I_n)$ is a 
semi-analytic subset of the disk $V$ that contains $s$.  It  is   smooth and analytic  
 in  $V\setminus\set{s}$, and   $\gamma$ is a component of $\pi_g(I_n)\setminus\set{s}$; therefore 
 $\overline \gamma$ is semi-analytic, as desired. 
 \end{proof}

%%%%%%%%%%%%%%%%%%%%%%%%%%%%%%%%%%%%%%%%%%%%%%
%%%%%%%%%%%%%%%%%%%%%%%%%%%%%%%%%%%%%%%%%%%%%%
\section{Finitely many invariant measures: proof of Theorem \ref{thm:finiteness}}\label{sec:finiteness}
%%%%%%%%%%%%%%%%%%%%%%%%%%%%%%%%%%%%%%%%%%%%%%
%%%%%%%%%%%%%%%%%%%%%%%%%%%%%%%%%%%%%%%%%%%%%%

Let as usual $X$ be a compact Kähler surface and $\Gamma$ be a non-elementary subgroup of $\Aut(X)$ containing a parabolic element. 
We want to show the following alternative:
\begin{itemize}
\item either $(X, \Gamma)$ is a Kummer example,
\item or there are only finitely many  $\Gamma$-invariant ergodic  measures  with a Zariski dense support. \end{itemize}
It is shown in \S~\ref{subs:infinitely_many_measures} that Kummer groups can indeed admit infinitely many ergodic totally real measures ({i.e.\ } with $d_\C(\mu)=2$ and $d_\R(\mu)=2$). 

\begin{lem}\label{lem:unique_abso_conti}
There is at most one $\Gamma$-invariant, ergodic probability measure which is absolutely continuous with respect to the Lebesgue measure. \end{lem}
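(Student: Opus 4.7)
The plan is to combine case~(d) of Theorem~\ref{thm:main} with the classical fact that distinct ergodic invariant probability measures are mutually singular.

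First I would eliminate the irrelevant cases of Theorem~\ref{thm:main}. If $\mu$ is $\Gamma$-invariant, ergodic, and absolutely continuous with respect to Lebesgue measure on $X$, then cases~(a), (b), (c) are all excluded: in (a) and (b), $\mu$ is carried by a proper algebraic subset; in (c), $\mu$ is carried by a totally real analytic surface $\Sigma$, which has real dimension~$2$ in a complex surface of real dimension~$4$ and hence Lebesgue measure zero. So any such $\mu$ falls into case~(d). By the remark following Theorem~\ref{thm:main}, the exceptional algebraic set is independent of $\mu$ and equals $\STang_\Gamma$. Writing $Z=\STang_\Gamma$ and fixing a real-analytic volume form on $X$, any such $\mu$ has a real-analytic density $\xi\colon X\setminus Z\to \R_{\geq 0}$, with $\mu(Z)=0$ and $\supp(\mu)=X$.

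Now let $\mu_1,\mu_2$ be two such measures with densities $\xi_1,\xi_2$. The key geometric observation I would establish is that $\{\xi_1>0\}\cap\{\xi_2>0\}$ has full Lebesgue measure in $X$. Indeed, $\xi_i$ cannot vanish identically on any non-empty open subset of $X\setminus Z$, for otherwise $\mu_i$ would be carried by the complement of that open set, contradicting $\supp(\mu_i)=X$ together with $\mu_i(Z)=0$. Since $X$ is a smooth complex projective surface and $Z$ is a proper algebraic subset, the open set $X\setminus Z$ is connected; consequently $\{\xi_i=0\}\cap(X\setminus Z)$ is a proper real-analytic subset and therefore has Lebesgue measure zero, which yields the claim.

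I would conclude by contradiction. Assuming $\mu_1\neq \mu_2$, a standard argument gives $\mu_1\perp\mu_2$: the Lebesgue decomposition of $\mu_1$ with respect to $\mu_2$ is $\Gamma$-invariant by uniqueness; ergodicity of $\mu_1$ forces the absolutely continuous and singular parts to be multiples of $\mu_1$; and if the absolutely continuous part is non-trivial then $d\mu_1/d\mu_2$ is $\Gamma$-invariant and hence $\mu_2$-a.e.\ constant by ergodicity of $\mu_2$, forcing $\mu_1=\mu_2$. But $\mu_1\perp\mu_2$ means precisely $\xi_1\xi_2=0$ Lebesgue-almost everywhere, contradicting the preceding paragraph.

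I do not foresee any genuine obstacle here: once Theorem~\ref{thm:main} is in hand, the argument is soft. The only point that deserves a careful statement is the uniformity of the exceptional set $Z$ across all absolutely continuous invariant measures, which is guaranteed by the explicit identification $Z=\STang_\Gamma$ in the remarks following Theorem~\ref{thm:main}.
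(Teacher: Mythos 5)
Your proof is correct, and it starts exactly as the paper does: reduce to case~(d) of Theorem~\ref{thm:main}, so that each measure has a real-analytic density with respect to a fixed volume form outside a proper (real-)analytic set, positive on a set of full Lebesgue measure because the support is all of $X$. Where you diverge is in the final step. The paper argues directly: on the common locus where both densities $\xi$ and $\xi'$ are positive and analytic, the ratio $\xi'/\xi$ is a continuous $\Gamma$-invariant function (the Jacobian factors cancel), hence constant by ergodicity, and the constant is $1$ since both are probability measures giving no mass to the exceptional sets. You instead invoke the general dichotomy that two distinct ergodic invariant probability measures are mutually singular, and rule out singularity because $\xi_1\xi_2>0$ Lebesgue-almost everywhere. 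The two routes are close in substance — your mutual-singularity argument internally reproduces the same mechanism (an invariant Radon–Nikodym derivative is a.e.\ constant by ergodicity) — but the packaging differs: your version needs only that each density is positive Lebesgue-a.e.\ (analyticity and full support enter solely to guarantee that the zero set is null, and the connectedness of $X\setminus Z$ you use is not even essential, since full support already forbids the density from vanishing on any open set), whereas the paper exploits the analytic structure more directly and gets a shorter proof. Your identification $Z=\STang_\Gamma$, uniform in $\mu$, is true but not needed for either argument; the paper simply allows the exceptional sets $B(\mu)$, $B(\mu')$ to depend on the measure and works on the complement of their union.
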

\begin{proof}  Let $\mu$ and $\mu'$ be such measures. 
Fix a real analytic volume form $\omega$ on $X$, for instance $\omega= \kappa\wedge \kappa$ for some Fubini-Study form. 
From Theorem~\ref{thm:main},  $d\mu(x)=\xi(x) \omega$ (resp. $d\mu'(x)=\xi'(x) \omega$) for some function $\xi$ (resp. $\xi'$) which is positive and real analytic on the complement of
some proper real analytic subset $B(\mu)$ (resp. $B({\mu'})$). 
On $X\setminus (B(\mu)\cup B(\mu'))$,  the function $\xi'/\xi$ is continuous and $\Gamma$-invariant; since 
$\mu$ is ergodic, $\xi'/\xi$ is constant, and $\mu'=\mu$. \end{proof}

According to Theorem~\ref{thm:main} and Lemma~\ref{lem:unique_abso_conti}, we are interested only in measures of type~(c): 
those with a totally real support. To prove the finiteness we revisit the proof of Proposition~\ref{pro:analytic_sigma}
and use an alternative similar to that of \S\ref{par:proof_pro_analytic_sigma}; indeed, exactly
 one of the following two situations 
 holds (note that the  quantifiers are switched as compared with the alternative of \S\ref{par:proof_pro_analytic_sigma}):  
 \begin{enumerate}
\item[(A1')]  $\Tang^{\mathrm{tt}}(\pi_{g}, \pi_{h}) = \emptyset$   for every pair $g, h$ in $\Hal(\Gamma)$ such that $\pi_g\neq \pi_h$ (recall   the convention of  Remark~\ref{rem:pig}); 
\item[(A2')]  there exists $g, h$ in $\Hal(\Gamma)$ such that 
  $\Tang^{\mathrm{tt}}(\pi_{g}, \pi_{h})  \neq \emptyset$.
\end{enumerate}
 
 Theorem~\ref{thm:finiteness} is then an immediate consequence of the following two lemmas. 
 
 \begin{lem}\label{lem:A1'}
 If      Alternative~$(A1')$ holds, then $(X, \Gamma)$ is a Kummer group. 
 \end{lem}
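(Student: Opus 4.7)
The proof will build on the strong geometric restriction imposed by $(A1')$ to reconstruct a torus structure on $X$. First, by assumption $\Gamma$ is non-elementary and contains a parabolic element, so by \cite{stiffness, finite_orbits} it contains Halphen twists preserving at least two distinct fibrations; fix such a pair $g, h \in \Hal(\Gamma)$ with $\pi_g\neq \pi_h$. By the hypothesis $\Tang^{\mathrm{tt}}(\pi_g,\pi_h)=\emptyset$ and Lemma~\ref{lem:isotrivial}, \emph{both} fibrations $\pi_g$ and $\pi_h$ are isotrivial. Moreover, the same argument applied with any third $k\in\Hal(\Gamma)$ shows that every fibration preserved by an element of $\Hal(\Gamma)$ is isotrivial.

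Next, I would pass to the contracted model $\eta\colon X\to X_0$ of Proposition~\ref{pro:contraction_of_DGamma}, which removes the invariant divisor $D_\Gamma$. On the complement of the (finite) images of $D_\Gamma$ and of the projections of multiple/critical fibers, the foliations $\mathcal F_g$ and $\mathcal F_h$ are two transverse holomorphic foliations by elliptic curves, so $TX_0$ splits holomorphically as $T\mathcal F_g\oplus T\mathcal F_h$ on a Zariski open set. Isotriviality of $\pi_g$ means that, after a finite ramified base change $\tilde B_g\to B_g$ killing its monodromy and multiple fibers, the pulled-back fibration becomes bimeromorphic to a trivial product $E_g\times \tilde B_g$; the same holds for $\pi_h$. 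A standard Bogomolov/Beauville type argument (or direct construction, using that the transverse foliation $\mathcal F_h$ provides a flat Ehresmann connection on $\pi_g$, forcing $\tilde B_g$ itself to be an elliptic curve) then produces a finite étale cover of a Zariski open subset of $X_0$ by a product of elliptic curves $A=E_g\times E_h$. By standard arguments on extension of étale covers across codimension $\geq 2$ sets and purity, this extends to a generically finite cover $p\colon A\to X_0$ ramified at most over a finite set, and $X_0$ is therefore birational to a finite quotient of an abelian surface.

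Third, I would lift the group action. Each Halphen twist $g\in\Gamma$ acts by translations on the fibers of $\pi_g$ (\S\ref{par:Halphen-intro}); under the trivialization, this translation extends to the $E_g$-factor of $A$ as a $\tilde B_g$-parametrized family of translations. The transverse foliation $\mathcal F_h$, being globally defined and identified with the $E_h$-factor, forces the parameter dependence to be itself a translation in $E_h$, so the lift of $g$ to $A$ is affine. The same applies to $h$, and since $\Gamma$ is generated by a ping-pong family of such Halphen twists (Lemma~\ref{lem:schottky-parabolic}), a finite index subgroup of $\Gamma$ lifts to $\Aut_{\mathrm{aff}}(A)$. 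This is precisely the definition of a Kummer group from \cite{finite_orbits}, concluding the proof.

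The hardest step is the middle one: promoting the local product structure (which is immediate from transversality plus isotriviality on small open sets) to a global étale cover by an abelian surface. The delicate issues are (i) reconciling the monodromies of the two base changes $\tilde B_g\to B_g$ and $\tilde B_h\to B_h$ so that the two trivializations come from a \emph{single} cover, and (ii) controlling the structure along the fibers of $D_\Gamma$-type components and the singular fibers (which, by isotriviality, can only be of types $_m I_0$, $II$, $III$, $IV$, or their starred versions). Both (i) and (ii) are purely algebro-geometric and should follow from the simultaneous $g$- and $h$-invariance, but writing them down cleanly will occupy the bulk of the proof.
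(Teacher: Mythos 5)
Your overall strategy is the same as the paper's (isotriviality via Lemma~\ref{lem:isotrivial}, contraction of $\mathrm D_\Gamma$ as in Proposition~\ref{pro:contraction_of_DGamma}, the two transverse elliptic foliations, realization of $X_0$ as a finite quotient of a product of elliptic curves, then affine lifting), but the central step is not actually carried out, and the tools you invoke do not discharge it. Removing ``the projections of multiple/critical fibers'' deletes divisors, not a codimension-two set, so Zariski--Nagata purity does not let you extend a finite \'etale cover of that open set across them, and a Bogomolov--Beauville splitting argument does not by itself produce the cover either. There is also a gap at the very start: for an arbitrary pair $g,h\in\Hal(\Gamma)$ with distinct fibrations, a common component of a fiber of $\pi_g$ and of $\pi_h$ is only $\langle g,h\rangle$-periodic, hence need not lie in $\mathrm D_\Gamma$; if such a component exists it survives in $X_0$ and the two foliations are tangent along a \emph{curve} there, contradicting your ``transverse outside a finite set'' claim. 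The paper avoids both problems by choosing $g,h$ via \cite{finite_orbits} (Prop.~3.12) so that $\Tang^{\mathrm{ff}}(\pi_g,\pi_h)=\mathrm D_\Gamma$ exactly, and then constructing the cover by hand: the holonomy of $\mathcal F_h$ over $B_g^\circ$ preserves the finite sets $X^h_{w'}\cap X^g_{w_0}$, hence has finite image $H\subset\Aut(E)$; one then builds $\Psi\colon X_0\to E'/H'\times E/H$ and $\Phi\colon E'\times E\to X_0$ by holonomy transport, analytic continuation from a cross $\disk'\times E\cup E'\times\disk$ to $(E'\times E)\setminus(F'\times F)$, and a Hartogs extension at the finite residual set. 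This construction (note that $\Phi$ is ramified over $\Sing(X_0)$, so it is not an \'etale cover to be extended by purity) is precisely what you defer as ``purely algebro-geometric'' routine; it is the bulk of the lemma.

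The final lifting step is also incorrect as stated. $\Gamma$ is an arbitrary non-elementary group containing a parabolic element; it is \emph{not} generated by Halphen twists, and Lemma~\ref{lem:schottky-parabolic} only provides a free subgroup, typically of infinite index. Lifting the twists to affine maps of $E_g\times E_h$ therefore does not show that the whole group acts affinely, which is what the definition of a Kummer group requires. The paper closes this with an argument valid for \emph{every} automorphism of $X_0$: once $X_0=(E'\times E)/G$ with $G$ finite, any $f\in\Aut(X_0)$ lifts to $\C^2$, and the class of its differential in $\GL_2(\C)/L(G)$ is a holomorphic, lattice-invariant map into an affine variety, hence constant, so the lift is affine. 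You need an argument of this kind, applying to all of $\Gamma$ and not only to the Halphen twists, to conclude that $(X,\Gamma)$ is a Kummer group.
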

 
 \begin{lem}\label{lem:A2'}
  If      Alternative~$(A2')$ holds, then there are only finitely many ergodic, $\Gamma$-invariant probability 
  measures with a Zariski dense support. 
 \end{lem}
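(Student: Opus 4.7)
The plan is to reduce to two sub-tasks: (a) exhibit a \emph{finite} list of analytic subsets of $X$ such that the support of any $\mu$ of type~(c) is a union taken from this list; and (b) check that there is at most one ergodic invariant probability measure with a given such support. I fix once and for all a pair $g, h \in \Hal(\Gamma)$ with $\Tang^{\mathrm{tt}}(\pi_g, \pi_h) \neq \emptyset$, as provided by hypothesis $(A2')$. Let $\mu$ be an ergodic $\Gamma$-invariant probability measure with Zariski dense support, so that $d_\C(\mu) = 2$; by Theorem~\ref{thm:main}, $\mu$ is of type~(c) or of type~(d). Lemma~\ref{lem:unique_abso_conti} disposes of type~(d), so it is enough to bound the number of measures of type~(c).

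For such $\mu$, Remark~\ref{rem:any_Halphen} yields $\mu_{g}(\Rk_g) = \mu_{h}(\Rk_h) = 1$, and Lemma~\ref{lem:analytic_continuation_gamma_general} produces intrinsic analytic curves $\sigma_g \subset B_g^\circ$ and $\sigma_h \subset B_h^\circ$ satisfying $\mu_{g}(\sigma_g) = \mu_{h}(\sigma_h) = 1$ \emph{for every} ergodic $\mu$ of type~(c). Proposition~\ref{pro:finite_number_components} guarantees that $\sigma_g$ and $\sigma_h$ have only finitely many irreducible components. I then set
\begin{equation*}
\tilde{\Sigma} := \bigl(\pi_g^{-1}(\sigma_g) \cap \pi_h^{-1}(\sigma_h)\bigr) \setminus \STang(\pi_g, \pi_h).
\end{equation*}
Following the argument of~\S\ref{par:proof_pro_analytic_sigma} (case~(A2)), $\tilde \Sigma$ is a $2$-dimensional, totally real C-analytic subset of $X \setminus \STang(\pi_g, \pi_h)$ with finitely many irreducible components $C_1, \ldots, C_N$, and crucially \emph{this collection depends only on $g$ and $h$}, not on $\mu$. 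Since $\STang(\pi_g, \pi_h)$ is a proper algebraic subset of $X$, $\mu(\tilde \Sigma) = 1$ for every $\mu$ under consideration.

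To each such $\mu$ I associate the subset $S(\mu) := \{\, j : \mu(C_j) > 0 \,\} \subset \{1, \ldots, N\}$; by Proposition~\ref{pro:analytic_sigma}(4), the analytic density is positive on a dense open subset of each $C_j \in S(\mu)$, so every such $C_j$ is evenly charged and $\supp(\mu) = \overline{\bigcup_{j \in S(\mu)} C_j}$. There are at most $2^N$ possibilities for $S(\mu)$, so it suffices to show that $\mu$ is determined by $S(\mu)$. Let $\mu_1, \mu_2$ be two ergodic $\Gamma$-invariant measures of type~(c) with $S(\mu_1) = S(\mu_2) = S$. By Proposition~\ref{pro:analytic_sigma}(4), both $\mu_1$ and $\mu_2$ have positive real-analytic densities on the regular part of each $C_j$, $j \in S$, with respect to a fixed real-analytic area form; hence they are mutually absolutely continuous. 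The Radon--Nikodym derivative $d\mu_1 / d\mu_2$ is then a $\Gamma$-invariant measurable function, and ergodicity of $\mu_2$ forces it to be constant; normalization yields $\mu_1 = \mu_2$.

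The technical core of the argument lies in the previous sections: Lemma~\ref{lem:analytic_continuation_gamma_general} secures the intrinsic character of $\sigma_g$ (and genuinely uses $\Tang^{\mathrm{tt}}(\pi_g, \pi_h) \neq \emptyset$), while Proposition~\ref{pro:finite_number_components} elevates this to an a~priori finite bound on the number of components via the delicate local study near fibers of type $I_b$. Given these ingredients, the only point to verify here is that the intrinsic decomposition $\tilde\Sigma = C_1 \cup \cdots \cup C_N$ is compatible with the local description of $\supp(\mu)$ from Proposition~\ref{pro:analytic_sigma}, so that the density-based uniqueness argument applies without interference from singular strata; this I expect to be routine given the evenly-charging property already established.
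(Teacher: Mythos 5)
Your overall scheme (the measure-independent curves $\sigma_g,\sigma_h$ from Lemma~\ref{lem:analytic_continuation_gamma_general}, the finiteness statement of Proposition~\ref{pro:finite_number_components}, and an ``at most one ergodic measure per component'' step based on positivity and analyticity of the density) is the same as the paper's, but there is a genuine gap at the pivotal step: you assert that $\tilde\Sigma=\bigl(\pi_g^{-1}(\sigma_g)\cap\pi_h^{-1}(\sigma_h)\bigr)\setminus\STang(\pi_g,\pi_h)$ has \emph{finitely many} irreducible components $C_1,\dots,C_N$, ``following the argument of \S\ref{par:proof_pro_analytic_sigma}, case (A2)''. That argument does not give this. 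In case (A2) of \S\ref{par:proof_pro_analytic_sigma}, finiteness of components is proved only for the union of components \emph{charged by a fixed measure} $\mu$, via the fact that each charged component is stabilized by a finite-index subgroup of $\Gamma$; the index, hence the bound, depends on $\mu$, so it cannot be recycled into a measure-independent list. For $\tilde\Sigma$ itself, C-analyticity only gives a \emph{locally finite} decomposition in the non-compact set $X\setminus\STang(\pi_g,\pi_h)$, and nothing prevents infinitely many components from accumulating along the singular fibers of $\pi_g$ or $\pi_h$: the components of $\sigma_g$ are finite in number, but the non-compact ones have their ends at $\Crit(\pi_g)$, and as $w\to\Crit(\pi_g)$ along such a component the branched covers $\pi_h\rest{X^g_w}$ degenerate, so connected pieces of $X^g_w\cap\pi_h^{-1}(\sigma_h)$ can appear and disappear infinitely often, producing infinitely many components of $\tilde\Sigma$ upstairs. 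Ruling this out is essentially the semi-analyticity problem at $\STang$, which the paper only solves under the extra hypothesis (AC) (Theorem~\ref{thm:semi-analytic}); Lemma~\ref{lem:A2'} is precisely designed to avoid needing it.

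The paper circumvents the difficulty by a compactness trick that your proof omits: Proposition~\ref{pro:finite_number_components} is used not merely to say $\sigma_g$ has finitely many components, but to produce a compact $K_g\Subset B_g^\circ$ met by \emph{every} component of $\sigma_g$ (and similarly $K_h$), so that every relevant measure gives positive mass to the compact set $\pi_g^{-1}(K_g)\cap\pi_h^{-1}(K_h)$; inside that compact set the regular locus of $\pi_g^{-1}(\sigma_g)\cap\pi_h^{-1}(\sigma_h)$, being semi-analytic, has finitely many connected components by general theory (\cite[Cor.~2.7]{bierstone-milman}), and each such component supports at most one ergodic invariant measure by the density argument you also use. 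If you replace your global decomposition $C_1,\dots,C_N$ by this localization to a fixed compact region, the rest of your argument (mutual absolute continuity of two measures charging the same component, ergodicity forcing equality) goes through; as written, however, the finiteness of $N$ is unjustified and is exactly the crux of the lemma.
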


 \begin{proof}[Proof of Lemma~\ref{lem:A1'}]
 By Proposition 3.12 in \cite{finite_orbits}, we can choose $g, h\in \Hal(\Gamma)$ which are conjugate in $\Gamma$ and such that every periodic curve for $h\circ g$ is contained in $\mathrm D_\Gamma$. %First,   we can choose $g, h\in \Hal(\Gamma)$   such that (a) the 
%common components of  $\pi_g$ and $\pi_h$ coincide with the components of $\mathrm D_\Gamma$ and
% (b) $h$ is conjugate to $g$. This follows from Proposition 3.12 in \cite{finite_orbits}.  
With such a choice, the 
common components of  $\pi_g$ and $\pi_h$ coincide with the components of $\mathrm D_\Gamma$, 
 and  the foliations ${\mathcal{F}}_g$ and ${\mathcal{F}}_h$ induced by $\pi_g$ and $\pi_h$ are everywhere transverse, 
except along $\Tang^{\mathrm{ff}}(\pi_g,\pi_h)= D_\Gamma$.

\begin{rem} 
If $X\to X_0$ is the  contraction of  $D_\Gamma$ (see Proposition~\ref{pro:contraction_of_DGamma}), the fibrations $\pi_g$ and $\pi_h$ define two genus $1$ fibrations on $X_0$,
and the associated foliations ${\mathcal{F}}_g^0$ and ${\mathcal{F}}_h^0$ are transverse everywhere, except on the finite set $\Sing(X_0)$.
Indeed, a smooth point of the surface can not be an isolated point of tangency between two foliations.
\end{rem}

If $\beta\colon [0,1]\to B_g^\circ$ is a path joining two points $w_0$ and $w$ of $B_g^\circ$, the holonomy of 
${\mathcal{F}}_h$ determines an 
isomorphism $\hol_h(\beta)\colon X_{w_0}^g\to X_{w}^g$; thus, $\pi_g$ is an isotrivial fibration (cf. Lemma~\ref{lem:isotrivial}).  This construction 
defines a representation $\hol_h\colon \pi_1(B_g^\circ; w_0)\to \Aut(X_{w_0}^g)$, the image of which 
fixes the finite
subsets $X_{w’}^h\cap X_{w_0}^g$ for every $w’\in B_h$. Thus, $\hol_h(\pi_1(B_g^\circ; w_0))$ is a finite 
subgroup $H$ of automorphisms 
of the genus $1$  curve $E:=X_{w_0}^g$. A similar argument applies to $\pi_h$ in place of $\pi_g$; we shall denote by 
$E'$ the genus $1$ curve $X_{w_0'}^h$, for some $w_0'$ in $B_h^\circ$, and by $H'$ the corresponding holonomy group. (Note that 
we have $E'\simeq E$ because the 
two fibrations are conjugate by some automorphism of $X$.) We also fix a point $\xi_0\in X$ whose projections are $w_0=\pi_g(\xi_0)$ and $w_0'=\pi_h(\xi_0)$. 

This construction yields  a holomorphic map $\Psi$ from $X\setminus D_\Gamma$, or equivalently   $X_0\setminus \Sing(X_0)$, to $E'/H'\times E/H$, which is defined as follows. To a point $\xi$ in $X\setminus D_\Gamma$, we associate the intersection of the leaf of ${\mathcal{F}}_g$ 
through $\xi$ (i.e. $X_{\pi_g(\xi)}^g$) with the fiber $X_{w_0'}^h$; this gives a unique point  modulo the action of $H'$, hence a point  $\psi'(\xi)\in E'/H'$. Doing the same with respect to ${\mathcal{F}}_h$ and $\pi_g$, 
we get a point $\psi(\xi)\in E/H$, and 
 then we set $ \Psi(\xi)=(\psi'(\xi),\psi(\xi))$. Let $\xi$ be a singularity of $X_0$ and let $U$ be a small 
neighborhood of $\xi$. Then $\psi'(X_{\pi_g(U)}^g)$  is contained in a small disk $V'\subset E'/H'$; similarly,  $\psi(X_{\pi_h(U)}^h)$  
 is contained in a small disk $V\subset E/H$. Thus, $\Psi$ maps $U\setminus \{\xi\}$ in a bidisk $V'\times V$; as a consequence, $\Psi$ extends to $U$, for 
the  singularities of $X_0$ are normal (see \cite[Prop. 3.9]{finite_orbits}). Altogether, this defines a finite ramified cover $\Psi\colon X_0\to E'/H'\times E/H$.
%The map $\Psi$ is a ramified cover from $X_0\setminus \Sing(X_0)$ to $E'/H'\times E/H$; 
%the ramification is contained in the singular or multiple fibers of $\pi_g$ and $\pi_h$. 

We also define a regular map $\Phi$ from $E'\times E$ to $X_0$. 
For this, without loss of generality we 
declare that the neutral element of $X_{w_0'}^h$ is $ \xi_0$ and 
denote by $0$ the neutral element of $E'$; hence,  the pair $(E'\times \{0\}, (0,0))$ is identified 
 to $(X_{w_0'}^h, \xi_0)$ by 
an isomorphism $\varphi_h\colon (E',0)\to  (X_{w_0'}^h, \xi_0)$.
Similarly, we identify $\{0\}\times E$ to $X_{w_0}^g$ via an isomorphism $\varphi_g$ that maps $0$ to $\xi_0$. 
We shall denote by $F'\subset E'$ (resp. $F\subset E$) the finite subset $\varphi_h^{-1}(\pi_g^{-1}(\Crit(\pi_g))$ (resp. $\varphi_g^{-1}(\pi_h^{-1}(\Crit(\pi_h)))$); 
$F'$ corresponds to the intersection of $\varphi_h(E')$ with singular and multiple fibers of $\pi_g$.
If $(u,v)$ is a point of $E'\times E$ close to $(0,0)$, then the fibers $X_{\pi_g(\varphi_h(u))}^g$ 
and $X_{\pi_h(\varphi_g(v))}^h$ have a unique intersection 
point {\textit{near}}~$\xi_0$. This defines  a germ of diffeomorphism $\Phi\colon E'\times E\to X_0$ mapping  
$(0,0)$ to $\xi_0$ and preserving the fibrations; it is defined in a 
   a small bidisk $\disk'\times \disk\subset E'\times E$. Observe
that  the composition $\Psi\circ \Phi$ coïncides with the natural projection 
from $E'\times E$ to $E'/H'\times E/H$. Reducing the bidisk if necessary, this map $\Phi$ extends uniquely to $\disk'\times E$ and provides a local trivialization of the fibration $\pi_g$ above $\pi_g(\disk')$. Similarly, it extends   
to $E'\times \disk$. So $\Phi$ is defined in a ``cross'' of the form  $\disk'\times E \cup E'\times \disk$. 
By analytic continuation, it extends uniquely to 
$(E'\setminus F') \times E$ and to $E'\times (E\setminus F)$, that is, 
  to $(E'\times E)\setminus (F'\times F)$. Moreover, $\Psi\circ\Phi\colon E'\times E\to E'/H'\times E/H$ is the quotient map with respect to the 
action of $H'\times H$. From this, we deduce that the default of injectivity of $\Phi$ is given by a subgroup $G$ of $H'\times H$: $\Phi(p) = \Phi(p')$ if and only if $p'-p\in G$. 
 Since 
$\Psi\colon X_0\to E'/H'\times E/H$ is a finite map, it follows that for each
$(u,v)\in F'\times F$ there is a point $\xi\in X_0$, an open neighborhood $U$ of $\xi$, and an open neighborhood $W$ of $(u,v)$ such that $\Phi$
maps $W\setminus\{(u,v)\}$ into $U$. Embedding $U$ into some affine space, we see by the Hartogs extension theorem that $\Phi$ extends 
to $W$. Thus, $\Phi$ extends to a holomorphic map $E'\times E\to X_0$ which  fits  in a sequence 
\begin{equation}
E'\times  E\xrightarrow{\Phi} X_0 \xrightarrow{\Psi} E'/H'\times E/H,
\end{equation}
such that the composition $\Psi\circ \Phi$ is the natural projection onto the quotient, and the fibers of $\Phi$ are orbits of $G$.

Thus, $X_0$ is a generalized (singular) Kummer surface (see~\cite{finite_orbits}): 
it is a quotient of the abelian surface $E'\times E$ by a finite subgroup 
$G$ of $H'\times H$; the singularities of $X_0$ correspond to the fixed points of elements of $G\setminus\{\id\}$.
Restricting $\Phi$ to the complement of these fixed points, we get a regular finite cover onto the regular part of $X_0$, with $G$ as a group 
of deck transformations. 
Denote $\Lambda$ and $\Lambda'$ lattices in $\C$ such that $E=\C/\Lambda$ and $E'=\C/\Lambda'$; the universal cover of $E'\times E$ is 
$\C^2$, with projection $\C^2\to \C^2/(\Lambda'\times\Lambda)$.
If we think of $X_0$ as an orbifold with quotient singularities, 
its universal cover is $\C^2$. 

From this point, the argument is identical to \cite[Thm. 5.15]{finite_orbits}. 
Let $f$ be an element of $\Aut(X_0)$ and lift it as a holomorphic diffeomorphism 
$F$ of $\C^2$. Its differential 
$DF_{(x,y)}$ at a point $(x,y)\in \C^2$ is an element of $\GL_2(\C)$. Let $L(G)\subset\GL_2(\C)$ be the linear part of 
$G$; it is a finite group, and the class $[DF_{(x,y)}]$ of $DF_{(x,y)}$ in  $\GL_2(\C)/L(G)$ determines a 
holomorphic map that is invariant under translations by the lattice $\Lambda'\times \Lambda$; since  $\GL_2(\C)/L(G)$
is an  affine variety, this map is constant. So, all lifts of all elements of $\Aut(X_0)$ to $\C^2$ are affine maps.

Now, consider the full group $\Gamma\subset \Aut(X)$. It preserves $D_\Gamma$
and induces a subgroup $\Gamma^0$ of $\Aut(X_0)$. By the previous paragraph, 
all elements of $\Gamma^0$ come from affine transformations 
of $E'\times E$. This
proves that $(X_0,\Gamma^0)$ and $(X,\Gamma)$ are  Kummer groups. 
\end{proof}

\begin{proof}[Proof of Lemma~\ref{lem:A2'}] By Lemma~\ref{lem:analytic_continuation_gamma_general},   
there is an analytic  curve  $\sigma_g\subset B_g^\circ$ (resp. 
$\sigma_h \subset B_h^\circ$) such that $\mu_g(\sigma_g) =1$ (resp. $\mu_h(\sigma_h) =1$) for any 
ergodic invariant measure $\mu$. By Proposition~\ref{pro:finite_number_components},  there exists a compact subset $K_g\Subset B_g^\circ$  
such that any component 
 of $\sigma_g$ intersects $K_g$ (and similarly  for $h$). 
In particular any invariant probability
measure gives positive mass to $\pi_g\inv(K_g)\cap \pi_g\inv(K_h)$. In $\pi_g\inv(K_g)\cap \pi_g\inv(K_h)$, the 
regular set of $\pi_g\inv(\sigma_g)\cap \pi_g\inv(\sigma_h)$, which is semi-analytic, has finitely many connected 
components (see~\S\ref{par:semi-analytic-vocabulary} and \cite[Cor. 2.7]{bierstone-milman}). 
By the analyticity of the density in Theorem~\ref{thm:main}.(c), if $\Sigma_0$ is 
such a connected component, there is at most one ergodic invariant probability measure  
giving positive mass to  $\Sigma_0$, and the proof is complete. 
\end{proof}

\section{Orbit closures}\label{sec:closed_invariant}
%%%%%%%%%%%%%%%%%%%%%%%%%%%%%%%%%%%%%%%%%%%%%%
%%%%%%%%%%%%%%%%%%%%%%%%%%%%%%%%%%%%%%%%%%%%%%

\subsection{Sparse subgroups of tori}\label{subs:finitely_many_kpq} 
Let $\e$ be a positive real number. A subset $F$ of a metric space $Z$ is $\e$-dense
if $\{ z\in Z\, ; \, \dist(z,F)\leq \e\} = Z$; thus it is not $\e$-dense whenever 
there is a closed ball of radius $\e$ in $Z$ that
does not intersect $F$. 

Let $G$ be a locally compact abelian group.   
 The space of closed subgroups of $G$  will be denoted by $\mathrm{Sub}(G)$. 
The  Chabauty topology on $\mathrm{Sub}(G)$ is the topology  generated by the sets  
  $$\Omega(K; U_1, \ldots , U_k)=\set{H\in \mathrm{Sub}(G)\; ; \; H\cap K=\emptyset  \text{ and   }
  \forall 1\leq i\leq k, \ H\cap U_i\neq\emptyset},$$
   where  $K$ is an arbitrary    compact subset  of $G$ and the   $U_i $ are  open subsets. 
   A sequence (or net) $H_n$ converges to $H$  for the Chabauty topology if and only if 
   $H_n\cup\set{\infty}$ converges to   $H\cup\set{\infty}$ for the Hausdorff topology in the one-point compactification of $\mathrm{Sub}(G)$. 
  Endowed with this topology, $\mathrm{Sub}(G)$  is a  
   compact Hausdorff space (see~\cite{Cornulier:AGT}).

  Now consider the abelian group $G^\vee = {\mathrm{Hom}}(G;\R/\Z)$  of characters of $G$, which 
  is locally compact when 
   endowed with  the topology of local uniform convergence.
By Pontryagin duality  the natural homomorphism $G\to (G^\vee)^\vee$ is an isomorphism of  topological
groups. According to~\cite[Thm. 1.1]{Cornulier:AGT}, the ``orthogonal'' map
 $ \mathrm{Sub}(G)\to \mathrm{Sub}(G^\vee)$ defined by 
 $H\mapsto H^\vee:=\{ \xi \in G^\vee \; ; \; \xi\rest{H}=0\}$ is a homeomorphism that maps $\{0\}$ to~$G^\vee$.
 
\begin{thm}\label{thm:chabauty}
Let $G$ be a compact, metric, abelian group. For each $\e > 0$, there is a finite set $\{\xi_i ; i\in I\}$ of continuous homomorphisms $\xi_i\colon G\to \R/\Z$ 
that satisfies the following property: if   $H$ is a subgroup of  $G$ which is not $\e$-dense, 
then $H$ is contained in $\mathrm{Ker}(\xi_j)$ for at least one $j\in I$.
\end{thm}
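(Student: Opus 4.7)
The plan is to combine the Chabauty compactness of the space of closed subgroups with Pontryagin duality, converting an ``$\e$-density'' statement on $G$ into a statement about which nontrivial characters can vanish on the subgroup.

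First I would reduce to closed subgroups. If $H\subset G$ is a subgroup that is not $\e$-dense, some $g\in G$ satisfies $d(g,H)>\e$; since $d(g,\overline H)=d(g,H)$, the closure $\overline H$ is also not $\e$-dense. Each kernel $\ker\xi_i$ is closed, so $H\subset \ker\xi_i$ iff $\overline H\subset \ker\xi_i$. Hence it suffices to treat the case $H\in\mathrm{Sub}(G)$.

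Next I would show that
\begin{equation*}
\mathcal{S}_\e := \{H \in \mathrm{Sub}(G) : H \text{ is not } \e\text{-dense}\}
\end{equation*}
is closed, hence compact, in the Chabauty topology. Indeed, suppose $H_n\to H$ and each $H_n$ admits a witness $g_n\in G$ with $d(g_n,H_n)\ge\e$. By compactness of $G$ we may extract $g_n\to g$; for any $h\in H$, Chabauty convergence provides $h_n\in H_n$ with $h_n\to h$, and then $d(g,h)=\lim d(g_n,h_n)\ge\e$. Thus $d(g,H)\ge\e$ and $H\in\mathcal{S}_\e$.

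Now I would cross to the dual side via the orthogonality homeomorphism $\Phi\colon \mathrm{Sub}(G)\to\mathrm{Sub}(G^\vee)$, $H\mapsto H^\vee$, which sends $\{0\}$ to $G^\vee$. Since $G$ is compact metric, $G^\vee$ is discrete. Every $H\in\mathcal{S}_\e$ is a proper closed subgroup (the group $G$ itself is trivially $\e$-dense, so $G\notin\mathcal{S}_\e$), whence $H^\vee\neq\{0\}$. For each $\xi\in G^\vee\setminus\{0\}$, the set
\begin{equation*}
U_\xi := \{K\in\mathrm{Sub}(G^\vee)\; ;\; \xi\in K\}=\Omega(\emptyset;\{\xi\})
\end{equation*}
is open in the Chabauty topology, since $\{\xi\}$ is open in the discrete group $G^\vee$. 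The family $\{U_\xi\}_{\xi\ne 0}$ covers the compact set $\Phi(\mathcal{S}_\e)$, so finitely many $U_{\xi_1},\dots,U_{\xi_n}$ suffice. For every $H\in\mathcal{S}_\e$, some $\xi_i$ lies in $H^\vee$, which means $\xi_i|_H\equiv 0$, i.e.\ $H\subset\ker\xi_i$; this gives the required finite list.

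The main obstacle is the closedness of $\mathcal{S}_\e$ in the Chabauty topology, which is the one place where the compactness of $G$ is used in an essential way (to extract a subsequential limit from the witnesses $g_n$); once this is in hand, the remainder is a direct application of Pontryagin duality together with the discreteness of $G^\vee$.
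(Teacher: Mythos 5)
Your argument is correct in substance and follows the same circle of ideas as the paper's proof: reduce to closed subgroups, pass to the dual side via the orthogonality homeomorphism, and use the discreteness of $G^\vee$ together with a compactness argument in the Chabauty topology. The packaging differs a little. The paper works with a basic neighborhood $\Omega(F;\{0\})$ of the trivial subgroup in $\mathrm{Sub}(G^\vee)$, with $F$ finite because $G^\vee$ is discrete, and leaves implicit the final step that non-$\e$-dense subgroups avoid a Chabauty neighborhood of $G$ (``the theorem follows''). You instead make the compactness explicit on the primal side: you prove that the family of non-$\e$-dense closed subgroups is Chabauty-closed, transport it by the homeomorphism $\Phi$, and extract a finite subcover from the open sets $U_\xi$. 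So your route fills in the step the paper leaves to the reader, at the (harmless) cost of invoking the compactness of $\mathrm{Sub}(G)$, which the paper's local argument does not actually need.

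One small slip: with the convention that $\e$-dense means every point of $G$ lies within distance $\le\e$ of $H$, being non-$\e$-dense means $d(g,H)>\e$ for some $g$, while your limiting argument only gives $d(g,H)\ge\e$; hence the set $\mathcal S_\e$, as you defined it, need not be closed (the limit subgroup could be exactly $\e$-dense). This is repaired in one line: replace $\mathcal S_\e$ by the set of closed subgroups $H$ for which $d(g,H)\ge\e$ for some $g$. Your argument shows this larger set is closed, hence compact, and every member is a proper subgroup since $\e>0$, so it is still covered by the sets $U_\xi$ with $\xi\neq 0$; the finite family of characters obtained from a finite subcover then works for all non-$\e$-dense subgroups, as required.
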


\begin{proof} If a  subgroup is not $\e$-dense, its closure is not $\e$-dense either, so we may
 assume that $H$ is closed.
Since $G$ is compact it is a classical fact that its dual $G^\vee$ is discrete. 
Thus, $\set{0}$ is open in $G^\vee$ and 
any neighborhood $W$ of $\{0\}$ in $\mathrm{Sub}(G^\vee)$ contains  
a set of the form $\Omega(F; \set{0})$, %since any subgroup contains 0)\serge{on peut enlever $\set{0}$ ici}\romain{ça te va comme ça? je trouve ça mieux car $\Omega(F)$ n'a pas été vraiment défini ça pourrait prêter à confusion}
where $F$ is a compact, hence finite, subset of $G^\vee\setminus\{0\}$. 
It follows that 
every  $L\in \mathrm{Sub}(G^\vee)\setminus W$ contains an element of $F$. 
Via the orthogonal map, this   implies that  for every neighborhood $V$
of $G$ in $\mathrm{Sub}(G)$, there is a finite subset $\{ \xi_i ; \; i\in I\}$ of $G^\vee$ such that every element $H$ of $\mathrm{Sub}(G)\setminus V$ is contained in the kernel of one of the $\xi_i$.
The theorem follows. 
\end{proof}

We shall only use the following special case of Theorem~\ref{thm:chabauty}. 

\begin{cor}\label{cor:closed_epsilon_sparse} 
Endow $\R^2/\Z^2$ with the flat riemannian metric coming from the standard euclidean metric of $\R^2$.
Let $\e$ be a positive real number. Then, there exists a finite set of proper closed 1-dimensional subgroups 
$L_j\subset \R^2/\Z^2$ 
such that  every subgroup $H$ of $\R^2/\Z^2$ which is not $\e$-dense is contained in one of the $L_j$.
\end{cor}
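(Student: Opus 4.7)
The plan is to invoke Theorem~\ref{thm:chabauty} directly with $G = \R^2/\Z^2$, which is a compact metric abelian group, and then merely translate the conclusion into geometric language using the explicit description of its Pontryagin dual. First I would apply Theorem~\ref{thm:chabauty} with the given $\e$, obtaining a finite family $\{\xi_i\}_{i\in I}$ of continuous homomorphisms $\xi_i \colon \R^2/\Z^2 \to \R/\Z$ such that any non-$\e$-dense subgroup $H \leq \R^2/\Z^2$ is contained in $\ker(\xi_i)$ for at least one index $i \in I$.

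Next I would identify these characters concretely: every continuous homomorphism $\xi \colon \R^2/\Z^2 \to \R/\Z$ lifts uniquely to a linear form $(x,y) \mapsto px+qy$ on $\R^2$ with $(p,q) \in \Z^2$ (this is the standard computation of the Pontryagin dual of a torus, giving $(\R^2/\Z^2)^\vee \simeq \Z^2$). When $(p,q)=(0,0)$ the kernel is all of $G$; when $(p,q) \neq (0,0)$ the kernel is a proper closed subgroup of dimension $1$, namely a finite union of parallel circles of slope $(-q,p)$ in $\R^2/\Z^2$.

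To ensure that the $L_j$ we output are \emph{proper} $1$-dimensional subgroups, I would first reduce to the case $\e < \diam(\R^2/\Z^2) = \sqrt{2}/2$ (for larger $\e$ every subgroup is $\e$-dense and the conclusion is vacuous with the empty list). Under this assumption the full group $\R^2/\Z^2$ is itself $\e$-dense, so any non-$\e$-dense $H$ is a proper subgroup. In particular the trivial character $\xi = 0$ cannot be invoked for such an $H$, and we may safely discard it from $\{\xi_i\}_{i\in I}$. Setting $L_j := \ker(\xi_j)$ for the remaining non-zero characters yields the desired finite list of proper closed $1$-dimensional subgroups.

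There is no serious obstacle here: the whole content is packed into Theorem~\ref{thm:chabauty} and Pontryagin duality, and the only point requiring any care is checking that the zero character can be removed without loss of generality, which is immediate from the reduction to $\e$ smaller than the diameter of $\R^2/\Z^2$.
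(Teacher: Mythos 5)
Your route is genuinely different from the paper's own proof of this corollary: the paper gives a self-contained elementary argument (lift $H$ to $\R^2$, consider the real span $V_\e(H)$ of the lattice points of $\pi^{-1}(H)$ in a small disk, and show that non-$\e$-density forces $\dim V_\e(H)\leq 1$ with an explicitly bounded rational slope $(p,q)$ and index $m\lesssim 1/\e$), whereas you deduce the statement from Theorem~\ref{thm:chabauty} plus the identification $(\R^2/\Z^2)^\vee\simeq\Z^2$. This is exactly the derivation the paper alludes to when it calls the corollary a ``special case'' of the theorem, and it is shorter; what the paper's direct proof buys is independence from the Chabauty/Pontryagin machinery and effective bounds on the subgroups $L_j$ in terms of $\e$.

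There is, however, one step whose justification is wrong as written: the removal of the trivial character. You argue that since a non-$\e$-dense $H$ is proper (once $\e<\sqrt2/2$), ``the trivial character cannot be invoked for such an $H$.'' But the kernel of the zero character is all of $\R^2/\Z^2$, which contains every subgroup, proper or not; so the zero character can always be ``invoked'', and indeed the statement of Theorem~\ref{thm:chabauty}, read literally, is satisfied by the family consisting of the zero character alone and then carries no information. Consequently, discarding $\xi=0$ from a family furnished by the black-box statement is not legitimate: nothing in that statement guarantees that a given non-$\e$-dense $H$ lies in the kernel of one of the remaining nonzero characters. The fix is immediate but requires opening the box: in the proof of Theorem~\ref{thm:chabauty} the finite set of characters is a compact subset $F\subset G^\vee\setminus\{0\}$, so the $\xi_i$ may be taken nonzero from the start; with that observation your identification of $\ker\xi_i$, for $\xi_i\leftrightarrow(p,q)\neq(0,0)$, as a proper closed $1$-dimensional subgroup (a finite union of parallel circles) finishes the proof, and the preliminary reduction to $\e<\sqrt2/2$ becomes unnecessary.
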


\begin{proof}[A direct proof of Corollary~\ref{cor:closed_epsilon_sparse}] 
As before we may assume that $H$ is a closed subgroup. 
We may assume $\e < 1/2$, because every non-trivial subgroup of $\R^2/\Z^2$ is $1/2$-dense. 
 The covering map $\pi\colon \R^2\to \R^2/\Z^2$
provides a diffeomorphism from the disk  $\disk((0,0) ; \e)$ to its image. Set $U'=\disk((0,0) ; \e/2)\subset \R^2$ and $U:=\pi(U')$.
For  any closed subgroup  $H$ of $\R^2/\Z^2$, denote by $V_\e(H)\subset \R^2$  the real vector space generated
by $\pi^{-1}(H)\cap U'=\pi\rest{U'}^{-1}(H\cap U)$. Since $V_\e(H)$ is generated by vectors of norm $\leq \e/2$, 
every point of $\pi(V_\e(H))$ lies within   distance $\leq \sqrt{2}\e/2$ from a point of $H$.
Thus, if $H$ is not $\e$-dense, then $\dim_\R(V_\e(H))\leq 1$.   

If $\dim_\R(V_\e(H))= 1$, $\pi(V_\e(H))$ 
is a closed subgroup that satisfies $\pi(V_\e(H))\cap U=\pi(V_\e(H)\cap U')$ (otherwise $\dim_\R(V_\e(H))$ would be equal to $2$).
This implies that $V_\e(H)$ is defined by a linear equation $px-qy=0$, where  $p$ and  $q$ are
 coprime integers   such that the subgroup of $\R/\Z$ generated by $p/q$ (resp. by $q/p$) is not $\e/2$-dense. 
 There are only finitely many possibilities for such pairs $(p,q)$. Moreover, the projection of $H$ in $(\R^2/\Z^2)/\pi(V_\e(H))$ is not $\e$-dense, which leaves only finitely many possibilities for $H$. 
 More precisely,  $H$ is contained in the kernel of the morphism $(x,y)\mapsto m(px-qy)$ for some $m\leq1/\e$. 
 
 The remaining  case to be considered  is when 
$H\cap U=\{(0,0)\}$. In this case, $H$ has at most $(\pi\e^2/4)^{-1}$ elements, so it 
 is contained in the kernel of $(x,y)\mapsto mx$ for some $m\leq(\pi\e^2/4)^{-1}$.  \end{proof}

%%%%%%%%%%%%%%%%%%%%%%%%%%%%
\subsection{Accumulation sets of orbits}
%%%%%%%%%%%%%%%%%%%%%%%%%%%%
The following result is a topological analogue of the analysis of Section~\ref{sec:Main_Proof}. It somehow says that there is no ``exceptional minimal set'' for the action of $\Gamma$ on $X$ 
(see \S~\ref{subs:minimal} for a more precise discussion on minimal subsets). 

\begin{pro}\label{pro:accumulation}
Let $X$ be a complex projective surface and let $\Gamma\subset\Aut(X)$ be a non-elementary subgroup containing parabolic elements. Let $F$ be an infinite closed invariant set which is not equal to $X$. 
Then, outside  $\STang_\Gamma$, the 
accumulation set $\mathrm{Acc}(F)$ is a  totally real 
analytic subset of dimension 2 with a locally finite singular set; moreover, 
 $F\setminus  \mathrm{Acc}(F)$ is a locally finite set outside   $\STang_\Gamma$. 
\end{pro}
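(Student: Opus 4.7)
The plan is to mirror the proof of Proposition~\ref{pro:analytic_sigma} in the topological setting, with Corollary~\ref{cor:closed_epsilon_sparse} playing the role that unique ergodicity of fiberwise translations played in the measure-theoretic case. Pick a special pair $(g,h)\in \Hal(\Gamma)^2$ associated to distinct fibrations (Lemma~\ref{lem:schottky-parabolic}). For every $x\in F$, the orbit closure $L_g(x)$ is contained in $F$, and on a smooth fiber $X^g_{\pi_g(x)}$ it is a translate of a closed subgroup, hence finite, a finite union of circles, or the whole fiber.

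First I would rule out the possibility that $F$ contains an entire smooth fiber of $\pi_g$ or $\pi_h$. If $X^g_{w_0}\subset F$, then $h^n(X^g_{w_0})\subset F$ for every $n$. By the ping-pong dynamics of $(g^*,h^*)$ on $H^{1,1}(X;\R)$ underlying Lemma~\ref{lem:schottky-parabolic}, the classes $[h^n(X^g_{w_0})]$ accumulate on the ray fixed by $h^*$, so these curves are Zariski dense in $X$. Iterating $g$ on each $h^n(X^g_{w_0})$ above a parameter outside $\Rk_g$, where $g$ has dense orbits in the fiber (Lemma~\ref{lem:basic_dynamics}), and using the closedness of $F$, one forces $F=X$, a contradiction.

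Second, since $F$ is a proper closed subset of $X$, there exists $\varepsilon>0$ and an open ball disjoint from $F$; propagating by $\Gamma$ and shrinking $\varepsilon$ if necessary, no intersection $F\cap X^g_w$ is $\varepsilon$-dense in the flat torus $X^g_w\simeq \R^2/\Z^2$ (above a fixed compact subset of $B^\circ_g$). Corollary~\ref{cor:closed_epsilon_sparse} then forces the closed subgroup underlying $L_g(x)$ into one of finitely many proper closed $1$-dimensional subgroups of $\R^2/\Z^2$; equivalently, $\pi_g(x)$ lies in $\Rk^k_{p,q}$ for a uniformly bounded range of $(k,(p,q))$, and symmetrically for $h$.

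Third, because the finiteness in Step~2 is uniform in $x\in F$ and does not rely on the existence of a preferred invariant measure, the semi-analytic analysis of Section~\ref{sec:semi-analytic} (culminating in Proposition~\ref{pro:finite_number_components}) can be applied directly to show that $\pi_g(F)\cap B^\circ_g$ is locally a finite union of analytic arcs $\sigma_g$, and similarly for $h$. Outside $\STang_\Gamma$ the map $(\pi_g,\pi_h)$ is a local diffeomorphism, hence
\begin{equation*}
F\setminus\STang_\Gamma \subset \pi_g^{-1}(\sigma_g)\cap \pi_h^{-1}(\sigma_h),
\end{equation*}
which is a totally real analytic subset of pure dimension $2$ whose singular locus is locally finite, by the argument given in the final paragraph of the proof of Proposition~\ref{pro:analytic_sigma}. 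The accumulation set $\mathrm{Acc}(F)$ is then a closed $\Gamma$-invariant subset of this surface; arguing as in \S\ref{par:proof_pro_analytic_sigma} (any analytic plaque containing a limit point of $F$ must be entirely contained in $F$, by Step~2 again together with minimality of fiberwise translations) $\mathrm{Acc}(F)$ is a union of irreducible components of $\pi_g^{-1}(\sigma_g)\cap \pi_h^{-1}(\sigma_h)$ outside $\STang_\Gamma$, while the complement $F\setminus\mathrm{Acc}(F)$ is locally finite there.

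The hardest step will be the first one: in the topological category there is no immediate mechanism (like absolute continuity) forcing a complex curve contained in $F$ to saturate $X$, and the conclusion requires a careful combination of the ping-pong structure of $\langle g^n,h^n\rangle$ on $H^{1,1}(X;\R)$ with the fiberwise density of generic orbits of Halphen twists.
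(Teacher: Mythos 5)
Your core mechanism in Steps 1--2 is the right one and matches the paper's: $F$ contains no full fiber, hence the fiberwise orbit closures are uniformly non-$\e$-dense over a compact part of the base, and Corollary~\ref{cor:closed_epsilon_sparse} confines them to finitely many circles; this is exactly how the paper begins. (Step 1 is in fact much easier than you fear: if $X^g_{w_0}\subset F$, then by Remark~\ref{rem:transport_of_smooth_fiber} this fiber is not contracted by $\pi_h$, so $\pi_h(F)=B_h$; for $w'\in B_h^\circ\setminus\Rk_h$ every $h$-orbit is dense in $X^h_{w'}$ (Lemma~\ref{lem:basic_dynamics}), so the closed set $F$ contains all these fibers and hence $F=X$ -- no ping-pong or Zariski-density of the curves $h^n(X^g_{w_0})$ is needed. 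Also, the uniform $\e$ comes from a fiber-by-fiber compactness argument over a compact subset of $B_g^\circ$, using closedness of $F$ and the absence of full fibers, not from ``propagating a ball by $\Gamma$'', which does not control every fiber.)

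The genuine gaps are in Step 3. First, having fixed one pair $(g,h)$ at the outset, you assert that $(\pi_g,\pi_h)$ is a local diffeomorphism outside $\STang_\Gamma$. That is false for a fixed pair: $\STang_\Gamma$ is the \emph{intersection} over all pairs in $\Hal(\Gamma)^2$, so a point off $\STang_\Gamma$ only comes with \emph{some} pair, depending on the point, whose fibers are smooth and transverse there. As structured, your argument only yields the conclusion outside $\STang(\pi_g,\pi_h)$, which may be strictly larger than $\STang_\Gamma$; the paper avoids this by working purely locally, choosing $g,h$ and the neighborhood $U$ after fixing $x_0\in \mathrm{Acc}(F)\setminus\STang_\Gamma$. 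Second, invoking Section~\ref{sec:semi-analytic} and Proposition~\ref{pro:finite_number_components} to control $\pi_g(F)$ is unjustified: those results concern the curve $\sigma_g$ defined via invariant measures (Lemma~\ref{lem:analytic_continuation_gamma_general} quantifies over measures with $\mu_g(\Rk_g)>0$, and the proof of Proposition~\ref{pro:finite_number_components} uses conditional measures on circles), and they assume $\Tang^{\mathrm{tt}}(\pi_g,\pi_h)\neq\emptyset$; nothing transfers them verbatim to the projection of an arbitrary closed invariant set. Fortunately the global detour is unnecessary: the statement is local, and your Step~2 already places $F\cap U$ in finitely many squares $(\pi_g\times\pi_h)^{-1}\bigl(\Rk^{k}_{p,q}\times \Rk^{k'}_{p',q'}\bigr)$. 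Finally, the concluding step is glossed, and the assertion ``any analytic plaque containing a limit point of $F$ must be entirely contained in $F$'' is wrong as stated: a square can contain a limit point of $F$ produced by points of $F$ lying in \emph{other} squares while meeting $F$ in a finite set. The correct statement, and the paper's, is that if a square meets $F$ in an infinite set, then (since $T$ is finite-to-one on arcs) one finds points of $F$ in it whose $g$-orbits are $\e$-dense in circles for every $\e$, so a full vertical segment lies in $\mathrm{Acc}(F)$, and sweeping with $h$ fills the whole square; finiteness of the number of squares then gives both conclusions, and the local finiteness of $\Sing(\mathrm{Acc}(F))$ follows from $\Gamma$-invariance of that singular set (an invariant curve inside a square would have dense $h$-orbit in the square), not from the singular locus of $\pi_g^{-1}(\sigma_g)\cap\pi_h^{-1}(\sigma_h)$.
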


\begin{proof}
Pick $x_0\in \mathrm{Acc}(F)\setminus \STang_\Gamma$. By definition of $\STang_\Gamma$, 
there is an  open neighborhood $U$ of $x_0$  and two elements $g$ and $h$ in $\Hal(\Gamma)$ 
such that $\pi_g(U)$ and $\pi_h(U)$ are open disks in $B_g^\circ$ and $B_h^\circ$, respectively, all fibers of $\pi_g$ and $\pi_h$ intersecting $U$ are smooth, and $\pi_g\times \pi_h$ is a diffeomorphism from $U$ 
to the product of these two disks. Observe that if $F$ 
 contains a fiber $X_w^g$ of $\pi_g$,  then $\pi_h(F)=B_h$, $F$ contains all fibers $X_{w’}^h$ for $w’\in B_h^\circ\setminus \Rk_h$, and finally $F=X$. Thus,  $F$ does not contain any fiber of $\pi_g$. From this 
it follows that $\pi_g(F)\cap B_g^\circ\subset \Rk_g$, and by switching the roles of $g$ and $h$ the same holds 
 for $h$. Furthermore by compactness there exists $\e>0$ such that for every $x\in F\cap U$ the  $g$-orbit of $x$ is not $\e$-dense in $X^g_{\pi_g(x)}$. 
Thus, by Corollary~\ref{cor:closed_epsilon_sparse}, $\pi_g(\Gamma(x))\cap U$ is contained in 
finitely many of the curves $R^k_{(p,q)}(\pi_g(U))$. 
The same property holds with respect to $\pi_h$. We conclude that $F\cap U$ is contained in 
finitely many squares $S_i\subset U$, each of them being of type $(\pi_g\times\pi_h)^{-1}(R^{k_i}_{(p_i,q_i)}\times R^{k'_i}_{(p'_i,q'_i)})$. 

Since $x_0$ is an accumulation point,  permuting $g$ and $h$ if necessary, we can find a sequence 
$(x_n)\subset F$ such that $(x_n)$ converges towards $x_0$ and
$\pi_g(x_n)$ is an infinite subset of $R^{k_i}_{(p_i,q_i)}$; 
it follows that for every $\e >0$, there is a point $x_n\in F$ close to $x_0$ whose $g$-orbit is
$\e$-dense in a circle of slope $(p_i,q_i)$ in $X^g_{\pi_g(x_n)}$; thus, there is a vertical segment of the form
 $S_{i_0}\cap X_y^g$, for some 
$y \in \pi_g(S_{i_0})$, which is contained in  $ \mathrm{Acc}(F)$. 
Using $h$, we see that the whole square $S_{i_0}$ is contained in $\mathrm{Acc}(F)$. 
This argument shows, more generally, that 
if $\vert S_i\cap F\vert = +\infty$, then $S_i\subset \mathrm{Acc}(F)$. Since $F$ is contained in 
finitely many squares $S_i$, we conclude that $\mathrm{Acc}(F)\cap U$ is a finite union of such squares and 
$(F\setminus\mathrm{Acc}(F))\cap U$ is finite, as asserted. 
  
In particular, $\mathrm{Acc}(F)\rest{U}$
is a real analytic, totally real subset of $U$. Its singular locus is the union of the pairwise intersections $S_i\cap S_j$ of distinct squares. If it contained a
curve, then permuting $g$ and $h$ if necessary, this curve would be a vertical segment $S_i\cap X_{w}^g$ for some $w\in R^{k_i}_{(p_i,q_i)}(\pi_g(U))$.
The $h$-orbit of such a segment would be dense in $S_i$, and this would contradict the invariance of  the singular set of $\mathrm{Acc}(F)$. Thus, all intersections $S_i\cap S_j$ 
are finite, and $\mathrm{Acc}(F)\rest{U}$ has only finitely many singularities. \end{proof}

\subsection{Orbit closures} 
When  $\STang_\Gamma = \emptyset$, 
Proposition~\ref{pro:accumulation}  leads 
to a rather satisfactory description of orbit closures. 

\begin{thm}\label{thm:orbit_closures}
Let $X$ be a complex projective surface. Let $\Gamma\subset \Aut(X)$ be a  
subgroup which is  non-elementary, contains    parabolic elements, 
 and %does not preserve any non-empty proper Zariski closed subset.  
 such that $\STang_{\Gamma}$ is  empty. Then there exists 
 a finite $\Gamma$-invariant subset $E$ such that   for any $x\in X\setminus E$, the orbit closure 
 $\overline{\Gamma(x)}$ is
   either  equal to $X$, 
  or to a  totally real, real analytic surface, with finitely many singularities.

 If furthermore  $X$ is a K3 or Enriques surface, and   $\Gamma$ 
 does not preserve any non-empty proper Zariski closed subset, then there exists 
 a smooth, totally real, real analytic surface $\Sigma\subset X$ such that:
  \begin{enumerate}[{\em (a)}]
 \item  if $x\notin \Sigma $ then 
 $\overline{\Gamma(x)} = X$, and 
 \item  if $x\in \Sigma $ then 
 $\overline{\Gamma(x)}$ is a union of connected components of $\Sigma $. 
 \end{enumerate}
\end{thm}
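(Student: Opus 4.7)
Part 1 is essentially a corollary of Proposition~\ref{pro:accumulation}. For $x\in X$, set $F=\overline{\Gamma(x)}$. If $\Gamma(x)$ is finite, then $x$ lies in a finite orbit, and we define $E$ to be the union of all finite $\Gamma$-orbits; this set is finite by \cite[Thm.~C]{finite_orbits}, since $\STang_\Gamma=\emptyset$ implies $\mathrm D_\Gamma=\emptyset$ via Lemma~\ref{lem:Tang_DGamma}. If $\Gamma(x)$ is infinite and $F\neq X$, any isolated point of $F$ must lie in $\Gamma(x)$, and by $\Gamma$-equivariance the entire orbit $\Gamma(x)$ would then consist of isolated points of $F$, making it a discrete subset of the compact space $X$ and hence finite---a contradiction. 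Thus $F=\mathrm{Acc}(F)$, and Proposition~\ref{pro:accumulation} yields the desired structure: $F$ is a totally real analytic surface of dimension~$2$, whose singular locus is locally finite and therefore finite by compactness of $X$.

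For Part~2, the hypothesis that $\Gamma$ preserves no proper Zariski closed subset immediately forces $\STang_\Gamma=\emptyset$, $\mathrm D_\Gamma=\emptyset$, and $E=\emptyset$, all being $\Gamma$-invariant algebraic subsets. Moreover $(X,\Gamma)$ cannot be a Kummer group: on K3 or Enriques surfaces, Kummer groups preserve the exceptional divisor arising from the desingularization of the underlying torus quotient, contradicting the hypothesis. Theorem~\ref{thm:finiteness} therefore provides finitely many ergodic $\Gamma$-invariant probability measures with Zariski-dense support: at most one of type~(d) (by Lemma~\ref{lem:unique_abso_conti}), and finitely many $\mu_1,\ldots,\mu_N$ of type~(c). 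By Corollary~\ref{cor:no_invariant_subset}, each support $\overline{\Sigma_i}$ is a smooth compact totally real real-analytic surface of dimension~$2$, and the $\overline{\Sigma_i}$ are pairwise disjoint: any intersection would be a proper analytic $\Gamma$-invariant subset of each, hence contained in $\mathrm D_\Gamma\cup E=\emptyset$. Set $\Sigma:=\bigsqcup_{i=1}^{N}\overline{\Sigma_i}$, a smooth totally real real-analytic $\Gamma$-invariant surface.

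The heart of the proof is to identify the minimal $\Gamma$-invariant closed subsets of $X$ distinct from $X$ with the $\overline{\Sigma_i}$. Given such a minimal $M$, Part~1 presents $M$ as a smooth totally real $2$-surface; since $M$ is compact, any symmetric probability measure on $\Gamma$ admits a stationary probability measure on $M$, and the stiffness results of~\cite{stiffness} promote it to a $\Gamma$-invariant measure. Ergodic decomposition combined with Theorem~\ref{thm:main} yields an ergodic $\Gamma$-invariant measure of type~(c) supported in $M$; this measure is some $\mu_i$, and minimality forces $M=\overline{\Sigma_i}$. Now let $F=\overline{\Gamma(x)}$ with $F\neq X$. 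Part~1 makes $F$ into a smooth compact totally real $2$-surface (its singular locus is a finite $\Gamma$-invariant set, empty by assumption). If a connected component $F_j$ of $F$ meets a connected component $C$ of $\Sigma$, then $F_j\cap C$ is open and closed in both $F_j$ and $C$ (as two smooth $2$-dimensional analytic submanifolds intersecting on an open set), so $F_j=C$ by connectedness. Therefore $F'':=F\setminus\Sigma$ is a $\Gamma$-invariant union of connected components of $F$, and if non-empty it contains a minimal $\Gamma$-invariant subset, necessarily of the form $\overline{\Sigma_i}\subset\Sigma$, which contradicts $F''\cap\Sigma=\emptyset$. Hence $F\subset\Sigma$: for $x\notin\Sigma$ this forces $F=X$, proving~(a); for $x\in\Sigma$, $F\subset\Sigma$ is a union of connected components of $\Sigma$, proving~(b).

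The main obstacle lies in identifying minimal $\Gamma$-invariant subsets with the $\overline{\Sigma_i}$, which hinges on the production of $\Gamma$-invariant measures on minimal sets. For non-amenable $\Gamma$ this does not follow from Krylov--Bogolyubov, and one must appeal to the stiffness machinery of~\cite{stiffness}; without it, the combinatorial argument excluding components of $F$ outside $\Sigma$ would not apply.
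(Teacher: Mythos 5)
Your Part~1 contains one false step: from ``every point of $\Gamma(x)$ is an isolated point of $F$'' you conclude that $\Gamma(x)$ is a discrete subset of the compact space $X$ and hence finite. That implication is wrong in general ($\{1/n \,;\, n\geq 1\}$ is an infinite subset of $[0,1]$ all of whose points are isolated); discreteness gives finiteness only for \emph{closed} subsets, and $\Gamma(x)$ need not be closed. The repair is immediate, and it is how the paper argues: Proposition~\ref{pro:accumulation} itself says that $F\setminus\mathrm{Acc}(F)$ is locally finite outside $\STang_\Gamma=\emptyset$, hence finite by compactness, and it is $\Gamma$-invariant, so it is contained in the maximal finite invariant set $E$ furnished by Theorem~C of \cite{finite_orbits}; on the other hand it is contained in $\Gamma(x)$, so if it were nonempty then $x$ would lie in the invariant set $E$, contrary to assumption. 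Hence $F=\mathrm{Acc}(F)$, and with this fix Part~1 coincides with the paper's proof.

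For Part~2 you take a genuinely different and much heavier route. The paper uses neither stationary measures nor the stiffness machinery of \cite{stiffness}: the K3/Enriques hypothesis enters through Remark~\ref{rmk:real_area_form}, which equips \emph{every} $\Gamma$-invariant totally real analytic surface --- in particular every minimal set $\neq X$ and every orbit closure $\neq X$ produced by Part~1 --- with a finite $\Gamma$-invariant measure induced by the canonical $2$-form $\Omega_X$. Theorem~\ref{thm:finiteness} then bounds the number of such surfaces and yields the maximal invariant surface $\Sigma$, after which (a) and (b) follow because an invariant surface contained in the $2$-manifold $\Sigma$ is open and closed in it. Your alternative (Krylov--Bogolyubov on a minimal set, then stiffness) can probably be made to work, but not as written: ``any symmetric probability measure on $\Gamma$'' is too glib, since the stiffness results of \cite{stiffness} require moment conditions on $\nu$ (and $\Gamma$ need not be finitely generated), they give invariance only under the group generated by $\mathrm{supp}\,\nu$ (so $\nu$ must be chosen with full support and suitable moments), and stationary measures need not be invariant when they are carried by invariant algebraic subsets --- the paper's own Section~\ref{sec:boundary} exhibits a stationary, non-invariant measure on an invariant rational curve --- so the absence of invariant Zariski closed sets has to be invoked explicitly at that point. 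The $2$-form observation renders all of this unnecessary, and it is exactly where the paper exploits the K3/Enriques assumption.

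Two further steps are asserted rather than proved. First, disjointness of the $\overline{\Sigma_i}$: their intersection is a closed invariant set, but there is no reason it should lie in $\mathrm D_\Gamma\cup E$, since it need not be algebraic; argue instead that a shared $2$-dimensional component contradicts mutual singularity of distinct ergodic measures with positive analytic densities, while a lower-dimensional intersection is impossible because, by Part~1 with $E=\emptyset$, the orbit closure of any of its points would have to be $2$-dimensional or equal to $X$. Second, the parenthetical ``two smooth $2$-dimensional analytic submanifolds intersecting on an open set'' is precisely what needs proof when a component of $F$ meets a component of $\Sigma$: a priori they could meet along a curve; the same dichotomy (a nonempty closed invariant set contains a $2$-dimensional orbit closure) supplies the open overlap, and analytic continuation then gives $F_j=C$. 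Both gaps are repairable, but with the paper's construction of the maximal $\Sigma$ they simply do not arise.
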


\begin{proof}
Since $\mathrm D_\Gamma\subset \STang_\Gamma$, 
 there is no $\Gamma$-invariant curve,  so Theorem~C of 
\cite{finite_orbits} guarantees that there is a maximal finite invariant set $E$.
Let $x\in X\setminus E$ be such that $\Gamma(x)$ is not dense in $X$. 
Since $\STang_{\Gamma}$ is empty, Proposition~\ref{pro:accumulation} entails that 
 $\overline{\Gamma(x)}=D\cup\Sigma$ where  $D$ is a finite  
 set and $\Sigma$ is a totally real surface with finitely many singularities, and both 
 $D$ and $\Sigma$ are $\Gamma$-invariant. Since $\Gamma(x)$ is infinite, $x$ does not belong to $D$,  hence $D$ is empty. This proves the first assertion. 
 
Under the assumptions of 
 the second part, we note that any invariant  real analytic surface $\Sigma$ is smooth, and 
admits a smooth $\Gamma$-invariant volume form $\vol_\Sigma$ (see Remark~\ref{rmk:real_area_form}).
By Theorem~\ref{thm:finiteness},   there are only finitely many such invariant measures,  which implies the 
existence of the maximal invariant real surface $\Sigma$. 
\end{proof}

%For the second part, let $\nu$ be a probability measure on $\Gamma$ with a finite exponential moment, and 
% whose support generates $\Gamma$. Then 
%if $\overline{\Gamma(x)} \neq X$, $\overline{\Gamma(x)}$ is a smooth $\Gamma$-invariant 
%surface $\Sigma$. Our assumption on $X$ implies that there is a $\Gamma$-invariant volume form 
%$\vol_\Sigma$ (see Remark~\ref{rmk:real_area_form}). Let now $\mu$ be any $\nu$-stationary measure on $\Sigma$. 
%Since there is no $\Gamma$-invariant curve, the stiffness theorem~\cite[Thm. A]{stiffness} asserts that 
%$\mu$ is $\Gamma$-invariant (and coincides with $\vol_\Sigma$). Furthermore since $X$ is not a torus and $\Gamma$ has no invariant curve,
% $\Gamma$ is not a Kummer group. So   there are only finitely many such invariant measures by  Theorem~\ref{thm:finiteness}, and we are done. 

\begin{rem} 
It is likely that the existence of the maximal invariant surface $\Sigma$ in the second part of the theorem 
could be obtained by a direct analysis of orbit closures instead of Theorem~\ref{thm:finiteness}, hence making it possible to extend 
the result to rational surfaces. 
This would however require to reproduce
most of Sections~\ref{sec:semi-analytic} and~\ref{sec:finiteness} in the topological category. 
%Note also that we do not know  any example of a non-elementary group of automorphisms on a rational surface which does not admit an  invariant curve. 
\end{rem}

\begin{rem}
Assume that $\Gamma$ admits no invariant curve but  
$\STang_\Gamma$ is a non-empty finite invariant subset. If $x\in X$ is such that $\Gamma(x)$ is infinite but not 
dense, then by Proposition~\ref{pro:accumulation}, outside $\STang_\Gamma$, $\overline{\Gamma(x)}$ is of the form $D\cup \Sigma$, where $D$ is discrete and $\Sigma$ is a totally real analytic surface. It is clear that 
only one of  $D$ and  $\Sigma$ can be   non-empty. If $\Sigma = \emptyset$, 
 $\overline{\Gamma(x)}$ accumulates 
only at $\STang_\Gamma$ and the problem is to analyse how it clusters near this finite set. 
 If $D = \emptyset$, the problem is to  describe the geometry of   
$\Sigma$ near $\STang_\Gamma$, and in particular the existence of finite invariant or stationary probability measures. We will develop tools to deal with 
these issues in~\cite{hyperbolic}.
\end{rem}

\subsection{Minimal invariant subsets}\label{subs:minimal} 
Recall that by definition a {\bf{minimal invariant subset}} $F\subset X$ is a non-empty 
closed $\Gamma$-invariant subset which is minimal for the inclusion; equivalently, 
a closed invariant subset $F$ is minimal if $\overline{\Gamma(x)}=F$ for every $x\in F$. 

When $\Gamma$ admits invariant Zariski closes subsets, we can still describe minimal invariant subsets. This is however not the right topological analogue of Theorem~\ref{thm:main} since closed invariant subsets are not unions of minimal ones.

\begin{lem}\label{lem:minimal}
Let $\Gamma$ be a non-elementary group of automorphisms of a complex projective surface.  
Let $F\subset X$ be a minimal invariant subset. If its Zariski closure $\mathrm{Zar}(F)$ has dimension 
$\leq 1$, then one of the following four possibilities occurs:
\begin{enumerate}[\em (a)]
\item $F$ is a finite orbit of $\Gamma$;
\item $\mathrm{Zar}(F)$ is a smooth $\Gamma$-invariant genus $1$ curve $C$ and either $F=C$ or
$F$ is a translate of a $1$-dimensional real subgroup of $C$ (in other words, it is a finite union 
of translates $x_i+L$ for some $1$-dimensional real torus $L\subset C$);
\item the components of $\mathrm{Zar}(F)$ are rational curves $C_i$ which are transitively permuted by 
$\Gamma$, the restriction of the stabilizer $\Gamma_i$ of $C_i$ to  $C_i$ is a relatively compact subgroup of $\Aut(C_i)$, its closure  $G_i\subset \Aut(C_i)$  is locally isomorphic  to $\PO_2(\R)$, 
and $F\cap C_i$ is a $G_i$-invariant circle or a $G_i$-invariant pair of circles;
\item the components of $\mathrm{Zar}(F)$ are smooth disjoint rational curves $C_i$ which are transitively permuted by 
$\Gamma$, the stabilizer $\Gamma_i$ of $C_i\simeq \P^1(\C)$ is non-elementary, and $F\cap C_i$ is the limit set of $\Gamma_i$.
\end{enumerate}
\end{lem}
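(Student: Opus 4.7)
The proof proceeds by case analysis on $\mathrm{Zar}(F)$ and on the induced action of $\Gamma$ on its irreducible components. If $\dim \mathrm{Zar}(F) = 0$, then $F$ is finite, hence a single $\Gamma$-orbit by minimality, giving case~(a). So assume $\mathrm{Zar}(F)$ is $1$-dimensional. Its irreducible components $C_0,\dots,C_k$ are $\Gamma$-periodic algebraic curves, hence components of $\mathrm D_\Gamma$ (see~\S\ref{par:invariant_curves}). A finite index subgroup $\Gamma_0 \leq \Gamma$ fixes each $C_i$. Minimality of $F$ forces $\Gamma$ to permute the $C_i$'s transitively, for otherwise the union of $F \cap C_i$ for $C_i$ in a single $\Gamma$-orbit would be a proper closed $\Gamma$-invariant subset of $F$; a similar argument shows that $F\cap C_i$ is a minimal closed $\Gamma_0$-invariant subset of~$C_i$.

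Next I would rule out components of geometric genus $\geq 2$: the normalization $\tilde C_i$ would have finite $\Aut(\tilde C_i)$, so $\Gamma_0$ would act with finite orbits on $C_i$, forcing $F$ to be finite, contradicting $\dim \mathrm{Zar}(F)=1$. So each $\tilde C_i$ has geometric genus $0$ or $1$. If $\tilde C_i$ has genus $1$, then $C_i$ is smooth (a singular geometric-genus-$1$ curve is rational, hence handled below). Picking an origin, $\Aut(C_i) = C_i \rtimes G$ with $G$ finite; since $F \cap C_i$ is infinite, the image of $\Gamma_0$ in $\Aut(C_i)$ contains a non-finite subgroup of translations $T\subset C_i$, whose closure is either a closed real $1$-dimensional subgroup $L$ or all of $C_i$. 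In the second case $F\cap C_i = C_i$; in the first, $F \cap C_i$ is a finite union of translates of $L$ (the finiteness coming from the $G$-part of the action), yielding case~(b). Uniqueness of the elliptic component comes from the fact that two distinct smooth elliptic components transitively permuted by $\Gamma$ would either be disjoint or intersect in a $\Gamma$-periodic finite set, a configuration that is ruled out by a routine analysis of the intersection form on $V(\mathrm D_\Gamma)$ together with the structure of the elliptic action.

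If instead $\tilde C_i\cong \P^1$, the action of $\Gamma_0$ on $\tilde C_i$ determines a homomorphism $\rho_i\colon \Gamma_0\to \PGL_2(\C)$, and the classification of $F\cap C_i$ is obtained from that of closed subgroups of $\PGL_2(\C)$. Let $H_i = \rho_i(\Gamma_0)$ and let $\overline{H_i}$ be its closure. If $\overline{H_i}$ is compact, it is conjugate to a subgroup of $\mathsf{PSU}(2)$ acting on $\P^1 \cong S^2$ by rotations; any minimal infinite closed invariant subset is then either a single round circle (orbit of a compact rotation subgroup about an axis) or a pair of circles exchanged by a pole-swapping element, which gives case~(c) with $G_i := \overline{H_i}$ locally isomorphic to $\PO_2(\R)$. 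If $\overline{H_i}$ is non-compact and elementary, it has a finite orbit on $\P^1$ of size $\leq 2$, so all minimal closed invariant sets are finite, contradicting $\dim\mathrm{Zar}(F)=1$; this case is therefore excluded. If $\overline{H_i}$ is non-elementary, then the standard theory of Möbius group actions (Kleinian groups or their closures) provides a unique minimal closed invariant subset in $\P^1$, namely the limit set $\Lambda(H_i)$, which gives case~(d); the smoothness and disjointness of the $C_i$'s in this case follow because any crossing or singular point would yield a $\Gamma$-periodic point on $C_i$ outside $\Lambda(H_i)$, whose $H_i$-orbit closure would have to be contained in~$\Lambda(H_i)$, a contradiction.

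The main technical hurdle is the detailed trichotomy for closed subgroups of $\PGL_2(\C)$: one must rule out cleanly the non-compact elementary case, identify precisely the circle-or-pair-of-circles structure in the compact subcase (so that $G_i$ has identity component $S^1$), and invoke the uniqueness of the minimal closed invariant set for non-elementary Möbius actions. The elliptic case~(b) is essentially routine once geometric genus $\geq 2$ is excluded and the uniqueness of the elliptic component is handled; the transitivity and minimality reductions in the first step are formal.
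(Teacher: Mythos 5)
Your proof follows the same broad plan as the paper's (which is itself only a sketch): reduce by transitivity to the stabilizer of one component, exclude genus $\geq 2$, treat the elliptic case, and run a trichotomy on the image of the stabilizer in $\Aut(C_i)\simeq\PGL_2(\C)$ for rational components. But two steps do not hold up as written. The first is the uniqueness of the elliptic component. The paper's proof rests on a genuine external input: by Theorem 4.2 of Diller--Jackson--Sommese, a non-elementary group cannot preserve more than one irreducible curve of genus $1$; applied to a finite-index subgroup of $\Gamma$ preserving every component (which is still non-elementary), this is what forces case (b) to consist of a single $\Gamma$-invariant elliptic curve and, more importantly, forces all components of a reducible $\mathrm{Zar}(F)$ to be rational. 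Your substitute --- ``a routine analysis of the intersection form on $V(\mathrm D_\Gamma)$ together with the structure of the elliptic action'' --- does not do this job: nothing in the negative definiteness of the intersection form prevents a non-elementary group from transitively permuting several pairwise disjoint elliptic curves of negative self-intersection, with each stabilizer acting through an infinite group of translations. The obstruction is dynamical, not lattice-theoretic, and this is precisely where the paper needs the citation. (A related slip: a singular curve of geometric genus $1$ is not rational; the correct exclusion is that its singular points form a finite invariant set, so the stabilizer acts on the normalization through a finite group, making $F\cap C_i$ finite.)

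The second problem is in your trichotomy for $H_i=\rho_i(\Gamma_0)$. In the compact case you assert that every infinite minimal closed invariant subset of $\P^1$ is a circle or a pair of circles; this is true only when $\overline{H_i}$ is a \emph{proper} infinite closed subgroup of (a conjugate of) $\mathsf{PSU}(2)$, i.e.\ when its identity component is a circle. You never exclude $\overline{H_i}=\mathsf{PSU}(2)$, in which case the only minimal set is $C_i$ itself and neither your conclusion nor the statement of case (c) ($G_i$ locally isomorphic to $\PO_2(\R)$) applies; the paper's explicit argument only rules this out when distinct components meet (the non-empty finite set of intersection points is invariant, and $\mathsf{PSU}(2)$ has no finite orbit), so in the disjoint or irreducible situation some additional argument is required and you supply none. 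Relatedly, your exclusion of the non-compact elementary case is a non sequitur as stated: the existence of a finite orbit of size $\leq 2$ does not by itself make all minimal sets finite (a compact rotation group has fixed points yet also infinite minimal circles); what you need is non-compactness, i.e.\ the presence of parabolic or loxodromic elements, whose dynamics forces any infinite closed invariant set to contain the finitely many distinguished fixed points and hence contradicts minimality. These points are repairable, but as written the elliptic-uniqueness step and the Möbius trichotomy both contain genuine gaps.
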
 

\begin{proof} The proof is elementary, modulo the following assertions; we leave the reader fill in the details. First, a non-elementary group can not preserve more than one irreducible curve of genus $1$, as  follows from 
Theorem 4.2 in~\cite{Diller-Jackson-Sommese}.
So if $\mathrm{Zar}(F)$ is reducible, its components are rational curves $C_i$
which  are  permuted transitively by $\Gamma$. Then,  if $C_i$ and $C_j$ are distinct components with non-empty intersection, the intersection $C_i\cap C_j$ 
is a finite set which  is invariant by $\Gamma_i = \mathrm{Stab}_{\Gamma}(C_i)$ and 
 cannot intersect $F$ by minimality. It follows that   $\Gamma_i$ is a relatively compact subgroup of $\Aut(C_i)$.  %\romain{a-t-on écarté le cas de 2 courbes rationnelles lisses  s'intersectant en  1 point?}\serge{Si tu as deux courbes rationnelles lisses qui se coupent en un point, les $\Gamma_i$ sont contenus dans le groupe affine mais ont une orbite qui n'accumule pas le point d'intersection (à l'infini). Donc un groupe relativement compact.}
\end{proof}

\begin{thm}\label{thm:minimal}
Let $X$ be a complex projective surface and $\Gamma\leq \Aut(X)$ be a non-elementary  subgroup  
 containing     parabolic elements. 
Let $F\subset X$ be a minimal invariant subset of $\Gamma$. 
If $F$ is not Zariski dense, then $F$ is as in Lemma~\ref{lem:minimal}. Otherwise, 
 $F$ is either equal to $X$ or equal to a smooth real analytic  totally real  surface..
\end{thm}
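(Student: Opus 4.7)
The plan is to split on whether $F$ is Zariski dense in $X$. If it is not, then $\mathrm{Zar}(F)$ has complex dimension at most $1$ and we invoke Lemma~\ref{lem:minimal} directly. So the substance of the argument is in the Zariski dense case, where we must show that $F$ is either equal to $X$ or a smooth totally real analytic surface.

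Assume $F$ is Zariski dense and $F\neq X$. The first step is to show that $F$ avoids the exceptional set $\STang_\Gamma$ entirely. The key observation is that $\STang_\Gamma$ is a proper Zariski closed $\Gamma$-invariant subset of $X$: it is Zariski closed by its definition as an intersection of algebraic subsets (which reduces to a finite intersection by Noetherianity), and it is proper because one can find two Halphen twists in $\Gamma$ with distinct invariant fibrations, whose tangency locus is necessarily proper. Thus $F\cap \STang_\Gamma$ is closed and $\Gamma$-invariant in $F$, and by minimality it is either empty or equal to $F$. The latter would force $F\subset \STang_\Gamma$, contradicting Zariski density. Hence $F\cap \STang_\Gamma = \emptyset$.

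Next I would apply Proposition~\ref{pro:accumulation} to $F$. It yields that $\mathrm{Acc}(F)$ is, on $X\setminus \STang_\Gamma$ (and hence along all of $F$), a totally real analytic surface with locally finite singular set, and that $F\setminus \mathrm{Acc}(F)$ is locally finite. The accumulation set $\mathrm{Acc}(F)\subset F$ is closed and $\Gamma$-invariant. By minimality, either $\mathrm{Acc}(F)=\emptyset$ or $\mathrm{Acc}(F)=F$. In the first case $F$ would be discrete and closed in the compact manifold $X$, hence finite — contradicting Zariski density. So $\mathrm{Acc}(F)=F$, and $F$ itself is a totally real analytic surface of pure real dimension $2$ with locally finite singular locus.

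To conclude, I would eliminate the singular points: the singular locus $\mathrm{Sing}(F)$ is closed, $\Gamma$-invariant, and strictly contained in $F$ (it is $0$-dimensional whereas $F$ is $2$-dimensional and nonempty), so by minimality it is empty and $F$ is smooth. The only nontrivial step is the exclusion of $F\cap \STang_\Gamma$ via Zariski density plus minimality; everything else is formal once Proposition~\ref{pro:accumulation} is in hand, since that proposition already contains the hard analytic work of describing closed invariant subsets outside $\STang_\Gamma$.
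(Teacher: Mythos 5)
Your argument is correct and follows essentially the same route as the paper: in the Zariski dense case the paper also notes that $F$ must be disjoint from $\STang_\Gamma$ and equal to $\mathrm{Acc}(F)$ by minimality, invokes Proposition~\ref{pro:accumulation} to identify $F$ with a totally real analytic surface, and kills the singular locus by minimality. Your write-up merely makes explicit the properness of $\STang_\Gamma$ and the infiniteness of $F$, which the paper leaves implicit.
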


\begin{proof}
Assume that $F$ is Zariski dense and not equal to $X$. Then $F$ is disjoint from $\STang_\Gamma$, 
and by minimality $F = \mathrm{Acc}(F)$, so it follows from Proposition~\ref{pro:accumulation} that 
$F$ is a totally real  surface, which must be smooth because otherwise its singular locus would be a 
non-trivial proper closed invariant subset of $F$. 
\end{proof}

\section{Invariant analytic surfaces which are not real parts}\label{sec:not_real_parts}
%%%%%%%%%%%%%%%%%%%%%%%%%%%%%%%%%%%%%%%%%%%%%%
%%%%%%%%%%%%%%%%%%%%%%%%%%%%%%%%%%%%%%%%%%%%%%

In this section we construct   examples of pairs $(X,\Gamma)$ such that Property (c) in Theorem~\ref{thm:main} holds and for which:
\begin{enumerate}[(1)]
\item  the support $\Sigma$ of $\mu$ is an analytic real surface;
\item there is no real structure  on $X$ for which 
$\Sigma$ is   contained in the real part~$X(\R)$. 
\end{enumerate} 

\subsection{A family of lattices}\label{par:subsection-lattice} Let $t$ be a positive real number, and set $\tau=\frac{1}{2}+\ii t$, where $\ii=\sqrt{-1}$. 
Consider the lattice $\Lambda\subset \C$ defined by 
\begin{equation}
\Lambda=\Z\oplus \Z (\frac{1}{2}+\ii t) = \Z\oplus \Z \tau.
\end{equation}
Since $\frac{1}{2} - \ii t$ belongs to $\Lambda$, the complex conjugation $z\mapsto \overline{z}$ induces an anti-holomorphic involution
 $\sigma_E(z)=\overline{z}$ on
the elliptic curve $E=\C/\Lambda$:  this gives a real structure on $E$. The fixed point 
set of $\sigma_E$ gives the real part of $E$; one checks easily that 
it coincides with the projection of the real axis:
\begin{equation}
E(\R)= \R/\Z= \R/(\Lambda\cap \R).
\end{equation}

\subsection{Abelian and Kummer surfaces}\label{par:subsection-kummer} Now, consider the abelian surface $A:=E\times E=\C^2/\Lambda^2$, and the real structure
$\sigma_A(x,y)=({\overline{x}}, {\overline{y}}) \mod (\Lambda^2)$. Its fixed point set is $A(\R)=E(\R)^2=\R^2/\Z^2$. 
The group $\GL_2(\Z)$ acts linearly on $\C^2$ by preserving $\Lambda^2$, so it 
 also acts  on $A$ by ``linear automorphisms''. 
Set 
\begin{equation}
\bfe_1=(1,0), \; \bfe_2=(\tau, 0), \; \bfe_3=(0,1),\; \bfe_4=(0,\tau).
\end{equation}
The vectors $\bfe_1$ and $\bfe_3$ form a basis of the complex plane $\C^2$, and the four vectors $\bfe_1$, $\ldots$, $\bfe_4$ 
form a basis of the real vector space $\C^2\simeq \R^4$. The real planes $\Vect_\R(\bfe_1,\bfe_3)$ and $\Vect_\R(\bfe_2,\bfe_4)$ are 
invariant under the action of $\GL_2(\Z)$, and they determine two invariant real tori in $A$,   the first one being 
equal to $A(\R)$.  Define 
\begin{equation}
\Sigma_A= \Vect_\R(\bfe_2, \bfe_4)/\Lambda^2= \Vect_\R(\bfe_2, \bfe_4)/ (\Z\bfe_2\oplus \Z\bfe_4)\simeq\R^2/\Z^2.
\end{equation}

Now, consider the Kummer surface $X_0=A/\eta$, where $\eta$ is the holomorphic involution of $A$ defined by 
$\eta(x,y)=(-x,-y) \mod(\Lambda^2)$. This surface has sixteen singularities, each of which is resolved by a simple
blow-up. Let $\Sigma_0$ be the projection of $\Sigma_A$ in $X_0$. We also 
 let $\Sigma_X \subset X$ be the proper transform  of $\Sigma_0$  
  in the smooth K3 surface $X$ obtained by resolving the singularities of $X_0$. %\serge{être plus précis}\romain{plus précis sur la notion de transformée stricte? tel quel ça me va}

\subsection{Statements} 

\begin{thm} With notation as above, the following properties are equivalent:
\begin{enumerate}[\em (a)]
\item there is a real structure $s_A$  on $A$   whose real part contains 
$\Sigma_A$; 
\item there is a real structure $s_X$  on     $X$ whose real part contains 
 $\Sigma_X$;   
\item the positive real number $t$ is equal to $\sqrt{3}/2$, $1/2$, or $\sqrt{3}/6$.
\end{enumerate}
In addition when $t=1/2$ (resp. $\sqrt{3}/2$ or $\sqrt{3}/6$) the curve $E$ is isomorphic to the quotient of $\C$ by the lattice of Gaussian integers (resp. Eisenstein integers).
\end{thm}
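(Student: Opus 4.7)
The plan is to prove (a) $\Leftrightarrow$ (c) by a direct computation of which antiholomorphic involutions of $A$ can fix $\Sigma_A$ pointwise, and to deduce (a) $\Leftrightarrow$ (b) from the Kummer correspondence between real structures on $A$ and on $X$. The modular identification of $E$ in each case is then a short calculation in $\mathrm{SL}_2(\Z)$.

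For (a) $\Rightarrow$ (c), I would first observe that since $0\in\Sigma_A$, any real structure $s_A$ with $\Sigma_A\subseteq\mathrm{Fix}(s_A)$ fixes the origin, hence lifts to an antiholomorphic map $\tilde s_A\colon\C^2\to\C^2$ of the form $z\mapsto M\overline z$ for some $M\in\GL_2(\C)$ (lifts of translation-equivariant maps of a torus are affine, with vanishing translation part once we normalize $\tilde s_A(0)=0$). Because $\Lambda^2$ is discrete and $\Vect_\R(\bfe_2,\bfe_4)$ is connected, pointwise fixation of $\Sigma_A$ propagates to the identity $M\overline z=z$ on all of $\Vect_\R(\bfe_2,\bfe_4)$; taking $z=\bfe_2=\tau\bfe_1$ and $z=\bfe_4=\tau\bfe_3$ then forces $M=(\tau/\overline\tau)\,I$. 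Since $\tau+\overline\tau=1$ implies $\overline\Lambda=\Lambda$, preservation of $\Lambda^2$ reduces to $\tau/\overline\tau\in\Lambda$, while involutivity $M\overline M=I$ is automatic. Writing $\tau/\overline\tau=a+b\tau$ with $a,b\in\Z$, multiplying by $\overline\tau$, and comparing imaginary and real parts (using $\mathrm{Re}(\tau)=1/2$, $|\tau|^2=1/4+t^2$) yields $a=-1$ and $b|\tau|^2=1$. The constraint $|\tau|^2>1/4$ forces $b\in\{1,2,3\}$, producing exactly the three values $t\in\{\sqrt3/2,\,1/2,\,\sqrt3/6\}$. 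Conversely, in each of these cases $s_A(z):=(\tau/\overline\tau)\overline z$ is easily verified to define an antiholomorphic involution of $A$ whose fixed set contains $\Sigma_A$.

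For (a) $\Leftrightarrow$ (b), the map $s_A$ just constructed commutes with $\eta$ (it is $\C$-antilinear, whereas $\eta$ is $\C$-linear acting by $-\mathrm{id}$, so the two commute on $A$), so $s_A$ descends to an antiholomorphic involution $s_0$ of $X_0$ which extends uniquely to a real structure $s_X$ on the minimal resolution $X$; a continuity argument on the proper transform yields $\Sigma_X\subseteq\mathrm{Fix}(s_X)$. Conversely, given $s_X$ with $\Sigma_X\subseteq\mathrm{Fix}(s_X)$, restriction to the smooth locus of $X_0$ gives $s_0$, which lifts to an antiholomorphic map $\tilde s_A$ on $A$; the main obstacle I expect in the argument is to rule out that $\tilde s_A$ is merely a ``quaternionic'' structure with $\tilde s_A^2=\eta$. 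For this I would pick any non-torsion $p\in\Sigma_A$ lying above a smooth point of $\Sigma_0$: choosing the lift with $\tilde s_A(p)=p$ forces $\tilde s_A^2(p)=p$, and since $p\neq\eta(p)$, this gives $\tilde s_A^2=\mathrm{id}$, while the fixed-point property propagates to all of $\Sigma_A$ by connectedness of $\Sigma_A$ minus its four $2$-torsion points. Finally, for the modular identification: $t=1/2$ gives $\tau=(1+i)/2$, with $2\Lambda=(1-i)\,\Z[i]$, so $E\cong\C/\Z[i]$; $t=\sqrt3/2$ gives $\tau=1+\omega$ with $\omega=e^{2\pi i/3}$, so $\Lambda=\Z[\omega]$; and $t=\sqrt3/6$ gives $|\tau|^2=1/3$, whence the elementary identity $-1/\tau+2=e^{i\pi/3}$ shows $\tau$ is $\mathrm{SL}_2(\Z)$-equivalent to the Eisenstein $\tau$.
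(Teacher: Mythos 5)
Your treatment of (a)$\Leftrightarrow$(c), of (a)$\Rightarrow$(b), and of the final modular identifications is correct and is essentially the paper's own argument: you write the lift of $s_A$ as $z\mapsto M\overline z$, use pointwise fixation of $\Vect_\R(\bfe_2,\bfe_4)$ and discreteness of $\Lambda^2$ to force $M=(\tau/\overline\tau)\id$, and then the condition $\tau/\overline\tau\in\Lambda$ gives $a=-1$, $b\vert\tau\vert^2=1$, hence $b\in\{1,2,3\}$ and the three values of $t$; the converse and the descent of $z\mapsto(\tau/\overline\tau)\overline z$ through $\eta$ and through the resolution are also fine, as are the identifications with the Gaussian and Eisenstein lattices.

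The gap is in (b)$\Rightarrow$(a). You assert that $s_X$ ``restricts to the smooth locus of $X_0$'', i.e. that $s_X$ permutes the sixteen exceptional $(-2)$-curves of $X\to X_0$, and then that the resulting $s_0$ ``lifts to an antiholomorphic map $\tilde s_A$ on $A$''. Neither step is automatic, and neither is justified. The Kummer K3 attached to $E\times E$ carries many smooth rational curves besides the sixteen exceptional ones (for instance images of elliptic curves of $E\times E$ passing through $2$-torsion points), so an arbitrary antiholomorphic involution of $X$ has no reason to preserve this particular sixteen-tuple; your hypothesis only fixes $\Sigma_X$ pointwise, and $\Sigma_X$ meets just four of the sixteen curves (those over the four $2$-torsion points of $\Sigma_A$), so the ``fixed circle implies invariant curve'' argument yields invariance of only those four. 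Even granting the descent to $X_0$, the existence of a lift of $s_0$ through the \'etale double cover $A\setminus A[2]\to X_0\setminus\Sing(X_0)$ is a covering-space (orbifold) statement that requires proof; the dichotomy $\tilde s_A^2\in\{\id,\eta\}$, which you do address, is not the main obstacle. The paper circumvents both issues by a local argument: writing $s_X=B_X\circ\sigma_X$ with $\sigma_X$ the real structure induced by $\sigma_A$, it uses the pointwise-fixed circle $S$ inside the exceptional curve $C$ over the origin to conclude that $B_X$ fixes $C$, contracts only $C$, lifts the germs of $B_X$, $\sigma_X$, $s_X$ to $(\C^2,0)$ through the local quotient by $\eta$, deduces there that $B_A=(\tau/\overline\tau)\id$, and then globalizes along the $B_X$-invariant curve $(\C/\Lambda\times\{0\})/\eta\simeq\P^1(\C)$, on which $B_X$ is covered by $x\mapsto(\tau/\overline\tau)x$; this reduces (b) to the lattice computation of (a) without ever lifting $s_X$ globally to $A$. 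To make your route work you would have to establish both the invariance of the exceptional configuration and the lifting criterion, which is substantially more than a remark.
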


The equivalences (a)$\Leftrightarrow$(c) and (b)$\Leftrightarrow$(c) are proven respectively 
 in \S~\ref{subs:proof_abelian} and~\ref{subs:proof_kummer} below.

\begin{cor}\label{cor:not_real_parts} 
There exist examples of abelian and Kummer surfaces $X$, and non-elementary subgroups $\Gamma\subset \Aut(X)$ 
such that {\emph{(1)}} $\Gamma$ preserves an analytic, totally real surface $\Sigma\subset X$, {\emph{(2)}} $\Gamma$ has 
a dense orbit and an invariant, ergodic, smooth, probability measure with support equal to $\Sigma$, and {\emph{(3)}} there is no real structure on $X$ whose real part  contains $\Sigma$.
\end{cor}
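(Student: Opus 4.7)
The plan is to apply the theorem to a generic member of the family constructed in \S\ref{par:subsection-lattice}--\ref{par:subsection-kummer}. Concretely, I would fix a real number $t>0$ with $t\notin\{\sqrt{3}/2,\,1/2,\,\sqrt{3}/6\}$ and take $\Gamma$ to be the image in $\Aut(X)$ (respectively $\Aut(A)$) of the linear action of $\SL_2(\Z)$ on $\C^2$. Since $\SL_2(\Z)$ preserves the lattice $\Lambda^2$, its action descends to $A=\C^2/\Lambda^2$, commutes with $\eta=-\id$, hence descends to $X_0=A/\eta$, and lifts equivariantly to the minimal resolution $X$ (the sixteen singular points of $X_0$ form a single $\SL_2(\Z)$-invariant set, and each is resolved by a single blow-up).

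Verifying the dynamical and geometric properties is then largely formal. For the invariance $\Gamma(\Sigma)=\Sigma$, one observes that the linear action preserves the real subspaces $\Vect_\R(\bfe_1,\bfe_3)=\R^2$ and $\Vect_\R(\bfe_2,\bfe_4)=\tau\R^2$, so it preserves both $A(\R)$ and $\Sigma_A$, and this descends to $\Sigma_0$ and then lifts to $\Sigma_X$. For the hypotheses of Theorem~\ref{thm:main}, I would check that $\Gamma$ is non-elementary and contains a parabolic element: any hyperbolic matrix, say $\bigl(\begin{smallmatrix}2&1\\1&1\end{smallmatrix}\bigr)$, induces a loxodromic automorphism of $A$ (its two real eigenvalues of modulus $\neq1$ give a loxodromic action on $H^{1,1}$), while the unipotent matrix $\bigl(\begin{smallmatrix}1&1\\0&1\end{smallmatrix}\bigr)$ preserves each fiber of the first projection $A\to E$ and is easily seen to be parabolic; both properties are inherited by the induced automorphisms of the Kummer resolution $X$.

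Next, the Haar probability measure on $\Sigma_A\simeq\R^2/\Z^2$ is $\Gamma$-invariant, smooth, has full support, and is $\Gamma$-ergodic by a standard Fourier-coefficient argument (any non-trivial character of $\R^2/\Z^2$ is moved to a distinct character by any hyperbolic element of $\SL_2(\Z)$); its pushforward to $X$ then satisfies condition~(2) with support $\Sigma_X$. The existence of a dense $\Gamma$-orbit on the whole surface follows from the same Fourier argument applied now to the full torus $A\simeq(\R/\Z)^4$: the $\SL_2(\Z)$-action is ergodic for $\vol_A$, so $\vol_A$-almost every $\Gamma$-orbit is dense in $A$, and the image in $X$ of such a point has dense $\Gamma$-orbit. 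Finally, property~(3) is exactly the content of the theorem (implication (c)$\Rightarrow$(b), negated) for our choice of~$t$.

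I do not expect any serious obstacle, since the main technical input, namely the classification of $t$ for which a real structure can contain $\Sigma_X$, is already proved in the theorem above. The only point that requires a little care is ensuring that the passage $A\rightsquigarrow X_0\rightsquigarrow X$ preserves both the dynamical features (non-elementarity, presence of a parabolic element) and the geometric feature (invariance of the totally real torus), but since every step in the Kummer construction is $\SL_2(\Z)$-equivariant this is routine; the abelian example is obtained directly from the same $\Gamma$ acting on $A$, via the equivalence (a)$\Leftrightarrow$(c).
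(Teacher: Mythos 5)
Your proposal is correct and is essentially the paper's own argument: the paper's proof consists precisely of taking $\Gamma=\GL_2(\Z)$ (your $\SL_2(\Z)$ works just as well) acting linearly on $A=E\times E$ and on the Kummer resolution, pushing the Lebesgue measure of $\Sigma_A$ forward, and invoking the equivalence (a)$\Leftrightarrow$(b)$\Leftrightarrow$(c) for $t\notin\{\sqrt3/2,1/2,\sqrt3/6\}$; your extra verifications (equivariance of the Kummer construction, parabolicity of a unipotent element, ergodicity and dense orbits via the hyperbolic toral automorphism) are exactly the routine checks the paper leaves implicit. Only a trivial slip: with the usual column-vector convention the unipotent $\bigl(\begin{smallmatrix}1&1\\0&1\end{smallmatrix}\bigr)$ preserves the fibers of the second projection, not the first (or use its transpose), which does not affect the argument.
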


 Indeed, one can just take $\Gamma = \GL_2(\Z)$ in the previous examples, and for the invariant probability measure 
one takes the measure coming from the Lebesgue measure on $\Sigma_A$.
%If one replaces $\GL_2(\Z)$ by a non-elementary subgroup $\Gamma \subset \GL_2(\Z)$, the existence of dense
%orbits and the ergodicity of the Lebesgue measure are still satisfied (this follows from results \cite{Muchnik, Guivarch-Starkov, BFLM} for instance).\romain{intérêt de cette remarque? pas vraiment un nouvel exemple, c'est juste un sous-groupe du précédent}
%\serge{J'ai enlevé la phrase sur le passage à des sous-groupes.}

\subsection{Proof for the abelian surface}\label{subs:proof_abelian}
 Let $s$ be a real structure on $A$, i.e.\ an anti-holomor\-phic 
involution. Then $s\circ \sigma_A$ is holomorphic, so $s=B\circ \sigma_A$ for some automorphism $B$ 
of $A$. Now, assume that the fixed point set of $s$ is equal to $\Sigma_A$. Since the origin $(0,0)$ 
of $A$ is fixed by $\sigma_A$ and $s$, it is fixed by $B$ too. This means that $B$ is induced by a linear
automorphism of the complex vector space $\C^2$, i.e.\ by an element of $\GL_2(\C)$.

Near the origin of $A$, we can write $s(x,y)=B\circ\sigma_A(x,y)=B({\overline{x}}, {\overline{y}})$, and our 
assumption implies that the vector $\bfe_2=(\tau, 0)$ satisfies $B({\overline{\tau}}, 0)=(\tau,0)$, i.e. 
$B\bfe_2=(\tau/{\overline{\tau}})\bfe_2$ because $B$ is $\C$-linear. The same property is satisfied 
by $\bfe_4$. Since $(\bfe_2,\bfe_4)$ is a basis of the complex plane $\C^2$, $B$ is a homothety: $B=(\tau/{\overline{\tau}})\mathsf{Id}$. 
As a consequence, the linear map $M\colon \C\to \C$ defined by $M(z)=(\tau/{\overline{\tau}})z$ preserves the lattice 
$\Lambda=\Z\oplus\Z\tau$,
with $\tau=1/2+\ii t$. Thus, we can find a quadruple of integers $(a,b,c,d)$ such that 
\begin{equation}
\frac{\tau}{{\overline{\tau}}} = a + b \tau \; \,  {\text{ and }} \, \; \frac{\tau}{{\overline{\tau}}}\tau= c + d \tau.
\end{equation}
This implies $a=c=-1$, by looking at the imaginary parts after both equations have been multiplied by ${\overline{\tau}}$. 
Then 
\begin{equation}
1 = b \vert \tau \vert^2   \quad  {\text{and}} \quad  \frac{3}{4} - t^2= d\vert \tau\vert^2 .
\end{equation}
The first equation plus the relation $\vert \tau \vert^2= 1/4 + t^2 > 1/4$ imply that 
$1\leq b \leq 3$, and more precisely 
$(b,t)\in \{ (1,\sqrt{3}/2), (2, 1/2), (3, \sqrt{3}/6)\}$. Together with the second equation we end up with exactly 
three possibilities:% $(b,d,t)=(1,0, \sqrt{3}/2)$, $(2, 1, 1/2)$ or $(3,2,\sqrt{3}/6)$.
\begin{enumerate}[(1)]
\item  $(b,d,t)=(1,0, \sqrt{3}/2)$, the lattice $\Z\oplus \Z\tau$ is the lattice of Eisenstein integers $\Z[(1+\ii \sqrt{3})/2]$, and $s(x,y)=(e^{2\ii \pi/3}{\overline{x}}, e^{2\ii \pi/3}{\overline{x}})=e^{2\ii \pi/3}\sigma_A(x,y)$; 
\item $(b,d,t)=(2, 1, 1/2)$, $\tau = (1+ \ii)/2$, the lattice is the image of the lattice of Gaussian integers $\Z[\ii]$
  by the  similitude $z\mapsto \frac{1-\ii}{2}z$, and $s(x,y)=(\ii {\overline{x}} , \ii {\overline{y}} ) = \ii \sigma_A(x,y)$;
  \item $(b,d,t)=(3,2,\sqrt{3}/6)$ and $\tau=1/2+\ii \sqrt{3}/6$; modulo the action of $\PSL_2(\Z)$ on the upper half plane, $\tau$ is 
equivalent to $2-1/\tau=(1+\ii \sqrt{3})/2$ so we end up again with the Eisenstein integers, and $s(x,y)=(e^{2\ii \pi/6}{\overline{x}}, e^{2\ii \pi/6}{\overline{y}})=e^{2\ii \pi/6}\sigma_A(x,y)$
\end{enumerate} 
This completes the proof of the implication (a)$\Rightarrow$(c), and for the converse implication 
the explicit formulas for $s(x,y)$ provides the desired real structure. 
 
In each case, $s$ is conjugate to $\sigma_A$ by $\beta\id\in \GL_2(\C)$, with  respectively $\beta= e^{2\ii\pi/6}$, $e^{2\ii\pi/8}$, $e^{2\ii\pi/12}$. However
 in the second and third cases, this conjugacy is only satisfied near the origin, because 
 $\beta\id $ does not preserve the lattice (i.e.does not induce an automorphism of $A$).   \qed

 %\romain{on devrait pouvoir dire  que les actions sur $\Sigma$ et $A(\R)$ ne sont pas conjuguées dans $\Aut(A)$. J'ai l'impression que dans ces cas pour des raisons de  volume il n'y a pas d'automorphisme de $A$ qui envoie $A(\R)$ sur $\Sigma$. On pourrait faire une remarque. }\serge{?? S'il existait un automorphisme $f$ qui envoie $A(\R)$ sur $\Sigma_A$ alors en conjuguant $\sigma_A$ par $f$ on aurait une structure réelle qui fixerait $\Sigma_A$. Donc c'est un corollaire évident de notre theoreme. Par contre, les actions sont les mêmes à difféo analytique réel près ($\GL_2(\Z)$ sur $\R^2/\Z^2$)}\romain{je parlais des 3 cas (1) (2) (3) ci dessus, en rapport avec la conjugaison par $\beta I$, qui donne un tel automorphisme dans le cas (1) et je voulais arguer qu'il n'y en avait pas dans les 2 autres cas. Bref, c'est sans importance.}

\subsection{Proof for the (smooth) Kummer surface}\label{subs:proof_kummer}
 Suppose  there is a   real structure $s_X$ on $X$
whose   fixed point set contains $\Sigma_X$. Let $\sigma_X$ be the real structure induced by $\sigma_A$
on the Kummer surface $X$. Then, there is an automorphism $B_X$ of $X$ such that $s_X=B_X\circ \sigma_X$. 

Consider the origin $(0,0)$ of $A$, and its blow-up $\e\colon \hat{A}\to A$. In local coordinates it expresses as
$\e (u,v)=(u,uv)=(x,y)\in A $, with exceptional divisor $D=\{u=0\}$. 
The involution $\eta$ lifts to $\hat{\eta}(u,v)=(-u,v)$; it is the
identity on $D$, it acts  transversally as $u\mapsto -u$, and the quotient map $\hat{A}\to X=\hat{A}/\hat{\eta}$ is locally 
given by $q\colon (u,v)\mapsto (u^2,v)$.
Lifting $\sigma_A$, we obtain ${\hat{\sigma}}_A(u,v)=(\overline{u}, \overline{v})$. 
In $A$, $\Sigma_A$ is locally parametrized by $(s\tau, s'\tau)$ with $s$ and $s'$ small real numbers; 
its strict transform is the
 real analytic surface ${\hat{\Sigma}}_A$ given by $(u,v)=(s\tau, s'/s)$. So, the intersection 
of ${\hat{\Sigma}}_A$ with $D$ is determined by the equation $v\in \R$. In particular, ${\hat{\Sigma}}_A\cap D$
is fixed by ${\hat{\sigma}}_A$. The image of $D$ in $X$ is a curve $C\simeq \P^1(\C)$ and the 
image of the subset $\{u=0, \; v\in \R\}$ is a great circle $S\subset C$. This circle is fixed by $\sigma_X$, as we
just saw, and by $s_X$, by definition of $\Sigma_X$.
Thus, $B_X$ fixes $S$, hence $C$ itself since $S$ is Zariski dense in $C$ (for the complex algebraic structure on $X$).

Since $B_X$ fixes $C$, we can contract $C$ onto a singularity of $X_0$:  $B_X$ and $\sigma_X$ descend 
to regular (holomorphic and anti-holomorphic) maps on a neighborhood of the singularity. Since this singularity is
the quotient singularity $(\C^2, 0)/ \eta$, we can lift
%\romain{c'est quoi déja la propriété de la singularité qui permet de relever? (je devrais savoir...)}\serge{Tu relèves au revêtement universel local dans le complément de la singularité (revêtement double par $\C^2$ privé de l'origine, localement). Puis tu étends par Hartogs.}
$B_X$, $\sigma_X$, and $s_X$ to germs of diffeomorphisms $B_A$, $\sigma_A$, and $s_A$ near 
the origin in $\C^2$. Using the natural, local coordinates given by the projection $\C^2\to A\to X_0$, the lifts
can be written 
\begin{equation}
\sigma_A(x,y)=({\overline{x}}, {\overline{y}}) \; {\text{ and }} \; B_A(x,y)=(\sum_{k,\ell\geq 0} a_{k,\ell} x^k y^\ell, \sum_{k,\ell\geq 0} b_{k,\ell} x^k y^\ell)
\end{equation}
for some  locally convergent   power series $\sum_{k,\ell} a_{k,\ell} x^k y^\ell$ and $\sum_{k,\ell} b_{k,\ell} x^k y^\ell$.
In these coordinates, $\Sigma_X$ corresponds to the real plane 
$(u \tau, v\tau)$ for $(u,v)\in \R^2$, and the equation for the fixed points of $s$ gives $B_A(u{\overline{\tau}}, v{\overline{\tau}})
=(u \tau, v\tau)$ for $(u,v)\in \R^2$. This implies that $B_A$ is linear in these coordinates, equal to the homothety of 
factor $\tau/{\overline{\tau}}$.
% \serge{Reprendre à partir d'ici}\romain{j'ajoute juste quelques détails je ne sais pas  ce que  tu as en tête}

Up to this point, we have worked locally near the singularity of $X_0$ corresponding to the origin of $A$, we now globalize the argument. 
As a consequence of its local form, $B_A$ preserves the horizontal line $\{y=0\}$, 
so that $B_X$ preserves the   quotient curve 
\begin{equation}
(\C/(\Lambda)\times \{ 0\} )/\eta \simeq \P^1(\C)\subset X.
\end{equation}
Furthermore,  in the coordinate $x$ given by the projection 
\begin{equation}
\C\times\{0\} \to E\times\{0\} =\C/\Lambda\times \{0\}\to \P^1(\C)
\end{equation}
 (where the last arrow is a   branched cover of degree 2),
$B_X$ is  covered by the linear map $x\mapsto (\tau/{\overline{\tau}})x$. 
Thus the analysis of the previous subsection applies, and shows that 
 $\tau=1/2+\ii \sqrt{3}/2$, $(1+ \ii)/2$, or
$1/2+\ii \sqrt{3}/6$, and the proof of (b)$\Rightarrow$(c) complete.  For the converse implication, 
 it is enough to observe that for each of these three cases, 
  the explicit anti-holomorphic involution on $A$ 
  given in \S~\ref{subs:proof_abelian} commutes with $\eta$, so it descends to the Kummer surface $X$. 
\qed 

%%%%%%%%%%%%%%%%%%%%%%%%%%%%%%%%%%%%%%%%
%%%%%%%%%%%%%%%%%%%%%%%%%%%%%%%%%%%%%%%%
\section{Invariant surfaces with boundary}\label{sec:boundary}
%%%%%%%%%%%%%%%%%%%%%%%%%%%%%%%%%%%%%%%%
%%%%%%%%%%%%%%%%%%%%%%%%%%%%%%%%%%%%%%%%

In this section, we show that in case (c) of Theorem~\ref{thm:main}, the surface $\Sigma$
 may have a non-trivial boundary in $X$.
We provide two examples, one for a Kummer surface, and then a deformation keeping the main features of the first example 
but on a surface which is not anymore a Kummer surface. We also give  
 examples of invariant curves that do not support 
any invariant measure. 

%%%%%%%%%%%%%%%%%%%%%%%%%%%%%%%%%%%%%%%%
\subsection{On a Kummer surface}
%%%%%%%%%%%%%%%%%%%%%%%%%%%%%%%%%%%%%%%%

Consider a Kummer example,  with the same construction as in Sections~\ref{par:subsection-lattice} and~\ref{par:subsection-kummer}.
Embed the curve $E=\C/\Lambda$ in $\P^2$, in a Weierstrass form. Its equation is 
\begin{equation}
y^2=4x^3-g_2x-g_3
\end{equation}
with coefficients $g_i\in \R$ depending on the parameter $t$; the real structure $\sigma_E(z)=\overline{z}$ 
is the restriction to $E$ of the real structure $[x:y:z]\mapsto [{\overline{x}}: {\overline{y}}: {\overline{z}}]$ 
on $\P^2$. Since  $E(\R)={\mathrm{Fix}}(\sigma_E)$ is connected, $g_2$ is negative. By convention, we fix 
the origin of the elliptic curve for its group law  at the (inflexion) point at infinity. 
If $u$ and $v$ are two points of $E$, the line containing $u$ and $v$ intersects $E$ in a third point $w$. The sum $u+v+w$ 
is zero for the group law. If $u=(x,y)$ is a point of $E$, then $-u=(x,-y)$ and  the fixed points of this involution $u\mapsto -u$ 
on $E(\R)$ are the two points $(x_0,0)$ and $(\infty,\infty)$ where $x_0$ is the unique real solution of the equation 
$4x^3=g_2 x + g_3$. 

Now, consider the map $\Phi\colon E\times E\to \P^1\times \P^1\times \P^1$ 
which is defined as follows: if $(u,v)$ belongs to 
$E\times E$, with $u=(x_1,y_1)$ and $v=(x_2,y_2)$, and if $w=-(u+v)=(x_3,y_3)$, then $\Phi(u,v)=(x_1,x_2,x_3)$. 
One easily checks that $\Phi\circ \eta =\Phi$, with $\eta(u,v)=(-u,-v)$, and $\Phi$ embeds the Kummer surface $X_0=(E\times E)/\eta$
into $\P^1\times \P^1\times \P^1$  as a singular $(2,2, 2)$-surface  (see~\cite[\S 8.2]{Cantat:Panorama-Synthese}). The singularities of $X_0$ correspond to the fixed points of $\eta$, i.e. to the group $A[2]$ of torsion points 
of order $2$ in $A=E\times E$. This gives $16$ points, of which only $4$ are real: 
\begin{equation}
(x_0,x_0,\infty),  \; (x_0,\infty,x_0), \; (\infty, x_0,x_0), \;  (\infty,\infty,\infty).
\end{equation}
The real part of $X_0$ corresponds to the fixed point set of $\sigma_0$, {i.e.} of $\sigma_A$ viewed on the quotient space
$X_0$. Recall that $\sigma_A(z_1,z_2)=(\overline{z_1},\overline{z_2})$ if we think of $A$ as $\C^2/\Lambda^2$. 
The fixed points of $\sigma_0$ are of two types: those coming from the fixed points of $\sigma_A$, hence from the 
real part $A(\R)$, and those coming from 
\begin{equation}
\Delta:=\{ (z_1,z_2)\in A\; ; \; \sigma_A(z_1,z_2) = \eta(z_1,z_2)\}.
\end{equation}
In the quotient $X_0$, $A(\R)$ projects onto a sphere with four singularities; the projection of $\Delta$ is another sphere 
with the same four singularities. These two spheres are glued along those four points; locally $X_0$ is the quotient of $\C^2$ by $\set{\id, -\id}$, so  up to an analytic 
change of coordinates,    $X_0(\R)$ 
is a quadratic cone isomorphic to $x_1x_2=x_3^2$.

\begin{figure}[h]
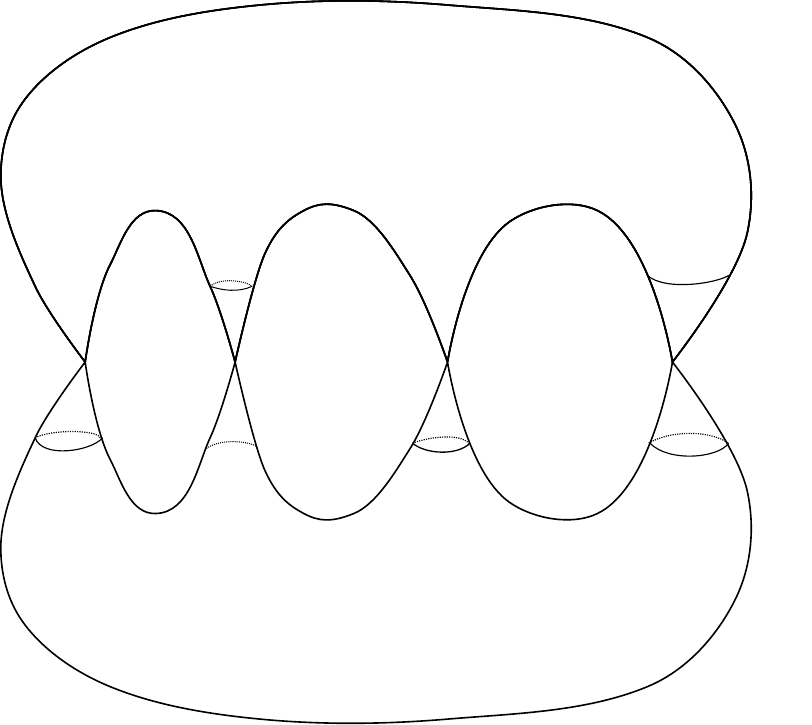
\end{figure}

The natural action of  $\GL_2(\Z)$ on $E\times E$ descends to an action of 
  $\PGL_2(\Z)$   on $X_0$, which preserves  $X_0(\R)$; each of the two connected 
components of $X_0(\R)\setminus \Sing(X_0(\R))$ is also preserved. The action of 
$\PGL_2(\Z)$ on these two punctured spheres has 
dense orbits (and finite orbits too, corresponding to torsion points of $A$). If we resolve the singularities of $X_0(\R)$, the two punctured spheres become 
two surfaces homeomorphic to a sphere minus four disks; they are glued together along their boundaries to form 
a closed, orientable surface of genus $3$, which is 
the real part of $X(\R)$ for the real structure $\sigma_X$. Thus, the generic 
orbit of $\PGL_2(\Z)$ in $X(\R)$ is dense in one of these two open subsets of $X(\R)$. This gives a first example of an invariant surface with boundary 
 $\Sigma$, given by the component $A(\R)/\eta$, with an invariant measure  $\mu$ given by
  the push forward of the Haar measure from $A(\R)$ to~$\Sigma$.

%%%%%%%%%%%%%%%%%%%%%%%%%%%%%%%%%%%%%%%%
\subsection{Deformation}\label{par:examples_deformation}
%%%%%%%%%%%%%%%%%%%%%%%%%%%%%%%%%%%%%%%%
Let us come back to $X_0(\R)$. Switching  the 
chart in    $\P^1\times \P^1\times \P^1$ so that the coordinates  $(x_1,x_2,x_3)$ are replaced by 
their inverses $(1/x_1,1/x_2,1/x_3)$, the four singularities become  
\begin{equation}
(\alpha,\alpha,0),  \; (\alpha,0,\alpha), \; (0, \alpha,\alpha), \;  (0,0,0)
\end{equation}
with $\alpha=1/x_0$.
Note that the three vectors $v_1=(\alpha,\alpha,0)$, $v_2=(\alpha,0,\alpha)$, and $v_3=(0, \alpha,\alpha)$ are linearly independent
(their determinant is $-2\alpha^3$). Thus, given any triple $(\e_1,\e_2,\e_3)\in \{\pm 1\}^3$, there
is a real quadratic form $Q(x_1,x_2,x_3)$ such that $\e_iQ(v_i)>0$ for each $1\leq i\leq 3$. 
If $P$ denotes the equation of $X_0$ (after changing the $x_i$ in $1/x_i$ as above) and $\e$ is a small 
real number, then $P+\e Q$ is an equation of a new surface $X_\e$ of degree $(2,2,2)$ in $\P^1\times \P^1\times\P^1$.
At the origin $(0,0,0)$, the linear term of the equation $P=0$ is not changed  by the addition of $\e Q$, and the quadratic term is only slightly perturbed  if $\e$ is small enough,  so $X_\e$ still admits 
a quadratic singularity, which    is non-degenerate of signature $(2,1)$. At the other three real 
singularities of $X_0$, we can choose the sign of $Q(v_i)$ in such a way that $X_\e(\R)$ is locally disconnected 
(as does  a hyperboloid with two sheets). 

We claim that, shifting $Q$ a little bit if necessary, all (real or complex) singularities of $X_0$ disappear
in the perturbation $X_\e$, except the origin. Indeed, by conjugating by  a
diagonal automorphism $(h,h,h)\in \Aut(\P^1)^3$, such that 
  $h\in \PGL_2(\C)$   fixes $0$ and $\alpha$, we can arrange that all singular points of $X$ belong to 
  $\C^3$. Let  $(s_j)_{j=0,\ldots, 15}$ be the singular points of $X_0$  
  (with $s_0 = 0$) and 
 choose $Q$ in such a way that $Q(s_j)\neq 0$ for $j\geq 1$. Let  
  $N = \bigcup_j B(s_j, \eta)$, where $\eta$ is so small that $Q\neq 0$ on $\bigcup_{j\geq 1} B(s_j, \eta)$. 
 Take $\e$ to be small and non-zero. Then,$X_\e$ is smooth in $\bigcup_{j\geq 1} B(s_j, \eta)$, because its
  equation $P+\e Q = 0$ reduces to $P/Q+\e = 0$ there, and such a hypersurface is smooth for $\e\neq 0$ small. 
Finally, smoothness being an open property,  $X_\e$ is also smooth in the complement of $N$.

\begin{figure}[h]
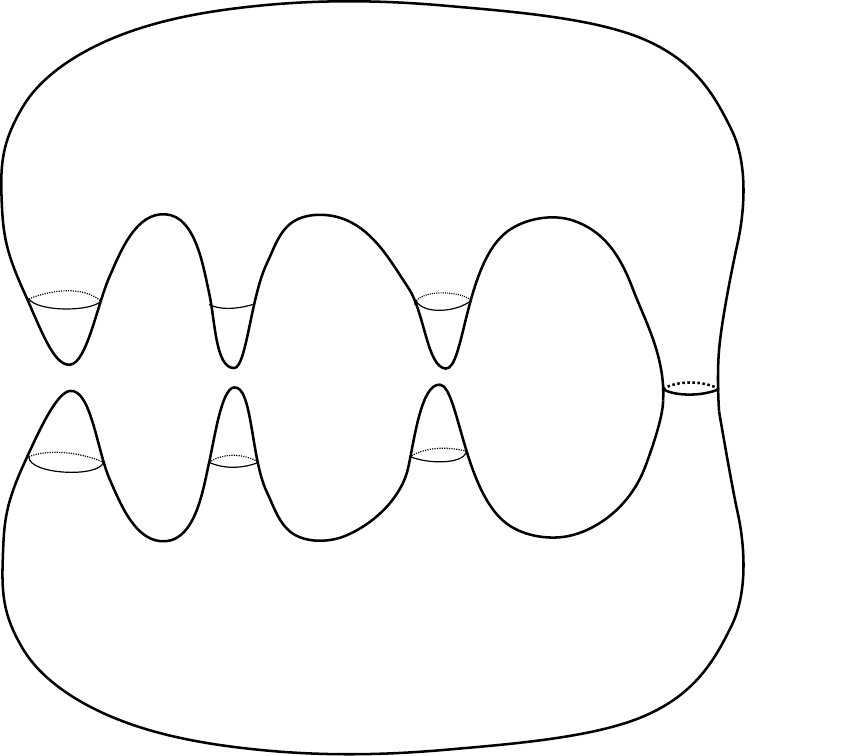
\end{figure}
\begin{rem} Such a deformation appears naturally in the closely related example of character
varieties for the once punctured torus (see~\cite{Cantat:BHPS, Goldman:1988}). \end{rem}

The result, for a sufficiently small $\e$ and a good choice of $Q$, is a real surface $X_\e$ of degree $(2,2,2)$
in $\P^1\times \P^1\times \P^1$ with a real part $X_\e(\R)$ satisfying the following properties.
\begin{enumerate}[--]
\item The surface $X_\e(\R)$ has a unique singularity,  at the origin.
\item After a minimal resolution of the singularity, we get a real K3 surface %\romain{pourquoi est-ce K3?}\serge{J'ai ajouté une vague idée de preuve. Sinon je peux donner une référence, ou détailler}\romain{c'est convaincant, mais si tu as une référence c'est encore mieux!}
${\hat{X}}_\e$. Indeed, the area form $\Omega_{X_\e}$ defined in Example~\ref{eg:wehler} lifts to 
a trivialization of the canonical bundle of ${\hat{X}}_\e$; and ${\hat{X}}_\e(\C)$ is simply connected (to 
see this one can use the fibrations ${\hat{\pi_i}}\colon {\hat{X}}_\e(\C)\to X_\e(\C)\to \P^1(\C)$).

\item The surface ${\hat{X}}_\e(\R)$ is homeomorphic to  a sphere ${\mathbb{S}}^2$; the exceptional divisor is a curve 
$S_\e\subset {\hat{X}}_\e(\R)^0$ that separates ${\hat{X}}_\e(\R)$ in two one-holed spheres with boundary $S_\e$;
\item The three involutions $\sigma_i\colon X_\e\to X_\e$ described in Example~\ref{eg:wehler} lift to three 
automorphisms of ${\hat{X}}_\e$. They generate a non-elementary subgroup $\Gamma_\e$
of $\Aut({\hat{X}}_\e)$ that preserves the real structure (the compositions $\sigma_i\circ \sigma_j$ are parabolic 
automorphisms with respect to distinct fibrations). A subgroup of index $4$ preserves simultaneously each component of ${\hat{X}}_\e(\R)\setminus S$ 
and the canonical area form of the K3 surface. 
\end{enumerate}
This provides {\sl{examples with non-trivial invariant open subsets of $X(\R)$ for a non-elementary subgroup of $\Aut(X_\R)$}}, in a case where
$X$ is not a Kummer surface (the dynamics of the group $\Gamma_\e$ is not covered by a linear dynamics on a torus).

%%%%%%%%%%%%%%%%%%%%%%%%%%%%%%%%%%%%%%%%
\subsection{Invariant curves} 
%%%%%%%%%%%%%%%%%%%%%%%%%%%%%%%%%%%%%%%%
Let us continue with the example given by $X_\e$. 

When $\e=0$, $X_0$ is a singular Kummer surface, and its
singularities are in $1$ to $1$ correspondance with the elements of $A[2]$. Let $\Gamma(2)\subset \GL_2(\Z)$ 
be the subgroup that fixes $A[2]$ pointwise. Its image in $\Aut(X_0)$ preserves the $16$
singularities of $X_0$. This group lifts to a group of automorphisms $\Gamma_0\subset \Aut(\hat{X}_0)$.
Let $S_0\subset {\hat{X}}_0$ be the $(-2)$-curve obtained by the minimal resolution of one of the singularities of $X_0$.
The dynamics of $\Gamma$ on $S_0$ coincides, up to conjugacy, with  the linear projective action of  $\Gamma(2)\subset \GL_2(\Z)$
on $\P(\C^2)$. This is a non-elementary subgroup of $\Aut(\P^1)\simeq \PGL_2(\C)$; in particular, this action does not preserve any probability measure. 

Now, consider the small perturbation ${\hat{X}}_\e$ and the group $\Gamma_\e$, as constructed in \S~\ref{par:examples_deformation}. 
Then, $\Gamma_\e$ preserves the $(-2)$-curve $S_\e\subset {\hat{X}}_\e$, and for $\e =0$ we recover $\Gamma_0$ up to finite 
index. Since the non-elementary property of $\Gamma_{0}\rest{S_0}\subset \PGL_2(\C)$ is invariant under small perturbations or after
taking finite index subgroups, we deduce that $\Gamma_\e$ induces a non-elementary subgroup of $S_\e$.
Thus, we obtain {\emph{examples of K3 surfaces ${\hat{X}}_\e$ with
of a non-elementary subgroup $\Gamma_\e\subset \Aut({\hat{X}}_\e)$ such that $\Gamma$ preserves a smooth rational curve $S_\e\subset {\hat{X}}_\e$ but
$S_\e$ does not support any $\Gamma_\e$-invariant probability measure}}; here, the examples are deformations of a Kummer example $({\hat{X}}_0, \Gamma_0)$.

\begin{rem}
Similar examples can be constructed on some Coble surfaces~$Y$:  $\Aut(Y)$ preserves a rational curve, coming from 
a plane sextic with ten nodes; and then, taking a  double cover of $Y$ ramified along the invariant sextic, one gets K3 surfaces.
\end{rem}

\begin{rem}
Consider the above example $({\hat{X}}_\e, S_\e, \Gamma_\e)$ and a probability measure $\nu$ on $\Gamma_\e$ 
whose support is finite and generates $\Gamma_\e$.   Then, the curve $S_\e\simeq \P^1(\C)$ supports a unique $\nu$-stationary 
measure $\mu_\nu$, because $\Gamma_\e\subset \PGL_2(\C)$ is non-elementary (see~\cite{Furstenber:1963}). 
If, as above, everything is defined over $\R$, the support of $\mu_\nu$ is contained in a circle; but if we apply the same construction 
with a well chosen, small complex deformation $X_\e$, the support of $\mu_\nu$ is supported on a fractal quasi-circle.
\end{rem}

%%%%%%%%%%%%%%%%%%%%%%%%%%%%%%%%%%%%%%%%%%%%%%%%%%%%%%%%%%%%%%%%%%
%%%%%%%%%%%%%%%%%%%%%%%%%%%%%%%%%%%%%%%%%%%%%%%%%%%%%%%%%%%%%%%%%%
\appendix
%%%%%%%%%%%%%%%%%%%%%%%%%%%%%%%%%%%%%%%%%%%%%%%%%%%%%%%%%%%%%%%%%%
%%%%%%%%%%%%%%%%%%%%%%%%%%%%%%%%%%%%%%%%%%%%%%%%%%%%%%%%%%%%%%%%%%

{\small {

%%%%%%%%%%%%%%%%%%%%%%%%%%%%%%%%%%%%%%%%%%%
%%%%%%%%%%%%%%%%%%%%%%%%%%%%%%%%%%%%%%%%%%%
\section{Abelian surfaces}\label{app:abelian}
%%%%%%%%%%%%%%%%%%%%%%%%%%%%%%%%%%%%%%%%%%%
%%%%%%%%%%%%%%%%%%%%%%%%%%%%%%%%%%%%%%%%%%%

In this appendix, we consider the case when all parabolic automorphisms $g$ of $\Gamma$ induce an automorphism
$g_B$ of infinite order on the base of their invariant fibration $\pi_g$. In that case, we know from~\cite[Proposition~3.6]{Cantat-Favre} that $X$ is a compact
torus, and  in fact an abelian surface since $\Gamma$ is non-elementary. Thus, we assume that 
\begin{enumerate}[(i)]
\item $X$ is an abelian 
surface, isomorphic to $\C^2/\Lambda$ for some lattice $\Lambda$;
\item  $\Gamma$ is a non-elementary group 
of automorphisms of $X$ that contains a parabolic element $g$; 
\item every parabolic element $g$ of $\Gamma$ acts on the base of its invariant fibration 
$\pi_g\colon X\to B_g$ by an automorphism $g_B\colon B_g\to B_g$ of infinite order. 
\end{enumerate}
We provide an argument to complete the proof of Theorem~\ref{thm:main} in that case; the strategy is 
the same as in Sections~\ref{par:Main Proof1} and~\ref{sec:Main_Proof}, but simpler since the dynamics is linear:

%\serge{J'ai enlevé ``torsion points''. Pour deux raisons. D'abord c'est faux, on peut toujours tout conjugué par une translation par un point qui n'est pas de torsion. Ensuite (et surtout), avec la preuve actuelle on ne montre plus cela. En fait, on montre que les projections $\ell_g(a_i)$ sont dans $R(g)$, mais maintenant $R(g)$ est plus gros que $\Q/\Z$ (à cause de $s_2$ dans le lemme A3.

%En fait je pense qu'on peut être plus précis. Ou bien tous les $S_i$ sont égaux, ou bien $\Gamma$ contient un sous-groupe d'indice fini qui, à conjugaison près par une translation, fixe l'origine; et après une telle conjugaison les $a_i$ sont de torsion. Je ne crois pas que ce soit utile de le rédiger, c'est dans le même esprit que la toute dernière sous-section.}%torsion
%\romain{OK}

\begin{pro}\label{pro:main_toris} Under 
the above hypotheses {\em{(i)}}, {\em{(ii)}}, {\em{(iii)}}, if $\mu$ is a $\Gamma$-invariant and ergodic measure, then 
either $\mu$ is the Haar measure on the abelian surface $X$, or  there are finitely 
many  subtori $S_j\subset X$ of real dimension $2$, and 
points $a_j\in X$, $j=1, \ldots, k$, such that 
\begin{enumerate}[\em (1)]
\item $\bigcup_j (a_j+S_j)$ is $\Gamma$-invariant;
\item $\Gamma$ permutes transitively the subsets $a_j+S_j$, $j=1, \ldots, k$;
\item $\mu$ is supported on $\bigcup_j (a_j+S_j)$ and on each $a_j+S_j$, $\mu$ is given by $\frac{1}{k} m_j$
where $m_j$ is the Haar measure on $a_j+S_j$.
\end{enumerate}
\end{pro}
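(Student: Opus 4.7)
The plan exploits two features special to the abelian surface case: every automorphism of $X$ is affine, and the combined action of two parabolic elements admits a classical measure rigidity.

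Writing $X = \C^2/\Lambda$, every $f\in\Aut(X)$ lifts to an affine map $\tilde f(x) = A_f x + t_f$ of $\C^2$, with $A_f\in\GL_2(\C)$ preserving $\Lambda$. The hypothesis that $g_B$ has infinite order forces $A_g\neq I$; since $g$ is parabolic, $A_g$ is unipotent, so $A_g = I + N_g$ with $N_g$ of rank $1$, whose image $L_g\subset\C^2$ is the complex tangent line to the fibers of $\pi_g$. By non-elementarity and \cite{stiffness}, $\Gamma$ contains a second parabolic $h$ whose fiber direction $L_h$ differs from $L_g$, so that $L_g + L_h = \C^2$.

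The central step is the measure-rigidity claim: \emph{any $g$-invariant ergodic probability measure on $X$ is a Haar measure on a translate of a closed $g$-invariant subgroup of $X$, of real dimension at least~$2$.} I would prove this by disintegration over $\pi_g$ combined with the unique ergodicity of the infinite-order translation $g_B$ on each of its orbit closures in $B_g$: the marginal $(\pi_g)_\ast\mu$ is Haar on such an orbit closure, and a Poincaré-return argument along fibers — exploiting that $g^n$ acts on fibers by translations whose $L_g$-components grow linearly in~$n$ — forces the fiber conditionals to be Haar measures on a subgroup of $F_g = L_g/(L_g\cap\Lambda)$, which generically is all of $F_g$. Equivalently, the same conclusion can be obtained by Fourier analysis on the dual group $\hat X$: $g$-invariance reads $\hat\mu(A_g^\vee\chi) = \chi(-t_g)\hat\mu(\chi)$, and the infinite orbits of $A_g^\vee = I + N_g^\vee$ outside $\ker N_g^\vee$ constrain the support of $\hat\mu$ to a union of cosets of a sublattice of $\hat X$.

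Applying this rigidity simultaneously to $g$ and $h$ and combining with $\Gamma$-ergodicity, the support of $\mu$ is a finite union of translates of a closed connected subgroup $S\subset X$ whose Lie algebra is invariant under $A_g$, $A_h$, and more generally under the linear image $\Gamma_0$ of $\Gamma$ in $\GL_2(\C)$. If $S = X$, then $\mu$ is the Haar measure on $X$ (case~(a)). Otherwise a dimension analysis forces $\dim_\R S = 2$: dimension~$1$ is excluded because any $A_g$-invariant real line is contained in $L_g$ and, symmetrically, in $L_h$, hence in $L_g\cap L_h = \{0\}$; dimension~$3$ is ruled out by a short case analysis using the simultaneous $A_g$- and $A_h$-invariance together with $\Gamma_0$-equivariance. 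The $\Gamma$-ergodicity then delivers the transitive permutation of the translates $a_j + S_j$ with equal weights, yielding the conclusion of the proposition.

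The main obstacle is the measure-rigidity step, and in particular the guarantee that the stabilizer of $\mu$ contains (a conjugate of) the full fiber subtorus $F_g$ and not merely a proper subgroup of it. Establishing this requires a careful analysis of the linear growth of translation amounts along fibers combined with the equidistribution of $g_B$-orbits; together they fill a dense subgroup of $F_g$ for $\nu_g$-generic base points. While this can be derived from Ratner-type measure rigidity for unipotent actions on homogeneous spaces, in the present abelian setting a self-contained argument via disintegration and equidistribution, or via the elementary Fourier computation on $\hat X$, is both cleaner and sufficient.
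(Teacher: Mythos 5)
Your global architecture — first classify the invariant measures of a single parabolic $g$, then intersect the information coming from $g$ and $h$, and finish with $\Gamma$-ergodicity and a dimension count (your exclusions of dimensions $1$ and $3$ via $L_g\cap L_h=\{0\}$ and $L_g+L_h=\C^2$ are fine) — hinges entirely on the coset-rigidity claim for a single affine unipotent map, and that claim is precisely what is not established. Neither of your two proposed proofs works as described. For the Fourier route: invariance only gives $\vert\hat\mu(A_g^\vee\chi)\vert=\vert\hat\mu(\chi)\vert$ along dual orbits, and Fourier coefficients of a \emph{measure} do not tend to $0$ along a sequence of distinct characters (Haar measure on a coset of a subtorus has infinitely many coefficients of modulus $1$), so constancy along an infinite $A_g^\vee$-orbit yields no constraint. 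Worse, the mechanism ``infinite dual orbit $\Rightarrow$ vanishing coefficient'' is false for the very measures appearing in the conclusion: when $\mu$ is Haar on a translate of a $2$-torus $S_0$ whose tangent plane does not contain $L_g$ (the generic situation, since $g$ restricted to $S_0$ is a nontrivial parabolic), $A_g^\vee$ acts as a nontrivial unipotent on the annihilator of $S_0$, and $\hat\mu$ has modulus $1$ on that whole annihilator, including characters with infinite dual orbit. (In the paper's Lemma~\ref{lem:unique_ergodicity_appendix} the analogous computation is legitimate only because it is applied to the $L^2$ indicator of an invariant set, via Furstenberg's criterion, where Bessel provides the decay.) For the disintegration route: asserting that the fiber conditionals are Haar measures on closed subgroups of $F_g$ is exactly the measure classification for the affine unipotent skew product on each fiber, i.e.\ the statement to be proved; and the parenthetical ``which generically is all of $F_g$'' is wrong, since for the $2$-dimensional measures of the proposition the conditionals are Haar on $1$-dimensional subgroups. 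Quoting Ratner-type rigidity for nil-translations would fill the hole, but it outsources the main content; note also that excluding $1$-dimensional ergodic pieces ultimately uses the holomorphy of $g$ (in the paper's normal form, $ad-bc\neq 0$), an input your ``linear growth of the $L_g$-component'' heuristic does not capture. Finally, even granting rigidity for $g$ and for $h$ separately, you must reconcile the $g$- and $h$-ergodic decompositions of the $\Gamma$-ergodic $\mu$ (different components could a priori sit on cosets of different subgroups) before speaking of a single $\Gamma_0$-invariant subgroup $S$.

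For comparison, the paper deliberately avoids any single-map classification. If some parabolic has dense orbits on its base, Furstenberg's unique ergodicity theorem gives that $\mu$ is Haar on $X$. Otherwise each parabolic preserves the fibers of the extra projection $\ell_g\colon X\to B_g/T_g$; by Lemmas~\ref{lem:real_projection} and~\ref{lem:unique_ergodicity_appendix}, $g$ is uniquely ergodic on all but countably many of these $3$-tori (proved by the Fourier computation on indicator functions), and the two-fibration disintegration argument of Proposition~\ref{pro:smooth_case} is rerun with $\ell_g,\ell_h$ in place of $\pi_g,\pi_h$: either some marginal is not concentrated on the countable exceptional set and $\mu$ is invariant under all translations, hence Haar, or $\mu$ charges a single fiber of $\ell_g$ and of $\ell_h$, hence a translate of a $2$-torus; ergodicity then gives finitely many translates, on each of which the powers $g^{k!},h^{k!}$ act as parabolic affine maps with transverse invariant fibrations, forcing Haar there. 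If you wish to keep your plan, you should either prove the coset rigidity honestly (e.g.\ by induction on isometric extensions, using Furstenberg's coboundary criterion fiber by fiber) or switch to this lighter route.
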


Here,  what we call Haar measure on  $a_j+S_j$ is the image of the Haar measure on $S_j$ by the translation 
$s \in S_j \mapsto a_j+s$.
With the results of Sections~\ref{par:Main Proof1}, 
this proposition concludes the proof of Theorem~\ref{thm:main}.

\subsection{Parabolic, affine transformations}\label{par:linear_translation_tori} The group $\Gamma$ acts on $X$ by affine transformations
\begin{equation}
f(x,y)=A_f(x,y)+S_f \mod (\Lambda )
\end{equation}
where the linear part $A_f\in \GL_2(\C)$ preserves the lattice $\Lambda\subset \C^2$ and the translation part $S_f$ is an 
element of $\C^2/\Lambda$. Now, pick a parabolic element $g\in \Gamma$; its linear part is given by
\begin{equation}\label{eq:linear_part_of_g}
A_g=\left(\begin{array}{cc} 1 & 0 \\ 1 & 1\end{array}\right)
\end{equation}
after a linear change of coordinates in $\C^2$. In these coordinates,  the fibration $\pi_g$ is induced by the projection $\pi_1\colon (x,y)\mapsto x$, and 
a conjugation by a translation reduces $g$  to the form  
\begin{equation}
g(x,y)=(x+s,y+x) \mod    (\Lambda )
\end{equation}
where $s$ has infinite order in the elliptic curve $B_g=\C/\pi_1(\Lambda)$. 

\begin{lem}
If the orbits of $g_B\colon x\mapsto x+s$ are dense in $B_g$, then $g$ is uniquely ergodic: the unique $g$-invariant probability measure on $X$
is the Haar measure. 
\end{lem}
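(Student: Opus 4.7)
The plan is to show by Fourier analysis that every $g$-invariant probability measure $\mu$ on $X$ coincides with the Haar measure $\lambda_X$. Let $\Lambda_0=\{c\in\C\,:\,(0,c)\in\Lambda\}$; the well-definedness of $g$ on $X$ forces the inclusion $\pi_1(\Lambda)\subset\Lambda_0$. A character of $X$ has the form $\chi_n(x,y)=e^{2\pi i(n_1(x)+n_2(y))}$, where $n=(n_1,n_2)\in\Lambda^*$ and the $n_j:\C\to\R$ are $\R$-linear; we write $\hat\mu(n)=\int\chi_n\,d\mu$, and the goal is to show $\hat\mu(n)=0$ for every $n\neq 0$. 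A direct induction gives $g^k(x,y)=(x+ks,\,y+kx+\binom{k}{2}s)$ modulo $\Lambda$, whence
\[ \chi_n(g^k v)=\chi_n(v)\,\exp\!\bigl(2\pi i\,P_n(k,x)\bigr),\qquad P_n(k,x)=\tfrac{n_2(s)}{2}k^2+\bigl(n_1(s)+n_2(x)-\tfrac{n_2(s)}{2}\bigr)k. \]
By $g$-invariance and Ces\`aro averaging,
\[ \hat\mu(n)=\int\chi_n(v)\cdot\tfrac{1}{N}\sum_{k=0}^{N-1}\exp\!\bigl(2\pi i\,P_n(k,v)\bigr)\,d\mu(v) \]
for every $N\geq 1$, so it suffices to show the inner Ces\`aro sum tends to $0$ $\mu$-almost everywhere and apply dominated convergence.

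If $n_2=0$, then $\chi_n$ descends to a character of $B_g$ and $\hat\mu(n_1,0)$ equals the corresponding Fourier coefficient of $(\pi_g)_*\mu$. Since $R_s:x\mapsto x+s$ has dense orbits, it is ergodic and hence uniquely ergodic on the compact abelian group $B_g$; so $(\pi_g)_*\mu$ is the Haar measure on $B_g$, and $\hat\mu(n_1,0)=0$ for $n_1\neq 0$. If $n_2\neq 0$, the map $x\mapsto e^{2\pi i n_2(x)}$ defines a non-trivial character $\tilde\psi$ of $B_g$: it is well-defined since $n_2(\Lambda_0)\subset\Z$ and $\pi_1(\Lambda)\subset\Lambda_0$, and non-trivial because $n_2\neq 0$ as an $\R$-linear functional on $\C$. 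Density of $R_s$-orbits then yields $\tilde\psi(s)\neq 1$, i.e.\ $n_2(s)\notin\Z$.

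The conclusion follows from Weyl's equidistribution theorem applied to the quadratic polynomial $P_n(k,v)$. If $n_2(s)\notin\Q$, the leading coefficient $n_2(s)/2$ is irrational and equidistribution holds at every $v$. If $n_2(s)\in\Q\setminus\Z$, the leading coefficient is rational, and the argument must combine the Haar conclusion of the previous paragraph with the structure of $B_g$: since $(\pi_g)_*\mu$ is Haar and the non-trivial character $\tilde\psi$ of the two-dimensional torus $B_g$ is surjective onto $S^1$, the real number $n_2(x)$ is uniformly distributed on $\R/\Z$ for Haar-a.e.\ $x$; hence the linear coefficient $n_1(s)+n_2(x)-n_2(s)/2$ is irrational for $(\pi_g)_*\mu$-a.e.\ $x$, and Weyl's theorem again yields Ces\`aro decay. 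This last sub-case is the main obstacle: it is the only point where the base unique ergodicity must be fed back into the equidistribution argument on the fiber, and it would fail if $s$ generated only a \emph{dense} subgroup that was already rational in the $n_2$ direction. Dominated convergence then yields $\hat\mu(n)=0$ for all $n\neq 0$, so $\mu=\lambda_X$.
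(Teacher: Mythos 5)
Your proof is correct. Note first that the paper does not actually prove this lemma: it invokes Furstenberg's theorem (citing \S 3.3 of his 1981 book), and the general principle it relies on elsewhere in the appendix (Lemma~\ref{lem:unique_ergodicity_appendix}) is Furstenberg's criterion that a skew product $(x,y)\mapsto (x+u,y+\varphi(x))$ over a uniquely ergodic rotation is uniquely ergodic as soon as the Haar measure is ergodic, the latter being checked by a Fourier-coefficient (coboundary) computation. Your argument is a genuinely different, self-contained route: you compute the Fourier coefficients of an \emph{arbitrary} invariant measure directly, using the explicit formula $g^k(x,y)=(x+ks,\,y+kx+\binom{k}{2}s)$ so that the phases are quadratic polynomials in $k$, and then apply Weyl equidistribution to the Ces\`aro averages. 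The two delicate points are handled properly: you verify that $n_2$ descends to a character of $B_g$ (using $\pi_1(\Lambda)\subset\Lambda_0$, which is exactly the condition that the unipotent linear part preserves $\Lambda$), which is what converts density of the $g_B$-orbits into $n_2(s)\notin\Z$; and in the case where $n_2(s)$ is rational non-integral you correctly feed the unique ergodicity of the base back in, so that $(\pi_g)_*\mu$ is Haar and the linear coefficient $n_1(s)+n_2(x)-n_2(s)/2$ is irrational for $\mu$-almost every $x$, after which Weyl's theorem and dominated convergence finish the proof. What each approach buys: Furstenberg's criterion is more general (it applies to arbitrary continuous cocycles $\varphi$, not just affine ones, and reduces everything to ergodicity of Haar), whereas your direct Weyl-sum computation is elementary and self-contained but exploits the affine structure of $g$; for the lemma as stated both are perfectly adequate. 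A cosmetic remark: the sentence asserting that ``$n_2(x)$ is uniformly distributed on $\R/\Z$ for Haar-a.e.\ $x$'' should be phrased as ``the law of $n_2(x)\bmod 1$ under the Haar measure of $B_g$ is Lebesgue'', from which the a.e.\ irrationality of the linear coefficient follows since the exceptional set is the preimage of a countable set; this is what you clearly intend, so it is not a gap.
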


This result is due to  Furstenberg (see~\cite[\S 3.3]{Furstenberg:1981}). Thus, in this case $\mu$ is the Haar measure on $X$ and we are done. So in what follows, we assume that 
for every $g\in \Hal(\Gamma)$ the orbits of the  translation $g_B$ are not dense: 
they equidistribute along circles $x+T_g$, where $T_g$ is the closure of the group 
$\Z s\subset B_g$; changing $g$ into a positive iterate
we may assume that this closure is isomorphic to $\R/\Z$ (as a real Lie group). 
We let $\ell_g$ be the quotient map $\C^2/\Lambda\to B_g/T_g$. 

\begin{lem}\label{lem:real_projection}
Every fiber of the linear projection $\ell_g$ is a 3-dimensional $g$-invariant torus,  
 and $g$ is uniquely ergodic on almost every fiber. 
\end{lem}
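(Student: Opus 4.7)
The plan is to prove the two assertions separately.

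For the first, $\ell_g$ factors as the composition $X \xrightarrow{\pi_g} B_g \xrightarrow{q} B_g/T_g$, where $T_g$ is a closed real $1$-dimensional subgroup of the real $2$-torus $B_g$. The fiber $F_c:=\ell_g^{-1}(c)=\pi_g^{-1}(L_c)$ over $c\in B_g/T_g$ is the preimage of the circle $L_c\subset B_g$ lying over $c$. As a translate of the closed connected real Lie subgroup $\pi_g^{-1}(T_g)\leq X$, it is a compact connected abelian Lie group of real dimension $1+2=3$, that is, a $3$-torus. The identity $\ell_g\circ g=\ell_g$ is immediate from $g(x,y)=(x+s,y+x)$ with $s\in T_g$, so $g$ preserves every $F_c$.

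For unique ergodicity I would invoke the classical theorem of Hahn (or equivalently the skew-product theorem of Furstenberg already used in \S\ref{par:linear_translation_tori}): an ergodic affine transformation of a compact abelian group is automatically uniquely ergodic, with Haar measure as its unique invariant probability. It therefore suffices to prove ergodicity of $g|_{F_c}$ for almost every $c$. In coordinates $(u,y)\in\R\times\C$ on the universal cover of $F_c$---with $u\in\R/\Z$ parametrizing $L_c$ via $u\mapsto x_0+u\gamma'$, where $\gamma'$ generates $T_g$ and $x_0$ is a lift of $c$, and $y$ parametrizing the elliptic fiber---the restriction $g|_{F_c}$ reads $(u,y)\mapsto(u+\alpha,y+x_0+u\gamma')$ with $\alpha$ irrational (by the hypothesis on $g_B$), and its linear part is unipotent. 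Crucially, the $A_g$-invariance of $\Lambda$ forces $\gamma'\in\Lambda_0$, which is what ensures that this skew-product formula descends to a well-defined self-map of $F_c$.

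Ergodicity of an affine map $T(z)=Az+b$ on a compact abelian group is equivalent, via Fourier expansion, to the non-existence of a non-trivial character $\chi$ with $A^*\chi=\chi$ and $\chi(b)=1$. A direct computation will show that the $A^*$-fixed characters of $F_c$ are of the form $\chi_{(n,w)}(u,y)=e^{2\pi i(nu+\Rea(\bar w y))}$ with $w$ in the sublattice $\set{w\in\Lambda_0^*\;;\;\Rea(\bar w\gamma')=0}$ and $n$ determined modulo $\Z$ by compatibility with the lattice of $F_c$; the condition $\chi(b)=1$ then becomes $n\alpha+\Rea(\bar w x_0)\in\Z$. For $w=0$, irrationality of $\alpha$ forces $n=0$; for $w\neq 0$, the condition cuts out a proper affine subvariety in the $c$-variable. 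Summing over the countable set of pairs $(n,w)$, the exceptional set has measure zero, and $g|_{F_c}$ is ergodic (hence uniquely ergodic) for every $c$ outside this union.

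The only genuine computation required is this Fourier analysis; the main subtlety is that the lattice defining $F_c$ incorporates the monodromy of $\pi_g$ over $L_c$, and the skew-product formula for $g|_{F_c}$ is compatible with it precisely because $A_g$ preserves $\Lambda$.
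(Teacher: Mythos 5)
Your argument is, in substance, the same as the paper's: identify each fiber of $\ell_g$ with a $3$-torus, view $g$ there as an affine skew product over an irrational rotation of the circle $L_c$, reduce unique ergodicity to ergodicity, and check ergodicity by Fourier analysis, with a null set of exceptional fibers. The paper simply does the bookkeeping in an integral basis of $\Lambda$ adapted to $\mathrm{Fix}(A_g)$, so that the fiber becomes literally $\R^3/\Z^3$ and the computation involves explicit integers $a,b,c,d$, while you keep intrinsic complex coordinates and handle the lattice/monodromy of $F_c$ directly; the conclusions match.

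Two caveats. First, both background statements are false at the generality at which you state them: a hyperbolic automorphism of $\R^2/\Z^2$ is an ergodic affine transformation of a compact connected abelian group that is not uniquely ergodic, and $x\mapsto -x+b$ on $\R/\Z$ admits no non-trivial character fixed by the linear part with $\chi(b)=1$ yet is not ergodic. What saves you is precisely the unipotence you mention in passing: for unipotent linear part, every finite orbit of the dual action consists of fixed characters (infinite orbits being killed by decay of Fourier coefficients), and ergodicity does imply unique ergodicity (Hahn; alternatively use Furstenberg's skew-product criterion over the uniquely ergodic rotation, as in the paper, after trivializing the translation monodromy of $F_c\to L_c$). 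These hypotheses should be stated, since as written you are invoking false theorems. Second, your claim that for $w\neq 0$ with $\Rea(\bar w\gamma')=0$ the condition $n\alpha+\Rea(\bar w x_0)\in\Z$ cuts out a null set of $c$ is exactly where holomorphy of $g$ enters: it holds because in the normal form $g(x,y)=(x+s,y+x)$ the fiber translation is by the genuine base point $x_0+u\gamma'\in\C$, so the functional $\Rea(\bar w\,\cdot\,)$, vanishing on $\R\gamma'$, is automatically non-constant transversally to it. This is the intrinsic counterpart of the paper's step $ad-bc\neq 0$, which the paper must re-derive from holomorphy after passing to real coordinates; a purely real unipotent affine map could fail here, so the point deserves to be made explicit rather than absorbed into ``a direct computation will show''.
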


To prove Lemma~\ref{lem:real_projection}, 
we think of $\C^2$ as a real vector space and fix a
basis of $\Lambda$. Then  $\C^2$ is identified with $\R^4$ and $\Lambda$ with $\Z^4\subset \R^4$. The eigenspace of $A_g$ for the eigenvalue $1$
is defined over $\Z$ with respect to $\Lambda=\Z^4$. Moreover, $A_g$ acts trivially on the quotient space $\R^4/{\mathrm{Fix}}(A_g)$. Thus adapting the basis of $\Z^4$ to $g$, we may assume that 
\begin{equation}
g(x_1,x_2,x_3,x_4)=(x_1, x_2+s_2, x_3+ax_1+bx_2, x_4+cx_1+dx_2),
\end{equation} 
for some irrational number $s_2$  and some integers $a$, $b$, $c$, and $d$. The linear projection $\ell_g$ is now given by $(x_1,x_2,x_3,x_4)\mapsto x_1$ and Lemma~\ref{lem:real_projection} boils down to the following statement.

\begin{lem}\label{lem:unique_ergodicity_appendix}
If $1$, $s_1$ and $s_2$ are linearly independent over $\Q$, then $g$ is uniquely ergodic on the level set 
$\set{x_1 = s_1}$. 
\end{lem}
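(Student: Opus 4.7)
The plan is to prove unique ergodicity of $g$ on the 3-torus $\{x_1=s_1\}\cong \R^3/\Z^3$ by a Fourier-analytic argument generalizing Furstenberg's classical treatment of skew products over irrational rotations. Viewed on this fiber, $g$ becomes the skew product
\begin{equation*}
g(x_2,x_3,x_4)=\bigl(x_2+s_2,\; x_3+as_1+bx_2,\; x_4+cs_1+dx_2\bigr),
\end{equation*}
with base an irrational rotation by $s_2$ on $\R/\Z$.

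First I would take any $g$-invariant probability measure $\nu$ on $\R^3/\Z^3$. Since the pushforward under $(x_2,x_3,x_4)\mapsto x_2$ is invariant under the uniquely ergodic rotation by $s_2$, it equals Lebesgue. Disintegrate $\nu=\int \nu_x\, dx$ with $\nu_x$ a probability on the $(x_3,x_4)$-torus, and consider the fiberwise Fourier coefficients $\hat\nu_x(n,p)=\int e^{2\pi\ii(ny+pz)}\,d\nu_x$. Invariance of $\nu$ is equivalent to
\begin{equation*}
\hat\nu_{x+s_2}(n,p)=e^{2\pi\ii((na+pc)s_1+(nb+pd)x)}\,\hat\nu_x(n,p),
\end{equation*}
so $|\hat\nu_x(n,p)|$ is $s_2$-translation invariant, and hence a.e.\ constant by unique ergodicity of the base rotation.

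Next I would expand the bounded function $x\mapsto \hat\nu_x(n,p)$ in $L^2(\R/\Z)$ as $\sum_k \gamma_k(n,p)e^{2\pi\ii kx}$, valid since $|\hat\nu_x(n,p)|\le 1$. The functional equation translates into the recurrence
\begin{equation*}
\gamma_k(n,p)=e^{2\pi\ii((na+pc)s_1-ks_2)}\,\gamma_{k-(nb+pd)}(n,p).
\end{equation*}
If $nb+pd\neq 0$, then $|\gamma_k|$ is constant along each coset of $(nb+pd)\Z$; combined with $\sum_k|\gamma_k|^2<\infty$, this forces every $\gamma_k$ to vanish. If $nb+pd=0$, the recurrence reduces to $\gamma_k=e^{2\pi\ii((na+pc)s_1-ks_2)}\gamma_k$, so $\gamma_k=0$ unless $(na+pc)s_1-ks_2\in\Z$; by the $\Q$-linear independence of $1,s_1,s_2$ this forces $k=0$ and $na+pc=0$. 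Consequently $\hat\nu_x(n,p)\not\equiv 0$ is possible only when $(n,p)$ lies in the kernel of the integer matrix $M^{T}$, where $M=\begin{pmatrix}a & b\\ c & d\end{pmatrix}$.

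The final step invokes the parabolic hypothesis. Writing $A_g=I+N$ with $N$ supported on the lower-left $2\times 2$ block $M$, the identity $(A_g)^n=I+nN$ and the expansion $\wedge^2(A_g^n)(v\wedge w)=v\wedge w+n(Nv\wedge w+v\wedge Nw)+n^2\,Nv\wedge Nw$ show that $\|(g^n)^*\|$ grows quadratically on $H^2(X;\R)\cong \wedge^2\R^4$ iff $Nv\wedge Nw\not\equiv 0$, i.e.\ iff $\rank M=2$. Therefore $M^{T}$ is injective, $(n,p)=(0,0)$, only $\gamma_0(0,0)=1$ survives, and $\nu_x$ equals Haar measure on the $(x_3,x_4)$-torus for a.e.\ $x$; hence $\nu$ is Haar on $\{x_1=s_1\}$. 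The main obstacle to watch is the $L^2$-Fourier step and the Riemann--Lebesgue-type argument used to kill the progression shifted by $nb+pd$; the arithmetic content is entirely packaged into the $\Q$-independence hypothesis, and parabolicity enters only at the very end through elementary linear algebra on the off-diagonal block.
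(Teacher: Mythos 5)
Your proof is correct. The arithmetic core is the same as in the paper — the recurrence you derive for $\gamma_k(n,p)$ is, up to notation, identical to the paper's relation $c_{k,\ell,m}=e^{2\ii\pi ks_2}e^{2\ii\pi(\ell a+mc)s_1}c_{k-\ell b-md,\ell,m}$, and both arguments kill the coefficients using square-summability (Riemann--Lebesgue), irrationality of $s_2$, the $\Q$-independence of $1,s_1,s_2$, and the nondegeneracy $ad-bc\neq 0$. The structural differences are twofold. First, the paper does not argue with an arbitrary invariant measure: it quotes Furstenberg's criterion (for a skew product $(x,y)\mapsto(x+u,y+\varphi(x))$ over an irrational rotation, unique ergodicity is equivalent to ergodicity of the Haar measure) and then only has to Fourier-expand the indicator function of a Haar-measurable invariant set. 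You instead take any invariant $\nu$, disintegrate it over the base rotation, and show the fiberwise Fourier coefficients satisfy a twisted functional equation forcing $\nu_x$ to be Haar a.e.; this is self-contained (in effect you reprove the relevant instance of Furstenberg's criterion) at the modest cost of handling disintegration, essential uniqueness of conditionals, and measurability of $x\mapsto\hat\nu_x(n,p)$, all of which are standard. Second, the nondegeneracy of $M=\begin{pmatrix}a&b\\ c&d\end{pmatrix}$: the paper observes that $ad-bc=0$ would give the linear part a $3$-dimensional real fixed space, impossible for a $\C$-linear map; you deduce $\rank M=2$ from the definition of parabolicity via $\wedge^2(I+nN)=\wedge^2 I+nL+n^2\,\wedge^2N$, so quadratic growth on $H^2$ forces $\wedge^2 N\neq 0$. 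Both are one-line arguments; the paper's is slightly more elementary (no cohomology needed), yours ties the hypothesis used to the definition of ``parabolic'' as stated in the introduction. Either way the conclusion and the quantitative content are the same.
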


\begin{proof} Let us first observe that $ad-bc\neq 0$. Indeed, otherwise 
 the linear
part of $g$ would have a fixed point set of dimension $3$, which  is impossible because $g$ is holomorphic (see Equation~\eqref{eq:linear_part_of_g}). To prove unique ergodicity, we use the following criterion due to Furstenberg (see~\cite[Prop. 3.10]{Furstenberg:1981}):  
let $h$ be a  homeomorphism  on $\R/\Z\times (\R^2/\Z^2)$ of the form $(x,y)\mapsto (x+u, y+\varphi(x))$, where $u$ is irrational, then $h$  is uniquely ergodic if and only if  it is ergodic  for the Haar measure. On the fiber 
$x_1 = s_1$, our map $g$ is of the form 
\begin{equation}
(x_2, x_3, x_4)\longmapsto (x_2+s_2, x_3+as_1+bx_2, x_4+cs_1+dx_2),
\end{equation}
so we need to check that it is ergodic for the Haar measure. For this, we pick a measurable invariant subset $A\subset \R^3/\Z^3$, 
we denote by $\mathbf 1_A\in L^2(\R^3/\Z^3)$ its indicator function, and we expand it into a Fourier series $\mathbf 1_A(x_2,x_3,x_4) = \sum_{(k,\ell, m)\in \Z^3} c_{k,\ell, m} e^{2\ii \pi (kx_2+\ell x_3+ mx_4)}$. Then 
\begin{equation}
\mathbf 1_A\circ g (x_2,x_3,x_4) = \sum_{(k,\ell, m)\in \Z^3} c_{k,\ell, m} e^{2\ii \pi ks_2} e^{2\ii \pi (\ell a+ mc)s_1} e^{2\ii \pi (k+\ell b+ m d)x_2 }e^{2\ii \pi \ell x_3} e^{2\ii \pi m x_4}
 \end{equation}
and, from the $g$-invariance of $\mathbf 1_A$ and the uniqueness of the expansion, we get
\begin{equation}\label{eq:cklm}
c_{k,\ell, m} = e^{2\ii \pi ks_2} e^{2\ii \pi (\ell a+ mc)s_1} c_{k- \ell b - md, \ell, m}
 \end{equation}
 for all $(k,\ell, m)\in\Z^3$. For $\ell = m  =0$, the irrationality of $s_2$ implies that 
 $c_{k, 0, 0} = 0$ unless $k=0$. If $\ell b + md \neq 0$, iterating  the relation 
 $\abs{c_{k,\ell, m}}  = \abs{c_{k- \ell b - md, \ell, m}}$ and using the fact that  Fourier coefficients decay to zero at infinity, we infer that  $c_{k,\ell, m} = 0$. Finally, if $\ell b + md= 0$ and one of $\ell$ or $m$ is nonzero, since 
 $ad-bc\neq 0$ we get that $\ell a+ mc\neq 0$ and 
 \eqref{eq:cklm} gives $c_{k,\ell, m} = e^{2\ii \pi (ks_2 + (\ell a+ mc)s_1)} c_{k, \ell, m}$. Since $1$, $s_1$, and $s_2$ are $\Q$-linearly independent, we derive  $c_{k,\ell, m} =0$. Thus, $\mathbf 1_A$ is a constant, which means that the Haar measure of $A$ is $0$ or $1$. 
 \end{proof}

\subsection{Using distinct parabolic automorphisms} To complete the proof of Theorem~\ref{thm:main} in the case of tori, one can now follow the same ideas as in Sections~\ref{par:Main Proof1} and~\ref{sec:Main_Proof}.
%when there are Halphen twists $h\in \Gamma$ acting trivially on the base of their invariant fibration $\pi_h$. 
We only need to replace the dimension $\dim_\R(\mu)$ by the minimal 
dimension of a real subtorus $Z\subset X$ such that $\mu(q+Z)>0$ for some $q\in X$,
the invariant fibration $\pi_g$ by the $\R$-linear projection $\ell_g$,  and the set $\mathrm R_g(B_g^\circ)\subset B_g^\circ$ 
by  %\serge{Nouvelle définition}
\begin{equation}
\mathrm R(g)=\{ y \in B_g/T_g\; ; \; g \text{ is not uniquely ergodic in } \ell_g^{-1}(y)\} \subset B_g/T_g.
\end{equation}  
Lemma~\ref{lem:unique_ergodicity_appendix} shows that $\mathrm R(g)$ is countable.

\begin{lem}
If there is a parabolic element $g\in \Gamma$ for which $((\ell_g)_*\mu)(\mathrm R(g))<1$, then $\mu$ 
is the Haar measure on $X$.   
\end{lem}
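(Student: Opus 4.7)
The plan is to show that $\mu$ is invariant under translations by the $3$-dimensional subtorus $F_g:=\ell_g^{-1}(0)\subset X$ (whose cosets are the fibers of $\ell_g$), and then to bootstrap using $\Gamma$-invariance and non-elementarity to obtain invariance under all translations of $X$, forcing $\mu$ to equal the Haar measure.

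First I would disintegrate $\mu$ along $\ell_g$ as $\mu=\int \mu_{s_1}\,d\theta(s_1)$, with $\theta=(\ell_g)_*\mu$. Since $g_B$ translates by an element of $T_g$, the map $\ell_g$ descends $g$ to the identity on $\R/\Z=B_g/T_g$, so each conditional $\mu_{s_1}$ is $g$-invariant. By Lemma~\ref{lem:unique_ergodicity_appendix}, for every $s_1 \in (\R/\Z)\setminus\mathrm R(g)$ the transformation $g$ is uniquely ergodic on $\ell_g^{-1}(s_1)$ with Haar measure as the only invariant probability, so $\mu_{s_1}=\mathrm{Haar}_{\ell_g^{-1}(s_1)}$ on a set of positive $\theta$-measure. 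Since Haar on a coset of $F_g$ is $F_g$-translation invariant, the part of $\mu$ carried by $\ell_g^{-1}((\R/\Z)\setminus\mathrm R(g))$ is $F_g$-invariant.

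The main obstacle is to rule out a positive contribution of $\theta$ on the countable set $\mathrm R(g)$. If $\theta(\mathrm R(g))>0$, then $\theta$ has an atom at some $s_1^{*}\in\mathrm R(g)$ and $\mu$ gives positive mass to the $3$-torus $\ell_g^{-1}(s_1^{*})$; by $\Gamma$-ergodicity, $\mu$ is then carried by the countable union $\bigcup_{h\in\Gamma}h(\ell_g^{-1}(s_1^{*}))$ of $3$-tori, which are cosets of the subgroups $A_hF_g\subset X$. Yet by the previous paragraph $\mu$ also carries a ``spread out'' Haar-on-$F_g$-coset part of positive total mass; for $h\in\Gamma$ with $A_hV_g\neq V_g$ (where $V_g=\mathrm{Lie}(F_g)$), the two $3$-subspaces intersect in a $2$-plane, so a Haar measure on an $F_g$-coset gives zero mass to any $A_hF_g$-coset. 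A careful bookkeeping, using that the stabilizer of $V_g$ in $\Gamma$ is a proper subgroup by non-elementarity, shows that this spread-out part cannot be accommodated on such a countable union of $3$-tori unless $\theta(\mathrm R(g))=0$. Hence $\mu$ is globally $F_g$-invariant.

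To conclude, the identity $h\circ T_t=T_{A_ht}\circ h$ (valid because each $h\in\Gamma$ is affine with linear part $A_h$) combined with $h_*\mu=\mu$ yields the $A_hF_g$-invariance of $\mu$ for every $h\in\Gamma$. Non-elementarity, via the characterization recalled in the introduction, guarantees the existence of some $h\in\Gamma$ with $A_hV_g\neq V_g$; the two distinct $3$-planes $V_g$ and $A_hV_g$ then span $\R^4$, so $F_g+A_hF_g=X$ and $\mu$ is translation-invariant on $X$, hence equal to the Haar measure.
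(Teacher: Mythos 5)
Your disintegration step and your closing argument are fine --- indeed the ending is a genuinely different (and arguably cleaner) route than the paper's: once $\mu$ is known to be invariant under translations by $F_g$, the identity $h\circ T_t=T_{A_ht}\circ h$ together with $h_*\mu=\mu$ gives $A_hF_g$-invariance, and non-elementarity does supply some $h$ with $A_hV_g\neq V_g$ (if every linear part preserved $V_g$ it would preserve the complex line $V_g\cap \mathsf{j}V_g$, making the linear group reducible), whereas the paper instead repeats the disintegration along a second projection $\ell_h$ and deduces $F_h$-invariance symmetrically. The genuine problem is the step you defer to ``careful bookkeeping'', i.e.\ ruling out $((\ell_g)_*\mu)(\mathrm R(g))>0$. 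Write $\theta=(\ell_g)_*\mu$ and suppose $\theta$ has an atom at $s_1^*\in\mathrm R(g)$, with $Y=\ell_g^{-1}(s_1^*)$. The invariant union $\bigcup_{h\in\Gamma}h(Y)$ contains, for every $h$ whose linear part preserves $V_g$, a whole fiber $\ell_g^{-1}(y_h)$ of $\ell_g$ (not a transverse $3$-torus), and nothing in your argument prevents these countably many fibers from lying over $(\R/\Z)\setminus\mathrm R(g)$ and from carrying the entire ``spread-out'' part: a priori $\theta$ restricted to the complement of $\mathrm R(g)$ could be purely atomic, with atoms exactly at the points $y_h$, in which case $\mu$ \emph{is} accommodated on a countable union of $3$-tori and the observation that Haar on an $F_g$-coset annihilates cosets of transverse subtori yields no contradiction. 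Properness of the stabilizer of $V_g$ in $\Gamma$ does not help, since that stabilizer can be infinite. (A secondary point: $\Gamma$ may be uncountable, so the ``full measure of $\bigcup_{h\in\Gamma}h(Y)$'' step needs a countable subgroup, and then possibly the ergodic-decomposition device of \S\ref{subsub:proof_smooth_case_general}, since $\mu$ need not be ergodic for that subgroup.)

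There are two known ways to close this gap, and both require an actual idea. The paper's way (``as in Step 2 of the proof of Proposition~\ref{pro:smooth_case}'') is to bring in a second parabolic $h$ with $\ell_h$ independent of $\ell_g$ and to use a ping-pong/special-pair argument in the spirit of Lemma~\ref{lem:schottky-parabolic}, so that inside the countable subgroup $\langle g^n,h^n\rangle$ the only elements whose linear part preserves $V_g$ are powers of $g$; those elements fix $Y$, whose base point lies in $\mathrm R(g)$ and hence away from the positive-measure set of fibers with Haar conditionals, and then the ergodicity argument does produce a genuine contradiction. Alternatively, you can keep your setup but add the missing twist: if some $f$ stabilizing $V_g$ sends $Y$ onto a fiber over the good set, then equivariance of the conditional measures forces the conditional over $Y$ to be the pull-back of a Haar measure, hence Haar itself; pursuing this shows that \emph{all} conditionals along $\ell_g$ are Haar, which is all you actually need to conclude $F_g$-invariance of $\mu$ (you never literally need $\theta(\mathrm R(g))=0$ at that stage --- it follows a posteriori once $\mu$ is identified with the Haar measure). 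As written, though, your second step asserts a contradiction that the sketched bookkeeping cannot deliver.
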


\begin{proof} The proof is the same as for 
 Proposition~\ref{pro:smooth_case}. Pick another parabolic transformation $h\in \Gamma$, 
such that $\ell_g$ and $\ell_h$ are linearly independent; such an $h$ exist because $\Gamma$ is non-elementary. The main tool is the disintegration of $\mu$ with respect to 
$\ell_g$; for $y$ in a subset ${\mathcal{Y}}_g\subset B_g/T_g$ of positive measure, the conditional measure $\lambda_{g,y}$ 
is the Haar measure on the $3$-dimensional torus $\ell_g^{-1}(y)$.  Hence $\dim_\R(\mu)\geq 3$ and 
$((\ell_g)_*\mu)(\mathrm R(g))=0$, as in Step 2 of the proof of Proposition~\ref{pro:smooth_case}. As in Steps 3 and 4, we infer that 
 $(\ell_h)_*\mu$ does not charge $\mathrm R(h)$
and  that $(\ell_h)_*\mu$  is absolutely continuous with respect to the Lebesgue measure on $\R/\Z$. This, implies that 
$\mu$ itself is invariant by  \emph{all} translations  along the fibers of $\ell_h$, because $\mu=\int_Y \lambda_{h,y} d((\ell_h)_*\mu)(y)$
and $\lambda_{h,y}$ is the Haar measure for almost every $y$. Permuting the roles of $g$ and $h$, $\mu$ is in fact invariant under 
all translations. Hence, $\mu$ is the Haar measure on $X$. 
\end{proof}

Now, we we are reduced to the case where 
$(\ell_g)_*\mu(R(g))=1$ for every parabolic automorphism $g$ in $\Gamma$. 
Since $R(g)$ is countable, $d_\R(\mu)\leq 3$ and $\mu$ charges some fiber 
 of $\ell_g$. Using another parabolic automorphism
$h$, we see that $\mu$ gives positive mass to a translate $a_0+S_0$ of a $2$-dimensional torus $S_0\subset X$ whose projections $\ell_g(a_0+S_0)=\ell_g(a_0)$ and 
$\ell_h(a_0)$ are in the countable sets $R(g) \subset B_g/T_g$ and $R(h)\subset B_h/T_h$ respectively. Thus, by ergodicity, we conclude that $\mu$ is supported 
on a finite union of translates of $2$-dimensional tori $a_j+S_j\subset X$, $0\leq i\leq k-1$ for some $k\geq 1$.

A subgroup $\Gamma_0$ of index $\leq k!$ in $\Gamma$ preserves $a_0+S_0$, and   $g^{k!}$ and $h^{k!}$ act on $a_0+ S_0\simeq \R^2/\Z^2$ 
as two linear parabolic transformations with respect to transverse linear fibrations. So, it follows that $\mu_{\vert a_0+S_0}$ 
is proportional to the Haar measure of $a_0+S_0$,  and 
the proof of Proposition~\ref{pro:main_toris} is complete.
%The projection  $\ell_f(a_j+S_j)$ is rational for every parabolic automorphism of $\Gamma$: this implies that the $a_j$ are torsion points of $X/S_j$;
%so, translating by an element of $S_j$,  we may assume that the $a_j$ are torsion points of $X$.

\subsection{No  or infinitely many invariant real tori} \label{subs:infinitely_many_measures}
Consider a compact complex torus $X=\C^2/\Lambda$ of dimension $2$. Let $\Gamma$ be 
a subgroup of $\Aut(X)$. As in \S~\ref{par:linear_translation_tori}, write the elements $f$ of $\Aut(X)$ in the form $f(x,y)=A_f(x,y)+S_f$, 
and denote by $A_\Gamma\subset \GL_2(\C)$ the image of $\Gamma$ by the homomorphism $f\mapsto A_f$. The group 
$\Gamma$ is non-elementary if and only if $A_\Gamma$ contains a free group, if and only if the Zariski closure of 
$A_\Gamma$ in the {\emph{real algebraic group}} $\GL_2(\C)$ is semi-simple. 

Now, assume that $\Gamma$ is non-elementary and preserves at least one ergodic probability measure $\mu$ with $\dim_\R(\mu)=2$.
Equivalently, after conjugation by a translation, there is a finite index subgroup $\Gamma_0\subset \Gamma$ that preserves a real, two-dimensional 
subtorus $\Sigma=\Pi/\Lambda_{\Pi}$, where $\Pi\subset \C^2$ is a real vector space of dimension $2$ and $\Lambda_\R:=\Pi\cap \Lambda$ is a lattice
in $\Pi$ (the restriction of $\mu$ to $\Pi/\Lambda_{\Pi}$ is proportional to the Haar measure). The goal of this last section is to explain
that, in fact, {\emph{$\Gamma$ preserves infinitely many ergodic measures $\mu_j$ with $\dim_\R(\mu_j)=2$.}} %There are  two reasons for this phenomenon. 
Two mechanisms can be used to establish this fact.  

The first one relies on the fact that $\Gamma_0$ acts on the quotient $Q=X/\Sigma\simeq \R^2/\Z^2$, fixing the origin. Moreover, the action of $\Gamma_0$ on 
$Q=\R^2/\Z^2$ is induced by an {\emph{injective homomorphism}} $\Gamma_0\to \GL_2(\Z)$ (to see this, note that $\C^2=\Pi\oplus_\R\ii \Pi$ and $\ii\Pi$ surjects onto $Q$). This implies that $\Gamma_0$ has arbitrarily large finite orbits in $Q$ (coming from torsion points of $Q$). The preimages of these orbits in $X$ provide
surfaces $\Sigma_j\subset X$;  they are ``parallel'' to $\Sigma$ and have an arbitrarily large number of connected components; they are $\Gamma_0$-invariant; and each of them supports a unique 
invariant, ergodic, probability measure $\mu_j$ with $\dim_\R(\mu_j)=2$.  

%\serge{J'avais oublié de préciser que les plans invariants se projetaient bien sur des tores compacts. En rédigeant la preuve, je me suis aperçu qu'il manquait un petit truc (i.e. que j'avais oublié une ``pente'' $\alpha/\beta$.}
For the second mechanism, we assume that $\Gamma_0$ fixes the origin and, changing $\Gamma_0$ in a finite index subgroup if necessary, we identify 
$\Gamma_0$ with a subgroup of $\SL_2(\C)$. Identify $(\Pi, \Lambda_\Pi)$ to $(\R^2 ,\Z^2)$ and the restriction  $\Gamma_{0}\rest{\Pi}$ to a subgroup of $\GL_2(\Z)$; since $\Gamma_0$ is non-elementary, $\Gamma_0$ is Zariski dense in $\SL_2(\C)$ and $\Gamma_{0}\rest{\Pi}$ is Zariski dense in $\SL_2(\R)$
(resp. in $\SL_2(\C)$). In particular, the $\Q$-algebra generated by $\Gamma_{0}\rest{\Pi}$ is the algebra of $2\times 2$ matrices with rational coefficients. 
The decomposition $\C^2=\Pi\oplus_\R \ii \Pi$ is $\Gamma_0$-invariant, and the multiplication by $\ii$ defines a $\Gamma_0$-equivariant map from $\Pi$ to $\ii\Pi$. Thus, $\Gamma_0$ preserves 
each of the real planes $\Pi_\eta=\{(x,y)+\eta \ii (x,y)\; ; \; {\text{for }} \; (x,y)\in \Pi\}$, with $\eta\in \R$. Now, consider the (real) projection $q$ of $\C^2$ 
onto $\Pi$ parallel to $\ii\Pi$, and set $\Lambda'=q(\Lambda)$. 
It is a $\Gamma_0$-invariant subgroup of $\Pi$ of rank at most $4$, and it contains $\Lambda_\Pi\simeq \Z^2$.  
Then, one checks easily that 
\begin{enumerate}[(1)]
\item $\Lambda'$ is commensurable  to $\Lambda_\Pi\oplus \alpha \Lambda_\Pi$, for some  $\alpha\in \R\setminus \Q$, or to $\Lambda_\Pi$, in which case we set $\alpha=0$;
\item $\Lambda$ is commensurable to $\Lambda_\Pi\oplus K_{\alpha,\beta}(\Lambda_\Pi)$ where $K_{\alpha,\beta}$ is the linear map from 
$\Pi$ to $\Pi\oplus \ii \Pi$ defined by $K_{\alpha, \beta}(u)=\alpha u+\beta\ii u$;
\item for $m$ in $\Z$, the real plane $\Pi_{m\alpha/\beta}$ is  $\Gamma$-invariant and intersects $\Lambda$ on  a cocompact 
lattice $\Lambda_{\Pi_{m\alpha/\beta}}$.
\end{enumerate}
Then, the surfaces $\Sigma_m=\Pi_{m\alpha/\beta}/\Lambda_{\Pi_{m\alpha/\beta}}$ form an infinite family of $\Gamma$-invariant tori in $X$.

\begin{rem}
 This second argument does not apply in the following case. 
Let $E=\C/\Z[\ii]$, $\Lambda= \Z[\ii]\times \Z[\ii]\subset \C^2$, and 
$X=\C^2/\Lambda= E\times E$. The group $\Gamma=\SL_2(\Z)\ltimes \R^2/\Z^2$ is a 
subgroup of $\Aut(X)$ that preserves the torus 
$\Pi/\Lambda_\Pi$ for $\Pi=\R^2\subset \C^2$, but has no fixed point (because
$\Gamma$ contains $\Pi/\Lambda_\Pi$), and every $\Gamma$-invariant surface 
is a finite union of translates of this torus .

On the other hand, this second argument applies when $X$ and $\Gamma$ come from a genuine Kummer example, that is,   
 a Kummer example 
defined on a surface that is not a compact torus. Indeed in that case $\Gamma$ contains a finite index subgroup with a fixed point. \end{rem}
}}
 %\newpage
%
%%%%%%%%%%%%%%%%%%%%%%%%%%%%%%%%%%%%%%%%%%%%%%%%%%%%%%%%%%%%%%%%%%
%

\bibliographystyle{acm}
\bibliography{biblio-serge}

\end{document}